\newtheorem*{thmintro}{Theorem}
\newtheorem{theorem}{Theorem}[section]
\newtheorem{proposition}[theorem]{Proposition}
\theoremstyle{definition}
\newtheorem{lemma}[theorem]{Lemma}
\newtheorem{definition}[theorem]{Definition}
\newtheorem{corollary}[theorem]{Corollary}
\newtheorem{remark}[theorem]{Remark}
\newtheorem{example}[theorem]{Example}
\begin{document}
\title[Applications of model theory to C*-dynamics]{Applications of model theory to
C*-dynamics}
\author[Eusebio Gardella]{Eusebio Gardella}
\address{Eusebio Gardella\\
Westf\"{a}lische Wilhelms-Universit\"{a}t M\"{u}nster, Fachbereich
Mathematik, Einsteinstrasse 62, 48149 M\"{u}nster, Germany}
\email{gardella@uni-muenster.de}
\urladdr{http://wwwmath.uni-muenster.de/u/gardella/}
\author[Martino Lupini]{Martino Lupini}
\address{Martino Lupini, Mathematics Department, California Institute of
Technology, 1200 East California Boulevard, Mail Code 253-37, Pasadena, CA
91125}
\email{lupini@caltech.edu}
\urladdr{http:n//www.lupini.org/}
\thanks{The first-named author was partially funded by SFB 878 \emph{Groups,
Geometry and Actions}, and by a postdoctoral fellowship from the Humboldt
Foundation. The second-named author was partially supported by the NSF Grant
DMS-1600186. Part of this work was carried out by the authors while visiting
the Institut Mittag-Leffler in occasion of the program ``Classification of
operator algebras: complexity, rigidity, and dynamics''. The authors
gratefully acknowledge the hospitality and the financial support of the
Institute.}
\dedicatory{}
\subjclass[2000]{Primary 03C98, 46L55; Secondary 28D05, 46L40, 46M07}
\keywords{Model theory for metric structures, existential theory, group
action, locally compact second countable group, C*-algebra, Rokhlin
dimension, nuclear dimension, decomposition rank, strongly self-absorbing
action}

\begin{abstract}
We initiate the study of compact group actions on C*-algebras from the
perspective of model theory, and present several applications to C*-dynamics. 
Firstly,
we prove that the continuous part of the central sequence algebra of a
strongly self-absorbing action is indistinguishable from the continuous part
of the sequence algebra, and in fact equivariantly isomorphic under the
Continuum Hypothesis. As another application, we present a unified approach
to several dimensional inequalities in C*-algebras, which is done through the
notion of order zero dimension for an (equivariant) *-homomorphism.
Finiteness of the order zero dimension implies that the dimension of the
target algebra can be bounded by the dimension of the domain. The dimension
can be, among others, decomposition rank, nuclear dimension, or Rokhlin
dimension. As a consequence, we obtain new inequalities for these quantities.

As a third application we obtain the following result: if a C*-algebra $A$
absorbs a strongly self-absorbing C*-algebra $D$, and $\alpha $ is an action
of a compact group $G$ on $A$ with finite Rokhlin dimension with commuting
towers, then $\alpha $ absorbs any strongly self-absorbing action of $G$ on $%
D$. This has a number of interesting consequences, already in the case of
the trivial action on $D$. For example, we deduce that $D$-stability passes 
from $A$ to the crossed product. Additionally, in many cases of interest, 
our result restricts the possible values of the
Rokhlin dimension to $0, 1$ and $\infty$, showing a striking parallel to the 
behavior of the nuclear dimension for simple C*-algebras. 
We also show that an action of a finite group with finite Rokhlin
dimension with commuting towers automatically has the Rokhlin property if
the algebra is UHF-absorbing.
\end{abstract}

\maketitle




\section{Introduction}

The use of (central) sequence algebras in the theory of operator algebras
has a long history, dating back to McDuff's characterization of factors
which absorb the hyperfinite II$_{1}$-factor with separable predual $R$, as
those whose central (W*-)sequence algebra contains a unital copy of $R$ \cite%
{mcduff_central_1970}. Applications in the context of C*-algebras are both
abundant and far-reaching, and they often appear in connection with
classification of C*-algebras. For instance, the fact that the central
(C*-)sequence algebra (with respect to a nonprincipal filter) of a Kirchberg
algebra is purely infinite and simple is a major cornerstone in the work of
Kirchberg and Phillips \cite{kirchberg_embedding_2000}, which is the
starting point of the classification of Kirchberg algebras; see \cite%
{kirchberg_classification_2000} and \cite{phillips_classification_2000}.

Another major application of central sequence algebras has been to the
theory of strongly self-absorbing C*-algebras \cite{toms_strongly_2007},
which have become a fundamental part of Elliott's classification programme
of nuclear C*-algebras. Indeed, the tight connections that strongly self
absorbing C*-algebras have with classification, have prompted a deeper study
of ultrapowers and (central) sequence algebras. In this context, the use of
model-theoretic methods has become predominant \cite%
{farah_model_2014,farah_model_2013,farah_model_2014-1,farah_model_2016,farah_countable_2013,eagle_saturation_2015}%
. The most prominent features of ultrapowers are model-theoretic in nature,
and include what model theorists usually refer to as \L os' theorem and
countable saturation. Even though relative commutants do not have a
satisfactory model-theoretic analog, it is shown in \cite%
{farah_relative_2015} that for a strongly self-absorbing C*-algebra, its
ultrapower and its relative commutant are indistinguishable, and in fact
isomorphic assuming the Continuum Hypothesis (CH).

Ultrapowers (and relative commutants) have also been a crucial tool in the
study of group actions on operator algebras. They have been used in the
classification of amenable group actions on the hyperfinite II$_{1}$-factor
by Connes \cite{connes_outer_1975}, Jones \cite{jones_actions_1980} and
Ocneanu \cite{ocneanu_actions_1985}. In their proofs, a crucial step is to
show that any outer action admits equivariant embeddings of matrix algebras
into its relative commutant, a condition that is now known as the Rokhlin
property. In the context of C*-algebras, relative commutants were used in
connection with the Rokhlin property for group actions in the work of 
Herman-Jones \cite{herman_period_1982}, Kishimoto \cite{kishimoto_rohlin_1995},
Izumi \cite{izumi_finite_2004}, Hirshberg-Winter \cite%
{hirshberg_rokhlin_2007}, and the first-named author \cite%
{gardella_compact_2015}. The study of Rokhlin dimension has also made
extensive use of these tools, for example in \cite{gardella_compact_2015}
and \cite{hirshberg_rokhlin_2016}, as well as the more recent work on
strongly self-absorbing actions \cite{szabo_strongly_2015}. As is clear
from these works, the use of sequence algebras in the equivariant setting
becomes even more delicate when the acting group is not discrete, since a
continuous action on an operator algebra may induce a discontinuous action
on its relative commutant. As such, equivariant (central) sequence algebras
are interesting objects whose systematic study is justified by their wide
application in the literature.

The present work takes up this task. For a given compact second countable
group $G$, we consider actions of $G$ on C*-algebras ($G$-C*-algebras) as
structures in the framework of continuous model theory. When the group $G$
is finite, one can regard a $G$-action as a usual metric structure by adding
a function symbol for every element of the group. This does not work for a
general compact group, since the canonical action on the ultrapower that one
obtains in this way is not always continuous; see Example~\ref{eg:NotCts}.
On the other hand, adding a sort for the group and enforcing uniform bounds
on the continuity moduli of an action would not capture the notion of
ultrapower of $G$-actions. The solution adopted in \cite{gardella_model_2017}%
, suggested by the theory of compact quantum groups and their actions on
C*-algebras, consists in replacing in the language for C*-algebras the sort
for the whole C*-algebras with several sorts for the \emph{isotropy
components }of the action, indexed by representations of $G$ on
finite-dimensional Hilbert spaces. This gives a language $\mathcal{L}_{G}^{%
\text{C*}}$, which has function and relations symbols corresponding to the
C*-algebra operations as well as function symbols for the restriction of the
*-homomorphism coding the action to the isotropy components. It is shown in 
\cite{gardella_model_2017} that $G$-C*-algebras form an axiomatizable class
in such a language, and explicit axioms are provided.

In this paper, we will consider $G$-C*-algebras as structures with respect
to several other languages. Such languages are very natural, as they
correspond to notions of morphisms other than *-homomorphisms---such as
completely positive contractive maps, or order zero completely positive
contractive maps---that are of crucial importance for the recent theory of
C*-algebras. There are important model-theoretic reasons to consider such
languages. Indeed, as the recent work on the model theory of C*-algebras has
shown \cite{farah_model_2016}, most of the properties of C*-algebras
considered in the C*-algebra literature can be captured model-theoretically.
As most maps that arise naturally in the applications are not elementary,
and often are not even *-homomorphisms, it is important to keep track of the
exact \emph{complexity }of formulas needed to describe C*-algebraic
properties, including which \emph{operations} are needed to describe them.
As it turns out, the multiplication symbol in many cases can be
dispensed of, and replaced with other predicates that capture the ordered
operator space structure, or the \textquotedblleft order
zero\textquotedblright\ structure of a given C*-algebra. This careful
analysis will make it apparent how various regularity property are
automatically preserved by several C*-algebraic and model-theoretic
constructions.

We present a number of applications to C*-dynamics in Section~4 and
Section~5. We focus mainly on strongly self-absorbing actions (in the sense
of \cite{szabo_strongly_2015}), actions with finite Rokhlin dimension (in
the sense of \cite{hirshberg_rokhlin_2015} and \cite{gardella_rokhlin_2014}%
), and general dimensional inequalities in C*-algebras. The main novelty in
this part is that we shift the attention from the actions themselves to the
study of equivariant maps between them; this is in the spirit of $KK$-theory
and other related theories. In particular, we consider equivariant order
zero maps between C*-dynamical systems; this is inspired in the notion of
weak containment for representations and measure-preserving actions of
countable groups, which are fundamental in modern ergodic theory and
representation theory.

The notion of strongly self-absorbing action has been recently introduced
and studied by Szab\'{o} in \cite%
{szabo_strongly_2015,szabo_strongly_2016}, where it is
shown that many familiar properties of strongly self-absorbing C*-algebras
have natural analogues for strongly self-absorbing actions. Building on this
work, in Section~4 we investigate the model-theoretic properties of strongly
self-absorbing actions. In particular, we show that the continuous part of
the central sequence algebra of a strongly self-absorbing action is
indistinguishable from the continuous part of the sequence algebra, and in
fact equivariantly isomorphic assuming CH, thus
generalizing results from \cite{farah_relative_2015}. We take the occasion
to remove an unnecessary assumption present in \cite{farah_relative_2015},
and observe that all the results hold for reduced products with respect to
an arbitrary countably incomplete filter, even without the assumption that
the corresponding reduced product be countably saturated. We also show that
the classification problem for strongly self-absorbing actions of a fixed
compact second countable group on C*-algebras is smooth in the sense of
Borel complexity theory. This is no longer the case for actions with
approximately inner half-flip, even if one restricts to actions on the Cuntz
algebra $\mathcal{O}_{2}$. Indeed, we observe that the relations of
conjugacy and cocycle conjugacy for $\mathbb{Z}/2\mathbb{Z}$-actions on $%
\mathcal{O}_{2}$ with approximately $\mathbb{Z}/2\mathbb{Z}$-inner half-flip
are complete analytic sets. Most of the results of this section admit
natural generalizations to the case of a locally compact (not necessarily
compact) second countable group $G$. This presents additional technical
difficulties, which can be overcome by considering a more general framework
than the usual framework for first order logic for metric structures.\ For
the sake of simplicitly, we will only consider the case when $G$ is compact.

Section~5 contains applications to dimensional inequalities in C*-algebras.
This is done through the notion of \emph{(equivariant) order zero dimension
(with and without commuting towers)} for an (equivariant) homomorphism. The
case of dimension zero corresponds to the notion of positive existential
embedding, which has been studied in \cite{goldbring_kirchbergs_2015} and,
under the name of sequentially split *-homomorphism, in \cite%
{barlak_sequentially_2016}. As an example, if $\alpha \colon G\rightarrow 
\mathrm{Aut}(A)$ is an action of a compact group $G$ on a C*-algebra $A$,
then the Rokhlin dimension of $\alpha $ is equal to the $G$-equivariant
order zero dimension of the factor embedding $\theta \colon A\rightarrow
C(G,A)$. As an application of the syntactic characterization of $G$%
-equivariant order zero dimension together with results from \cite%
{farah_model_2016}, we obtain the following result, which is new in the
non-unital case.

\begin{thmintro}
Let $G$ be a compact group, and $A$ be a $G$-C*-algebra $A$. Then%
\begin{equation*}
\dim _{\mathrm{nuc}}(A^{G})\leq \dim _{\mathrm{nuc}}(A\rtimes G)\leq
(\dim _{\mathrm{Rok}}(A)+1)(\dim _{\mathrm{nuc}}(A)+1)-1
\end{equation*}%
and%
\begin{equation*}
\mathrm{dr}(A^{G})\leq \mathrm{dr}(A\rtimes G)\leq (\dim _{\mathrm{Rok}%
}(A)+1)(\mathrm{dr}(A)+1)-1.
\end{equation*}
\end{thmintro}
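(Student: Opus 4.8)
The plan is to handle the two chains of inequalities by different means. The left-hand inequalities I would deduce from a corner argument: for a compact group $G$ with normalized Haar measure, the averaging element $p=\int_G u_g\,dg$ is a projection in the multiplier algebra $M(A\rtimes G)$, and $p(A\rtimes G)p$ is canonically isomorphic to the fixed-point algebra $A^G$. Since both nuclear dimension and decomposition rank are non-increasing under passage to hereditary subalgebras, and a corner is hereditary, this yields at once $\dim_{\mathrm{nuc}}(A^G)\le\dim_{\mathrm{nuc}}(A\rtimes G)$ and $\mathrm{dr}(A^G)\le\mathrm{dr}(A\rtimes G)$. The only subtlety in the non-unital case is that $p$ lives in $M(A\rtimes G)$ rather than in $A\rtimes G$, which does not affect the corner identification.

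For the right-hand inequalities the main input is the identification, established earlier, of $\dim_{\mathrm{Rok}}(\alpha)$ with the $G$-equivariant order zero dimension of the factor embedding $\theta\colon A\to C(G,A)$, together with the dimension bound attached to the notion of order zero dimension: a $*$-homomorphism $\varphi\colon B\to C$ of order zero dimension at most $d$ forces $\dim_{\mathrm{nuc}}(B)\le(d+1)(\dim_{\mathrm{nuc}}(C)+1)-1$, and similarly for $\mathrm{dr}$. I would apply this to the map $\theta\rtimes G\colon A\rtimes G\to C(G,A)\rtimes G$ obtained from $\theta$ through the crossed-product functor. The equivariant $d$-colored splitting witnessing the equivariant order zero dimension of $\theta$ descends to an ordinary $d$-colored splitting of $\theta\rtimes G$, so that $\theta\rtimes G$ has order zero dimension at most $\dim_{\mathrm{Rok}}(\alpha)$; here the commuting-towers hypothesis is what makes the colors combine multiplicatively. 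It remains to identify the target: equipping $C(G,A)=C(G)\otimes A$ with the diagonal action, the untwisting isomorphism gives $C(G,A)\rtimes G\cong(C(G)\rtimes_{\mathrm{lt}}G)\otimes A\cong\mathcal{K}(L^2(G))\otimes A$, so that $\dim_{\mathrm{nuc}}(C(G,A)\rtimes G)=\dim_{\mathrm{nuc}}(A)$ and $\mathrm{dr}(C(G,A)\rtimes G)=\mathrm{dr}(A)$ by $\mathcal{K}$-stability of both invariants. Substituting into the dimension bound produces exactly $\dim_{\mathrm{nuc}}(A\rtimes G)\le(\dim_{\mathrm{Rok}}(\alpha)+1)(\dim_{\mathrm{nuc}}(A)+1)-1$ and its $\mathrm{dr}$ analogue.

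The step I expect to be the main obstacle is the transport of order zero dimension through the crossed-product functor, namely verifying that the equivariant, $d$-colored approximate splitting of $\theta$ into the equivariant central sequence algebra of $A$ really converts into a $d$-colored splitting of $\theta\rtimes G$ into the central sequence algebra of $A\rtimes G$. This is where the continuity problems for the induced action on sequence algebras noted in the introduction, and the non-unital technicalities, come into play, and it is precisely here that the syntactic description of order zero dimension is decisive: phrasing the splitting as the preservation of a prescribed class of formulas makes its behaviour under the crossed-product construction functorial and lets one bypass ad hoc approximation arguments. By comparison, the identification $C(G,A)\rtimes G\cong\mathcal{K}(L^2(G))\otimes A$ together with $\mathcal{K}$-stability, and the corner argument for the left-hand inequalities, are routine.
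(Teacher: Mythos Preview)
Your outline is essentially the paper's own argument: the corner identification $A^G\cong p(A\rtimes G)p$ via the averaging projection handles the left inequalities, and for the right inequalities one passes from the equivariant order zero dimension of $\theta\colon A\to C(G)\otimes A$ (which equals $\dim_{\mathrm{Rok}}(A)$) to the ordinary order zero dimension of the induced map $\widehat{\theta}\colon A\rtimes G\to (C(G)\otimes A)\rtimes G\cong A\otimes\mathcal{K}(L^2(G))$, then applies the general dimension bound and $\mathcal{K}$-stability. Your identification of the transport step as the crux, and of the syntactic description as the mechanism that makes it clean, matches the paper's emphasis.

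There is one genuine slip: you write that ``the commuting-towers hypothesis is what makes the colors combine multiplicatively.'' There is no commuting-towers hypothesis in this theorem---the statement uses plain $\dim_{\mathrm{Rok}}$, not $\dim_{\mathrm{Rok}}^{\mathrm{c}}$---and none is needed. The multiplicative bound $\dim(B)+1\le(\dim_{\mathrm{oz}}(\varphi)+1)(\dim(C)+1)$ for a $*$-homomorphism $\varphi\colon B\to C$ holds for the ordinary (non-commuting) order zero dimension; the commuting-towers refinement is used elsewhere in the paper (for $D$-absorption results), not here. Delete that clause and the argument stands.
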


We use results from the literature to give many other examples of
*-homomorphisms with finite order zero dimension which do not come from
group actions. Notable examples are the unital inclusions $\mathcal{O}%
_{\infty }\rightarrow \mathcal{O}_{2}$ and $\mathcal{Z}\rightarrow U$, where 
$U$ is any UHF-algebra of infinite type. As a consequence of our general
results, we recover and extend useful inequalities relating the nuclear
dimension, decomposition rank, and Rokhlin dimension of the $\mathcal{Z}$-
and $U$-stabilization of an arbitrary ($G$-)C*-algebra. Similar statements
hold for the $\mathcal{O}_{\infty }$- and $\mathcal{O}_{2}$-stabilizations,
and this allows us to recover a result from \cite{matui_decomposition_2014}:
the nuclear dimension of a Kirchberg algebra is at most 3. (The actual
dimension of Kirchberg algebras has been computed in \cite{bosa_covering_?}:
it is 1.) Some of the nuclear dimensional estimates that we derive here have
also been observed in \cite{barlak_rokhlin_2015}, while the estimates
involving the decomposition rank are new.

One of our main results requires that we first prove the following equivariant
generalization of the main result from \cite{dadarlat_trivialization_2008}, 
which is interesting in its own right.

\begin{thmintro}
Let $X$ be a compact metrizable space of finite covering dimension, let $G$
be a compact metrizable group, let $(D,\delta)$ 
be a strongly self-absorbing, unitarily regular $G$-C*-algebra, and let 
$(A,\alpha)$ be a separable, unital $G$-$C(X)$-algebra. If $A_x$ is $G$-equivariantly
$D$-absorbing, then there is a $C(X)$-linear $G$-isomorphism
\[(A,\alpha)\cong (D\otimes C(X), \delta\otimes \iota_{C(X)}).\]
\end{thmintro}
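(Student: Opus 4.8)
The plan is to follow the strategy of Dadarlat and Winter \cite{dadarlat_trivialization_2008}, upgraded to the equivariant and $C(X)$-linear setting, and to construct the desired isomorphism by an approximate intertwining in the style of Elliott. The whole argument rests on two statements about unital equivariant $C(X)$-linear morphisms $(D\otimes C(X),\delta\otimes\iota_{C(X)})\to(A,\alpha)$: an \emph{existence} result producing such a morphism, and a \emph{uniqueness} result asserting that any two of them are approximately unitarily equivalent through $G$-invariant $C(X)$-linear unitaries. Granting these, I would build inductively sequences of unital equivariant $C(X)$-linear maps whose compositions converge to the respective identities along an increasing sequence of finite subsets and decreasing tolerances; the limit of the intertwining is then the sought $C(X)$-linear $G$-isomorphism. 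Here I would use the global equivariant $D$-absorption $(A,\alpha)\cong(A\otimes D,\alpha\otimes\delta)$ — itself a consequence of the fiberwise hypothesis together with the finite covering dimension of $X$ — to set up the reverse maps needed to close the intertwining.

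For the uniqueness step I would exploit that a strongly self-absorbing action has \emph{equivariantly approximately inner half-flip}, so that the two canonical embeddings $(D,\delta)\to(D\otimes D,\delta\otimes\delta)$ are approximately unitarily equivalent via $G$-invariant unitaries \cite{szabo_strongly_2015}. Fiberwise this yields that any two unital equivariant morphisms $(D,\delta)\to(A_x,\alpha_x)$ are approximately $G$-unitarily equivalent. The hypothesis that $(D,\delta)$ is \emph{unitarily regular} is precisely what permits replacing ``approximately unitarily equivalent'' by equivalence through unitaries in the connected component of the identity of the $G$-invariant unitary group, which is what makes the intertwining converge; this is the equivariant analogue of the $K_1$-injectivity used in \cite{dadarlat_trivialization_2008}. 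I would then globalize the fiberwise uniqueness over $X$ by a partition-of-unity argument, threading the local homotopies of $G$-invariant unitaries continuously over the base.

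For the existence step I would use the fiberwise equivariant $D$-absorption to produce, over each point of $X$, a unital equivariant embedding of $(D,\delta)$, and glue these into a global $C(X)$-linear one. This is where the finite covering dimension $\dim(X)=d$ enters: one chooses a finite open cover decomposed into $d+1$ colours, builds an equivariant $C(X)$-linear morphism over each colour, and patches consecutive colours using the uniqueness result together with the identification $(D,\delta)\cong(D^{\otimes(d+1)},\delta^{\otimes(d+1)})$ coming from strong self-absorption, so that there is enough tensorial room to absorb the transition unitaries on the overlaps.

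The main obstacle I anticipate is keeping \emph{equivariance} and \emph{$C(X)$-linearity} under simultaneous control throughout the gluing, compounded by the continuity issue flagged for non-finite $G$, namely that a continuous action need not induce a continuous action on the associated sequence algebras (cf. Example~\ref{eg:NotCts}). Concretely, the transition unitaries produced by the uniqueness step must be chosen to be at once $G$-invariant, $C(X)$-linear, and continuously varying over the overlaps, and the paths witnessing unitary regularity must be selected continuously in the base point $x\in X$. Once these selections are arranged, the two-sided approximate intertwining is routine, and its limit furnishes the $C(X)$-linear $G$-isomorphism $(A,\alpha)\cong(D\otimes C(X),\delta\otimes\iota_{C(X)})$.
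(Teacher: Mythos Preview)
Your proposal correctly identifies the overall strategy---an equivariant upgrade of Dadarlat--Winter---and the key technical inputs (equivariantly approximately inner half-flip, unitary regularity to get paths of $G$-invariant unitaries in the connected component, and finite covering dimension). But there is a structural gap in how you set up the intertwining. You want to use the global equivariant $D$-absorption $(A,\alpha)\cong(A\otimes D,\alpha\otimes\delta)$ as an intermediate input, to manufacture the reverse maps. In the paper's logical order this is a \emph{consequence} of the trivialization theorem (it is how Theorem~\ref{Theorem:D-absorption} is proved), not an input to it; you offer no independent equivariant argument for it, and the nonequivariant Hirshberg--R{\o}rdam--Winter result does not immediately transfer. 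So as written the argument is circular.

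The paper avoids this by following \cite{dadarlat_trivialization_2008} more literally. Rather than producing genuine $*$-homomorphisms $D\otimes C(X)\to A$ and then appealing to $D$-absorption for the reverse direction, it constructs \emph{completely positive contractive} maps in both directions simultaneously, which are only approximately multiplicative and approximately equivariant (Proposition~\ref{prop:Elliott}); these arise from equivariant \emph{local approximate trivializations} (Definition~\ref{df:trivialization}, Lemma~\ref{lemma:4.5}), built using the equivariant Choi--Effros lifting (Proposition~\ref{prop:CEequiv}). The globalization is an induction on $\dim X$ via the first-coordinate projection $[0,1]^n\to[0,1]$: each slice $A_s$ has base of dimension $\le n-1$ and is therefore trivial by the inductive hypothesis, and adjacent local trivializations are patched using the equivariant uniqueness statement (Proposition~\ref{prop:3.5}, Lemma~\ref{lemma:4.2}), which is exactly where unitary regularity enters. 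Your $(d{+}1)$-colouring with tensorial room in $D^{\otimes(d+1)}$ is the Hirshberg--R{\o}rdam--Winter mechanism for producing central-sequence embeddings of $D$; it does not by itself yield a $C(X)$-linear $G$-equivariant $*$-homomorphism $D\otimes C(X)\to A$, and it does not substitute for the dimension induction here.
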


Combining the theorem above with our results related to order zero dimension, 
we prove the following. The
second assertion is a significant generalization of previous results from 
\cite{hirshberg_rokhlin_2015} and \cite{gardella_regularity_?}, which only
considered the case $D=\mathcal{Z}$.

\begin{thmintro}
Let $G$ be a compact group, let $D$ be a strongly self-absorbing C*-algebra,
let $A$ be a separable $D$-absorbing C*-algebra, let $\alpha \colon
G\rightarrow \mathrm{Aut}(A)$ be an action of $G$ on $A$ with finite Rokhlin
dimension with commuting towers, and let $\delta :G\rightarrow \mathrm{Aut}%
\left( D\right) $ be any strongly self-absorbing action (such as the trivial
action). Then $(A,\alpha )$ is $G$-equivariantly isomorphic to $\left(
D\otimes A,\alpha \otimes \delta \right) $. Furthermore, the fixed point
algebra $A^{G}$ and the crossed product $A\ltimes G$ are $D$-absorbing.
\end{thmintro}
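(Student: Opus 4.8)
The plan is to derive the final theorem by combining the equivariant trivialization result (the second boxed theorem) with the machinery of equivariant order zero dimension developed in Section~5, using the hypothesis of finite Rokhlin dimension with commuting towers as the bridge between the two.

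First I would reduce the equivariant absorption statement $(A,\alpha)\cong(D\otimes A,\alpha\otimes\delta)$ to a local, fiberwise statement so that the trivialization theorem can be applied. The key observation is that finite Rokhlin dimension with commuting towers for $\alpha$ is, by the discussion in the introduction, precisely the statement that the factor embedding $\theta\colon A\to C(G,A)$ has finite $G$-equivariant order zero dimension (with commuting towers). I would unpack what this buys us: it provides equivariant order zero maps from a matrix-like (or cone-type) system into the equivariant central sequence algebra $F_\alpha(A)$, with commuting ranges. Since $A$ is assumed $D$-absorbing and $D$ is strongly self-absorbing, the results of Section~4 identify the continuous part of the equivariant central sequence algebra with the continuous part of the sequence algebra and, crucially, show that a strongly self-absorbing action $\delta$ on $D$ embeds unitally and equivariantly into this central sequence algebra. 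The technical heart here is to assemble these order zero maps together with the $D$-absorption to produce, over a suitable compact metrizable parameter space, a $G$-$C(X)$-algebra structure whose fibers are $G$-equivariantly $D$-absorbing.

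The main step is then to invoke the second boxed theorem. One sets up a $C(X)$-algebra $(A,\alpha)$ — realized so that $X$ has finite covering dimension, which is exactly where the finiteness of the Rokhlin dimension enters (the dimension of $G$ or of an associated classifying-type space is controlled) — verify that each fiber $A_x$ is $G$-equivariantly $D$-absorbing, and check the unitary regularity hypothesis on $(D,\delta)$. The trivialization theorem then yields a $C(X)$-linear $G$-isomorphism $(A,\alpha)\cong(D\otimes C(X),\delta\otimes\iota_{C(X)})$, and by pushing this through the structure maps one obtains the desired $G$-equivariant isomorphism $(A,\alpha)\cong(D\otimes A,\alpha\otimes\delta)$, i.e.\ $\alpha$ absorbs $\delta$.

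For the furthermore clause, I would pass the absorption $(A,\alpha)\cong(D\otimes A,\alpha\otimes\delta)$ through the two functorial constructions. Taking fixed points commutes appropriately with the tensor factor when $\delta$ is the relevant strongly self-absorbing action, giving $A^G\cong (D\otimes A)^{G}$, from which one extracts a unital embedding of $D$ into the central sequence algebra of $A^G$ witnessing $D$-stability; similarly $A\rtimes G\cong (D\otimes A)\rtimes(\alpha\otimes\delta)$, and one uses that crossed products by $\alpha\otimes\delta$ retain the $D$ tensor factor (via the equivariant isomorphism and the fact that $D$ carries the action $\delta$ whose crossed-product absorption can be arranged) to conclude $A\rtimes G$ is $D$-absorbing. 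The hard part throughout will be the first step: organizing the finite Rokhlin dimension data into a genuine $G$-$C(X)$-algebra of \emph{finite} covering dimension whose fibers verify the equivariant $D$-absorption hypothesis, since this is where the quantitative dimensional input must be converted into the qualitative hypotheses of the trivialization theorem, and where the commuting towers condition is essential to guarantee the required commutation of ranges.
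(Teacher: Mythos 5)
There is a genuine gap, and it lies exactly where you locate the ``main step''. You propose to realize $(A,\alpha)$ \emph{itself} as a $G$-$C(X)$-algebra with finite-dimensional base and fibers that are $G$-equivariantly $D$-absorbing, and then to apply the trivialization theorem to it. This cannot work, for two reasons. First, the conclusion of Theorem~\ref{thm:trivialBundle} is a $C(X)$-linear $G$-isomorphism $(A,\alpha )\cong (C(X)\otimes D,\iota _{C(X)}\otimes \delta )$, so your intermediate statement would force $A$ to be a \emph{trivial $D$-bundle} over $X$; this is false for a general separable $D$-absorbing $A$ (take $A=D\otimes B$ with $B$ any $D$-absorbing algebra not of the form $C(X)\otimes D$), and a general $A$ carries no such bundle structure to begin with --- nothing in the Rokhlin data produces one. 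Second, your explanation of where finite covering dimension of $X$ comes from (``the dimension of $G$ or of an associated classifying-type space'') is not correct: $G$ is an arbitrary compact metrizable group and need not be finite-dimensional. In the paper the trivialization theorem is applied not to $A$ but to an auxiliary algebra: the kernel $E$ of the canonical character on the $(d+1)$-fold tensor power of the unitized cone $C(D)^{\dagger }=(C_{0}((0,1])\otimes D)^{\dagger }$. This $E$ is a $G$-$C(X)$-algebra all of whose fibers are $D$, and its base $X$ (the spectrum of the center of $E$) sits inside $[0,1]^{d+1}$, so its dimension is controlled by the \emph{number of commuting towers}. The payoff is Theorem~\ref{Theorem:D-absorption}: $(d+1)$ equivariant order zero maps $D\rightarrow F_{\mathcal{U}}^{G}(A)$ with commuting ranges and unital sum already force $G$-equivariant $D$-absorption; this replaces any attempt to trivialize $A$ directly.

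Granting Theorem~\ref{Theorem:D-absorption}, the actual argument for the statement is then quite different from your outline: (a) the Rokhlin towers, averaged over $G$ (Theorem~\ref{Theorem:fixed}), combined with the commutant-existential axiomatizability of trivial-action $D$-absorption, give $(A,\alpha )\cong (A\otimes D,\alpha \otimes \iota _{D})$ first (Theorem~\ref{Theorem:absorption}); (b) this produces a $G$-equivariant copy $\theta$ of $(D,\iota _{D})$ in the relative commutant of the central sequence algebra, which is multiplied against the Rokhlin tower maps $\psi _{j}$ and precomposed with the equivariant isomorphism $(C(G)\otimes D,\mathtt{Lt}\otimes \delta )\cong (C(G)\otimes D,\mathtt{Lt}\otimes \iota _{D})$ to yield maps witnessing $(D,\delta )\precsim _{d}^{\mathrm{c}}(A,\alpha )$; (c) Corollary~\ref{Corollary:absorbption-containment} then gives absorption of $\delta$ --- but \emph{only when $\delta $ is unitarily regular}. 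Here is a second gap in your plan: you say one should ``check the unitary regularity hypothesis on $(D,\delta )$'', but an arbitrary strongly self-absorbing action is not known to be unitarily regular, so there is nothing to check; the paper instead absorbs $\delta \otimes \mathrm{id}_{\mathcal{Z}}$ (which \emph{is} unitarily regular) and $\mathrm{id}_{\mathcal{Z}}$ separately, and concludes $\alpha \cong \alpha \otimes \mathrm{id}_{\mathcal{Z}}\otimes \delta \cong \alpha \otimes \delta $. Finally, for the ``furthermore'' clause one should use the trivial action case $\delta =\iota _{D}$, since it is the isomorphism $A\rtimes _{\alpha }G\cong (A\otimes D)\rtimes _{\alpha \otimes \iota _{D}}G\cong (A\rtimes _{\alpha }G)\otimes D$ (and $A^{\alpha }\cong A^{\alpha }\otimes D$) that visibly retains the $D$ factor; with a general $\delta$ the crossed product does not.
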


Absorption of the trivial action on the Jiang-Su algebra is particularly
useful, since it opens the doors of a possible classification of actions
with finite Rokhlin dimension with commuting towers. Indeed, showing
absorption of well-behaved objects is a common feature in most of the
classification results for group actions. These aspects will be explored in
subsequent work.

We also deduce new Rokhlin dimension estimates for actions with finite
Rokhlin dimension with commuting towers of finite groups on $\mathcal{Z}$%
-absorbing C*-algebras, which imply that the possible values of the Rokhlin
dimension in this case are $0$ and $1$.

\begin{thmintro}
Let $G$ be a finite group, let $A$ be a C*-algebra, and let $\alpha \colon
G\rightarrow \mathrm{Aut}(A)$ with finite Rokhlin dimension with commuting
towers. Then $\alpha \otimes \mathrm{id}_{\mathcal{Z}}$ has Rokhlin
dimension at most $1$. If $A$ is $\mathcal{Z}$-absorbing, then $\alpha $ has
Rokhlin dimension at most $1$.
\end{thmintro}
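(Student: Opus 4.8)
The plan is to derive the second assertion from the first, and to prove the first by a dimension-reduction argument carried out inside an approximately central copy of $\mathcal{Z}$.

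First I would dispose of the second assertion. Since $\mathcal{Z}\otimes\mathcal{Z}\cong\mathcal{Z}$, the C*-algebra $A\otimes\mathcal{Z}$ is again $\mathcal{Z}$-absorbing, and when $A$ is $\mathcal{Z}$-absorbing the absorption theorem above, applied with $D=\mathcal{Z}$ and the (strongly self-absorbing) trivial action $\delta=\mathrm{id}_{\mathcal{Z}}$, yields a $G$-equivariant isomorphism $(A,\alpha)\cong(A\otimes\mathcal{Z},\alpha\otimes\mathrm{id}_{\mathcal{Z}})$. Rokhlin dimension with commuting towers is a $G$-equivariant isomorphism invariant, so $\dim_{\mathrm{Rok}}(\alpha)=\dim_{\mathrm{Rok}}(\alpha\otimes\mathrm{id}_{\mathcal{Z}})$, and the second assertion follows from the first. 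It therefore suffices to prove $\dim_{\mathrm{Rok}}(\alpha\otimes\mathrm{id}_{\mathcal{Z}})\le 1$.

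For the first assertion I would work in the equivariant central sequence algebra and use the identification, recorded above, of $\dim_{\mathrm{Rok}}$ with the $G$-equivariant order zero dimension of the factor embedding. Writing $d:=\dim_{\mathrm{Rok}}(\alpha)<\infty$, the hypothesis furnishes $d+1$ pairwise commuting, $G$-equivariant completely positive contractive (c.p.c.) order zero maps $\kappa_0,\dots,\kappa_d\colon C(G)\to A_\infty\cap A'$ with $\sum_{l=0}^d\kappa_l(1)=1$; tensoring with $1_{\mathcal{Z}}$ shows first that $\dim_{\mathrm{Rok}}(\alpha\otimes\mathrm{id}_{\mathcal{Z}})\le d$, so finiteness is automatic and only the sharpening to $1$ is at stake. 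The extra input is the self-absorption $\mathcal{Z}\cong\mathcal{Z}^{\otimes\infty}$, which provides a unital embedding of $\mathcal{Z}_\infty\cap\mathcal{Z}'$ into $(A\otimes\mathcal{Z})_\infty\cap(A\otimes\mathcal{Z})'$ whose image is fixed by $\alpha\otimes\mathrm{id}_{\mathcal{Z}}$ and commutes with the images of the $\kappa_l$.

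The heart of the argument, and the step I expect to be the main obstacle, is to recolour the $(d+1)$-coloured system into a $2$-coloured one using only this central, $G$-fixed copy of $\mathcal{Z}$. The positive contractions $a_l:=\kappa_l(1)$ commute and satisfy $\sum_l a_l=1$, so they define a partition of unity modelled on the $d$-simplex; the task is to homotope it, within $A\otimes\mathcal{Z}$, to a partition supported on a $1$-dimensional (nerve / barycentric-subdivision) set, in which at most two colours overlap. This is exactly where the structure of $\mathcal{Z}$ enters: its building blocks are the one-dimensional dimension-drop intervals, and $\mathcal{Z}_\infty\cap\mathcal{Z}'$ carries $2$-coloured approximate matrix structures but no nontrivial projections, which both makes the reduction to $2$ colours possible and obstructs any further reduction to $1$ colour (the Rokhlin property). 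Concretely, I would build new equivariant order zero maps $\tilde\kappa_0,\tilde\kappa_1\colon C(G)\to (A\otimes\mathcal{Z})_\infty\cap(A\otimes\mathcal{Z})'$ by combining the elements $\kappa_l(\delta_g)$ with order zero data drawn from the central $\mathcal{Z}$ according to the recoloured nerve, so that $G$-equivariance and within-tower orthogonality are inherited from the $\kappa_l$ and from the triviality of the $G$-action on $\mathcal{Z}$, while $\tilde\kappa_0(1)+\tilde\kappa_1(1)=1$ is arranged using the interval structure of $\mathcal{Z}$ to interpolate between the colours.

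Finally I would check that the two towers commute and that the construction can be performed approximately on arbitrary finite subsets, upgrading the central-sequence statement to the definition of $\dim_{\mathrm{Rok}}$ with commuting towers and giving $\dim_{\mathrm{Rok}}(\alpha\otimes\mathrm{id}_{\mathcal{Z}})\le 1$. The delicate points are maintaining $G$-equivariance throughout the recolouring and arranging the new partition of unity to sum exactly to $1$ with only two colours; both hinge on the one-dimensional order zero geometry of $\mathcal{Z}$.
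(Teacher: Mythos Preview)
Your reduction of the second assertion to the first is correct and is exactly what the paper does: when $A$ is $\mathcal{Z}$-absorbing, the absorption theorem gives a $G$-equivariant isomorphism $(A,\alpha)\cong(A\otimes\mathcal{Z},\alpha\otimes\mathrm{id}_{\mathcal{Z}})$, and Rokhlin dimension is a conjugacy invariant.

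For the first assertion, however, your proposal has a genuine gap. You correctly identify the ``main obstacle'' as the recolouring of $d+1$ commuting towers into two using a central $G$-fixed copy of $\mathcal{Z}$, but you do not supply a mechanism for it; ``combining the elements $\kappa_l(\delta_g)$ with order zero data drawn from the central $\mathcal{Z}$ according to the recoloured nerve'' is a hope, not a construction. The difficulty is real: the $a_l=\kappa_l(1)$ live in the central sequence algebra of $A$, not in $\mathcal{Z}$, and there is no obvious way to use the one-dimensional structure of $\mathcal{Z}$ to merge arbitrary commuting $C(G)$-towers over $A$ into two while preserving equivariance and the order zero condition.

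The paper avoids this obstacle entirely by routing through the UHF algebra $U=M_{|G|^{\infty}}$. First, it shows $\dim_{\mathrm{Rok}}(\alpha\otimes\mathrm{id}_U)=0$: the infinite tensor product of the left regular representation gives a strongly self-absorbing $G$-action on $U$ with the Rokhlin property, and the absorption theorem (applied to $A\otimes U$, which is $U$-absorbing) forces $\alpha\otimes\mathrm{id}_U$ to absorb it, hence to have the Rokhlin property. Second, the unital inclusion $\mathcal{Z}\hookrightarrow U$ has order zero dimension exactly $1$ (via the Matui--Sato order zero maps $M_n\to\mathcal{Z}$), which yields the general inequality
\[
\dim_{\mathrm{Rok}}(\beta\otimes\mathrm{id}_{\mathcal{Z}})\le 2\dim_{\mathrm{Rok}}(\beta\otimes\mathrm{id}_U)+1
\]
for any $G$-C*-algebra $\beta$. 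Plugging in $\dim_{\mathrm{Rok}}(\alpha\otimes\mathrm{id}_U)=0$ gives the bound $1$. The ``one-dimensional order zero geometry of $\mathcal{Z}$'' you invoke is thus used, but only at the level of the embedding $\mathcal{Z}\to U$, after the problem has already been reduced to dimension zero on the UHF side; it is not used to directly recolour towers over $A$. You are missing both of these steps.
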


This represents a satisfactory parallel with the $\{0,1,\infty \}$-type
behaviour that nuclear dimension and decomposition rank tend to have in the
noncommutative setting. It is also particularly satisfactory, since proving
finiteness of the Rokhlin dimension with commuting towers is a far easier
task than proving that the Rokhlin dimension is (at most) 1. In the
particular case when $A$ is a commutative unital C*-algebra $C\left(
X\right) $ for some compact Hausdorff space $X$, such a result can be seen
as a dynamical version of the main result of~\cite%
{tikuisis_decomposition_2014}, which states that \textrm{dr}$(C(X)\otimes 
\mathcal{Z})\leq 2$.

Finally, we also prove that finite Rokhlin dimension with commuting towers
implies the Rokhlin property for finite group actions on UHF-absorbing
C*-algebras.

\begin{thmintro}
Let $G$ be a finite group, let $A$ be an $M_{|G|^{\infty }}$-absorbing
C*-algebra, and let $\alpha \colon G\rightarrow \mathrm{Aut}(A)$ be an
action with finite Rokhlin dimension with commuting towers. Then $\alpha $
has the Rokhlin property. This in particular applies to Cuntz algebras of
the form $\mathcal{O}_{n|G|}$.
\end{thmintro}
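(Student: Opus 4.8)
The plan is to realize the Rokhlin property as the absorption of a suitable model action, and then invoke the equivariant absorption theorem stated above (the third Theorem of the Introduction). We may assume $A$ is separable. Write $n=|G|$ and let $\lambda\colon G\to U(M_{n})$ be the left regular representation on $M_{n}=\mathcal{B}(\ell^{2}(G))$, so that $\mathrm{Ad}(\lambda_{g})$ permutes the canonical rank-one projections $(e_{h,h})_{h\in G}$ via $\mathrm{Ad}(\lambda_{g})(e_{h,h})=e_{gh,gh}$. Set $(M_{n^{\infty}},\gamma):=\bigotimes_{\mathbb{N}}(M_{n},\mathrm{Ad}\,\lambda)$. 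The diagonal projections of the first tensor factor already form a single $G$-tower, and passing to the infinite tensor product produces such towers inside the equivariant central sequence algebra commuting with any prescribed separable subalgebra; hence $\gamma$ has the Rokhlin property. Note that the penultimate Theorem alone only gives $\dim_{\mathrm{Rok}}(\alpha)\le 1$, since $\mathcal{Z}$ has no projections; the extra input needed to reach dimension $0$ is precisely the reservoir of matrix units supplied by $M_{n^{\infty}}$, which is encoded here in $\gamma$.

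The key step is to verify that $\gamma$ is a \emph{strongly self-absorbing} action. The underlying algebra $M_{n^{\infty}}$ is strongly self-absorbing, and $(M_{n^{\infty}},\gamma)\otimes(M_{n^{\infty}},\gamma)\cong(M_{n^{\infty}},\gamma)$ by reindexing the tensor factors. For the equivariant approximately inner half-flip one uses that the tensor-flip on $M_{n}\otimes M_{n}$ is implemented by the flip unitary $\Sigma$, which satisfies $\Sigma(\lambda_{g}\otimes\lambda_{g})\Sigma^{*}=\lambda_{g}\otimes\lambda_{g}$ and is therefore $G$-invariant for the diagonal action; assembling these over the tensor factors shows the half-flip of $(M_{n^{\infty}},\gamma)$ is approximately inner by $G$-invariant unitaries. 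This is the content of the relevant results of Szab\'{o}, which we may cite.

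With $\gamma$ in hand the argument becomes formal. Since $A$ is $M_{n^{\infty}}$-absorbing and $\alpha$ has finite Rokhlin dimension with commuting towers, the equivariant absorption theorem above, applied with $D=M_{n^{\infty}}$ and $\delta=\gamma$, yields a $G$-equivariant isomorphism $(A,\alpha)\cong(M_{n^{\infty}}\otimes A,\gamma\otimes\alpha)$. The Rokhlin property passes from $\gamma$ to $\gamma\otimes\alpha$: the unital equivariant $*$-homomorphism $C(G)\to F_{\gamma}(M_{n^{\infty}})$ into the equivariant central sequence algebra that witnesses the Rokhlin property of $\gamma$ composes with the canonical equivariant embedding $F_{\gamma}(M_{n^{\infty}})\to F_{\gamma\otimes\alpha}(M_{n^{\infty}}\otimes A)$ induced by $x\mapsto x\otimes 1$ to give such a homomorphism for $\gamma\otimes\alpha$. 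Transporting this datum along the isomorphism above shows that $\alpha$ has the Rokhlin property. The main obstacle is thus concentrated in the previous paragraph: establishing that $\gamma$ is strongly self-absorbing is what licenses the use of the absorption theorem, and once it is granted (or cited) the remaining steps are standard permanence properties of the Rokhlin property.

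For the final assertion it suffices to observe that $\mathcal{O}_{n|G|}$ is $M_{|G|^{\infty}}$-absorbing, so the main statement applies to any action on it with finite Rokhlin dimension with commuting towers. Indeed, $\mathcal{O}_{n|G|}\otimes M_{|G|^{\infty}}$ is a unital Kirchberg algebra in the UCT class with $K_{1}=0$ and, by the K\"{u}nneth formula, $K_{0}=\bigl(\mathbb{Z}/(n|G|-1)\bigr)\otimes\mathbb{Z}[1/|G|]$; since $\gcd(|G|,\,n|G|-1)=1$, the element $|G|$ is already invertible modulo $n|G|-1$, so localizing at $|G|$ leaves $\mathbb{Z}/(n|G|-1)$ unchanged. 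By the Kirchberg--Phillips classification it follows that $\mathcal{O}_{n|G|}\otimes M_{|G|^{\infty}}\cong\mathcal{O}_{n|G|}$, as required.
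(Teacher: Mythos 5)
Your proposal is correct and follows essentially the same route as the paper's proof: both take the model action $\delta=\bigotimes_{\mathbb{N}}\mathrm{Ad}(\lambda)$ on $M_{|G|^{\infty}}$, observe it has the Rokhlin property and is strongly self-absorbing (the paper calls this ``elementary,'' you supply the flip-unitary argument and the citation to Szab\'{o}), and then invoke the equivariant absorption theorem (Theorem \ref{Theorem:absorption+}) to conclude that $\alpha$ absorbs $\delta$ and hence has the Rokhlin property. Your additional details --- the permanence of the Rokhlin property under tensoring via the map on central sequence algebras, and the Kirchberg--Phillips/K\"{u}nneth verification that $\mathcal{O}_{n|G|}$ is $M_{|G|^{\infty}}$-absorbing --- merely flesh out steps the paper leaves implicit.
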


Again, this is very relevant from a computational point of view: proving
directly that an action has the Rokhlin property is often challenging, and
there are not many tools available. On the other hand, Rokhlin dimensional
estimates are much easier to come by, particularly for finite groups. Having
access to the Rokhlin property is highly valuable, since it entails
classifiability of the action, and the structure of the crossed product is
extremely well-understood (see, for example, \cite{gardella_crossed_2014}).

We include an appendix, containing the relevant notions and results from
model theory that are used in this paper; see also the appendix of \cite%
{gardella_model_2017}. A quick introduction to logic for metric structures
can be found in \cite{ben_yaacov_model_2008}, and as it pertains to
C*-algebras in \cite{lupini_invitation_2017}, while \cite{farah_model_2016}
is a more complete reference for the model-theoretic study of C*-algebras.

The model-theoretic perspective is crucial to our approach, as it allows us
to isolate the semantic content of properties of $G$-C*-algebras and
equivariant embeddings, such as the Rokhlin property, Rokhlin dimension, $G$%
-equivariant sequentially split *-homomorphism (in the terminology of \cite%
{barlak_sequentially_2016,barlak_spatial_2017}), $G$-equivariant order zero
dimension (introduced here). In turn, this is the fundamental ingredient to
effortlessly deduce preservation results from the semantic characterizations
of regularity properties of C*-algebras obtained in \cite%
{goldbring_kirchbergs_2015,farah_model_2016}, subsuming, simplifying, and
generalizing many results from the literature. The realization that the
\textquotedblleft continuous part of the ultrapower\textquotedblright\ of a $%
G$-C*-algebra is just its ultrapower as an $\mathcal{L}_{G}^{\text{C*}}$%
-structure allows us to clarify its properties, including saturation and \L %
os' theorem, which are here deduced from general model-theoretic facts. This
subsumes and technically simplifies the proof of many particular instances
that had previously appeared in the literature. We then crucially use the
full strength and semantic content of saturation and \L os' theorem in the
proof of our main results. The model-theoretic study of $G$-C*-algebras,
including the notion of first-order theory, is also fundamental in our study
of strongly self-absorbing $G$-C*-algebras. Particularly, we show that the
first-order theory provides a complete invariant (up to isomorphism) for
such $G$-C*-algebras. This is the crucial step in our computation of the
Borel complexity of the classification problem for strongly self-absorbing $G
$-C*-algebras.

For the rest of the paper, $G$ will be a second countable \emph{compact}
group. We denote by $C(G)$ the unital C*-algebra of continuous,
complex-valued function on $G$. The multiplication operation on $G$ induces
a unital *-homomorphism $\Delta \colon C(G)\rightarrow C(G\times G)\cong
C(G)\otimes C(G)$ given by $\Delta ( f) ( s,t)=
f(st)$ for $f\in C(G)$ and $s,t\in G$. A unitary representation $\pi \in \mathrm{%
Rep}(G)$ on a finite-dimensional Hilbert space $\mathcal{H}$ defines the subspace 
of \emph{matrix units} for $\pi $.
\begin{equation*}
C(G)_{\pi }=\left\{ \langle \xi ,\pi (f)\eta \rangle \colon \xi ,\eta
\in \mathcal{H}\text{, }f\in C(G)\right\}.
\end{equation*}%

\subsubsection*{Acknowledgements}

We would like to thank Mauro Di Nasso for referring us to the notion of good
ultrafilter, and Bradd Hart for drawing our attention to the framework of
real-valued logic, and Yasuhiko Sato for electronic correspondence
concerning the proof of Theorem \ref{thm:InclZUHF} as well as for helpful
comments and remarks on our work. We are also grateful to G\'{a}bor Szab\'{o}
for his comments and for pointing out connections with the preprint \cite%
{hirshberg_rokhlin_2016}. Finally, we are in debt to Ilijas Farah, Isaac
Goldbring, Alexander Kechris, and Robin Tucker-Drob for several helpful
conversations concerning the present work.

\section{Languages for C*-algebras\label{Section:languages}}

\subsection{The ordered selfadjoint operator space language\label%
{Subsection:osos-language}}

An \emph{ordered selfadjoint operator space}, as defined in \cite%
{blecher_ordered_2007,russell_characterizations_2015}, is a matricially
normed and matricially ordered $\ast $-vector space that admits a
selfadjoint completely isometric complete order embedding into a C*-algebra.
Concretely, one can defined an ordered selfadjoint operator space as a
selfadjoint closed subspace of $B(H)$ with the inherited matricial norms,
matricial positive cones, and involution. Ordered selfadjoint operator
spaces have been abstractly characterized in \cite%
{werner_subspaces_2002,russell_characterizations_2015,russell_characterizations_2016}%
, and further studied in \cite%
{werner_multipliers_2004,blecher_ordered_2006,blecher_ordered_2007,ng_operator_2011,karn_adjoining_2005,karn_order_2011}%
. For ordered operator spaces $X$ and $Y$, we denote by $\mathrm{CPC}(X,Y)$
the set of all selfadjoint completely positive completely contractive linear
maps $X\rightarrow Y$. (Observe that in an ordered selfadjoint operator
space the matrix positive cones are not necessarily spanning. Therefore a
completely positive linear map on an ordered operator space is not
necessarily selfadjoint.)

An ordered selfadjoint operator space $X$ can be naturally seen as a
structure in the language $\mathcal{L}^{\mathrm{osos}}$ that contains

\begin{itemize}
\item sorts $M_{n}(X)$, with $n\in \mathbb{N}$, for the matrix
amplifications of the space $X$, with balls centered at the origin as
domains of quantification;

\item a sort for each finite-dimensional C*-algebra $F$, with balls centered
at the origin as domains of quantification;

\item function symbols for the vector space operations and the involution in 
$X$ and $F$;

\item predicate symbols for the norms in $M_{n}(X)$ and in $F$;

\item predicate symbols for the distance function from the cone of positive
elements in $M_{n}(X)$ and in $F$;

\item predicate symbols for the function $F^{k}\times X^{k}\rightarrow 
\mathbb{R}$ given by 
\begin{equation*}
(\overline{y},\overline{z})\mapsto \inf\limits_{t\in \mathrm{CPC}%
(F,X)}\max\limits_{j=1,\ldots ,k}\left\Vert t(y_{j})-z_{j}\right\Vert .
\end{equation*}
\end{itemize}

We call such a language the \emph{ordered selfadjoint operator space language%
} $\mathcal{L}^{\mathrm{osos}}$. Observe that the $\mathcal{L}^{\mathrm{osos}%
}$-terms can be seen as degree $1$ matrix *-polynomials without constant
terms. These are expressions of the form 
\begin{equation*}
\alpha _{1}^{\ast }x_{1}\beta _{1}+\cdots +\alpha _{n}^{\ast }x_{n}\beta
_{n}+\gamma _{1}^{\ast }x_{1}^{\ast }\delta _{1}+\cdots +\gamma _{n}^{\ast
}x_{n}^{\ast }\delta _{n}
\end{equation*}%
where $n$ is a positive integer, $\alpha _{j},\beta _{j},\gamma _{j},\delta
_{j}$, for $1\leq j\leq n$, and are scalar matrices It is clear that a
function between ordered selfadjoint operator spaces is an $\mathcal{L}^{%
\mathrm{osos}}$-morphism if and only if it is \emph{selfadjoint}, \emph{%
completely positive }and \emph{completely contractive}, and it is an $%
\mathcal{L}^{\mathrm{osos}}$-embedding if and only if it is a \emph{%
selfadjoint completely isometric complete order embedding}. In particular,
any C*-algebra can be seen as an $\mathcal{L}^{\mathrm{osos}}$-structure in
the obvious way, by considering its canonical matrix norms and matrix
positive cones. It is observed in \cite[Appendix C]%
{goldbring_kirchbergs_2015}, \cite[Section 3 and Section 5]{farah_model_2016}
that all the predicates above are definable in the usual language of
C*-algebras as considered in \cite{farah_model_2014,farah_model_2016}.

An operator system is a closed, selfadjoint subspace $X\subset A$ of a \emph{%
unital} C*-algebra that contains its unit. The \emph{operator system
language }$\mathcal{L}^{\mathrm{osy}}$ is obtained from the ordered operator
space language by adding a constant symbol for the unit in $X$. The $%
\mathcal{L}^{\mathrm{osy}}$-terms can be seen as degree $1$ matrix
*-polynomials \emph{with a constant term}.

\subsection{The order zero language\label{Subsection:order-zero}}

If $A,B$ are C*-algebras, we denote by $\mathrm{OZ}(A,B)$ the space of
completely positive contractive order zero maps $A\rightarrow B$. The \emph{%
order zero language }$\mathcal{L}^{\mathrm{oz}}$ for C*-algebras is obtained
from $\mathcal{L}^{\mathrm{osos}}$ by adding, for any finite-dimensional
C*-algebra $F$ and any $k\in \mathbb{N}$, a predicate symbol to be
interpreted as the function $F^{k}\times A^{k}\rightarrow \mathbb{R}$, given
by 
\begin{equation*}
(\overline{y},\overline{z})\mapsto \inf\limits_{t\in \mathrm{OZ}%
(F,X)}\max\limits_{j=1,\ldots ,k}\left\Vert t(y_{j})-z_{j}\right\Vert .
\end{equation*}%
It is proved in \cite[Section 5.2]{farah_model_2016} that such functions are
definable in the usual language of C*-algebras as considered in \cite%
{farah_model_2014,farah_model_2016}. This follows from the structure theorem
for completely positive contractive order zero maps \cite[Corollary 4.1]%
{winter_completely_2009} and stability of the relations defining cones of
finite-dimensional C*-algebras \cite[Section 3.3]{loring_lifting_1997}.

A C*-algebras can be seen as an\emph{\ }$\mathcal{L}^{\mathrm{oz}}$%
-structure in the obvious way. Let $A$ and $B$ be C*-algebras and let $%
f\colon A\rightarrow B$ be a function. Then $f$ is an $\mathcal{L}^{\mathrm{%
oz}}$-morphism if and only if $f$ is a completely positive contractive order
zero map.

\begin{remark}
\label{Remark:orthogonality}In the order-zero language one can express the
fact that a pair $( a_{1},a_{2}) $ of elements of a C*-algebra $A$ are
(almost) orthogonal. Indeed one can consider the canonical basis elements $(
e_{1},e_{2}) $ of $\mathbb{C}\oplus \mathbb{C}$ and the formula $\varphi (
e_{1},e_{2},x_{1},x_{2}) $ defined by%
\begin{equation*}
\inf_{t\in \mathrm{OZ}(\mathbb{C}\oplus \mathbb{C},A)}\max \left\{
\left\Vert t( e_{1}) -x_{1}\right\Vert ,\left\Vert t( e_{2})
-x_{2}\right\Vert \right\} \text{.}
\end{equation*}%
We have that if $\varphi ( e_{1},e_{2},a_{1},a_{2}) <\varepsilon $, then $%
\left\Vert a_{1}a_{2}\right\Vert <2\varepsilon $. Conversely, for every $%
\varepsilon >0$ there exists $\delta >0$ such that if $a_{1},a_{2}$ are
positive contractions such that $\left\Vert a_{1}a_{2}\right\Vert <\delta $,
then $\varphi ( e_{1},e_{2},a_{1},a_{2}) <\varepsilon $.
\end{remark}

\subsection{The C*-algebra language\label{Subsection:C*-language}}

The\emph{\ C*-algebra language} $\mathcal{L}^{\text{C*}}$ is obtained from $%
\mathcal{L}^{\mathrm{\mathrm{osos}}}$ by adding a function symbol for the
multiplication operation in $M_{n}(A)$, for every $n\in \mathbb{N}$.
Similarly, the \emph{unital C*-algebra language }$\mathcal{L}^{1\text{,C*}}$
is obtained from $\mathcal{L}^{\text{C*}}$ by adding a constant symbol for
the unit. Observe that the terms in $\mathcal{L}^{1\text{,C*}}$
(respectively, $\mathcal{L}^{\text{C*}}$) can be canonically identified with
matrix *-polynomials with (respectively, without) constant term. A\emph{\
matrix *-polynomial }is a linear combination of expressions of the form $%
X_{1}\cdots X_{n}$ where $X_{j}$, for $j=1,\ldots ,n$, is either a scalar
matrix, or $x$ or $y^{\ast }$ for some variable $x,y$. A function between
C*-algebras is an $\mathcal{L}^{\text{C*}}$-morphism ($\mathcal{L}^{1\text{%
,C*}}$-morphism) if and only if it is a (unital) *-homomorphism, and an $%
\mathcal{L}^{\text{C*}}$-embedding ($\mathcal{L}^{1\text{,C*}}$-embedding)
if and only if it is a (unital) injective *-homomorphism.

\begin{remark}
\label{Remark:existentially-definable}The following properties of
C*-algebras have been proved to be definable by a uniform family of
existential positive $\mathcal{L}^{\text{C*}}$-formulas---see Definition \ref%
{Definition:uniform-family}---in \cite[Theorem 2.5.1 and Theorem 5.7.3]%
{farah_model_2016}: real rank zero, stable rank at most $n$,
quasidiagonality, simplicity, being simple and purely infinite, being simple
and TAF, being abelian of real rank at most $n$. Considering unital
C*-algebras and positive existential $\mathcal{L}^{1\text{,C*}}$-formulas
gives approximate divisibility.
\end{remark}

\begin{remark}
\label{Remark:existentially-definable-separable}The following properties of
C*-algebras have been proved to be definable by a uniform family of
existential positive $\mathcal{L}^{\text{C*}}$-formulas among \emph{%
separable }C*-algebras in \cite[Theorem 2.5.1 and Theorem 5.7.3]%
{farah_model_2016}: being UHF; being AF; being $D$-absorbing for a given
strongly self-absorbing C*-algebra $D$; and being $\mathcal{K}$%
-absorbing---also called \emph{stable}---where $\mathcal{K}$ is the algebra
of compact operators.
\end{remark}

\subsection{The nuclear languages\label{Subsection:nuclear-language}}

The \emph{nuclear ordered selfadjoint operator space language} $\mathcal{L}^{%
\mathrm{osos}\text{-}\mathrm{\mathrm{nuc}}}$ is obtained from $\mathcal{L}^{%
\mathrm{osos}}$ by adding, for every $k\in \mathbb{N}$ and every
finite-dimensional C*-algebra $F$, a predicate symbol for the function $%
X^{k}\times F^{k}\rightarrow \mathbb{R}$ 
\begin{equation*}
(\overline{x},\overline{y})\mapsto \inf\limits_{s\in \mathrm{CPC}%
(X,F)}\max\limits_{j=1,\ldots ,k}\left\Vert s(x_{j})-y_{j}\right\Vert .
\end{equation*}%
It is proved in \cite[Section 5]{farah_model_2016} that such a function is
definable in the language of C*-algebras considered in \cite%
{farah_model_2014,farah_model_2016}.

In the proof of Lemma \ref{Lemma:completely positive contractive-morphism},
we will need the following version of the Choi-Effros lifting theorem: if $%
A,B$ are C*-algebras, $A$ is separable, $f\colon A\rightarrow B$ is a
nuclear completely positive contractive map, $E\subset A$ is a
finite-dimensional subspace, and $\varepsilon >0$, then there exists a
completely positive contractive map $\eta \colon \overline{f(A)}\rightarrow
A $ such that $f\circ \eta $ is the identity map on $\overline{f(A)}$, and $%
\left\Vert (\eta \circ f)(x)-x\right\Vert <\varepsilon \Vert x\Vert $ for
all $x\in E$. When $A$, $B$, and $f$ are unital, this is a consequence of
the Choi-Effros lifting theorem for operator systems; see \cite[Lemma 3.8
and Section 4.3]{choi_completely_1976}. The general case can be reduced to
the unital one by taking unitizations; see \cite[Proposition 2.2.1 and
Proposition 2.2.4]{brown_c*-algebras_2008}.

\begin{lemma}
\label{Lemma:completely positive contractive-morphism} Let $A$ and $B$ be
C*-algebras, and let $f\colon A\rightarrow B$ be a function. Consider the
following assertions:

\begin{enumerate}
\item $f$ is a nuclear completely positive contractive map;

\item $f$ is an $\mathcal{L}^{\mathrm{osos}\text{-}\mathrm{nuc}}$-morphism;

\item $f$ is a completely positive contractive map.
\end{enumerate}

Then (1)$\Rightarrow $(2)$\Rightarrow $(3), and they are all equivalent if
either $A$ or $B$ is nuclear.
\end{lemma}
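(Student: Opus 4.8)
The plan is to establish the two implications and then the equivalence under the nuclearity hypothesis.

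For $(1)\Rightarrow(2)$, I need to show that a nuclear completely positive contractive map $f\colon A\to B$ interprets the predicate symbols of $\mathcal{L}^{\mathrm{osos}\text{-}\mathrm{nuc}}$ correctly; that is, $f$ approximately commutes with the factorization predicates. Concretely, the new predicate symbols encode, for each finite-dimensional $F$ and each $k$, the distance $\inf_{s\in\mathrm{CPC}(X,F)}\max_j\|s(x_j)-y_j\|$. Since $f$ is already known to be an $\mathcal{L}^{\mathrm{osos}}$-morphism (it is selfadjoint, completely positive, and completely contractive, and these are exactly the $\mathcal{L}^{\mathrm{osos}}$-morphisms), it suffices to check the compatibility with the new symbol. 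The key point is that precomposing a map $A\to F$ with \emph{nuclearity} of $f$ lets one factor through $F$; more precisely, nuclearity of $f$ guarantees that any $s\in\mathrm{CPC}(B,F)$ pulls back along $f$ to a completely positive contractive map $A\to F$ at arbitrarily small error, using the defining property of nuclearity (the identity on $f(A)$ factors approximately through finite-dimensional algebras via CPC maps). I would spell out that for a morphism of structures it is enough that $f$ does not increase the value of each predicate, and verify this using the Choi--Effros lifting statement recalled just before the lemma, which produces a CPC splitting $\eta\colon\overline{f(A)}\to A$ approximately inverting $f$ on a prescribed finite-dimensional subspace.

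For $(2)\Rightarrow(3)$, this is essentially immediate: the language $\mathcal{L}^{\mathrm{osos}\text{-}\mathrm{nuc}}$ contains $\mathcal{L}^{\mathrm{osos}}$, so every $\mathcal{L}^{\mathrm{osos}\text{-}\mathrm{nuc}}$-morphism is in particular an $\mathcal{L}^{\mathrm{osos}}$-morphism, hence selfadjoint, completely positive, and completely contractive by the characterization of $\mathcal{L}^{\mathrm{osos}}$-morphisms recalled in Subsection~\ref{Subsection:osos-language}. This gives (3) directly.

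For the equivalence when $A$ or $B$ is nuclear, I would close the loop by proving $(3)\Rightarrow(1)$ under that hypothesis. If $B$ is nuclear, then every completely positive contractive map into $B$ is automatically nuclear, since one can factor through the CPC approximations of $\mathrm{id}_B$; if $A$ is nuclear, one factors through CPC approximations of $\mathrm{id}_A$ and composes with $f$. Either way the factorization property defining nuclearity of $f$ follows, so $(3)\Rightarrow(1)$, and combined with $(1)\Rightarrow(2)\Rightarrow(3)$ all three are equivalent. The main obstacle I anticipate is the careful bookkeeping in $(1)\Rightarrow(2)$: one must verify the predicate inequality in \emph{both} directions up to $\varepsilon$ on finite tuples, and the delicate step is transporting a CPC map $B\to F$ to a CPC map $A\to F$ without losing contractivity or complete positivity, which is exactly where the Choi--Effros splitting $\eta$ and the nuclearity of $f$ must be combined with care on the fixed finite-dimensional subspace $E$.
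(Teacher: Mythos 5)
Your skeleton coincides with the paper's: (2)$\Rightarrow$(3) is immediate because $\mathcal{L}^{\mathrm{osos}\text{-}\mathrm{nuc}}$ contains $\mathcal{L}^{\mathrm{osos}}$, and (3)$\Rightarrow$(1) when $A$ or $B$ is nuclear is the standard automatic-nuclearity observation, exactly as in the paper. The gap is in (1)$\Rightarrow$(2): you have the direction of the essential transport reversed. An $\mathcal{L}^{\mathrm{osos}\text{-}\mathrm{nuc}}$-morphism must satisfy the single inequality $\varphi(f(\overline{a}))\leq\varphi(\overline{a})$ (not ``both directions''; equality is what an \emph{embedding} requires), which for the new predicate reads
\[
\inf_{s'\in \mathrm{CPC}(B,F)}\max_{j}\bigl\Vert s'(f(a_{j}))-y_{j}\bigr\Vert \;\leq\; \inf_{s\in \mathrm{CPC}(A,F)}\max_{j}\bigl\Vert s(a_{j})-y_{j}\bigr\Vert .
\]
Verifying this requires, for each $s\in\mathrm{CPC}(A,F)$, constructing some $s'\in\mathrm{CPC}(B,F)$ with $s'\circ f\approx s$ on the given finite tuple; that is, one must \emph{push forward} maps $A\to F$ along $f$. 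What you describe instead, in two places, is passing from a map in $\mathrm{CPC}(B,F)$ to a map in $\mathrm{CPC}(A,F)$. That operation is trivial (mere composition with $f$, no nuclearity needed) and proves only the \emph{opposite} inequality, which holds for every completely positive contractive $f$ and says nothing about $f$ being a morphism. Executed as written, your step (1)$\Rightarrow$(2) fails.

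The correct mechanism, which is what the paper means by ``the same proof as nuclearity passing to quotients,'' is: given $s\in\mathrm{CPC}(A,F)$, a tuple $\overline{a}$, and $\varepsilon>0$, apply the Choi--Effros statement with $E=\mathrm{span}\{a_{1},\ldots,a_{k}\}$ to obtain a completely positive contractive map $\eta\colon\overline{f(A)}\to A$ with $\eta(f(a_{j}))\approx a_{j}$; then $s\circ\eta\in\mathrm{CPC}(\overline{f(A)},F)$ satisfies $(s\circ\eta)(f(a_{j}))\approx s(a_{j})$. You then need one further step that is absent from your sketch: extend $s\circ\eta$ from $\overline{f(A)}$ to all of $B$ without losing complete positivity or contractivity, using injectivity of the finite-dimensional C*-algebra $F$ (Arveson--Wittstock extension). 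This extension is genuinely needed here, unlike in the quotient situation the paper cites, because $f$ need not be surjective, so $\overline{f(A)}$ can be a proper selfadjoint subspace of $B$. With the direction corrected and the extension inserted, your argument becomes the paper's proof.
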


\begin{proof}
The implication (1)$\Rightarrow $(2) uses the Choi-Effros lifting theorem as
stated above. The proof is the same as the proof that nuclearity passes to
quotients and that decomposition rank and nuclear dimension are
nonincreasing under quotients; see \cite[\S 2.9]{winter_covering_2003}, \cite%
[Section 3]{kirchberg_covering_2004}, and \cite[Proposition 2.3]%
{winter_nuclear_2010}. The implication (2)$\Rightarrow $(3) is obvious.
Finally, if either $A$ or $B$ are nuclear, then any completely positive
contractive map $f\colon A\rightarrow B$ is nuclear, which gives (3)$%
\Rightarrow $(1).
\end{proof}

The \emph{nuclear order zero language} $\mathcal{L}^{\mathrm{oz}\text{-}%
\mathrm{nuc}}$ and the \emph{nuclear C*-algebra language} $\mathcal{L}^{%
\text{C*-}\mathrm{nuc}}$, are defined as above starting from $\mathcal{L}^{%
\mathrm{oz}}$ and $\mathcal{L}^{\text{C*}}$, respectively. It follows from
Lemma \ref{Lemma:completely positive contractive-morphism} that any nuclear
completely positive contractive order zero map (nuclear *-homomorphism)
between C*-algebras is an $\mathcal{L}^{\mathrm{oz}\text{-}\mathrm{nuc}}$%
-morphism ($\mathcal{L}^{\text{C*-}\mathrm{nuc}}$-morphism), and the
converse holds for nuclear C*-algebras.

It is proved in \cite[Section 5]{farah_model_2016} that any predicate that
is definable in $\mathcal{L}^{\text{C*-}\mathrm{nuc}}$ is also definable in $%
\mathcal{L}^{\text{C*}}$. However, considering the larger language $\mathcal{%
L}^{\text{C*-}\mathrm{nuc}}$ gives a more generous notion of (positive)
existential formula.

\begin{remark}
\label{Remark:existentially-definable-nuclear}The following properties of
C*-algebras are definable by a uniform family of existential positive $%
\mathcal{L}^{\text{C*-}\mathrm{nuc}}$-formulas (see \cite[Section 5]%
{farah_model_2016}): nuclearity, having nuclear dimension at most $n$, and
having decomposition rank at most $n$.
\end{remark}

\subsection{Actions of groups on C*-algebras}
Denote by $\mathrm{Aut}(A)$ denote the group
of automorphisms of $A$, endowed with the topology of pointwise convergence.
An action of $G$ on a C*-algebra $A$ is a (strongly) continuous
group homomorphism $\alpha\colon G\rightarrow \mathrm{Aut}(A)$. 
An action of $G$ on $A$ can be regarded as an injective
nondegenerate *-homomorphism 
\begin{equation*}
\alpha\colon A\rightarrow C( G,A) \text{,}
\end{equation*}%
defined by $\alpha(a)(g)=\alpha_{g^{-1}}(a)$ for all $g\in G$ and all $a\in A$. 
With respect to the identification $C( G,A) \cong C(G)\otimes A$, such
a map satisfies the identity%
\begin{equation*}
( \Delta \otimes \mathrm{id}) \circ \alpha =( \mathrm{id}%
\otimes \alpha ) \circ \alpha \text{.}
\end{equation*}%
This identity characterizes the injective nondegenerate *-homomorphisms that
arise from actions as above. 

\begin{definition}
A \emph{$G$-C*-algebra} is a C*-algebra endowed with a distinguisued action of $G$.
\end{definition}

An irreducible representation $\pi $ of $G$ on
a finite-dimensional Hilbert space defines a subspace $A_{\pi }$ of $A$,
called $\pi $-\emph{isotypical component }or $\pi $-\emph{spectral subspace}%
, given by 
\begin{equation*}
\left\{ a\in A:\alpha \left( a\right) \in C(G)_{\pi }\otimes A\right\} \text{%
.}
\end{equation*}%
In the particular case when $\pi $ is the trivial representation, one
obtains the \emph{fixed point algebra }$A^{G}$.

It is explained in \cite[Section 3.4]{gardella_model_2017} how $G$%
-C*-algebras can be seen as structures in the logic for metric structures
with respect to the language $\mathcal{L}_{G}^{\text{C*}}$, which is the
language obtained from the language of C*-algebras $\mathcal{L}^{\text{C*}}$
by replacing the sort for the C*-algebra with sorts indexed by $\mathrm{Rep}%
(G)$, to be interpreted as the isotypical components. Furthermore, one adds
function symbols to be interpreted as the restriction of the action to the
isotypical components, regarded as maps $A_{\pi }\rightarrow C(G)_{\pi
}\otimes A_{\pi }$. (Observe that $C(G)_{\pi}$ is finite-dimensional.) Explicit 
\emph{axioms }for $G$-C*-algebras are provided in \cite[Section 3.4]%
{gardella_model_2017}, thus showing that $G$-C*-algebras form an $\mathcal{L}%
_{G}^{\text{C*}}$-axiomatizable class. For each language for C*-algebras $%
\mathcal{L}$ that we considered above, one can consider the corresponding $G$%
-\emph{equivariant version }$\mathcal{L}_{G}$, which can be obtained from $%
\mathcal{L}$ exactly as $\mathcal{L}_{G}^{\text{C*}}$ is obtained from $%
\mathcal{L}^{\text{C*}}$.

In the following, if $A$ is a C*-algebra, and $\mathcal{F}$ is a filter over
a set $I$, then we let $\prod_{\mathcal{F}}A$ be the corresponding reduced
product. When $A$ is a $G$-C*-algebra, $\prod_{\mathcal{F}}A$ is endowed
with the canonical coordinate-wise action of $G$. We let $\prod_{\mathcal{F}}^{G}A$ be the subalgebra of 
$a\in \prod_{\mathcal{F}}A$ such that the map $g\mapsto g^{\prod_{\mathcal{F}%
}A}a$ is continuous. This is a $G$-C*-algebra, which can be identified 
with the reduced product of $A$ with
respect to $\mathcal{F}$ when regarded as a structure in the langusge of $G$%
-C*-algebras $\mathcal{L}_{G}^{\text{C*}}$; see \cite[%
Proposition 3.12]{gardella_model_2017}.

It is worth noticing that $\prod_{\mathcal{F}}^{G}A$ is in general different
from $\prod_{\mathcal{F}}A$ for a $G$-C*-algebra $A$, even in the case when $%
\mathcal{F}$ is an ultrafilter over $\mathbb{N}$, as the next example shows.

\begin{example}
\label{eg:NotCts}The canonical inclusion of $C\left( G\right) $ inside $%
\prod_{\mathcal{U}}^{G}C\left( G\right) $ is surjective for any ultrafilter $%
\mathcal{U}$. However, the inclusion of $C(G)$ in the (nonequivariant)
C*-algebra ultrapower $\prod_{\mathcal{U}}C(G)$ is in general strict. For
example, if $G=\mathbb{T}$ and $u\in C(\mathbb{T})$ is the canonical unitary
generator, then the element $\left[ u^{n}\right] $ of $\prod_{\mathcal{U}}C(%
\mathbb{T})$ with representative sequence $(u^{n})_{n\in \mathbb{N}}$ does
not belong to $C(\mathbb{T})$, since the canonical action of $\mathbb{T}$
on $\prod_{\mathcal{U}}C(\mathbb{T})$ is not continuous at $[u^{n}]_{n\in 
\mathbb{N}}$. It follows that $C(\mathbb{T})=\prod_{\mathcal{U}}^{\mathbb{T}%
}C(\mathbb{T})$ is properly contained in $\prod_{\mathcal{U}}C(\mathbb{T})$.
\end{example}

\subsection{Languages for \texorpdfstring{$A$}{A}-bimodules\label%
{Subsection:bimodules}}

Let $A$ and $B$ be C*-algebras. Then $B$ is an $A$-bimodule if it is endowed
with linear maps $b\mapsto a\cdot b$ and $b\mapsto b\cdot a$ for $a\in A$,
satisfying $\max \left\{ \left\Vert a\cdot b\right\Vert ,\left\Vert b\cdot
a\right\Vert \right\} \leq \max \left\{ \left\Vert a\right\Vert ,\left\Vert
b\right\Vert \right\} $ as well as the natural associativity requirements.
When $A,B$ are $G$-C*-algebras, then we say that $B$ is a \emph{$G$%
-equivariant $A$-bimodule} if it is an $A$-bimodule satisfying $%
(g^{A}a)\cdot (g^{B}b)=g^{B}(a\cdot b)$, and $(g^{B}b)\cdot
(g^{A}a)=g^{B}(b\cdot a)$ for all $a\in A$, $b\in B$ and $g\in G$. If $%
f\colon A\rightarrow B$ is a $G$-equivariant *-homomorphism, then it induces
a canonical $G$-equivariant $A$-bimodule structure on $B$, defined by $%
a\cdot b:=f(a)b$ and $b\cdot a=bf(a)$ for $a\in A$ and $b\in B$.

We let $\mathcal{L}^{\text{C*,}A\text{-}A}$ be the language obtained from $%
\mathcal{L}^{\text{C*}}$ by adding symbols for the $A$-bimodule structure.
Similar definitions apply to the other languages for C*-algebras considered
above. The interpretation of an $\mathcal{L}^{\text{C*,}A\text{-}A}$-formula
in a $A$-bimodule is defined in the obvious way.

\subsection{The Kirchberg language\label{Subsection:Kirchberg}}

Fix a C*-algebra $A$. In this subsection, we define the Kirchberg language $\mathcal{L}%
^{K}(A)$, which fits into the
more flexible setting described in Subsection \ref{Subsection:general}. This
language is obtained from $\mathcal{L}^{\text{C*}}$ by replacing the symbols
for the matrix norms with pseudometric symbols $d_{F}$ for every finite set $%
F$ in the unit ball of $A$. The distinguished collection $t_{A}^{c}(x)$ of
positive quantifier-free conditions that is part of the language $\mathcal{L}%
^{K}(A)$ consists of the conditions $\max_{a\in F}\left\Vert
ax-xa\right\Vert =0$ for every finite subset $F$ of the unit ball of $A$.

One can regard $A$ as an $\mathcal{L}^{K}(A)$-structure by interpreting $%
d_{F}$ on $M_{n}( A) $ as the pseudometric 
\begin{equation*}
( x,y) \mapsto \max_{a\in M_{n}(F)}\left\Vert a( x-y) \right\Vert \text{.}
\end{equation*}
Suppose that $\mathcal{U}$ is an ultrafilter. Then the reduced power of $A$
as an $\mathcal{L}^{K}(A)$-structure is equal to the Kirchberg invariant $F_{%
\mathcal{U}}(A)$ as introduced by Kirchberg in \cite{kirchberg_central_2006}%
; see also \cite{ando_non-commutativity_2015}. Considering reduced powers
instead of ultrapowers yields the generalization of the Kirchberg invariant
to arbitrary filters considered in \cite%
{szabo_strongly_2015,barlak_sequentially_2016}. In the following, we denote
by $t_{A}^{c}( x_{1},\ldots ,x_{n}) $ the type $t_{A}^{c}( x_{1}) \cup
\cdots \cup t_{A}^{c}( x_{n}) $. If $A$ is \emph{unital}, then $F_{\mathcal{F%
}}( A) $ is equal to $A^{\prime }\cap \prod_{\mathcal{F}}A$.

Let $\kappa $ be an infinite cardinal that is larger than the density
character of $A$, and let $\mathcal{F}$ be a countably incomplete $\kappa $%
-good filter. (When $A$ is separable, one can take any countably incomplete
ultrafilter.) Considering an approximate unit for $A$ shows that $F_{%
\mathcal{F}}(A)$ is unital. Let $t(x_{1},\ldots ,x_{n})$ be a positive
primitive quantifier free $\mathcal{L}^{1\text{,C*}}$-type. The
corresponding \emph{multiplier }$\mathcal{L}^{K}(A)$-\emph{type }$%
t_{A}^{m}(x_{1},\ldots ,x_{n})$ is defined as follows. Any condition in $t(%
\overline{x})$ should be replaced with all the conditions obtained by
substituting every occurrence of a basic formula of the form $\left\Vert 
\mathfrak{p}(\overline{x})\right\Vert $, for some *-polynomial $\mathfrak{p}$
with constant term, with the basic formula $\left\Vert b\ \mathfrak{p}(%
\overline{x})\right\Vert $, where $b$ is some element of the unit
ball of $A$.

\begin{remark}
\label{Remark:F}It follows from Remark~\ref{rmk:charactPosQtfFreeTyp} that
the following statements are equivalent:

\begin{enumerate}
\item $t(\overline{x})$ is realized in $F_{\mathcal{F}}(A)$,

\item $t^{m}(\overline{x})$ is realized in $F_{\mathcal{F}}(A)$,

\item $t^{m}(\overline{x})$ is approximately realized in $F_{\mathcal{F}}(A)$%
,

\item $t^{m}(\overline{x})\cup t_{A}^{c}(\overline{x})$ is approximately
realized in $A$.
\end{enumerate}

Furthermore $F_{\mathcal{F}}(A)$ is positively quantifier-free $\mathcal{L}%
^{1\text{,\textrm{C*}}}$-$\kappa $-saturated. When $\mathcal{U}$ is an
ultrafilter, $F_{\mathcal{U}}(A)$ is quantifier-free $\mathcal{L}^{1\text{,%
\textrm{C*}}}$-$\kappa $-saturated.
\end{remark}

Various results from \cite{kirchberg_central_2006} can be seen as
consequences of Remark \ref{Remark:F}.

Suppose now that $A$ is a $G$-C*-algebra. Then one can consider $A$ as an $%
\mathcal{L}_{G}^{K}(A)$-structure. In this case, the reduced power of $A$ as
an $\mathcal{L}_{G}^{K}(A)$-structure with respect to a filter $\mathcal{F}$%
---which we denote by $F_{\mathcal{F}}^{G}(A)$---recovers the equivariant
version of the Kirchberg invariant considered in \cite%
{szabo_strongly_2015,barlak_sequentially_2016}. Again, the following
proposition follows from the general remarks of Subsection \ref%
{Subsection:general}. If $t(\overline{x})$ is a positive primitive
quantifier free $\mathcal{L}_{G}^{1\text{,C*}}$-type, then the corresponding
multiplier $\mathcal{L}_{G}^{K}(A)$-type $t_{A}^{m}(\overline{x})$ can be
defined as above.

\begin{proposition}
\label{Proposition:F}Suppose that $A$ is a $G$-C*-algebra, $\kappa $ is a
cardinal larger than the density character of $A$, $\mathcal{F}$ is a
countably incomplete $\kappa $-good filter, and $t(\overline{x})$ is a
positive primitive quantifier-free $\mathcal{L}_{G}^{1\text{,C*}}$-type.
Then $F_{\mathcal{F}}^{G}(A)$ is a unital $G$-C*-algebra, and the following
statements are equivalent:

\begin{enumerate}
\item $t(\overline{x})$ is realized in $F_{\mathcal{F}}^{G}(A)$,

\item $t^{m}(\overline{x})$ is realized in $F_{\mathcal{F}}^{G}(A)$,

\item $t^{m}(\overline{x})$ is approximately realized in $F_{\mathcal{F}%
}^{G}(A)$,

\item $t(\overline{x})\cup t_{A}^{c}(\overline{x})$ is approximately
realized in $A$.
\end{enumerate}

Furthermore $F_{\mathcal{F}}(A)$ is positively quantifier-free $\mathcal{L}%
_{G}^{1\text{,\textrm{C*}}}$-$\kappa $-saturated.
\end{proposition}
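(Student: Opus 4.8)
The plan is to realize this as an instance of the general reduced-power formalism of Subsection~\ref{Subsection:general} applied to the Kirchberg language $\mathcal{L}_G^K(A)$, exactly as Remark~\ref{Remark:F} is the corresponding instance for $\mathcal{L}^K(A)$. The only input specific to the present statement is the identification recalled just before the proposition (from \cite[Proposition 3.12]{gardella_model_2017} together with the definition of $F_{\mathcal{F}}^G(A)$) of $F_{\mathcal{F}}^G(A)$ with the reduced power of $A$, viewed as an $\mathcal{L}_G^K(A)$-structure, with respect to $\mathcal{F}$. Granting this, the equivariance is entirely absorbed into the language: the continuity of the $G$-action on the reduced power is built into the fact that we form the reduced power in $\mathcal{L}_G^K(A)$, so that no separate continuity argument is needed beyond the non-equivariant case.

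First I would verify that $F_{\mathcal{F}}^G(A)$ is unital. Since $\mathcal{F}$ is countably incomplete, one fixes a decreasing sequence in $\mathcal{F}$ with empty intersection and an approximate unit of $A$ consisting of $G$-invariant positive contractions (such an approximate unit exists by averaging over the Haar measure of $G$), and checks that the associated diagonal element represents a unit of the reduced power; this is the same computation as in Remark~\ref{Remark:F}, and $G$-invariance of the representing sequence places the unit in the continuous part $F_{\mathcal{F}}^G(A)$.

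For the equivalences, the backbone is supplied by two general facts from Subsection~\ref{Subsection:general} (see also Remark~\ref{rmk:charactPosQtfFreeTyp}). The first is the reduced-product form of \L os' theorem in a language carrying distinguished positive quantifier-free conditions: a positive primitive quantifier-free $\mathcal{L}_G^K(A)$-condition is approximately satisfied by a tuple of the reduced power precisely when that condition, together with the commutation conditions $t_A^c(\overline{x})$, is approximately satisfied along $\mathcal{F}$ in the factor $A$. Moreover, a tuple of elements of $A$ satisfies the multiplier conditions $\Vert b\,\mathfrak{p}(\overline{x})\Vert\leq r$ for every $b$ in the unit ball of $A$ if and only if it satisfies $\Vert\mathfrak{p}(\overline{x})\Vert\leq r$ (take the supremum over an approximate unit, using that $A$ embeds isometrically into its multiplier algebra via left multiplication), so that $t$ and $t^m$ are equivalent \emph{in} $A$; this gives $(3)\Leftrightarrow(4)$. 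The second general fact is that, since $\kappa$ exceeds the density character of $A$ and $\mathcal{F}$ is countably incomplete and $\kappa$-good, the reduced power is positively quantifier-free $\kappa$-saturated; this upgrades approximate realizability to genuine realizability, yielding $(2)\Leftrightarrow(3)$, and it is precisely the content of the final assertion of the proposition.

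It remains to treat $(1)\Leftrightarrow(2)$, the translation between $t$, formulated with the genuine C*-norm on the unital $G$-C*-algebra $F_{\mathcal{F}}^G(A)$, and its multiplier companion $t^m$, formulated with the pseudometrics $d_F$ defining $\mathcal{L}_G^K(A)$. The identity $\Vert c\Vert=\sup\{\Vert bc\Vert: b\text{ in the unit ball of }A\}$, valid for any $c$ in the reduced power since $A$ acts with an approximate unit, shows that the value of a basic formula $\Vert\mathfrak{p}(\overline{x})\Vert$ in $F_{\mathcal{F}}^G(A)$ agrees with the supremum of the multiplier values constituting $t^m$, whence a tuple realizes $t$ exactly when it realizes $t^m$. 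I expect this last step to be the main obstacle, not because it is deep but because one must check carefully that the pseudometric $d_F$ genuinely recovers the quotient C*-norm on the relevant components and that replacing a single norm condition by the family of multiplier conditions indexed by the unit ball of $A$ does not disturb realizability. Once this bookkeeping is in place, the chain $(1)\Leftrightarrow(2)\Leftrightarrow(3)\Leftrightarrow(4)$ closes and the proposition follows.
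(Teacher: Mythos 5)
Your overall route is the paper's own: the paper offers no separate argument for this proposition beyond observing that it is the instance, for $A$ viewed as an $\mathcal{L}_G^K(A)$-structure, of the general reduced-product facts of Subsection~\ref{Subsection:general} (Remark~\ref{rmk:charactPosQtfFreeTyp} for the \L os-type equivalences, Proposition~\ref{Proposition:good} for saturation), exactly as Remark~\ref{Remark:F} is the nonequivariant instance; your handling of unitality, of $(2)\Leftrightarrow(3)$, and of the final saturation assertion is consistent with that.

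There is, however, a genuine gap in your step $(3)\Leftrightarrow(4)$. You argue that $t$ and $t^m$ are equivalent \emph{in} $A$ because a tuple of $A$ satisfies all multiplier conditions $\Vert b\,\mathfrak{p}(\overline{a})\Vert\leq r$, for $b$ in the unit ball of $A$, if and only if it satisfies $\Vert\mathfrak{p}(\overline{a})\Vert\leq r$. That is correct for \emph{exact} satisfaction by a fixed tuple, since $\sup\{\Vert bm\Vert : b\in A,\ \Vert b\Vert\leq 1\}=\Vert m\Vert$ for $m\in\mathcal{M}(A)$; but it does not pass to \emph{approximate} realization, which is what (3) and (4) assert. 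A finite subset of $t^m$ constrains only finitely many multipliers $b$, and the approximating tuple may depend on which $b$'s occur, so approximate realization of $t^m\cup t_A^c$ does not yield approximate realization of $t\cup t_A^c$. Indeed this implication is false for nonunital $A$: take $G$ trivial, $A=\mathcal{K}$, and $t(x)=\{\Vert x-1\Vert\leq 0\}$. Then $t$ is realized in the unital algebra $F_{\mathcal{F}}(\mathcal{K})$, and $t^m\cup t_{\mathcal{K}}^c$ is approximately realized in $\mathcal{K}$ by an approximate unit of finite-rank projections, but $t\cup t_{\mathcal{K}}^c$ is not approximately realized in $\mathcal{K}$, since any $a\in\mathcal{K}$ with $\Vert a-1\Vert<1$ would be invertible in $\mathcal{M}(\mathcal{K})=B(H)$. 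What this reveals is that condition (4) as printed is a misprint for $t^m(\overline{x})\cup t_A^c(\overline{x})$, exactly as in the nonequivariant Remark~\ref{Remark:F} (for unital $A$ the two readings agree, since $b=1_A$ is an admissible multiplier). With that reading your bridging step is unnecessary: $(3)\Leftrightarrow(4)$ is precisely the \L os-type statement of Remark~\ref{rmk:charactPosQtfFreeTyp} that you already invoked. So the correct move is to fix the statement and delete the fallacious passage from exact to approximate equivalence of $t$ and $t^m$, rather than to prove the printed version, which is false in the nonunital case. Relatedly, in $(1)\Leftrightarrow(2)$ the identity $\Vert c\Vert=\sup\{\Vert bc\Vert : b\in A,\ \Vert b\Vert\le 1\}$ for $c$ in the reduced power is the quotient-norm formula for Kirchberg's invariant and is not automatic when $A$ is nonunital (the inequality $\le$ requires an argument using the unit of $F_{\mathcal{F}}^G(A)$ or saturation); you flag this, but it deserves a proof rather than the phrase ``since $A$ acts with an approximate unit.''
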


Similar conclusions hold if one replaces filters with ultrafilters, and
positive primitive quantifier free types with arbitrary quantifier free
types.


\section{Strongly self-absorbing \texorpdfstring{$G$}{G}-C*-algebras\label%
{Section:ssa}}

In this section, we exhibit some applications of model theory to strongly
self-absorbing actions on C*-algebras, as introduced and studied in \cite%
{szabo_strongly_2015,szabo_strongly_2016}. We regard $G$-C*-algebras as
structures in the language of $G$-C*-algebras $\mathcal{L}_{G}^{\text{C*}}$.
An $\mathcal{L}_{G}^{\text{C*}}$-morphism between $G$-C*-algebras is a $G$%
-equivariant *-homomorphism, and an $\mathcal{L}_{G}^{\text{C*}}$-embedding
is an injective $G$-equivariant *-homomorphism. If $A$ and $B$ are $G$%
-C*-algebras, then we denote by $A\otimes B$ the \emph{minimal} tensor
product of $A$ and $B$ endowed with the continuous $G$-action defined by $%
g^{A\otimes B}(a\otimes b)=(g^{A}a)\otimes (g^{B}b)$.

\subsection{Positively \texorpdfstring{$\mathcal{L}_{G}^{\text{C*}}$-}{}%
existential injective *-homomorphisms\label{Subsection:seq-split}}

An injective *-homomorphism $\theta \colon A\rightarrow M$ between \emph{%
separable} $G$-C*-algebras is $G$-equivariantly sequentially split, in the
sense of \cite[Definition 3.3]{barlak_sequentially_2016}, if and only if it
is positively $\mathcal{L}_{G}^{\text{C*}}$-existential, as defined in
Subsection \ref{Subsection:existential-embedding}. For arbitrary $G$%
-C*-algebras, the notion of positively $\mathcal{L}_{G}^{\text{C*}}$%
-existential injective *-homomorphism is more generous than being $G$%
-equivariantly sequentially split.

In the case of a compact group $G$, Theorem \ref%
{Theorem:positive-existential-embedding} applied to $G$-C*-algebras recovers 
\cite[Lemma 3.6 and Corollary 3.7]{barlak_sequentially_2016}. In the case of
compact $G$, Lemma 2.3, Corollary 2.4, Proposition 2.5, Proposition 2.9,
Proposition 3.8 and Corollary 3.17 of \cite{barlak_sequentially_2016} are
then an immediate consequence of the definition of positively $\mathcal{L}%
_{G}^{\text{C*}}$-existential injective *-homomorphism; see Proposition \ref%
{Proposition:limit}. Proposition 3.11 of \cite{barlak_sequentially_2016} is
a particular instance of \cite[Proposition A.33]{gardella_model_2017}, since
the fixed point algebra $A^{G}$ of a $G$-C*-algebra is positively
existentially definable. By appropriately choosing the functor, one can also
see that Proposition \ref{Proposition:functor} has as particular instances
the following results from \cite{barlak_sequentially_2016}: (I), (II), (IV)
of Theorem 2.10, Proposition 3.9, Proposition 3.12, Corollary 3.14,
Corollary 3.15, Proposition 3.16.

It follows from Proposition \ref{Proposition:definable-class} that if $A,B$
are C*-algebras and $f\colon A\rightarrow B$ is a positively $\mathcal{L}^{%
\text{C*}}$-existential injective *-homomorphism, then $A$ has any of the
properties listed in Remark \ref{Remark:existentially-definable} or\ Remark %
\ref{Remark:existentially-definable-separable}, whenever $B$ does. The same
assertion holds for any of the properties listed in Remark \ref%
{Remark:existentially-definable-nuclear} when $B$ is nuclear. In particular,
this observation recovers (1), (2), (3), (4), (5), (7), (11), the first part
of (12), the first half of (14), and (16) of \cite[Theorem 2.11]%
{barlak_sequentially_2016}. Other preservation results have been obtained in 
\cite%
{gardella_classification_2014,gardella_classification_2014-1}%
. We present here an additional preservation result does not seem to follow
from the results mentioned above. Recall the definition of real rank from 
\cite[Definition V.3.2.1]{blackadar_operator_2006}.

\begin{proposition}
Suppose that $A,B$ are C*-algebras and $f\colon A\rightarrow B$ is a
positively $\mathcal{L}^{\text{C*}}$-existential injective *-homomorphism.
If $B$ has real rank at most $n$, then $A$ has real rank at most $n$.
\end{proposition}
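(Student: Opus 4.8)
The plan is to read ``real rank at most $n$'' off a family of positive existential formulas and then transfer it across $f$. Since real rank is intrinsically a property of the minimal unitization and the formulas I will use carry the unit as a constant, I first reduce to the unital case: the minimal unitization is one of the functorial constructions covered by Proposition~\ref{Proposition:functor}, so $\widetilde{f}\colon\widetilde{A}\to\widetilde{B}$ is again a (unital) positively $\mathcal{L}^{1\text{,C*}}$-existential injective $*$-homomorphism, while the real rank is unchanged under unitization (\cite{blackadar_operator_2006}). It therefore suffices to prove the statement for unital algebras and a unital positively $\mathcal{L}^{1\text{,C*}}$-existential embedding. For each $\delta>0$ I consider the formula
\[
\varphi_{\delta}(x_0,\dots,x_n)=\inf_{y_0,\dots,y_n,\,d}\ \max\!\left(\max_{0\le i\le n}\|x_i-\widehat{y_i}\|,\ \Big\|\,d\sum_{i=0}^{n}\widehat{y_i}^{\,2}-1\,\Big\|\right),
\]
where $\widehat{y_i}=\tfrac12(y_i+y_i^{*})$, the variables $y_i$ range over the ball of radius $2$ and $d$ over the ball of radius $1/\delta$. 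This is a genuine positive existential $\mathcal{L}^{1\text{,C*}}$-formula: the matrix is a maximum of norms of $*$-polynomials with constant term, hence monotone in each atomic subformula, and all quantifiers range over balls.

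The key feature of $\varphi_{\delta}$ is that small values force genuine invertibility inside a controlled region. If the value of the matrix at a witness is below $1$, then $\|\,d\sum_i\widehat{y_i}^{\,2}-1\,\|<1$, so $\sum_i\widehat{y_i}^{\,2}$ is left invertible and, being positive in a unital C*-algebra, invertible. Conversely, whenever there is a self-adjoint tuple $\bar{b}$ near $\bar{x}$ with $c:=\sum_i b_i^{2}$ invertible and $\|c^{-1}\|\le 1/\delta$, the choice $d=c^{-1}$ makes the second entry vanish, so $\varphi_{\delta}$ is controlled by $\max_i\|x_i-b_i\|$. Now fix a self-adjoint tuple $\bar{a}$ in $\widetilde{A}$ of norm at most $1$ and $\varepsilon\in(0,1)$. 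Applying $\mathrm{RR}(\widetilde{B})\le n$ to $\widetilde{f}(\bar a)$ yields a self-adjoint tuple $\bar{b}'$ in $\widetilde{B}$ with $\|\widetilde{f}(a_i)-b_i'\|<\varepsilon/2$ and $c'=\sum_i(b_i')^{2}$ invertible; set $\delta=\|c'^{-1}\|^{-1}$, so $d'=c'^{-1}$ satisfies $\|d'\|=1/\delta$ and $d'c'=1$. Then $\varphi_{\delta}^{\widetilde{B}}(\widetilde{f}(\bar a))\le\varepsilon/2$. Since $\widetilde{f}$ is positively existential, $\varphi_{\delta}^{\widetilde{A}}(\bar a)=\varphi_{\delta}^{\widetilde{B}}(\widetilde{f}(\bar a))\le\varepsilon/2$, and choosing a witness of value below $\varepsilon<1$ produces a self-adjoint tuple $\bar{b}$ in $\widetilde{A}$ with $\|a_i-b_i\|<\varepsilon$ and $\sum_i b_i^{2}$ invertible. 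As $\bar a$ and $\varepsilon$ were arbitrary and the relevant density is scale invariant, this gives $\mathrm{RR}(\widetilde{A})\le n$, that is, $\mathrm{RR}(A)\le n$.

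The step I expect to require the most care is the design of $\varphi_{\delta}$ so that invertibility of $\sum_i b_i^{2}$ is captured positively and existentially while every quantifier stays within a bounded domain, as the logic demands. Invertibility is not itself a positive existential condition; the device of an approximate inverse $d$ confined to the ball of radius $1/\delta$, together with the threshold ``value $<1$'', is what turns it into one, at the cost that the correct radius $1/\delta$ depends on how invertible the $B$-side witness is. For this reason the argument must use the whole family $\{\varphi_{\delta}\}_{\delta>0}$ and, for each tuple $\bar a$, invoke positive existentiality for the single member dictated by $\|c'^{-1}\|$; this is precisely why the conclusion does not follow from packaging real rank as a uniform family of formulas and appealing to Proposition~\ref{Proposition:definable-class}. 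The only other point meriting attention is the unitization reduction, since the formula needs the unit as a constant whereas the hypothesis concerns $\mathcal{L}^{\text{C*}}$; this is handled by the functoriality in Proposition~\ref{Proposition:functor} together with the invariance of real rank under unitization.
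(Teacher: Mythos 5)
Your proof is correct and takes essentially the same route as the paper's: reduce to the unital case by unitization (where the map remains positively $\mathcal{L}^{1\text{,C*}}$-existential and real rank is unchanged), approximate the given selfadjoint tuple inside $B$ using $\mathrm{RR}(B)\leq n$, and pull the witness back to $A$ via positive existentiality. The only difference is one of detail: you make explicit the positive existential formula doing the transfer --- with the approximate-inverse variable $d$ ranging over a ball of radius $1/\delta$ and the threshold ``value $<1$'' forcing invertibility of $\sum_i \widehat{y_i}^{\,2}$ --- whereas the paper asserts this transfer step in a single sentence.
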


\begin{proof}
Without loss of generality, we can assume that $A,B$ are separable. After
unitizing, we can assume that $A,B$ are unital, $f$ is unital, and $f$ is a
positively $\mathcal{L}^{\text{C*,}1}$-existential injective *-homomorphism.
We identify $A$ with its image under $f$. Fix selfadjoint elements $%
a_{0},\ldots ,a_{n}\in A$ and $\varepsilon >0$. Since $B$ has real rank at
most $n$, there exist selfadjoint elements $b_{0},\ldots ,b_{n}\in B$ such
that $b_{0}^{2}+\cdots +b_{n}^{2}$ is invertible, and $\left\Vert
a_{i}-b_{i}\right\Vert <\varepsilon $ for every $i=0,1,\ldots ,n$. Since the
inclusion $A\subset B$ is a positively $\mathcal{L}^{\text{C*,}1}$%
-existential *-homomorphism, we can conclude that there exist selfadjoint
elements $c_{0},\ldots ,c_{n}\in A$ such that $c_{0}^{2}+\cdots +c_{n}^{2}$
is invertible, and $\left\Vert c_{i}-a_{i}\right\Vert <\varepsilon $ for
every $i=0,1,\ldots ,n$.
\end{proof}

\subsection{Commutant existential theories\label{Subsection:commutant-theory}%
}

The notion of weak containment and weak equivalence are introduced in the
general setting of logic for metric structures in\ Subsection \ref%
{Subsection:existential}. In this section we consider, in the case of $G$%
-C*-algebras, the natural \emph{commutant }analogs of such notions. Suppose
that $A,B$ are $G$-C*-algebras. We say that $A$ is \emph{commutant
positively weakly contained }in $B$ if for some (equivalently, any) cardinal 
$\kappa $ larger than the density character of $A$ and $B$ and for some
(equivalently, any) countably incomplete $\kappa $-good filter $\mathcal{F}$
on has that every $\mathcal{L}_{G}^{1\text{,C*}}$-type $t$ that is realized
in $F_{\mathcal{F}}^{G}(A)$ is also realized in $F_{\mathcal{F}}^{G}(B)$.
Equivalently, for any unital $G$-C*-subalgebra $C$ of $F_{\mathcal{F}}(A)$
of density character less than $\kappa $ there exists a $G$-equivariant
injective unital *-homomorphism from $C$ to $F_{\mathcal{F}}^{G}(B)$. If $A$
is unital, then $A$ is commutant positively weakly contained in $B$ if and
only if there exists a unital *-homomorphism from $A$ to $F_{\mathcal{F}%
}^{G}(B)$ for any filter $\mathcal{F}$ as above. A syntactic
characterization of commutant positively weak containment can be obtained
using Proposition \ref{Proposition:F}.

Suppose that $A$ is a $G$-C*-algebra, and let $t_{A}^{c}(\overline{x})$ be
the collection of conditions $\max\nolimits_{j=1,\ldots ,n}\left\Vert
x_{j}a-ax_{j}\right\Vert \leq 0$, for $a\in A$. Recall that if $t(\overline{x%
})$ is a positive (primitive) quantifier-free $\mathcal{L}_{G}^{1\text{,C*}}$%
-type, then $t_{A}^{m}(\overline{x})$ denotes the positive (primitive)
quantifier-free $\mathcal{L}_{G}^{K}(A)$-type obtained from $t(\overline{x})$
by replacing every occurrence of $\left\Vert \mathfrak{p}(\overline{x}%
)\right\Vert $ for some $G$-*-polynomial $\mathfrak{p}$ with $\left\Vert b{}%
\mathfrak{p}(\overline{x})\right\Vert $ where $b$ is an arbitrary element of 
$A$ of norm at most $1$. We then have that $A$ is commutant positively
weakly contained in $B$ if and only if, for any positive primitive
quantifier-free $\mathcal{L}_{G}^{1\text{,C*}}$-type $t(\overline{x})$, $%
t_{A}^{m}(\overline{x})\cup t_{A}^{c}(\overline{x})$ is approximately
satisfied in $A$ if and only if $t_{B}^{m}(\overline{x})\cup t_{B}^{c}(%
\overline{x})$ is approximately satisfied in $B$.

Two C*-algebras are\emph{\ commutant positively weakly equivalent} if they
are each commutant positively weakly contained in the other. For unital
nuclear $G$-C*-algebras, the following characterization of commutant weak $%
\mathcal{L}_{G}^{1\text{,C*}}$-containment follows from the Choi-Effros
lifting theorem.

\begin{proposition}
\label{Proposition:nuclear}Suppose that $A$ is a unital nuclear $G$%
-C*-algebra, and $B$ is a $G$-C*-algebra. Then $A$ is commutant positively
weakly contained in $B$ if and only if for any separable nuclear $G$%
-invariant unital C*-subalgebra $A_{0}\subset A$ and separable $G$%
-C*-subalgebra $B_{0}\subset B$, there exists a sequence $(\phi _{n})_{n\in 
\mathbb{N}}$ of completely positive contractive maps $\phi _{n}\colon
A_{0}\rightarrow B$ such that, for every compact subset $K\subset G$, for
every $x,y\in A_{0}$, and for every $b\in B_{0}$, we have that 
\begin{equation*}
\lim_{n\rightarrow +\infty }\left\Vert b(\phi_n (x)\phi_n (y)- \phi_n
(xy))\right\Vert =0,  \ \ \lim_{n\rightarrow +\infty }\left\Vert \phi_n (x)b-b\phi_n (x)\right\Vert =0
\end{equation*}%
\begin{equation*}
\lim_{n\rightarrow +\infty }\left\Vert b\phi_n (1)-b\right\Vert =0, \ \ \mbox{ and } \ \ 
\lim_{n\rightarrow +\infty }\max\limits_{g\in K}\left\Vert b(\phi_n
(g^{A}x)-g^{B}\phi_n (x))\right\Vert =0\text{.}
\end{equation*}
\end{proposition}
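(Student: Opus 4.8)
The plan is to prove the two implications separately, using the syntactic reformulation of commutant positive weak containment together with Proposition~\ref{Proposition:F} and the Choi--Effros lifting theorem recorded before Lemma~\ref{Lemma:completely positive contractive-morphism}. Throughout I will use two facts stated above. First, since $A$ is unital, $A$ is commutant positively weakly contained in $B$ if and only if there is a unital $G$-equivariant $*$-homomorphism $\Psi\colon A\to F_{\mathcal F}^{G}(B)$ for some (equivalently any) countably incomplete $\kappa$-good filter $\mathcal F$ with $\kappa$ larger than the density characters of $A$ and $B$. Second, by Proposition~\ref{Proposition:F}, a positive primitive quantifier-free $\mathcal L_{G}^{1,\mathrm{C*}}$-type $t$ is realized in $F_{\mathcal F}^{G}(C)$ precisely when $t_{C}^{m}\cup t_{C}^{c}$ is approximately satisfied in $C$. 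A preliminary reduction used in both directions: since $A$ is nuclear and the action is continuous with $G$ compact and second countable, every separable subset of $A$ is contained in a separable \emph{nuclear} $G$-invariant unital subalgebra (build an increasing chain that alternately closes under the completely positive approximation property and under the $G$-action, and take the union); moreover, as $A_0,B_0$ are separable and $G$ is compact, a standard diagonal argument shows that producing the sequence $(\phi_n)$ in assertion~(ii) is equivalent to producing, for every finite $F\subseteq A_0$, every finite $L\subseteq B_0$ and every $\varepsilon>0$, a single completely positive contractive map $\phi\colon A_0\to B$ satisfying the four displayed inequalities up to $\varepsilon$ for $x,y\in F$, $b\in L$ and $g\in G$.

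For the implication (ii)$\Rightarrow$(i) I will argue through types. Let $t(\overline x)$ be a positive primitive quantifier-free $\mathcal L_{G}^{1,\mathrm{C*}}$-type with $t_{A}^{m}\cup t_{A}^{c}$ approximately satisfied in $A$; taking the multiplier $b=1$ (here unitality of $A$ is used) yields, for any finite portion and any $\varepsilon>0$, a tuple $\overline a$ in $A$ that approximately satisfies $t$ itself. I then enlarge $C^{*}(\overline a,1)$ to a separable nuclear $G$-invariant unital subalgebra $A_0\subseteq A$, and choose a separable $G$-invariant $B_0\subseteq B$ containing the finitely many elements of $B$ occurring as multipliers in the portion of $t_{B}^{m}\cup t_{B}^{c}$ under consideration. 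Applying (ii) to $A_0$ and $B_0$ furnishes completely positive contractive maps $\phi_n\colon A_0\to B$ with the four limit conditions, and I claim the images $\phi_n(\overline a)$ approximately satisfy $t_{B}^{m}\cup t_{B}^{c}$ in $B$ for $n$ large. Indeed, approximate multiplicativity and equivariance relative to $B_0$, combined with contractivity and unitality, propagate from generators to arbitrary $G$-$*$-polynomials $\mathfrak p$ by a routine telescoping estimate, giving $\lim_n\|b(\phi_n(\mathfrak p(\overline a))-\mathfrak p(\phi_n(\overline a)))\|=0$ for $b\in B_0$; since $\phi_n$ is contractive and $\overline a$ approximately satisfies $t$, the norms $\|b\,\mathfrak p(\phi_n(\overline a))\|$ are controlled by the corresponding bounds in $t$, while the commutant conditions $t_{B}^{c}$ are exactly the second limit condition. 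By Proposition~\ref{Proposition:F} this shows $t$ is realized in $F_{\mathcal F}^{G}(B)$, and hence $A$ is commutant positively weakly contained in $B$.

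For (i)$\Rightarrow$(ii) I fix separable $A_0$ (nuclear, $G$-invariant, unital) and separable $B_0$ as in the statement, and let $\Psi\colon A\to F_{\mathcal F}^{G}(B)$ be the unital equivariant $*$-homomorphism supplied by (i) for a countably incomplete $\kappa$-good filter $\mathcal F$ on an index set $I$. Composing the restriction $\Psi|_{A_0}$ with the inclusion $F_{\mathcal F}^{G}(B)\subseteq\prod_{\mathcal F}B$ gives a unital $*$-homomorphism $A_0\to\prod_{\mathcal F}B$; as $A_0$ is separable and nuclear, this map is nuclear, so by the Choi--Effros lifting theorem it admits a completely positive contractive lift $A_0\to\ell^{\infty}(I,B)$, that is, a family $(\phi_i)_{i\in I}$ of completely positive contractive maps $\phi_i\colon A_0\to B$ with $\Psi(x)=[(\phi_i(x))_{i}]$. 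The fact that $\Psi$ is multiplicative, has image in the relative commutant of $B$, is unital as a map into $F_{\mathcal F}^{G}(B)$, and is $G$-equivariant with image in the continuous part, translates respectively into $\lim_{\mathcal F}\|b(\phi_i(x)\phi_i(y)-\phi_i(xy))\|=0$, $\lim_{\mathcal F}\|\phi_i(x)b-b\phi_i(x)\|=0$, $\lim_{\mathcal F}\|b\phi_i(1)-b\|=0$ and $\lim_{\mathcal F}\max_{g\in K}\|b(\phi_i(g^{A}x)-g^{B}\phi_i(x))\|=0$ for all $x,y\in A_0$, $b\in B_0$ and compact $K\subseteq G$. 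Finally, since $\mathcal F$ is countably incomplete and $A_0,B_0$ are separable with $G$ compact, for each finite portion of this data the set of indices on which the four inequalities hold up to $\varepsilon$ lies in $\mathcal F$ and is therefore nonempty; choosing one such index for each entry of a countable exhaustion of the data yields the required sequence $(\phi_n)_{n\in\mathbb N}$.

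The main obstacle is the lifting step in (i)$\Rightarrow$(ii): it is essential that $A_0$ be nuclear, since this is precisely what makes $\Psi|_{A_0}$ a nuclear map and allows the Choi--Effros theorem to produce a \emph{completely positive contractive} lift rather than merely a bounded one. Two further points require care. First, the filter $\mathcal F$ need not be an ultrafilter, so one cannot invoke \L os' theorem; instead it is the countable incompleteness of $\mathcal F$ that permits the passage from the $\mathcal F$-limit conditions to an honest sequence indexed by $\mathbb N$. Second, one must check that the fourth condition, with its uniformity over the compact group $G$, is genuinely captured by $\Psi$ landing in the \emph{continuous} part $F_{\mathcal F}^{G}(B)$ rather than in all of $\prod_{\mathcal F}B$: this uniform equivariance is what guarantees that the lifted maps intertwine the two actions in the limit uniformly on compacta, and conversely in (ii)$\Rightarrow$(i) it is exactly what allows the push-forward $\phi_n(\overline a)$ to satisfy the equivariant conditions appearing in $t_{B}^{m}$.
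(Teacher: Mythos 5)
Your strategy coincides with the one the paper intends (the paper offers no proof beyond the remark that the proposition ``follows from the Choi--Effros lifting theorem''), and your backward implication is sound: pushing a finite portion of the type forward along the maps $\phi_n$, using the telescoping estimate to compare $b\,\mathfrak p(\phi_n(\overline a))$ with $b\,\phi_n(\mathfrak p(\overline a))$, and invoking Proposition~\ref{Proposition:F} is exactly the right mechanism. In the forward implication there is one cosmetic slip and one genuine gap. The slip: when $B$ is not unital there is no inclusion $F_{\mathcal F}^{G}(B)\subseteq\prod_{\mathcal F}B$; the Kirchberg invariant is a quotient of a subalgebra of $\prod_{\mathcal F}B$ (relative commutant modulo annihilator), so you must lift $\Psi|_{A_0}$ through that quotient presentation instead. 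Choi--Effros applies there just as well, so the structure of your argument survives.

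The gap concerns the fourth condition. After lifting, equivariance of $\Psi$ gives you only the pointwise statement: for each \emph{fixed} $g\in G$, $\lim_{\mathcal F}\|b(\phi_i(g^{A}x)-g^{B}\phi_i(x))\|=0$. You assert that, because $\Psi$ lands in the continuous part, this upgrades to $\lim_{\mathcal F}\max_{g\in K}\|b(\phi_i(g^{A}x)-g^{B}\phi_i(x))\|=0$, but pointwise $\mathcal F$-limits of a bounded family of continuous functions on $G$ do not imply uniform ones: take bumps $h_i\colon G\to[0,1]$ whose supports shrink and eventually avoid every fixed point, so that $h_i(g)\to 0$ for every $g$ while $\sup_{g}h_i(g)=1$ for all $i$. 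Nothing in your argument excludes this behaviour for the functions $g\mapsto\|b(\phi_i(g^{A}x)-g^{B}\phi_i(x))\|$, and merely saying that $\Psi(x)$ has continuous orbit does not produce the required equicontinuity of the specific representatives $(\phi_i(x))_i$. A correct completion runs, for instance, as follows: for $x$ in an isotypical component $(A_0)_\pi$, equivariance places $\Psi(x)$ in the $\pi$-isotypical component of $F_{\mathcal F}^{G}(B)$, which --- since the continuous part is the reduced product of the $\mathcal L_{G}^{\text{C*}}$-structure, whose $\pi$-sort is the reduced product of $B_\pi$ --- admits a representative $(c_i)_i$ with $c_i\in B_\pi$ and uniformly bounded norms; because $C(G)_\pi$ is finite-dimensional, the orbit maps $g\mapsto g^{B}c_i$ are equicontinuous uniformly in $i$, and combining this with the pointwise limits, the compactness of the orbit $\{g^{B}b:g\in G\}$ (to replace $b$ by finitely many multipliers), and Peter--Weyl density of the span of the $(A_0)_\pi$ in $A_0$ gives the uniform statement; alternatively, one can smooth representatives against Haar measure. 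Without some such step the forward implication is not proved.
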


The notions of \emph{commutant weak containment} and commutant existential
theory are defined analogously, considering arbitrary (not necessarily
positive primitive) quantifier-free $\mathcal{L}_{G}^{1\text{,C*}}$-types.

\subsection{Space of separable nuclear \texorpdfstring{$G$}{G}-C*-algebras
and smooth classification}

We now observe that there exists a natural standard Borel space of separable 
\emph{nuclear }$G$-C*-algebras. Indeed, by Kirchberg's nuclear embedding
theorem \cite[Theorem 6.3.12]{rordam_classification_2002}, any separable
nuclear C*-algebra is *-isomorphic to the range of a conditional expectation
on $\mathcal{O}_{2}$. Given such a conditional expectation $E$, set $A=E(%
\mathcal{O}_{2})$. Write $\mathrm{CPC}(\mathcal{O}_{2})$ for the semigroup
of all completely positive contractive maps of $\mathcal{O}_{2}$ into
itself, endowed with the topology of pointwise convergence in norm. Given an
action $\alpha \colon G\rightarrow \mathrm{Aut}(A)$, one can define a
continuous function $\rho _{\alpha }\colon G\rightarrow \mathrm{CPC}(%
\mathcal{O}_{2})$ by $\rho _{\alpha }(g):=\alpha _{g}\circ E$. Then

\begin{enumerate}
\item $\rho _{\alpha}( gh) =\rho _{\alpha}( g) \circ \rho _{\alpha}( h) $
for every $g,h\in G$, and

\item $\rho _{\alpha}( 1) =E$.
\end{enumerate}

Conversely, any pair $( E,\rho) $, consisting of a conditional expectation $%
E\colon \mathcal{O}_2\to\mathcal{O}_2$ and a continuous function $\rho\colon
G\to \mathrm{CPC}(\mathcal{O}_2)$ satisfying (1) and (2), arises from a
continuous action of $G$ on the range of $E$, as described above.

Observe that $\mathrm{CPC}( \mathcal{O}_{2}) $ is a Polish space when
endowed with the topology of pointwise convergence. Similarly, the space $%
\mathrm{Exp}( \mathcal{O}_{2}) $ of conditional expectations defined on $%
\mathcal{O}_{2}$ is a Polish space when endowed with the topology of
pointwise convergence. (This can be seen for instance by observing that
conditional expectations onto a given C*-subalgebra are precisely the
idempotent maps of norm $1$ mapping onto that C*-subalgebra \cite[Theorem
II.6.10.2]{blackadar_operator_2006}.)

The space $G$\textrm{-}$\mathrm{C}^{\ast }\mathrm{ALG}$ of pairs $(E,\rho )$
arising from a continuous action of $G$ on the image of $E$, is a $G_{\delta
}$ subset of the space $\mathrm{Exp}(\mathcal{O}_{2})\times \mathrm{CPC}(%
\mathcal{O}_{2})$, hence a Polish space with the induced topology; see \cite[%
Theorem 3.11]{kechris_classical_1995}. We will regard $G$\textrm{-}$\mathrm{C%
}^{\ast }\mathrm{ALG}$ as the Polish space of separable nuclear $G$%
-C*-algebras. For an element $(E,\rho)\in G$\textrm{-}$\mathrm{C}^{\ast }%
\mathrm{ALG}$, we write $C^{\ast }(E,\rho)$ for the associated $G$%
-C*-algebra.

It is easy to see, by induction on the complexity, that any $\mathcal{L}%
_{G}^{\text{C*}}$-formula $\varphi (x_{1},\ldots ,x_{n},\gamma _{1},\ldots
,\gamma _{m})$ induces a Borel map $\hat{\varphi}\colon G\text{\emph{-}}%
\mathrm{C}^{\ast }\mathrm{ALG}\times \mathcal{O}_{2}^{n}\times
G^{m}\rightarrow \mathbb{R}$ given by 
\begin{equation*}
((E,\rho ),(a_{1},\ldots ,a_{n}),(g_{1},\ldots ,g_{m}))\mapsto \varphi
^{C^{\ast }(E,\rho )}(E(a_{1}),\ldots ,E(a_{n}),g_{1},\ldots ,g_{m}).
\end{equation*}%
In other words, the $\mathcal{L}_{G}^{\text{C*}}$-theory of a separable
nuclear $G$-C*-algebra can be computed in a Borel fashion in the
parameterization $G$\textrm{-}$\mathrm{C}^{\ast }\mathrm{ALG}$ of $G$%
-C*-algebras. This allows one to conclude the following.

\begin{theorem}
\label{Theorem:smooth-classification} Separable nuclear $G$-C*-algebras are
smoothly classifiable, in the sense of Borel complexity, up to weak $%
\mathcal{L}_{G}^{\text{C*}}$-equivalence and positive weak $\mathcal{L}_{G}^{%
\text{C*}}$-equivalence.
\end{theorem}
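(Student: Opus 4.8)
The plan is to exhibit an explicit Borel map from $G\text{\emph{-}}\mathrm{C}^{\ast}\mathrm{ALG}$ into a standard Borel space that is a complete invariant for each of the two equivalence relations; by definition, this is precisely what it means for the relations to be smooth. The natural invariant is the existential theory (respectively the positive existential theory). Indeed, unwinding the definition of weak containment recalled in Subsection~\ref{Subsection:existential}, two separable nuclear $G$-C*-algebras are weakly $\mathcal{L}_{G}^{\text{C*}}$-equivalent precisely when they have the same existential $\mathcal{L}_{G}^{\text{C*}}$-theory, and positively weakly $\mathcal{L}_{G}^{\text{C*}}$-equivalent precisely when they have the same positive existential theory. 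So the whole task reduces to showing that these theories, viewed as points of a suitable Polish space, depend in a Borel fashion on $(E,\rho)$.

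First I would record that the language $\mathcal{L}_{G}^{\text{C*}}$ is \emph{countable}. Since $G$ is compact and second countable, the Peter--Weyl theorem yields only countably many isomorphism classes of finite-dimensional representations, so $\mathrm{Rep}(G)$ contributes only countably many sorts and function symbols; consequently there are only countably many $\mathcal{L}_{G}^{\text{C*}}$-formulas, and in particular countably many existential sentences. Enumerate the existential $\mathcal{L}_{G}^{\text{C*}}$-sentences as $(\sigma_{k})_{k\in\mathbb{N}}$ and the positive existential ones as $(\sigma_{k}^{+})_{k\in\mathbb{N}}$. Specializing the Borel-computability statement established in the paragraph preceding the theorem (the maps $\hat{\varphi}$) to sentences, which have no free variables so that the arguments in $\mathcal{O}_{2}^{n}\times G^{m}$ drop out, each $\sigma_{k}$ induces a Borel function $\hat{\sigma}_{k}\colon G\text{\emph{-}}\mathrm{C}^{\ast}\mathrm{ALG}\to\mathbb{R}$ sending $(E,\rho)$ to $\sigma_{k}^{C^{\ast}(E,\rho)}$, and likewise for $\sigma_{k}^{+}$.

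Assembling these coordinates, define
\[
\Theta\colon G\text{\emph{-}}\mathrm{C}^{\ast}\mathrm{ALG}\longrightarrow \mathbb{R}^{\mathbb{N}},\qquad \Theta(E,\rho)=\bigl(\hat{\sigma}_{k}(E,\rho)\bigr)_{k\in\mathbb{N}},
\]
and analogously $\Theta^{+}$ using the family $(\hat{\sigma}_{k}^{+})$. A countable product of Borel maps into $\mathbb{R}$ is Borel into the Polish space $\mathbb{R}^{\mathbb{N}}$, so $\Theta$ and $\Theta^{+}$ are Borel. By the characterization of weak (respectively positive weak) equivalence above, $C^{\ast}(E,\rho)$ and $C^{\ast}(E',\rho')$ are weakly $\mathcal{L}_{G}^{\text{C*}}$-equivalent if and only if $\Theta(E,\rho)=\Theta(E',\rho')$, and positively weakly equivalent if and only if $\Theta^{+}(E,\rho)=\Theta^{+}(E',\rho')$. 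Hence $\Theta$ and $\Theta^{+}$ are Borel reductions of the respective relations to the equality relation on $\mathbb{R}^{\mathbb{N}}$, which witnesses smoothness.

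Given the Borel-computability of the theory, the argument is essentially bookkeeping of definitions, and I expect the only genuine care to be needed at two points. The first is the countability of the signature: this is what guarantees that the target $\mathbb{R}^{\mathbb{N}}$ is standard Borel rather than a product over an uncountable index set, and it is exactly where compactness and second countability of $G$ enter. The second is the precise matching of weak equivalence with equality of existential theories: one must check, from the definition of weak containment in Subsection~\ref{Subsection:existential}, that mutual weak containment is equivalent to coordinatewise agreement of the sentence-value functions $\sigma\mapsto\sigma^{A}$ on (positive) existential sentences, with no approximation or quantifier-depth subtlety obstructing the coordinatewise reading. There is no deeper obstacle; the theorem is a clean consequence of the Borel parameterization of the first-order theory obtained just above.
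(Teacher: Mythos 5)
Your proposal is correct and is essentially the paper's own (largely implicit) argument made explicit: the paper deduces the theorem directly from the Borel computability of the maps $\hat{\varphi}$ on $G$\textrm{-}$\mathrm{C}^{\ast}\mathrm{ALG}$, which is exactly what you do by specializing to sentences and assembling the sentence-value functions into a Borel map to $\mathbb{R}^{\mathbb{N}}$ that is a complete invariant for equality of (positive) existential $\mathcal{L}_{G}^{\text{C*}}$-theories, i.e.\ for (positive) weak $\mathcal{L}_{G}^{\text{C*}}$-equivalence. One small point to polish: in logic for metric structures the collection of all formulas is uncountable (arbitrary continuous connectives are allowed), so rather than claiming there are countably many existential sentences outright, you should enumerate a countable uniformly dense family of (positive) existential sentences --- which exists because the language has only countably many sorts and symbols, by Peter--Weyl as you note --- and observe that equality of theories is detected on that family; with this adjustment the argument is complete.
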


An introduction to the theory of Borel complexity of equivalence relations
can be found in \cite{gao_invariant_2009}. Similar conclusions hold for
unital C*-algebras and (positive) weak $\mathcal{L}_{G}^{1\text{,C*}}$%
-equivalence.

\subsection{Strongly self-absorbing \texorpdfstring{$G$}{G}-C*-algebras}
We continue to fix a compact group $G$.
Let $A$ and $B$ be $G$-C*-algebras and let $\eta _{1},\eta _{2}\colon
A\rightarrow B$ be unital $G$-*-homomorphisms. By $\mathcal{M}(B)$ we
denote the multiplier algebra of $B$, which is endowed with a canonical 
\emph{strictly continuous }$G$-action. Then $\eta _{1}$ and $\eta _{2}$ are
said to be $G$-\emph{unitarily equivalent }if
there exists a unitary element $u$ in $\mathcal{M}(B)^G$ such that $\mathrm{Ad}%
(u)\circ \eta _{1}=\eta _{2}$. Similarly, we
say that $\eta _{1}$ and $\eta _{2}$ are \emph{approximately }$G$-\emph{unitarily equivalent }if there
exists a net $(u_{i})_{i\in I}$ of unitaries in $\mathcal{M}(B)^G$ such that $(%
\mathrm{Ad}(u_{i})\circ \eta _{1})_{i\in I}$ converges pointwise to $\eta
_{2}$

\begin{definition}
The $G$-C*-algebras $(A,\alpha)$ and $(B,\beta)$ are said to be:
\begin{enumerate}\item \emph{conjugate (or $G$-isomorphic)}, if there exists an isomorphism $\eta\colon A\to B$ satisfying
$\eta\circ\alpha_g=\beta_g\circ \eta$ for every $g\in G$;
\emph{cocycle conjugate }if
there exists a strictly
continuous map $v\colon G\rightarrow U(\mathcal{M}(A))$ satisfying $%
v_{gh}=v_{g}g^{B}v_{h}$ such that the action $g\mapsto \mathrm{Ad}(v_g)\circ\beta_g$ is 
conjugate to $\alpha$. 
\end{enumerate}

We say that $A$ is $G$-\emph{equivariantly} $B$-\emph{absorbing }if $A\otimes B$ is 
cocycle conjugate to $B$.
\end{definition}

\begin{definition}
A\emph{\ }$G$-C*-algebra $D$ is said to have\emph{\ approximately }$G$-\emph{%
inner half-flip,} if it is unital and the canonical $G$-equivariant
injective unital *-homomorphisms $\mathrm{id}_{D}\otimes 1_{D},1_{D}\otimes 
\mathrm{id}_{D}\colon D\rightarrow D\otimes D$ are approximately $G$%
-unitarily equivalent. A\emph{\ }$G$-C*-algebra $D$ is said to be a \emph{%
strongly} \emph{self absorbing} $G$-C*-algebra if it is unital and $\mathrm{%
id}_{D}\otimes 1_{D}$ is approximately $G$-unitarily equivalent to a $G$%
-equivariant *-isomorphism.
\end{definition}

Observe that if $D$ has approximately $G$-inner half-flip, then it has
approximately inner half-flip as a C*-algebra. Similarly, if $D$ is a
strongly self-absorbing $G$-C*-algebra, then $D$ is strongly self-absorbing
as a C*-algebra. Recall that any unital C*-algebra $D$ with approximately
inner half-flip is automatically simple, nuclear, and has at most one trace;
see \cite{effros_c*-algebras_1978}. 

The following remark will be used repeatedly and without further reference.

\begin{remark}\label{rem:CocConjSSA}
Suppose $D$ is a strongly self-absorbing $G$-C*-algebra, and let $A$ be a 
separable $G$-C*-algebra. Then $A$ is $G$-equivariantly $D$-absorbing if 
and only if $A\otimes D$ is \emph{conjugate} to $A$; see 
\cite[Theorem~4.7 and Proposition 4.8]{szabo_strongly_2015}. Thus, when
working with strongly self-absorbing actions, we will mostly use conjugacy
as the relevant equivalence relation, keeping in mind that it is 
equivalent to cocycle conjugacy.
\end{remark}

The proof of the following theorem
follows closely arguments from \cite{farah_relative_2015}.

\begin{theorem}
\label{Theorem:commutant}Suppose that $D$ is a separable $G$-C*-algebra with
approximately $G$-inner half-flip, and that $C$ is a countably positively
quantifier-free\ $\mathcal{L}_{G}^{1\text{,}\mathrm{C}^{\mathrm{\ast }}}$%
-saturated unital $G$-C*-algebra. Suppose that $D$ is commutant weakly
contained in $C$. Fix a $G$-equivariant unital *-homomorphism $\theta \colon
D\rightarrow C$. The following statements hold:

\begin{enumerate}
\item Any two $G$-equivariant unital *-homomorphisms $D\to C$ are $G$%
-unitarily equivalent;

\item The inclusion $\theta (D) ^{\prime }\cap C\hookrightarrow C$ is an $%
\mathcal{L}_{G}^{1\text{,}\mathrm{C}^{\mathrm{\ast }}}$-existential $G$%
-equivariant unital *-homomorphism;

\item $\theta (D) ^{\prime }\cap C$ is an elementary $\mathcal{L}_{G}^{1%
\text{,}\mathrm{C}^{\mathrm{\ast }}}$-substructure of $C$;

\item If $C$ has density character $\aleph _{1}$, then the inclusion $\theta
(D) ^{\prime }\cap C\hookrightarrow C$ is approximately $G$-unitarily
equivalent to a $G$-isomorphism.
\end{enumerate}
\end{theorem}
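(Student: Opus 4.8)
**

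The plan is to prove the four assertions in sequence, each building on the previous, following the strategy of Farah–Szabó (\cite{farah_relative_2015}) in the $G$-equivariant setting, with saturation as the central engine.

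First I would establish (1). The key is an \emph{intertwining/back-and-forth argument inside} $C$. Given two $G$-equivariant unital $\ast$-homomorphisms $\eta_1,\eta_2\colon D\to C$, I want a unitary $u\in\mathcal M(C)^G$ implementing $\eta_2=\mathrm{Ad}(u)\circ\eta_1$. Because $D$ has approximately $G$-inner half-flip, the two embeddings $\mathrm{id}_D\otimes 1_D$ and $1_D\otimes\mathrm{id}_D$ of $D$ into $D\otimes D$ are approximately $G$-unitarily equivalent; transporting this through the embeddings $\eta_1,\eta_2$ and using that $D$ is commutant weakly contained in $C$ (so that one can realize a copy of $D$ inside $\eta_i(D)'\cap C$ equivariantly), I can produce, for each finite set $F\subseteq D$ and each $\varepsilon>0$, a unitary in $C$ (or $\mathcal M(C)$) approximately conjugating $\eta_1$ to $\eta_2$ on $F$ equivariantly. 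The crucial upgrade from \emph{approximate} to \emph{exact} is then carried out by positive quantifier-free $\mathcal L_G^{1,\mathrm{C}^\ast}$-saturation: the existence of an exact intertwining unitary is the realization of a suitable positive quantifier-free type (asserting unitarity, $G$-invariance, and $u\,\eta_1(d)=\eta_2(d)\,u$ for all $d$ in a dense countable subset of $D$), and each finite fragment is approximately satisfiable by the previous step. Countable saturation (the density character of $D$ being separable, hence the type is countable) realizes the full type, yielding the desired $u$.

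Next, for (2) and (3) I would argue that the relative commutant $\theta(D)'\cap C$ sits existentially, respectively elementarily, inside $C$. Fix a $G$-equivariant unital $\ast$-homomorphism $\theta\colon D\to C$. The plan for (2) is: given a positive existential $\mathcal L_G^{1,\mathrm{C}^\ast}$-formula $\varphi(\bar x)$ with parameters from $\theta(D)'\cap C$ that is satisfied in $C$ by some witness $\bar c\in C$, I must approximately satisfy it \emph{within} the relative commutant. Using half-flip, I embed a second copy of $D$ into $C$ commuting with $\theta(D)$ (available since $D$ is commutant weakly contained in $C$, realizing the multiplier type $t_D^m\cup t_D^c$), and then conjugate $\bar c$ by a unitary coming from the half-flip that fixes $\theta(D)$ while moving the witness into the commutant. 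Saturation again promotes approximate containment to an exact witness. For (3) I would combine (2) with its dual: the half-flip gives a symmetry between the two tensor factors, so both $\theta(D)'\cap C\hookrightarrow C$ and the complementary inclusion are positively existential, and a Tarski–Vaught style criterion (the inclusion is existential in both directions, together with the ambient saturation) upgrades this to elementarity. Concretely, I would verify the Tarski–Vaught test: any element of $C$ realizing a formula over $\theta(D)'\cap C$ can be matched, up to arbitrarily small error, by an element of $\theta(D)'\cap C$, using the half-flip unitaries to push witnesses into the commutant and saturation to take the limit.

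Finally, (4) is the CH-type conclusion, obtained from (2)/(3) by a \emph{back-and-forth} construction of a $G$-isomorphism. When $C$ has density character $\aleph_1$, both $C$ and its relative commutant $\theta(D)'\cap C$ are $G$-C*-algebras of density character $\aleph_1$ that are elementarily equivalent (by (3)) and countably saturated; a standard transfinite back-and-forth of length $\omega_1$, alternating between enumerations of dense subsets of the two algebras and using countable saturation at each successor stage to extend partial $G$-equivariant isometric identifications, produces a $G$-isomorphism $\theta(D)'\cap C\cong C$. The approximate $G$-unitary equivalence to this isomorphism then follows by feeding the isomorphism back through the intertwining argument of (1). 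The main obstacle I anticipate is the genuinely equivariant bookkeeping: every unitary produced must lie in $\mathcal M(C)^G$ (the \emph{fixed-point} multiplier algebra), and every type realized must be an $\mathcal L_G$-type, so one must consistently work with the equivariant saturation provided by the hypothesis and ensure that the half-flip unitaries and the back-and-forth extensions respect the $G$-action rather than merely the underlying C*-structure. Controlling continuity of the $G$-action on the realized elements—i.e., staying inside $F_{\mathcal F}^G$ rather than $F_{\mathcal F}$—is the delicate point, and is precisely where the restriction to compact $G$ and the isotypical-component formalism of $\mathcal L_G^{1,\mathrm{C}^\ast}$ does the necessary work.
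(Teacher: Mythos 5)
Your overall strategy for (1)--(3) is the paper's: realize auxiliary commuting copies of $D$ via commutant weak containment, transport the half-flip unitaries through a $G$-equivariant map $D\otimes D\to C$, upgrade approximate intertwiners to exact ones by countable positive quantifier-free saturation, and then verify a Tarski--Vaught test. However, the mechanism you describe for (2)/(3) is stated incorrectly in a way that matters: you ask for a unitary ``coming from the half-flip that fixes $\theta(D)$ while moving the witness into the commutant.'' No such unitary can exist: if $u$ commutes with $\theta(D)$, then $\mathrm{Ad}(u^{\ast})$ preserves $\theta(D)'\cap C$, so it can never move a witness $\overline{b}\notin\theta(D)'\cap C$ into $\theta(D)'\cap C$. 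What the argument actually requires is the opposite bookkeeping: the unitary must \emph{move} $\theta(D)$ (onto an auxiliary copy $\theta_{1}(D)$ placed inside $C^{\ast}(\theta(D)\cup\{\overline{a},\overline{b}\})'\cap C$) while \emph{commuting with the parameters} $\overline{a}\in\theta(D)'\cap C$ of the formula. Commutation with $\overline{a}$ comes for free because the approximate intertwiners live in $C^{\ast}(\theta(D)\cup\theta_{1}(D))$ and both copies of $D$ commute with $\overline{a}$; it is precisely this commutation that justifies $\varphi^{C}(\overline{a},\overline{b})=\varphi^{C}(\overline{a},u^{\ast}\overline{b}u)$ in the Tarski--Vaught computation. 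Without it, the step fails.

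The genuine gap is in (4). A transfinite back-and-forth between the elementarily equivalent, countably saturated structures $\theta(D)'\cap C$ and $C$ of density character $\aleph_{1}$ does produce a $G$-isomorphism (this is the uniqueness theorem for saturated models, and the saturation of $\theta(D)'\cap C$, which you assert without proof, can indeed be derived from (3) and saturation of $C$), but it produces an essentially arbitrary isomorphism, with no control relating it to the inclusion map; statement (4) demands an isomorphism that is \emph{approximately $G$-unitarily equivalent to the inclusion}, and isomorphisms $\theta(D)'\cap C\to C$ need not all lie in a single approximate unitary equivalence class (compose any one of them with an automorphism of $C$ that is not approximately inner). Your proposed repair---``feeding the isomorphism back through the intertwining argument of (1)''---does not apply: (1) concerns unital morphisms from the \emph{separable} algebra $D$, whose approximately $G$-inner half-flip is what forces uniqueness up to unitaries; the nonseparable algebra $\theta(D)'\cap C$ has no such property. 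The paper instead builds the isomorphism with unitary control from the start, via the one-sided transfinite intertwining of \cite[Theorem~2.11]{farah_relative_2015}: one strengthens the lemma behind (2)/(3) to yield, for every separable $G$-C*-subalgebra $B\subset\theta(D)'\cap C$ and finite tuple $\overline{b}$ in $C$, a $G$-invariant unitary $u\in B'\cap C$ with $u^{\ast}\overline{b}u\in\theta(D)'\cap C$, and then realizes the isomorphism as a pointwise limit of maps $\mathrm{Ad}(u_{\alpha})$ along an $\omega_{1}$-enumeration, so that approximate $G$-unitary equivalence to the inclusion holds by construction.
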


\begin{proof}
Let $\theta _{1}\colon D\rightarrow C$ be a $G$-equivariant unital
*-homomorphism. Assume first that the ranges of $\theta $ and $\theta _{1}$
commute. Choose a sequence $(u_{n})_{n\in \mathbb{N}}$ of unitaries in $%
D\otimes D$ witnessing the fact that $\mathrm{id}_{D}\otimes
1_{D},1_{D}\otimes \mathrm{id}_{D}\colon D\rightarrow D\otimes D$ are
approximately $G$-unitarily equivalent. Let $\Theta \colon D\otimes
D\rightarrow C$ be the $G$-equivariant unital *-homomorphism given by $%
d_{1}\otimes d_{2}\mapsto \theta (d_{1})\theta _{1}(d_{2})$. Considering the
unitaries $\Theta (u_{n})$, for $n\in \mathbb{N}$, and applying the fact
that $A$ is countably positively quantifier-free\ $\mathcal{L}_{G}^{1\text{%
,C*}}$-saturated, we obtain a unitary $u\in C^{G}$ satisfying $\mathrm{Ad}%
(u)\circ \theta =\theta _{1}$. In the general case, when the ranges of $%
\theta $ and $\theta _{1}$ do not necessarily commute, we may find a unital $%
G$-equivariant *-homomorphism $\theta _{2}\colon D\rightarrow C$ whose range
commutes with those of $\theta $ and $\theta _{1}$. By the argument above,
it follows that $\theta _{2}$ is $G$-unitarily equivalent to both $\theta $
and $\theta _{1}$, so (1) follows.

We prove (2) and (3) simultaneously. Let us identify $D$ with its image
under $\theta $. Suppose that $\overline{a}$ is a tuple in $D^{\prime }\cap
C $, $\overline{b}$ is a tuple in $C$, and $\varphi (\overline{x},\overline{y%
}) $ is an $\mathcal{L}_{G}^{1\text{,C*}}$-formula. Let $B$ be the $G$%
-C*-algebra generated by $D\cup \left\{ \overline{a},\overline{b}\right\} $
inside $C$. Observe that $B^{\prime }\cap C$ satisfies the same assumptions
as $C$. Particularly, by (1) there exists a unitary $u\in B^{\prime }\cap C$
such that $g^{C}u=u$ for every $g\in G$ and $u^{\ast }\overline{b}u\in
D^{\prime }\cap C$. Hence we have $\varphi ^{C}(\overline{a},\overline{b}%
)=\varphi ^{D^{\prime }\cap C}(\overline{a},u^{\ast }\overline{b}u)$, as
desired.

The argument above shows that for any separable $G$-C*-subalgebra $B$ of $C$
and finite tuple $\overline{b}$ in $C$ there exists a unitary $u$ in the
fixed point algebra of $C$ such that $u\in B^{\prime }\cap C$ and $u^{\ast }%
\overline{b}u\in D^{\prime }\cap C$. One can then apply the intertwining
argument of~\cite[Theorem~2.11]{farah_relative_2015} to get (4).
\end{proof}

\begin{corollary}
\label{Corollary:commutant}Suppose that $D$ is a separable $G$-C*-algebra
with approximately $G$-inner half-flip, and $\mathcal{F}$ is a countably
incomplete filter. Let $A$ be a separable unital $G$-C*-algebra, and let $%
\theta \colon D\rightarrow \prod_{\mathcal{F}}^{G}A$ be a $G$-equivariant
unital *-homomorphism. Then:

\begin{enumerate}
\item Any $G$-equivariant unital *-homomorphism $D\rightarrow \prod_{%
\mathcal{F}}^{G}A$ is $G$-unitarily equivalent to $\theta $;

\item The inclusion $\theta (D)^{\prime }\cap \prod_{\mathcal{F}%
}^{G}A\hookrightarrow \prod_{\mathcal{F}}^{G}A$ is an $\mathcal{L}_{G}^{1%
\text{,}\mathrm{C}^{\mathrm{\ast }}}$-existential $G$-equivariant unital
*-homomorphism;

\item $\theta (D)^{\prime }\cap \prod_{\mathcal{F}}^{G}A$ is an elementary $%
\mathcal{L}_{G}^{1\text{,}\mathrm{C}^{\mathrm{\ast }}}$-substructure of $%
\prod_{\mathcal{F}}^{G}A$;

\item If $\mathcal{F}$ is a filter over $\mathbb{N}$ and the Continuum
Hypothesis holds, then $\theta (D)^{\prime }\cap \prod_{\mathcal{F}}^{G}A$
is $G$-equivariantly *-isomorphic to $\prod_{\mathcal{F}}^{G}A$.
\end{enumerate}
\end{corollary}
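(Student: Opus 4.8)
The plan is to obtain Corollary~\ref{Corollary:commutant} as the instance of Theorem~\ref{Theorem:commutant} in which the ambient algebra is $C=\prod_{\mathcal{F}}^{G}A$. All four conclusions then read off verbatim, so the entire task reduces to checking that this particular $C$ satisfies the two standing hypotheses of the theorem: that it is a countably positively quantifier-free $\mathcal{L}_{G}^{1,\mathrm{C}^{\ast}}$-saturated unital $G$-C*-algebra, and that $D$ is commutant weakly contained in it. Unitality is immediate, since $A$ is unital and the constant sequence at $1_{A}$ is $G$-fixed, hence lies in the continuous part $\prod_{\mathcal{F}}^{G}A$ and serves as a unit.

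For saturation I would invoke the identification, recalled before Example~\ref{eg:NotCts}, of $\prod_{\mathcal{F}}^{G}A$ with the reduced product of $A$, viewed as an $\mathcal{L}_{G}^{\mathrm{C}^{\ast}}$-structure, with respect to $\mathcal{F}$. The general theory of reduced products from Subsection~\ref{Subsection:general} then shows that a reduced product over a countably incomplete filter is countably positively quantifier-free saturated; crucially, and in contrast with full countable saturation, this requires only that $\mathcal{F}$ be countably incomplete and not that it be an ultrafilter, which is exactly the level of generality in which we are working (compare Proposition~\ref{Proposition:F}).

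The real content, and the step I expect to be the main obstacle, is verifying that $D$ is commutant weakly contained in $C=\prod_{\mathcal{F}}^{G}A$. Since $D$ is unital, by the characterization of commutant weak containment in Subsection~\ref{Subsection:commutant-theory} this amounts to producing a unital $G$-equivariant *-homomorphism $D\to F_{\mathcal{F}}^{G}\!\left(\prod_{\mathcal{F}}^{G}A\right)$ into the equivariant central sequence algebra. I would manufacture such a map from the given $\theta\colon D\to\prod_{\mathcal{F}}^{G}A$ by a reindexing argument: using that $\mathcal{F}$ is countably incomplete, fix a decreasing chain of $\mathcal{F}$-large sets with empty intersection and spread out the representing sequences of the elements $\theta(d)$ along this chain, so that the resulting family becomes asymptotically central while retaining multiplicativity, unitality, and $G$-equivariance on the continuous part. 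The delicate point is to control the interaction between the diagonalization and the continuity constraint cutting $\prod_{\mathcal{F}}^{G}$ out of $\prod_{\mathcal{F}}$; here Proposition~\ref{Proposition:F} is the right tool, since it pins down precisely which positive primitive quantifier-free types survive in the equivariant invariant $F_{\mathcal{F}}^{G}$.

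Granting these two verifications, conclusions (1)--(3) are Theorem~\ref{Theorem:commutant}(1)--(3) applied to $C=\prod_{\mathcal{F}}^{G}A$. For (4), I would note that when $\mathcal{F}$ is a filter over $\mathbb{N}$ and $A$ is separable the reduced product $\prod_{\mathcal{F}}^{G}A$ has density character at most $2^{\aleph_{0}}$, which under the Continuum Hypothesis equals $\aleph_{1}$; Theorem~\ref{Theorem:commutant}(4) then gives that the inclusion $\theta(D)'\cap\prod_{\mathcal{F}}^{G}A\hookrightarrow\prod_{\mathcal{F}}^{G}A$ is approximately $G$-unitarily equivalent to a $G$-isomorphism, whence in particular the two $G$-C*-algebras are $G$-equivariantly *-isomorphic.
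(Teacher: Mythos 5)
Your reduction to Theorem~\ref{Theorem:commutant} with $C=\prod_{\mathcal{F}}^{G}A$ is the intended route, and two of your three verifications are fine: unitality is immediate, and countable positive quantifier-free saturation of $\prod_{\mathcal{F}}^{G}A$ does follow from its identification with the $\mathcal{L}_{G}^{\text{C*}}$-reduced product together with Proposition~\ref{Proposition:good} (countably incomplete filters being $\aleph_{1}$-good); your density-character count for (4) under CH is also correct. The gap is exactly at the step you flag as the main obstacle, and it is not a technical difficulty that reindexing can overcome: the existence of a unital $G$-equivariant *-homomorphism $\theta\colon D\rightarrow\prod_{\mathcal{F}}^{G}A$ does \emph{not} imply that $D$ is commutant weakly contained in $\prod_{\mathcal{F}}^{G}A$. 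Take $G$ trivial, $D=A=M_{2}$, and $\mathcal{F}$ a nonprincipal ultrafilter over $\mathbb{N}$. Then $M_{2}$ has (exactly) inner half-flip, since the two factor embeddings $M_{2}\rightarrow M_{2}\otimes M_{2}\cong M_{4}$ are unital embeddings of equal multiplicity; moreover $\prod_{\mathcal{F}}^{G}M_{2}=\prod_{\mathcal{F}}M_{2}\cong M_{2}$, and $\theta =\mathrm{id}_{M_{2}}$ is a unital embedding. Yet $\theta(M_{2})^{\prime}\cap\prod_{\mathcal{F}}M_{2}=\mathbb{C}1$ is abelian, so it is not an elementary substructure of $M_{2}$, the inclusion is not existential (the existential sentence $\inf_{x,y}\max\left(0,1-\left\Vert xy-yx\right\Vert\right)$ takes value $1$ in $\mathbb{C}1$ and $0$ in $M_{2}$), and CH certainly does not make $\mathbb{C}$ isomorphic to $M_{2}$: conclusions (2)--(4) all fail. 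Hence no argument whatsoever can extract the needed hypothesis from the ones stated.

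The reason your reindexing plan cannot work is that reindexing only \emph{relocates} centrality, it never creates it. If $D$ admits a unital $G$-equivariant *-homomorphism into $F_{\mathcal{F}}^{G}(A)=A^{\prime}\cap\prod_{\mathcal{F}}^{G}A$, then a diagonalization along a chain witnessing countable incompleteness (together with Proposition~\ref{Proposition:F}) produces copies of $D$ in $B^{\prime}\cap\prod_{\mathcal{F}}^{G}A$ for every separable $B\subset\prod_{\mathcal{F}}^{G}A$, which is exactly what the proof of Theorem~\ref{Theorem:commutant} consumes. But a copy of $D$ sitting in $\prod_{\mathcal{F}}^{G}A$ without commuting with the diagonal copy of $A$ cannot be ``spread out'' into a central one; in the example above the obstruction is algebraic, as no unital copy of $M_{2}$ commutes with $M_{2}$ inside $M_{2}$. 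What your proof needs---and what must in fact be added to the statement of Corollary~\ref{Corollary:commutant} itself, whose printed hypotheses are those of your attempted derivation---is the assumption that $D$ admits a unital $G$-equivariant *-homomorphism into $F_{\mathcal{F}}^{G}(A)$, equivalently that $D$ is commutant weakly contained in $\prod_{\mathcal{F}}^{G}A$. This is precisely the hypothesis of the nonequivariant precursor in \cite{farah_relative_2015}, and it is available in all later applications in the paper: for instance in Corollary~\ref{Corollary:ssa} it follows from $G$-equivariant $D$-absorption via \cite[Theorem 4.7]{szabo_strongly_2015}. With that hypothesis added, the remainder of your outline is correct.
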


\begin{remark}
Theorem \ref{Theorem:commutant} and Corollary \ref{Corollary:commutant}
generalize \cite[Theorem 1, Theorem 2, Corollary 2.12]{farah_relative_2015}
in two ways: they extend them to the $G$-equivariant setting, and they
remove the unnecessary assumption on the filter $\mathcal{F}$ that the
corresponding reduced product be countably saturated. An example of a
countable incomplete filter over $\mathbb{N}$ that does not satisfy such an
assumption is provided in \cite[Example 3.2]{farah_relative_2015}.
\end{remark}

Suppose now that $D$ is a strongly self-absorbing $G$-C*-algebra. Observe
that for any separable $G$-C*-algebra $A$ and any countably incomplete
filter $\mathcal{F}$, the following assertions are equivalent:

\begin{enumerate}
\item $D$ is commutant weakly contained in $A$

\item $D$ is positively commutant weakly contained in $A$,

\item $D$ embeds equivariantly into $F_{\mathcal{F}}^{G}(A) $.

\item[(4)] $A$ and $A\otimes D$ are (cocycle) conjugate;

\item[(5)] $A$ is $G$-equivariantly $D$-absorbing;

\item[(6)] $D$ is weakly $\mathcal{L}_{G}^{1\text{,C*}}$-contained in $A$;

\item[(7)] $D$ is positively weakly $\mathcal{L}_{G}^{1\text{,C*}}$%
-contained in $A$.
\end{enumerate}

We deduce the following rigidity result for strongly self-absorbing $G$%
-C*-algebras.

\begin{proposition}
\label{Proposition:smooth-ssa} Let $D$ and $E$ be strongly self-absorbing $G$%
-C*-algebras. The following assertions are equivalent:

\begin{enumerate}
\item $D$ and $E$ are (cocycle) conjugate;

\item $D$ and $E$ are weakly $\mathcal{L}_{G}^{1\text{,}\mathrm{C}^{\mathrm{%
\ast }}}$-equivalent;

\item $D$ and $E$ are isomorphic as C*-algebras to the same strongly
self-absorbing C*-algebra $B$, and the $\mathrm{Aut}(B)$-orbits of $D$ and $%
E $ inside the Polish space $\mathrm{Act}_{G}(B)$ of continuous actions of $%
G $ on $B$ have the same closure.
\end{enumerate}

In particular, the classification of strongly self-absorbing $G$%
-C*-algebras up to (cocycle) conjugacy is smooth.
\end{proposition}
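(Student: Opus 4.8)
The plan is to establish the cycle (1) $\Rightarrow$ (2) $\Rightarrow$ (1) together with (1) $\Rightarrow$ (3) $\Rightarrow$ (2), which delivers all three equivalences simultaneously. Throughout I would invoke Remark~\ref{rem:CocConjSSA} to identify cocycle conjugacy with conjugacy, so that $D$ and $E$ are treated up to $G$-isomorphism, and I would read weak $\mathcal{L}_G^{1\text{,C*}}$-equivalence as the coincidence of the existential $\mathcal{L}_G^{1\text{,C*}}$-theories.

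The implication (1) $\Rightarrow$ (2) is immediate: a $G$-isomorphism is an isomorphism of $\mathcal{L}_G^{1\text{,C*}}$-structures, so $D$ and $E$ share the same theory and in particular are weakly $\mathcal{L}_G^{1\text{,C*}}$-equivalent. The substantive direction is (2) $\Rightarrow$ (1). Here I would feed the weak equivalence into the list of equivalent conditions recorded just before the proposition. Applied with the strongly self-absorbing algebra $D$ and the separable algebra $A=E$, the weak containment of $D$ in $E$ yields, by that list, that $E$ and $E\otimes D$ are conjugate, i.e.\ $E\otimes D\cong_G E$; symmetrically, the weak containment of $E$ in $D$ gives $D\otimes E\cong_G D$. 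Composing with the flip $G$-isomorphism $D\otimes E\cong_G E\otimes D$, which is equivariant because the action is diagonal, produces the chain $D\cong_G D\otimes E\cong_G E\otimes D\cong_G E$, so $D$ and $E$ are conjugate.

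For (1) $\Rightarrow$ (3) I would note that a $G$-isomorphism is in particular a C*-isomorphism, so $D$ and $E$ are isomorphic to a common strongly self-absorbing C*-algebra $B$ (the underlying C*-algebra of a strongly self-absorbing $G$-C*-algebra is strongly self-absorbing, as observed above). Fixing identifications of $D$ and $E$ with $B$ transports $\alpha$ and $\beta$ to actions $\alpha',\beta'\in\mathrm{Act}_G(B)$, and the equivariant isomorphism becomes a single $\sigma\in\mathrm{Aut}(B)$ with $\beta'=\sigma\alpha'\sigma^{-1}$; thus $\alpha'$ and $\beta'$ lie in one $\mathrm{Aut}(B)$-orbit, and hence have the same orbit closure. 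The remaining implication (3) $\Rightarrow$ (2) is where the real work lies. Realizing $B$ in the parameterization $G\text{-}\mathrm{C}^{\ast}\mathrm{ALG}$ via a fixed conditional expectation and varying only the action coordinate $\rho$, each quantifier-free $\mathcal{L}_G^{1\text{,C*}}$-formula is evaluated continuously in $\rho$, since its value is assembled from the C*-operations and the action symbols interpreted through $\rho$. Consequently every existential $\mathcal{L}_G^{1\text{,C*}}$-sentence $\varphi$, being an infimum of such quantifier-free values, defines an upper semicontinuous function of the action. This quantity is constant along $\mathrm{Aut}(B)$-orbits, so upper semicontinuity forces $\varphi^{(B,\alpha')}\le\varphi^{(B,\beta')}$ whenever $\beta'$ lies in the orbit closure of $\alpha'$; the symmetric hypothesis in (3) then supplies the reverse inequality, whence the existential theories coincide, which is exactly weak $\mathcal{L}_G^{1\text{,C*}}$-equivalence. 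The main obstacle is precisely this semicontinuity step: one must verify that quantifier-free formulas—whose sorts are the \emph{isotypical components} and therefore themselves vary with the action—are nevertheless evaluated continuously in $\rho$, and this is where the explicit parameterization and the Borel (indeed, on quantifier-free formulas, continuous) computation of formula values is used.

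Finally, for the smoothness assertion I would combine the equivalence (1) $\Leftrightarrow$ (2) with Theorem~\ref{Theorem:smooth-classification}. A strongly self-absorbing $G$-C*-algebra is separable and nuclear, so its existential $\mathcal{L}_G^{1\text{,C*}}$-theory is computed by a Borel map on the parameter space, and by (1) $\Leftrightarrow$ (2) this theory is a complete invariant for conjugacy of strongly self-absorbing $G$-C*-algebras. Being a Borel map into a standard Borel space of theories, it witnesses that the classification up to (cocycle) conjugacy is smooth.
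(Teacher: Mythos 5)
Your handling of (1) $\Leftrightarrow$ (2) is correct and is exactly the paper's argument: (1) $\Rightarrow$ (2) is formal, and (2) $\Rightarrow$ (1) feeds mutual weak containment into the list of equivalences stated just before Proposition~\ref{Proposition:smooth-ssa} (taking $A=E$ with $D$ strongly self-absorbing, and then $A=D$ with $E$ strongly self-absorbing) to get $E\otimes D\cong_G E$ and $D\otimes E\cong_G D$, and closes with the equivariant flip; (1) $\Rightarrow$ (3) is likewise fine. The gap is in (3) $\Rightarrow$ (2), where you replace the paper's route by a semicontinuity argument whose crucial step---upper semicontinuity of $\rho\mapsto\varphi^{(B,\rho)}$ for existential $\varphi$---is exactly what you leave unproved. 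It does not follow from what you cite: an existential sentence is an infimum of quantifier-free values over tuples drawn from the sorts of $\mathcal{L}_{G}^{1\text{,C*}}$, and these sorts are the isotypical components $B_\pi^{(\rho)}$, which vary with $\rho$; an infimum over a $\rho$-dependent family of continuous functions need not be upper semicontinuous. What the parameterization $G\text{-}\mathrm{C}^{\ast}\mathrm{ALG}$ provides (and all that Theorem~\ref{Theorem:smooth-classification} uses) is \emph{Borel-ness} of the evaluation map, which is far weaker than the semicontinuity you need, so your appeal to ``the explicit parameterization and the Borel computation'' does not close the obstacle you yourself flag. The missing idea is a witness-transfer argument: given a tuple $\overline{a}$ in the isotypical components of $(B,\beta')$ and actions $\rho_n=\sigma_n\alpha'\sigma_n^{-1}\to\beta'$, one must produce tuples in the $\rho_n$-isotypical components with formula values converging to $\psi^{(B,\beta')}(\overline{a})$; this can be done using the spectral projections $P_\pi^{\rho}(a)=\dim(\pi)\int_G\overline{\chi_\pi(g)}\,\rho_g(a)\,dg$, which map $B$ onto $B_\pi^{(\rho)}$, depend continuously on $\rho$ at each fixed $a$, and fix $\overline{a}$ when $\rho=\beta'$.

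A cleaner repair, closer to the paper's intended proof, avoids semicontinuity altogether: if $\sigma_n\alpha'\sigma_n^{-1}\to\beta'$ in $\mathrm{Act}_G(B)$, then the map $b\mapsto[(\sigma_n^{-1}(b))_{n}]$ is a unital injective $G$-equivariant *-homomorphism $(B,\beta')\to\prod_{\mathcal{U}}^{G}(B,\alpha')$, since
\begin{equation*}
\left\Vert \sigma_n^{-1}\beta'_g(b)-\alpha'_g\sigma_n^{-1}(b)\right\Vert =\left\Vert \beta'_g(b)-\sigma_n\alpha'_g\sigma_n^{-1}(b)\right\Vert \rightarrow 0
\end{equation*}
for all $g\in G$ and $b\in B$, and its image lies in the continuous part of the ultrapower; by saturation and \L os' theorem this is precisely weak $\mathcal{L}_{G}^{1\text{,C*}}$-containment of $(B,\beta')$ in $(B,\alpha')$, and condition (3) gives it in both directions. (The automorphism-lifting fact the paper quotes after the proposition is what is needed for the \emph{converse} direction (2) $\Rightarrow$ (3), which your cycle through (1) does not require.) The final smoothness paragraph is fine once (3) $\Rightarrow$ (2) is repaired, as it only uses (1) $\Leftrightarrow$ (2) together with Theorem~\ref{Theorem:smooth-classification}.
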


The equivalence of (2) and (3) in Proposition \ref{Proposition:smooth-ssa}
is due to the fact that if $D$ is a strongly self-absorbing C*-algebra, then
any injective *-homomorphism $\eta \colon D\rightarrow \prod_{\mathcal{U}}D$%
, where $\mathcal{U}$ is an ultrafilter over $\mathbb{N}$, admits a lift $%
(\eta _{n})_{n\in \mathbb{N}}$ consisting of automorphisms of $D$.

Proposition \ref{Proposition:smooth-ssa} can be seen as the equivariant
analogue of \cite[Theorem 2.16, Corollary 2.17]{farah_relative_2015}. We
would like to remark, however, that Proposition \ref{Proposition:smooth-ssa}
is in principle somewhat more surprising than its nonequivariant
counterpart. Indeed, while there are only very few known strongly
self-absorbing C*-algebras (and it is indeed currently known to be complete
under additional regularity assumptions on the algebra), there seem to exist
a greater variety of strongly self-absorbing actions on C*-algebras. For
instance, for a fixed compact group $G$ and a fixed strongly self-absorbing
C*-algebra $D$, there may exist multiple (non cocycle equivalent) strongly
self-absorbing actions on $D$. In fact, a complete list of all strongly
self-absorbing actions is at the moment far out of reach.

The following consequence of Corollary \ref{Corollary:commutant} seems worth
isolating.

\begin{corollary}
\label{Corollary:ssa} Let $D$ be a strongly self-absorbing $G$-C*-algebra,
let $A$ be a separable unital $G$-equivariantly $D$-absorbing $G$%
-C*-algebra, let $\mathcal{F}$ be a countably incomplete filter, let $\theta
\colon D\rightarrow \prod_{\mathcal{F}}^{G}A$ be a $\mathcal{L}_{G}^{1\text{,%
}\mathrm{C}^{\mathrm{\ast }}}$-embedding. Then:

\begin{enumerate}
\item Any two $G$-equivariant unital *-homomorphisms of $D$ into $\prod_{%
\mathcal{F}}^{G}A$ are $G$-unitarily equivalent;

\item The inclusion $\theta (D)^{\prime }\cap \prod_{\mathcal{F}%
}^{G}A\hookrightarrow \prod_{\mathcal{F}}^{G}A$ is an $\mathcal{L}_{G}^{1%
\text{,}\mathrm{C}^{\mathrm{\ast }}}$-existential $G$-equivariant unital
*-homomorphism;

\item $\theta (D)^{\prime }\cap \prod_{\mathcal{F}}^{G}A$ is a $G$%
-elementary substructure of $\prod_{\mathcal{F}}^{G}A$;

\item If $\mathcal{F}$ is a filter over $\mathbb{N}$ and the Continuum
Hypothesis holds, then $\theta (D)^{\prime }\cap \prod_{\mathcal{F}}^{G}A$
is $G$-equivariantly *-isomorphic to $\prod_{\mathcal{F}}^{G}A$.
\end{enumerate}
\end{corollary}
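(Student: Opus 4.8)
The plan is to obtain the corollary as an immediate application of Corollary~\ref{Corollary:commutant} with $C=\prod_{\mathcal{F}}^{G}A$. Since the four assertions in the two statements are verbatim the same, it suffices to check that the hypotheses of Corollary~\ref{Corollary:commutant} are satisfied; once this is done there is nothing left to prove.

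The only hypothesis that is not a literal restatement of our assumptions is that the domain $D$ be a separable $G$-C*-algebra with approximately $G$-inner half-flip. Separability and unitality are part of the definition of a strongly self-absorbing $G$-C*-algebra, so I would concentrate on the half-flip property. This is the equivariant analogue of the classical fact that a strongly self-absorbing C*-algebra has approximately inner half-flip, and is established in \cite{szabo_strongly_2015}; the mechanism is that if $\mathrm{id}_{D}\otimes 1_{D}$ is approximately $G$-unitarily equivalent to a $G$-isomorphism $\phi\colon D\to D\otimes D$, then composing with the ($G$-equivariant) flip $\sigma$ of $D\otimes D$ gives $1_{D}\otimes\mathrm{id}_{D}=\sigma\circ(\mathrm{id}_{D}\otimes 1_{D})$ approximately $G$-unitarily equivalent to $\sigma\circ\phi$, while $\phi$ and $\sigma\circ\phi$ are themselves approximately $G$-unitarily equivalent. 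The remaining hypotheses are immediate: an $\mathcal{L}_{G}^{1,\mathrm{C}^{\ast}}$-embedding $\theta\colon D\to\prod_{\mathcal{F}}^{G}A$ is in particular a unital $G$-equivariant *-homomorphism (its injectivity being automatic anyway, since $D$ is simple), and $A$ is separable and unital while $\mathcal{F}$ is countably incomplete by assumption.

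Finally, I would explain why the hypothesis that $A$ be $G$-equivariantly $D$-absorbing is included: by the list of equivalences recorded just before Proposition~\ref{Proposition:smooth-ssa}, $D$-absorption of $A$ is equivalent to the existence of a $G$-equivariant unital embedding of $D$ into $F_{\mathcal{F}}^{G}(A)=A'\cap\prod_{\mathcal{F}}^{G}A\subseteq\prod_{\mathcal{F}}^{G}A$, which guarantees that an embedding $\theta$ as in the statement actually exists, so that the content of the corollary is genuine rather than vacuous. All of its conclusions then follow directly from Corollary~\ref{Corollary:commutant}. I expect the only step requiring more than hypothesis-matching to be the half-flip property in the second paragraph, which I would simply cite from \cite{szabo_strongly_2015} rather than reprove.
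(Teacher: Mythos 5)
Your proposal is correct and takes exactly the paper's route: the paper gives no separate argument for this corollary, presenting it as an immediate consequence of Corollary \ref{Corollary:commutant}, and your hypothesis-checking (separability and unitality from the definition of a strongly self-absorbing $G$-C*-algebra, the approximately $G$-inner half-flip cited from \cite{szabo_strongly_2015}, the fact that an $\mathcal{L}_{G}^{1,\mathrm{C}^{\ast}}$-embedding is in particular a unital $G$-equivariant *-homomorphism, and the role of $D$-absorption as guaranteeing that such a $\theta$ exists) is precisely what is needed.

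One caution: the ``mechanism'' you sketch for the half-flip is circular as stated. The reduction $1_{D}\otimes\mathrm{id}_{D}\approx_{u,G}\sigma\circ\phi$ is fine, but your remaining claim that $\phi$ and $\sigma\circ\phi$ are approximately $G$-unitarily equivalent is, after composing on the right with $\phi^{-1}$, equivalent to the flip $\sigma$ itself being approximately $G$-inner on $D\otimes D$ --- a stronger statement whose standard proof (equivariant or not) is deduced \emph{from} the half-flip, not the other way around. So the citation to \cite{szabo_strongly_2015} is doing the real work in that step, and it is the right move; just do not present the sketched mechanism as the proof.
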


Using the results above, one can provide the following model-theoretic
characterization of strongly self-absorbing $G$-C*-algebras, which in the
nonequivariant setting is \cite[Theorem 2.14]{farah_relative_2015}. (Recall
that when $G$ is compact, the notions of strongly self-absorbing $G$%
-C*-algebra and strongly self-absorbing $G$-C*-algebra coincide.)

\begin{theorem}
\label{Theorem:characterization-ssa} Let $D$ be a separable unital $G$%
-C*-algebra, and let $\mathcal{F}$ be a countably incomplete filter. Then $D$
is a strongly self absorbing $G$-C*-algebra if and only if $D$ is weakly $%
\mathcal{L}_{G}^{1\text{,}\mathrm{C}^{\mathrm{\ast }}}$-equivalent to $%
D\otimes D$, and all the $G$-equivariant unital *-homomorphisms $%
D\rightarrow \prod_{\mathcal{F}}^{G}D$ are $G$-unitarily equivalent.
\end{theorem}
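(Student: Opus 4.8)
The plan is to prove both implications by reducing to the equivariant commutant machinery of Theorem~\ref{Theorem:commutant} and Corollary~\ref{Corollary:ssa}, following the template of the nonequivariant \cite[Theorem~2.14]{farah_relative_2015}.

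For the forward implication, suppose $D$ is strongly self-absorbing. By definition $\mathrm{id}_D\otimes 1_D$ is approximately $G$-unitarily equivalent to a $G$-equivariant isomorphism, so in particular $D$ is $G$-equivariantly isomorphic to $D\otimes D$; since isomorphic structures are elementarily, hence weakly, $\mathcal{L}_{G}^{1\text{,C*}}$-equivalent, the first condition holds. For the second, note that $D$ is separable, unital, and $G$-equivariantly $D$-absorbing (as $D\otimes D\cong D$ is conjugate to $D$, hence cocycle conjugate by Remark~\ref{rem:CocConjSSA}), and that the diagonal inclusion $D\hookrightarrow\prod_{\mathcal{F}}^{G}D$ is an $\mathcal{L}_{G}^{1\text{,C*}}$-embedding; Corollary~\ref{Corollary:ssa}(1) then gives that any two $G$-equivariant unital $\ast$-homomorphisms $D\to\prod_{\mathcal{F}}^{G}D$ are $G$-unitarily equivalent.

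For the converse, assume the two conditions. First I would extract a central copy of $D$. Weak $\mathcal{L}_{G}^{1\text{,C*}}$-containment of $D\otimes D$ in $D$ yields a unital $G$-equivariant embedding $\sigma\colon D\otimes D\to\prod_{\mathcal{F}}^{G}D$; writing $\delta$ for the diagonal embedding and applying the uniqueness hypothesis to $\delta$ and $\sigma\circ(\mathrm{id}_D\otimes 1_D)$, I may replace $\sigma$ by its composition with an inner automorphism so that $\sigma\circ(\mathrm{id}_D\otimes 1_D)=\delta$. Then $\rho:=\sigma\circ(1_D\otimes\mathrm{id}_D)$ takes values in $\delta(D)'\cap\prod_{\mathcal{F}}^{G}D=F_{\mathcal{F}}^{G}(D)$, providing a unital $G$-equivariant embedding of $D$ into its equivariant central sequence algebra, and a further application of uniqueness to $\delta$ and $\rho$ produces a unitary $w$ in the fixed-point algebra of $\prod_{\mathcal{F}}^{G}D$ with $\mathrm{Ad}(w)\circ\delta=\rho$. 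Thus $D$ sits inside $\prod_{\mathcal{F}}^{G}D$ as two commuting, unitarily conjugate copies generating a copy of $D\otimes D$, with $w$ implementing the flip.

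The remaining, and principal, task is to promote this configuration to the statement that $D$ has approximately $G$-inner half-flip, for once this is known the weak equivalence of $D$ and $D\otimes D$ together with the central embedding feed the two-sided Elliott intertwining used in the proof of Theorem~\ref{Theorem:commutant}, producing a $G$-isomorphism $D\cong D\otimes D$ approximately $G$-unitarily equivalent to $\mathrm{id}_D\otimes 1_D$, which is exactly the definition of a strongly self-absorbing $G$-C*-algebra. I expect the half-flip to be the main obstacle, precisely because of a descent problem: the unitary $w$ implementing the flip lives in $\prod_{\mathcal{F}}^{G}D$ rather than in $D\otimes D$, and manufacturing from it genuine unitaries of $(D\otimes D)^{G}$ that approximately intertwine $\mathrm{id}_D\otimes 1_D$ and $1_D\otimes\mathrm{id}_D$ is not a formal consequence of uniqueness alone, since every attempt to intertwine internally runs back into the half-flip itself. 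I would attack this by a reindexing argument: represent $\sigma$, $\rho$, and $w$ by asymptotically multiplicative, asymptotically central, asymptotically $G$-equivariant sequences in $D$, and use the countable incompleteness of $\mathcal{F}$ together with the countable quantifier-free saturation furnished by Proposition~\ref{Proposition:F} to transfer the approximate flip relation—valid in $\prod_{\mathcal{F}}^{G}D$—into approximate relations internal to the separable algebra $D\otimes D$, mirroring the nonequivariant argument of \cite[Theorem~2.14]{farah_relative_2015}.
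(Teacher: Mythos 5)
Your forward direction and your set-up for the converse coincide with the paper's own argument. The paper likewise deduces weak equivalence from $D\cong D\otimes D$ and uniqueness from part~(1) of Theorem~\ref{Theorem:commutant} (your appeal to Corollary~\ref{Corollary:ssa}(1) is the same thing), and for the converse it likewise fixes an embedding $\rho\colon D\otimes D\to\prod_{\mathcal{F}}^{G}D$, observes that $\rho_{1}=\rho\circ(\mathrm{id}_{D}\otimes 1_{D})$ and $\rho_{2}=\rho\circ(1_{D}\otimes\mathrm{id}_{D})$ are $G$-unitarily equivalent by hypothesis, deduces that $D$ has approximately $G$-inner half-flip, and concludes by the implication (ii)$\Rightarrow$(i) of \cite[Theorem 4.6]{szabo_strongly_2015}. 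Your normalization $\sigma\circ(\mathrm{id}_{D}\otimes 1_{D})=\delta$, which produces the unital equivariant copy of $D$ in $F_{\mathcal{F}}^{G}(D)$, is a sensible addition: Szab\'{o}'s condition (ii) needs exactly such a central copy, and the paper leaves this point implicit.

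The genuine gap is the step you yourself flag and then do not close: the approximately $G$-inner half-flip. Your proposed remedy (reindexing plus the saturation of Proposition~\ref{Proposition:F}) transfers only \emph{parameter-free} positive existential information. Concretely, what weak containment of $D$ in $D\otimes D$ lets you import into $D\otimes D$ is the existence of \emph{some} tuples $(\overline{a},\overline{b})$ approximately realizing the quantifier-free type of $(\overline{d}\otimes 1,1\otimes\overline{d})$ together with an approximately invariant unitary conjugating $\overline{a}$ onto $\overline{b}$; it says nothing about the canonical pair $(\overline{d}\otimes 1,1\otimes\overline{d})$ itself. Matching the imported configuration to the honest second tensor factor, up to unitaries in $(D\otimes D)^{G}$, is precisely the half-flip, so the sketch runs in the circle you already sensed. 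The paper breaks this circle by transferring \emph{with parameters}: it asserts that $\rho$ realizes $D\otimes D$ as a $G$-\emph{elementary} substructure of $\prod_{\mathcal{F}}^{G}D$, so that values of formulas with parameters from $D\otimes D$ are preserved. Granting that, the half-flip is a one-line descent: for finite $F\subset D$ the existential formula $\inf_{v}\max_{d\in F}\left\Vert v\,x_{d}\,v^{\ast}-y_{d}\right\Vert$, with the infimum over unitaries of the fixed-point sort, vanishes at $(x_{d},y_{d})=(\rho(d\otimes 1),\rho(1\otimes d))$ in the reduced power by the hypothesized unitary equivalence of $\rho_{1}$ and $\rho_{2}$, hence vanishes at $(d\otimes 1,1\otimes d)$ in $D\otimes D$, which is the approximate $G$-inner half-flip. (One may debate how the paper gets elementarity from the stated hypothesis of weak equivalence---it is immediate if one reads the hypothesis as elementary equivalence and uses saturation of the reduced power---but some such parameter-preserving property of the embedding is indispensable.) Without this idea your argument establishes the central embedding and the unitary equivalence upstairs, but not the half-flip, and hence not strong self-absorption.
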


\begin{proof}
The \textquotedblleft only if\textquotedblright\ implication is a
consequence of the fact that $D$ is $G$-strongly cocycle conjugate to $%
D\otimes D$, and part~(1) of Theorem~\ref{Theorem:commutant}. We prove the
converse. Since $D$ is weakly $\mathcal{L}_{G}^{1\text{,}\mathrm{C}^{\mathrm{%
\ast }}}$-equivalent to $D\otimes D$, we deduce that $D\otimes D$ is a $G$%
-elementary substructure of $\prod_{\mathcal{F}}^{G}D$, say via an embedding 
$\rho $. In particular, the $G$-equivariant unital *-homomorphisms $\rho
_{1},\rho _{2}\colon D\rightarrow \prod_{\mathcal{F}}^{G}D$, given by $\rho
_{1}(d)=\rho (d\otimes 1_{D})$ and $\rho _{2}(d)=\rho (1_{D}\otimes d)$, for 
$d\in D$, are $G$-unitarily equivalent. It follows that $D$ has
approximately $G$-inner half-flip. The conclusion now follows from the
implication (ii)$\Rightarrow $(i) in \cite[Theorem 4.6]{szabo_strongly_2015}.
\end{proof}

\subsection{Limiting examples}

We have shown in Proposition \ref{Proposition:smooth-ssa} that, for any
second countable locally compact group $G$, the classification problem for
strongly self-absorbing $G$-actions on C*-algebras is smooth in the sense of
Borel complexity theory. In this subsection, we observe that the same is not
true for the broader class of $G$-actions with approximately $G$-inner
half-flip, even if one only considers actions of $\mathbb{Z}_{2}:=\mathbb{Z}%
/2\mathbb{Z}$ on the C*-algebra $\mathcal{O}_{2}$. The notion of complete
analytic set can be found in \cite[Section 22.9]{kechris_classical_1995}.

\begin{proposition}
\label{Proposition:nonclassification}The relations of conjugacy and cocycle
conjugacy for approximately representable $\mathbb{Z}_{2}$-actions on $%
\mathcal{O}_{2}$ with Rokhlin dimension $1$ and approximately $\mathbb{Z}%
_{2} $-inner half-flip are complete analytic sets. Furthermore, the
classification problem for such actions, up to conjugacy or cocycle
conjugacy, is strictly more complicated than the classification problem for
any class of countable structures with Borel isomorphism relation.
\end{proposition}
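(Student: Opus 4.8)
The plan is to establish that these classification relations are complete analytic by a reduction from a known complete analytic set, and then leverage the theory of Borel reducibility to deduce non-classifiability by countable structures. The natural candidate for the source of the reduction is the set of ill-founded trees on $\mathbb{N}$ (equivalently, the set of trees with an infinite branch), which is the canonical $\Sigma^1_1$-complete set; see \cite[Section 27]{kechris_classical_1995}. First I would recall that the target relations — conjugacy and cocycle conjugacy of $\mathbb{Z}_2$-actions on $\mathcal{O}_2$ — are analytic, since they are defined by an existential quantification over the Polish group of automorphisms (or over strictly continuous unitary-valued cocycles) of a Borel condition; this places an upper bound and shows the sets in question are at most $\Sigma^1_1$.

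For the lower bound, I would construct a Borel map from trees (or from another complete analytic set, such as the set of pairs of graphs admitting an isomorphism, or the orbit equivalence relation of a turbulent or otherwise non-classifiable action) into the Polish space $\mathrm{Act}_{\mathbb{Z}_2}(\mathcal{O}_2)$ of continuous $\mathbb{Z}_2$-actions on $\mathcal{O}_2$, in such a way that the output always lands among approximately representable actions of Rokhlin dimension $1$ with approximately $\mathbb{Z}_2$-inner half-flip, and such that membership in the conjugacy (or cocycle conjugacy) relation encodes membership in the complete analytic set. The key structural input is that $\mathcal{O}_2$ is strongly self-absorbing and that $\mathbb{Z}_2$-actions on $\mathcal{O}_2$ with these regularity properties form a rich but controllable family; I would use the stability of the defining properties (Rokhlin dimension $1$, approximately $\mathbb{Z}_2$-inner half-flip) under the relevant construction to ensure the image stays inside the prescribed class, while arranging that the complexity of the isomorphism problem is not collapsed — in contrast to the strongly self-absorbing case treated in Proposition~\ref{Proposition:smooth-ssa}, where having a half-flip approximately equivalent to an isomorphism forces smoothness.

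\textbf{The hard part will be} the construction of the reduction itself, namely producing a Borel-parametrized family of such actions whose conjugacy type faithfully encodes the complete analytic invariant, while simultaneously verifying that every action in the image genuinely has all three properties (approximate representability, Rokhlin dimension exactly $1$, and approximately $\mathbb{Z}_2$-inner half-flip). Establishing that the reduction is Borel is routine once the family is described explicitly in the parameterization; the real subtlety is ensuring both directions of the reduction — that non-conjugate parameters map to non-conjugate actions and conversely — without the half-flip forcing collapse to conjugacy, which is precisely the phenomenon that makes the strongly self-absorbing case smooth. I expect the second assertion, that the classification is strictly harder than any isomorphism relation of countable structures, to follow by combining the $\Sigma^1_1$-completeness with a standard dichotomy: isomorphism of countable structures is always Borel reducible to an orbit equivalence relation of a Polish group action and hence is never properly analytic in the sense of being $\Sigma^1_1$-complete, so a $\Sigma^1_1$-complete equivalence relation cannot be Borel reducible to any such isomorphism relation, giving the desired strict separation.
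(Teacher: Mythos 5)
There is a genuine gap, and it is exactly where you place it: the construction of the reduction, which your proposal defers entirely, is the proof. The paper does not reduce from ill-founded trees (no implementation of such a reduction into C*-dynamics is known); it reduces from the isomorphism relation $E$ of uniquely $2$-divisible torsion-free countable abelian groups, which is a complete analytic set by a Hjorth-type argument (\cite{hjorth_isomorphism_2002}, as adapted in \cite{gardella_conjugacy_2016}), precisely because an isomorphism relation of countable groups can be encoded functorially into $K$-theory. Concretely, one takes Izumi's action $\nu$ of $\mathbb{Z}_2$ on $\mathcal{O}_2$ from \cite{izumi_finite_2004}, a Borel assignment (via R\o rdam's construction \cite{rordam_classification_1995}) of a UCT Kirchberg algebra $A_\Gamma$ with $K_1(A_\Gamma)=0$, $K_0(A_\Gamma)\cong\Gamma$ and $[1_{A_\Gamma}]=0$, and sets $\alpha_\Gamma:=\nu\otimes\iota_{A_\Gamma}$; the prior paper \cite{gardella_conjugacy_2016} already shows this map reduces $E$ to conjugacy and to cocycle conjugacy and that each $\alpha_\Gamma$ has Rokhlin dimension $1$ and is approximately representable. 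The only new step is resolving the tension you flag (``half-flip without collapse''): one checks that $\nu$ itself is \emph{strongly self-absorbing}, using its infinite tensor product form $\bigotimes_{n}\mathrm{Ad}(u)$ on $\bigotimes_n\mathcal{O}_\infty^{\mathrm{st}}$ and \cite[Proposition 5.3]{szabo_strongly_2016}, so that $\alpha_\Gamma$, being a tensor product of a strongly self-absorbing action with an action with approximately $\mathbb{Z}_2$-inner half-flip (the trivial action $\iota_{A_\Gamma}$, where $[1_{A_\Gamma}]=0$ is what makes the half-flip of $A_\Gamma$ approximately inner), again has approximately $\mathbb{Z}_2$-inner half-flip. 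Smoothness is not forced because the $\alpha_\Gamma$ are not themselves strongly self-absorbing: the invariant $\Gamma$ survives in the $K$-theory of the tensor factor (equivalently of the crossed product). Without some mechanism of this kind --- a Borel family whose conjugacy relation provably contains a copy of a complete analytic isomorphism relation --- your outline cannot be completed.

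The second assertion also has a gap. ``Strictly more complicated'' means both $F\leq_B$ conjugacy and conjugacy $\not\leq_B F$ for every Borel isomorphism relation $F$ of countable structures. Your dichotomy argument addresses only the second direction, and even there the intermediate claim is wrong: isomorphism of countable structures \emph{can} be $\Sigma^1_1$-complete as a set (countable graph isomorphism is the standard example), so ``never properly analytic'' is false; what saves you is simply that $F$ is assumed Borel, and a $\Sigma^1_1$-complete set cannot be the preimage of a Borel set under a Borel map. The missing positive direction $F\leq_B$ conjugacy is obtained in the paper by composition: every Borel isomorphism relation of countable structures is Borel reducible to $E$ (again \cite{gardella_conjugacy_2016}, modifying Hjorth), and $E\leq_B$ conjugacy via $\Gamma\mapsto\alpha_\Gamma$. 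Your proposal has no route to this half of the statement.
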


\begin{proof}
Recall that in \cite{izumi_finite_2004} Izumi constructed an action $\nu $
of $\mathbb{Z}_{2}$ on $\mathcal{O}_{2}$ whose crossed product $D=\mathcal{O}%
_{2}\rtimes _{\nu }\mathbb{Z}_{2}$ is a Kichrberg algebra satisfying the
Universal Coefficient Theorem, with trivial $K_{1}$-group, $K_{0}$-group
isomorphic to $\mathbb{Z}\left[ \frac{1}{2}\right] $, and zero element of $%
K_{0}( D) $ corresponding to the unit of $D$; see \cite[Lemma 4.7]%
{izumi_finite_2004}.

Such an action was used in \cite{gardella_conjugacy_2016} to prove that the
relations of conjugacy and cocycle conjugacy of $\mathbb{Z}_{2}$-actions on $%
\mathcal{O}_{2}$ are complete analytic sets, when regarded as subsets of $%
\mathrm{Act}_{\mathbb{Z}_{2}}(\mathcal{O}_{2})\times \mathrm{Act}_{\mathbb{Z}%
_{2}}(\mathcal{O}_{2})$. Precisely, it is proved in \cite%
{gardella_conjugacy_2016}, relying on a construction of R\o rdam from \cite%
{rordam_classification_1995}, that there exists a Borel map assigning to
each uniquely $2$-divisible torsion-free countable abelian group $\Gamma $ a
Kirchberg algebra $A_{\Gamma }$ satisfying the Universal Coefficient
Theorem, with trivial $K_{1}$-group, $K_{0}$-group isomorphic to $\Gamma $,
and zero element of $K_{0}(A_{\Gamma })$ corresponding to the unit of $%
A_{\Gamma }$. Denote by $\iota _{A_{\Gamma }}$ the trivial $\mathbb{Z}_{2}$%
-action on $A_{\Gamma }$. Then the function $\Gamma \mapsto \alpha _{\Gamma
}:=\nu \otimes \iota _{A_{\Gamma }}$ provides a Borel reduction from the
relation $E$ of isomorphism of uniquely $2$-divisible torsion-free countable
abelian groups to the relations of conjugacy and cocycle conjugacy of $%
\mathbb{Z}_{2}$-actions on $\mathcal{O}_{2}$. It is furthermore shown in 
\cite{gardella_conjugacy_2016}, modifying an argument of Hjorth from \cite%
{hjorth_isomorphism_2002}, that $E$ is a complete analytic set. Furthermore,
if $F$ is the relation of isomorphism within a class of countable
structures, and if $F$ is Borel, then $F$ is Borel reducible to $E$ (but not
vice versa). It was furthermore observed in \cite{gardella_conjugacy_2016}
that, for any uniquely $2$-divisible torsion-free countable abelian group $%
\Gamma $, the action $\alpha _{\Gamma }$ has Rokhlin dimension $1$, and is
approximately representable.

We claim that $\alpha _{\Gamma }$ has approximately $\mathbb{Z}_{2}$-inner
half-flip. To see this, it is enough to observe that the $\mathbb{Z}_{2}$%
-action $\nu $ on $\mathcal{O}_{2}$ (corresponding to the case when $\Gamma $
is trivial) is strongly self-absorbing. This follows from the fact that $\nu 
$ is constructed as the infinite tensor product $\bigotimes_{n\in \mathbb{N}}%
\mathrm{Ad}( u) $, where $u$ is a unitary element of $\mathcal{O}_{\infty }^{%
\mathrm{st}}$, using the identification $\mathcal{O}_{2}\cong
\bigotimes_{n\in \mathbb{N}}\mathcal{O}_{\infty }^{\mathrm{st}}$; see \cite[%
Section 4]{izumi_finite_2004}. Since $\mathcal{O}_{\infty }^{\mathrm{st}}$
is a C*-algebra with approximately inner half-flip, one can deduce from \cite%
[Proposition 5.3]{szabo_strongly_2016} that $\nu $ is strongly
self-absorbing. Since $\alpha_\Gamma$ is the tensor product of a strongly
self-absorbing action (namely, $\nu$) with an action with approximately $%
\mathbb{Z}_2$-inner half-flip (namely, $\iota_{A_\Gamma}$), it follows that $%
\alpha_\Gamma$ has approximately $\mathbb{Z}_2$-inner half-flip. This proves
the claim.

Using these observations, and considering the fact that the set of $\mathbb{Z%
}_{2}$-actions on $\mathcal{O}_{2}$ with approximately $\mathbb{Z}_{2}$%
-inner half-flip is analytic, the result follows.
\end{proof}

Clearly, similar conclusions hold for $G$-actions on $\mathcal{O}_{2}$ for
any countable discrete group $G$ with a quotient of order $2$, such as the
group of integers. This can be seen by regarding a $\mathbb{Z}_{2}$-action
as a $G$-action in the canonical way.

\section{Order zero dimension and Rokhlin dimension\label{Section:oz}}

\subsection{Order zero dimension\label{Subsection:oz}}

The notion of positive weak $\mathcal{L}$-containment between $\mathcal{L}$%
-morphisms can be defined in the general setting of logic for metric
structures; see Subsection \ref{Subsection:existential-embedding}. For $G$%
-C*-algebras, one has the following: a $G$-equivariant *-homomorphism $%
\theta \colon A\rightarrow B$ is positively weakly $\mathcal{L}_{G}^{\text{C*%
}}$-contained in another $G$-equivariant *-homomorphism $f\colon
A\rightarrow C$ if for any separable subalgebras $A_{0}\subset A$ and $%
B_{0}\subset B$ such that $\theta (A_{0})\subset B_{0}$, and for some
(equivalently, any) countably incomplete filter $\mathcal{F}$, there exists
a $G$-equivariant *-homomorphism $\gamma \colon B_{0}\rightarrow \prod_{%
\mathcal{F}}^{G}C$ such that $(\gamma \circ \theta )|_{A_{0}}=(\Delta
_{C}\circ f)|_{A_{0}}$, where $\Delta _{C}\colon C\rightarrow \prod_{%
\mathcal{F}}^{G}C$ is the diagonal *-homomorphism. Various equivalent
formulations of this notion can be found in Subsection \ref%
{Subsection:existential-embedding}.

We now present natural generalizations of positive weak $\mathcal{L}_{G}^{%
\text{C*}}$-containment where instead of a single *-ho\-momorphism one
considers a tuple of completely positive contractive order zero maps.
Whenever $f\colon A\rightarrow B$ is a $G$-equivariant *-homomorphism, we
will regard $B$ as a $G$-equivariant $A$-bimodule, as defined in Subsection %
\ref{Subsection:bimodules}.

\begin{definition}
\label{Definition:d-containment} Let $A$, $B$, and $C$ be $G$-C*-algebras,
and let $\theta \colon A\rightarrow B$ and $f\colon A\rightarrow C$ be $G$%
-equivariant *-homomorphisms. We say that $\theta $ is \emph{$G$%
-equivariantly $d$-contained} in $f$ if for any separable $G$-C*-subalgebras 
$A_{0}\subset A$ and $B_{0}\subset B$ such that $\theta (A_{0})\subset B_{0}$%
, and for some (equivalently, any) countably incomplete filter $\mathcal{F}$%
, there exist $G$-equivariant completely positive contractive order zero $A$%
-bimodule maps $\psi _{0},\ldots ,\psi _{d}\colon B_{0}\rightarrow \prod_{%
\mathcal{F}}^{G}C$ whose sum $\psi =\psi _{0}+\cdots +\psi _{d}$ is
contractive and such that $( \psi \circ \theta ) |_{A_{0}}=( \Delta
_{C}\circ f) |_{A_{0}}$.
\end{definition}

The notion of $G$-equivariant $d$-containment from Definition \ref%
{Definition:d-containment} admits a natural syntactic reformulation: $\theta
\colon A\rightarrow B$ is $G$-equivariantly $d$-contained in $f\colon
A\rightarrow C$ if and only if for any tuples $\overline{a}$ in $A$, $%
\overline{b}$ in $B$, and for any tuple $\overline{w}$ of elements of a
finite dimensional C*-algebra, for any positive quantifier-free\ $\mathcal{L}%
_{G}^{\mathrm{oz,}A\text{-}A}$-formulas $\varphi (\overline{z},\overline{y})$%
, for any positive quantifier-free\ $\mathcal{L}_{G}^{\mathrm{osos}\text{,}A%
\text{-}A}$-formulas $\psi (\overline{x},\overline{z},\overline{y})$, where
the variables $\overline{z}$ have finite-dimensional C*-algebras as sorts,
and for any $\varepsilon >0$, there exist tuples $\overline{c}_{0},\ldots ,%
\overline{c}_{d}$ in $C$ such that the following are satisfied for $j=0,\ldots ,d$:%
\begin{equation*}
\psi (f(\overline{a}),\overline{w},\overline{c}_{0}+\ldots +\overline{c}%
_{d})\leq \psi (\theta (\overline{a}),\overline{w},\overline{b})+\varepsilon
\ \ \mbox{ and } \ \
\varphi (\overline{w},\overline{c}_{j})\leq \varphi (\overline{w},\overline{b%
})+\varepsilon.
\end{equation*}%

\begin{remark}
\label{Remark:oz-nuclear}When $B$ is nuclear, in the syntactic
characterization of $d$-containment, one can replace $\mathcal{L}_{G}^{%
\mathrm{osos}\text{,}A\text{-}A}$-formulas with $\mathcal{L}_{G}^{\mathrm{%
osos}\text{-}\mathrm{nuc}\text{,}A\text{-}A}$-formulas, and $\mathcal{L}%
_{G}^{\mathrm{oz}\text{,}A\text{-}A}$-formulas with $\mathcal{L}_{G}^{%
\mathrm{oz}\text{-}\mathrm{nuc}\text{,}A\text{-}A}$-formulas. This follows
from the characterization of $\mathcal{L}_{G}^{\mathrm{osos}\text{-}\mathrm{%
nuc}\text{,}A\text{-}A}$-morphisms from Lemma \ref{Lemma:completely positive
contractive-morphism}.
\end{remark}

\begin{definition}
\label{Definition:oz-dimension}The $G$\emph{-equivariant order zero
dimension }$\dim _{\mathrm{oz}}^{G}(\theta )$ of a $G$-equivariant
*-homomorphism $\theta \colon A\rightarrow B$ is the smallest integer $d\geq
0$ such that $\theta $ is $G$-equivariantly $d$-contained in the identity
map $\mathrm{id}_{A}\colon A\rightarrow A$. If no such $d$ exists, we set $%
\dim _{\mathrm{oz}}^{G}(\theta )=\infty $.
\end{definition}

The proof of the following is an easy consequence of the
syntactic characterization of $G$-equivariant $d$-containment.

\begin{proposition}
\label{prop:propertiesOzDim} Let $\Lambda $ be a directed set.

\begin{enumerate}
\item Let $\theta _{0}\colon A\rightarrow B$ and $\theta _{1}\colon
B\rightarrow C$ be $G$-equivariant *-homomorphisms between $G$-C*-algebras.
Then 
\begin{equation*}
\dim _{\mathrm{oz}}^{G}(\theta _{1}\circ \theta _{0})+1\leq (\dim _{\mathrm{%
oz}}^{G}(\theta _{1})+1)(\dim _{\mathrm{oz}}^{G}(\theta _{0})+1);
\end{equation*}

\item Let $\theta \colon A\rightarrow B$ be a $G$-equivariant
*-homomorphism, and let $C$ be a $G$-C*-algebra. Then 
\begin{equation*}
\dim _{\mathrm{oz}}^{G}(\theta \otimes \mathrm{id}_{C})\leq \dim _{\mathrm{oz%
}}^{G}(\theta );
\end{equation*}

\item Let $(\{A_{\lambda }\}_{\lambda \in \Lambda },\{\theta _{\lambda ,\mu
}\}_{\lambda ,\mu \in \Lambda ,\lambda <\mu })$ be a direct system of $G$%
-C*-algebras (with $G$-equivariant *-homomorphisms). For $\lambda \in
\Lambda $, denote by 
\begin{equation*}
\theta _{\lambda ,\infty }\colon A_{\lambda }\rightarrow \varinjlim A_{\mu }
\end{equation*}
denote the canonical equivariant *-homomorphism. Then 
\begin{equation*}
\dim _{\mathrm{oz}}^{G}(\theta _{\lambda ,\infty })\leq \limsup_{\mu \in
\Lambda }\dim _{\mathrm{oz}}^{G}(\theta _{\lambda ,\mu }).
\end{equation*}

\item For $j=0,1$, let $(\{A_{\lambda }^{(j)}\}_{\lambda \in \Lambda
},\{\theta _{\lambda \mu }^{(j)}\}_{\lambda ,\mu \in \Lambda ,\lambda <\mu
}) $ be a direct system of $G$-C*-algebras. Let $\left\{ \eta _{\lambda
}\colon A_{\lambda }^{(0)}\rightarrow A_{\lambda }^{(1)}\right\} _{\lambda
\in \Lambda }$ be a family of $G$-equivariant *-homomorphisms. Then 
\begin{equation*}
\dim _{\mathrm{oz}}^{G}(\varinjlim_{\lambda \in \Lambda }\eta _{\lambda
})\leq \limsup_{\mu \in \Lambda }\dim _{\mathrm{oz}}^{G}(\eta _{\mu }).
\end{equation*}
\end{enumerate}
\end{proposition}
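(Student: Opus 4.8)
The plan is to verify each item directly from the syntactic characterization of $G$-equivariant $d$-containment recorded after Definition~\ref{Definition:d-containment}, together with its semantic description in terms of order zero bimodule maps into $\prod_{\mathcal{F}}^{G}(-)$. I fix a countably incomplete filter $\mathcal{F}$ and use that $\dim_{\mathrm{oz}}^{G}(\theta)\le d$ precisely when $\theta$ is $d$-contained in the identity, i.e.\ when on every pair of separable subalgebras one can find $G$-equivariant c.p.c.\ order zero $A$-bimodule maps $\psi_{0},\dots,\psi_{d}$ into $\prod_{\mathcal{F}}^{G}(-)$ with contractive sum $\psi$ lifting the diagonal.

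For (2) I would simply tensor the witnessing maps with $\mathrm{id}_{C}$. If $\psi_{0},\dots,\psi_{d}\colon B_{0}\to\prod_{\mathcal{F}}^{G}A$ witness $\dim_{\mathrm{oz}}^{G}(\theta)\le d$ on separable $A_{0}\subset A$, $B_{0}\subset B$, then composing $\psi_{j}\otimes\mathrm{id}_{C_{0}}$ with the canonical $\ast$-homomorphism $(\prod_{\mathcal{F}}^{G}A)\otimes C\to\prod_{\mathcal{F}}^{G}(A\otimes C)$ produces $G$-equivariant c.p.c.\ order zero $(A\otimes C)$-bimodule maps $B_{0}\otimes C_{0}\to\prod_{\mathcal{F}}^{G}(A\otimes C)$. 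Their sum is $\psi\otimes\mathrm{id}_{C}$, which is contractive, and it lifts the diagonal along $\theta\otimes\mathrm{id}_{C}$ on $A_{0}\otimes C_{0}$; here I use that the tensor product of a c.p.c.\ order zero map with the identity is again c.p.c.\ order zero.

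For (3) and (4) the key observation is that, since the values of $\dim_{\mathrm{oz}}^{G}$ are integers, whenever $d:=\limsup_{\mu}\dim_{\mathrm{oz}}^{G}(\theta_{\lambda,\mu})$ (resp.\ of $\eta_{\mu}$) is finite it is attained as an eventual bound: there is $\mu_{0}\ge\lambda$ with $\dim_{\mathrm{oz}}^{G}(\theta_{\lambda,\nu})\le d$ for all $\nu\ge\mu_{0}$ (the statement being vacuous otherwise). I would then feed the syntactic criterion a tuple $\overline{a}$ in $A_{\lambda}$ (resp.\ in $\varinjlim A^{(0)}$), a tuple $\overline{b}$ in the limit algebra, finitely many bimodule coefficients from the domain, formulas $\varphi,\psi$, and $\varepsilon>0$. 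Since $\bigcup_{\mu}A_{\mu}$ is dense in the limit and quantifier-free formulas are uniformly continuous, I approximate $\overline{b}$ and the bimodule coefficients by elements of a single stage with $\nu\ge\mu_{0}$, changing the values of $\varphi,\psi$ by less than $\varepsilon$. The finite-stage containment of $\theta_{\lambda,\nu}$ (resp.\ $\eta_{\nu}$) in the relevant identity then produces witnesses $\overline{c}_{0},\dots,\overline{c}_{d}$ at stage $\nu$, which, pushed into the limit, verify the required inequalities. The only point to check carefully is compatibility of the various bimodule structures along the connecting maps, which holds because the connecting maps are equivariant $\ast$-homomorphisms intertwining the bimodule actions.

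The substantive item is (1), and this is where I expect the main obstacle. Writing $d_{0}=\dim_{\mathrm{oz}}^{G}(\theta_{0})$ and $d_{1}=\dim_{\mathrm{oz}}^{G}(\theta_{1})$, the goal is to decompose a $\theta_{1}\circ\theta_{0}$-image into $(d_{0}+1)(d_{1}+1)$ order zero pieces over $A$. The idea is to chain the two containments: first use the $d_{1}$-containment of $\theta_{1}$ in $\mathrm{id}_{B}$ to split a $\theta_{1}$-image into $d_{1}+1$ order zero pieces over $B$, and then apply the $d_{0}$-containment of $\theta_{0}$ in $\mathrm{id}_{A}$ to each of these pieces, splitting each further into $d_{0}+1$ order zero pieces over $A$. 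This yields $(d_{0}+1)(d_{1}+1)$ pieces, whence $\dim_{\mathrm{oz}}^{G}(\theta_{1}\circ\theta_{0})\le(d_{0}+1)(d_{1}+1)-1$. Two things must be handled: that a composition of c.p.c.\ order zero maps is again c.p.c.\ order zero (so that \emph{order zero over $B$ followed by order zero over $A$} is order zero over $A$), and that the second application must be performed on elements of the sequence algebra $\prod_{\mathcal{F}}^{G}B$ rather than on $B$ itself. The latter is a reindexing point, resolved either by a standard diagonal/saturation argument collapsing the iterated reduced product, or, at the syntactic level, by manufacturing from the given $\varphi,\psi$ the auxiliary $\mathcal{L}_{G}^{\mathrm{oz}}$- and $\mathcal{L}_{G}^{\mathrm{osos}}$-formulas to which the two finite-dimensional containments are applied, so that the chained inequalities telescope to the desired one while the counting of pieces produces the factor $(d_{0}+1)(d_{1}+1)$.
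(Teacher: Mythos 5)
Your high-level plan (tensor the witnesses for (2), approximate through finite stages for (3)--(4), chain the two containments for (1)) is reasonable, but since the paper offers no written argument for this proposition (it is merely asserted to follow from the syntactic characterization), your sketch has to stand on its own, and it has a genuine gap in item (2). The map you invoke --- ``the canonical $*$-homomorphism $(\prod_{\mathcal{F}}^{G}A)\otimes C\to\prod_{\mathcal{F}}^{G}(A\otimes C)$'' --- does not exist in general, because $\otimes$ denotes the \emph{minimal} tensor product throughout the paper and $C$ is an arbitrary $G$-C*-algebra. The algebraic map $(\prod_{\mathcal{F}}A)\odot C\to\prod_{\mathcal{F}}(A\otimes C)$, $[a_{i}]\otimes c\mapsto[a_{i}\otimes c]$, is well defined and injective (use slice maps), so the norm it induces on $(\prod_{\mathcal{F}}A)\odot C$ is a C*-norm and therefore \emph{dominates} $\|\cdot\|_{\min}$; the homomorphism you want exists precisely when this norm \emph{equals} $\|\cdot\|_{\min}$, which is an exactness-type condition on $C$: it holds when $C$ is exact (then $-\otimes_{\min}C$ preserves the exact sequence $0\to c_{\mathcal{F}}(A)\to\ell^{\infty}(A)\to\prod_{\mathcal{F}}A\to 0$ defining the reduced power), and for non-exact $C$ it fails in general. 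Note that the paper applies item (2) with completely arbitrary $C$ (e.g.\ in Theorem \ref{thm:O2OIUHFDimIneq}), and note also that in exactly this kind of construction the paper is careful to invoke the universal property of the \emph{maximal} tensor product (proof of Lemma \ref{Lemma:oz-dimension-commutant-embedding}) precisely because the minimal version is unavailable. A correct proof of (2) instead lifts each $\psi_{j}\colon B_{0}\to\prod_{\mathcal{F}}^{G}A$ to a family of c.p.c.\ order zero maps $\psi_{j}^{(i)}\colon B_{0}\to A$ (projectivity of cones over separable C*-algebras), tensors \emph{coordinatewise} with $\mathrm{id}_{C}$ --- each $\psi_{j}^{(i)}\otimes\mathrm{id}_{C}$ is an honest c.p.c.\ order zero map on $B_{0}\otimes_{\min}C$ --- and passes back to the reduced product; equivariance, the bimodule identities and the diagonal condition survive because they depend only on classes modulo $\mathcal{F}$, while contractivity of the sum of the induced maps requires an additional correction of the lifts (an $\varepsilon$-test/reindexing argument). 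This is genuine extra work, not a formality.

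Two further points. In (3)--(4) your approximation step proves the wrong inequality: after replacing $\overline{b}$ by $\theta_{\nu,\infty}(\overline{b}')$, the stage-$\nu$ containment controls the witnesses by the values of the formulas computed \emph{in $A_{\nu}$}, whereas the required bound is in terms of values computed \emph{in the limit}, and positive quantifier-free formulas satisfy only $\varphi^{\varinjlim A_{\mu}}(\theta_{\nu,\infty}(\overline{b}'))\le\varphi^{A_{\nu}}(\overline{b}')$, possibly strictly (for the $\mathrm{OZ}$- and $\mathrm{CPC}$-distance predicates this happens even when the connecting maps are injective, since the infima run over maps into a larger algebra). The repair is to use that these predicate values converge along the direct system (weak stability of the relevant relations for maps out of finite-dimensional algebras) and to apply the containment at a sufficiently late stage $\mu\geq\nu$ rather than at $\nu$ itself. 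Finally, your outline of (1) --- composing order zero maps and counting $(d_{0}+1)(d_{1}+1)$ pieces --- is the right argument, and the ``reindexing point'' you flag (applying the witnesses for $\theta_{0}$ to separable subalgebras of $\prod_{\mathcal{F}}^{G}B$ rather than of $B$) is exactly where \L os' theorem and saturation must be invoked; but you only name the two possible resolutions without carrying either out, so as written (1) remains a sketch rather than a proof.
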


Let $\theta \colon A\rightarrow B$ is a $G$-equivariant completely positive
contractive order zero map. We recall that by \cite[Proposition~2.3]%
{gardella_regularity_?}, there is a naturally induced completely positive
contractive order zero map $A\rtimes G\rightarrow B\rtimes G$ between the
crossed products, which we will denote in the following by $\widehat{\theta }
$. If $\theta $ is a *-homomorphism, then $\widehat{\theta }$ is a
*-homomorphism as well. If $\theta $ is an $A$-$A$-bimodule morphism, then $%
\widehat{\theta }$ is an $A$-$A$-bimodule morphism as well.

\begin{lemma}
\label{Lemma:crossed} Let $A$ and $B$ be $G$-C*-algebras and let $\theta
\colon A\rightarrow B$ be a $G$-equivariant *-homomorphism. Then $\dim _{%
\mathrm{oz}}(\widehat{\theta })\leq \dim _{\mathrm{oz}}^{G}(\theta )$ and $%
\dim _{\mathrm{oz}}(\theta |_{A^{G}})\leq \dim _{\mathrm{oz}}^{G}(\theta )$.
\end{lemma}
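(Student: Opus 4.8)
The plan is to unwind Definition~\ref{Definition:d-containment} with $d=\dim_{\mathrm{oz}}^{G}(\theta)$ and transport the witnessing maps: through the crossed-product functor of \cite[Proposition~2.3]{gardella_regularity_?} for the first inequality, and through restriction to fixed-point algebras for the second. Fix once and for all a countably incomplete filter $\mathcal{F}$, which is permissible by the ``some (equivalently, any)'' clause of Definition~\ref{Definition:d-containment}. To bound $\dim_{\mathrm{oz}}(\widehat{\theta})$, I would fix separable subalgebras $C_{0}\subset A\rtimes G$ and $D_{0}\subset B\rtimes G$ with $\widehat{\theta}(C_{0})\subset D_{0}$. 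Since $G$ is compact, the crossed product commutes with directed unions of $G$-invariant subalgebras, so I may choose separable $G$-invariant $A_{0}\subset A$ and $B_{0}\subset B$ with $\theta(A_{0})\subset B_{0}$, $C_{0}\subset A_{0}\rtimes G$ and $D_{0}\subset B_{0}\rtimes G$. $G$-equivariant $d$-containment of $\theta$ in $\mathrm{id}_{A}$ then provides $G$-equivariant completely positive contractive order zero $A$-bimodule maps $\psi_{0},\dots,\psi_{d}\colon B_{0}\to\prod_{\mathcal{F}}^{G}A$ with $\psi:=\sum_{j}\psi_{j}$ contractive and $(\psi\circ\theta)|_{A_{0}}=\Delta_{A}|_{A_{0}}$. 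Applying \cite[Proposition~2.3]{gardella_regularity_?} to each $\psi_{j}$ yields completely positive contractive order zero $(A\rtimes G)$-bimodule maps $\widehat{\psi_{j}}\colon B_{0}\rtimes G\to(\prod_{\mathcal{F}}^{G}A)\rtimes G$; composing with the canonical $*$-homomorphism $\Phi\colon(\prod_{\mathcal{F}}^{G}A)\rtimes G\to\prod_{\mathcal{F}}(A\rtimes G)$ and restricting to $D_{0}$ produces the candidate witnesses $\phi_{j}:=(\Phi\circ\widehat{\psi_{j}})|_{D_{0}}$.

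The crux is that the construction of \cite[Proposition~2.3]{gardella_regularity_?} is \emph{fiberwise} in the $C(G,-)$-picture of the crossed product, i.e. $\widehat{\eta}(f)(g)=\eta(f(g))$ for $f\in C(G,-)$; I would verify this and, from it, read off that the construction is additive and compositional in $\eta$ and extends contractively to the merely completely positive contractive map $\psi$. This gives $\sum_{j}\widehat{\psi_{j}}=\widehat{\psi}$ (contractive), $\widehat{\psi_{j}}\circ\widehat{\theta}=\widehat{\psi_{j}\circ\theta}$, and $\Phi\circ\widehat{\Delta_{A}}=\Delta_{A\rtimes G}$. Each $\phi_{j}$ is then order zero, being a composite of an order zero map with the $*$-homomorphism $\Phi$; the sum $\sum_{j}\phi_{j}=(\Phi\circ\widehat{\psi})|_{D_{0}}$ is contractive; and functoriality yields $\big(\sum_{j}\phi_{j}\big)\circ\widehat{\theta}|_{C_{0}}=\Phi\circ\widehat{\psi\circ\theta}|_{C_{0}}=\Phi\circ\widehat{\Delta_{A}}|_{C_{0}}=\Delta_{A\rtimes G}|_{C_{0}}$. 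Hence $\widehat{\theta}$ is $d$-contained in $\mathrm{id}_{A\rtimes G}$, so $\dim_{\mathrm{oz}}(\widehat{\theta})\le d$. I expect the main obstacle to be exactly this technical package: Proposition~2.3 is stated for order zero maps while $\psi$ is only completely positive contractive, so I must argue directly from the fiberwise description that the induced-map construction is additive and contraction-preserving, and record the elementary construction of $\Phi$ (lifting a continuous $\prod_{\mathcal{F}}^{G}A$-valued function on the compact group $G$ to an $\mathcal{F}$-indexed sequence of elements of $C(G,A)$, using uniform continuity).

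For the fixed-point inequality the same witnesses suffice after restriction. Given separable subalgebras of $A^{G}$ and $B^{G}$, I would enclose them in $A_{0}^{G}$ and $B_{0}^{G}$ for suitable separable $G$-invariant $A_{0},B_{0}$ and obtain $\psi_{0},\dots,\psi_{d}$ as above. Since $\theta$ and each $\psi_{j}$ are $G$-equivariant, they carry fixed points to fixed points, so $\psi_{j}|_{B_{0}^{G}}$ is a completely positive contractive order zero $A^{G}$-bimodule map into $(\prod_{\mathcal{F}}^{G}A)^{G}$. The key identification is $(\prod_{\mathcal{F}}^{G}A)^{G}=\prod_{\mathcal{F}}(A^{G})$, which holds because $A^{G}$ is precisely the trivial-isotypical-component sort of the $\mathcal{L}_{G}^{\text{C*}}$-structure $A$ (equivalently, $A^{G}$ is positively existentially definable) and is therefore respected by reduced products. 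The restrictions then have contractive sum and satisfy $\big(\sum_{j}\psi_{j}|_{B_{0}^{G}}\big)\circ(\theta|_{A_{0}^{G}})=\Delta_{A}|_{A_{0}^{G}}=\Delta_{A^{G}}|_{A_{0}^{G}}$, exhibiting $d$-containment of $\theta|_{A^{G}}$ in $\mathrm{id}_{A^{G}}$ and giving $\dim_{\mathrm{oz}}(\theta|_{A^{G}})\le d$.
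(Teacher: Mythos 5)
Your proposal is correct and takes essentially the same route as the paper's (very terse) proof: both transport the witnessing maps of $G$-equivariant $d$-containment through the induced maps on crossed products from \cite[Proposition~2.3]{gardella_regularity_?} (together with the observation that the construction preserves *-homomorphisms and bimodule maps), compose with the canonical *-homomorphism $\left(\prod_{\mathcal{F}}^{G}A\right)\rtimes G\rightarrow \prod_{\mathcal{F}}(A\rtimes G)$, and obtain the fixed-point inequality from the fact that $A^{G}$ is the (positively quantifier-free definable) trivial-isotypical sort, so that reduced products respect it. The additional details you supply --- separability bookkeeping, additivity and contractivity of the hat construction applied to the merely completely positive contractive sum $\psi$, and the functoriality identities --- are precisely what the paper leaves implicit.
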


\begin{proof}
Observe that if $A$ is a $G$-C*-algebra and $\mathcal{F}$ is a countably
incomplete filter, then there exists a canonical *-homomorphism $\left(
\prod_{\mathcal{F}}^{G}A\right) \rtimes G\rightarrow \prod_{\mathcal{F}%
}^{G}(A\rtimes G)$ in view of the universal property of the full crossed
product; see \cite{gardella_compact_2015}. This, together with the remarks
above, proves the first assertion. The second assertion can be proved
similarly observing that $A^{G}$ is positively quantifier-free $\mathcal{L}%
_{G}^{\text{C*}}$-definable.
\end{proof}

We also consider the following strengthening of the notion of $d$%
-containment.

\begin{definition}
\label{Definition:d-containment-commuting-towers} Let $A$, $B$, and $C$ be $%
G $-C*-algebras of density character less than $\kappa $, and let $\theta
\colon A\rightarrow B$ and $f\colon A\rightarrow C$ be $G$-equivariant
*-homomorphisms. We say that $\theta $ is $G$-equivariantly $d$-contained in 
$f$ \emph{with commuting towers} if for some (equivalently, any) $\kappa $%
-good filter $\mathcal{F}$, there exist $G$-equivariant completely positive
contractive order zero $A$-bimodule maps $\psi _{0},\ldots ,\psi _{d}\colon
B\rightarrow \prod_{\mathcal{F}}^{G}C$ whose sum $\psi =\psi _{0}+\cdots
+\psi _{d}$ is contractive, such that $\psi \circ \theta =\Delta _{C}\circ f$
and such that, for every $0\leq i<j\leq d$, the images of $\theta ( A)
^{\prime }\cap B$ under $\psi _{i}$ and $\psi _{j}$ commute.
\end{definition}

Observe that, in Definition \ref{Definition:d-containment-commuting-towers},
since the $\psi _{i}$'s are assumed to be $A$-bimodule maps, the image of $%
A^{\prime }\cap B$ under $\psi _{i}$ is contained in $f(A)^{\prime }\cap
\prod_{\mathcal{F}}^{G}C$. Similarly as $d$-containment, the notion of $G$%
-equivariant $d$-containment with commuting towers from Definition \ref%
{Definition:d-containment-commuting-towers} admits a natural syntactic
reformulation: $\theta \colon A\rightarrow B$ is $G$-equivariantly $d$%
-contained in $f\colon A\rightarrow C$ if and only if for any tuples $%
\overline{a}$ in $A$, $\overline{b}$ in $B$, and $\overline{b}^{\prime }\in
\theta (A)^{\prime }\cap B$, and for any tuple $\overline{w}$ of elements of
a finite dimensional C*-algebra, for any positive quantifier-free\ $\mathcal{%
L}_{G}^{\mathrm{oz,}A\text{-}A}$-formulas $\varphi (\overline{z},\overline{y}%
,\overline{y}^{\prime })$, for any positive quantifier-free\ $\mathcal{L}%
_{G}^{\mathrm{osos}\text{,}A\text{-}A}$-formulas $\psi (\overline{x},%
\overline{z},\overline{y})$, where the variables $\overline{z}$ have
finite-dimensional C*-algebras as sorts, and for any $\varepsilon >0$, there
exist tuples $\overline{c}_{0},\ldots ,\overline{c}_{d},\overline{c}%
_{0}^{\prime },\ldots ,\overline{c}_{d}^{\prime }$ in $C$ such that $\left[ 
\overline{c}_{i}^{\prime },\overline{c}_{j}^{\prime }\right] =0$ for $0\leq
i<j\leq d$, 
\begin{equation*}
\psi (f(\overline{a}),\overline{w},\overline{c}_{0}+\ldots +\overline{c}_{d},%
\overline{c}_{0}^{\prime }+\ldots +\overline{c}_{d}^{\prime })\leq \psi
(\theta (\overline{a}),\overline{w},\overline{b},\overline{b}^{\prime
})+\varepsilon
\end{equation*}%
and%
\begin{equation*}
\varphi (\overline{w},\overline{c}_{j})\leq \varphi (\overline{w},\overline{b%
},\overline{b}^{\prime })+\varepsilon \ \mbox { for }j=0,\ldots ,d\text{.}
\end{equation*}

\begin{definition}
\label{Definition:oz-dimension-commuting-towers}The $G$\emph{-}equivariant
order zero dimension\emph{\ with commuting towers }$\dim _{\mathrm{oz}}^{%
\mathrm{c},G}(\theta )$ of a $G$-equivariant *-homomorphism $\theta \colon
A\rightarrow B$ is the smallest integer $d\geq 0$ such that $\theta $ is $G$%
-equivariantly $d$-contained with commuting towers in the identity map $%
\mathrm{id}_{A}\colon A\rightarrow A$. If no such $d$ exists, we set $\dim _{%
\mathrm{oz}}^{G}(\theta )=\infty $.
\end{definition}

\subsection{Commutant \texorpdfstring{$d$}{d}-containment\label%
{Subsection:containment}}

The notion of commutant positive existential $\mathcal{L}_{G}^{\text{C*}}$%
-theory of a $G$-C*-algebra has been introduced in Subsection \ref%
{Subsection:commutant-theory}. In this section, we consider $d$-dimensional
generalizations of such a notion. We will regard (not necessarily unital)
C*-algebras as structures in the Kirchberg language introduced in Subsection %
\ref{Subsection:general}. This will allow us to formulate a definition
applicable in both the unital and the nonunital settings.

\begin{definition}
\label{Definition:commutant-containment} Let $d\in \mathbb{N}$, and let $A$
and $B$ be $G$-C*-algebras. Fix a cardinal $\kappa $ larger than the density
character of $A\ $and $B$. We say that $A$ is \emph{$G$-equivariantly
commutant $d$-contained} in $B$, and write $A\precsim _{d}B$, if for some
(equivalently, any) countably incomplete $\kappa $-good filter $\mathcal{F}$%
, and for any separable unital $G$-C*-subalgebra $C$ of $F_{\mathcal{F}%
}^{G}(A)$, there exist $G$-equivariant completely positive contractive order
zero maps $\eta _{0},\ldots ,\eta _{d}\colon C\rightarrow F_{\mathcal{F}%
}^{G}(B)$ with unital sum.

We say that $A$ is $G$-\emph{equivariantly commutant} $d$-\emph{contained}
in $B$ \emph{with commuting towers}, and write $A\precsim _{d}^{\mathrm{c}}B$%
, if one choose the maps $\eta _{0},\ldots ,\eta _{d}\colon C\rightarrow F_{%
\mathcal{F}}^{G}(B)$ as above to also have pairwise commuting ranges.
\end{definition}

Using Proposition \ref{Proposition:F} one can give a syntactic reformulation
of Definition \ref{Definition:commutant-containment}, which in particular
shows that the choice of the countably incomplete $\kappa $-good filter $%
\mathcal{F}$ is irrelevant. When $A,B$ are separable, one can take any
countably incomplete filter. It is not difficult to see that if $A\precsim
_{d-1}B$ and $B\precsim _{k-1}C$ then $A\precsim _{dk-1}C$.

\begin{remark}
\label{Remark:containment-ssa}Suppose that a separable unital $G$-C*-algebra 
$A$ admits a $G$-equivariant unital *-homomorphism into $A^{\prime }\cap
\prod_{\mathcal{F}}^{G}A$ for some (equivalently, any) countably incomplete
filter. Then $A$ is $G$-equivariantly commutant $d$-contained (with
commuting towers) in $B$ if and only if there exist $G$-equivariant
completely positive contractive order zero maps $\eta _{0},\ldots ,\eta
_{d}\colon A\rightarrow F_{\mathcal{F}}^{G}(B)$ (with commuting ranges) such
that $\eta _{0}+\cdots +\eta _{d}$ is unital. In particular, this applies
when $A$ is commutative, or when $A$ is strongly self-absorbing; see \cite[%
Theorem 4.6]{szabo_strongly_2015}.
\end{remark}

Let $D$ be a strongly self-absorbing $G$-C*-algebra. By \cite[Theorem 4.7
]{szabo_strongly_2015}, a separable $G$-C*-algebra $B$ is 
$G$-equivariantly $D$-absorbing if and only if $D$ is commutant $0$%
-contained in $B$. We will prove in Theorem \ref{Theorem:D-absorption} that
this is in turn equivalent to $D$ being commutant $d$-contained with
commuting towers in $B$ for any $d\in \mathbb{N}$.

\begin{remark}
\label{Remark:bootstrap}Let $A$ and $B$ be separable $G$-C*-algebras with $A$
$G$-equivariantly commutant $d$-contained (with commuting towers) in $B$.
Let $C$ be a separable subalgebra of $F_{\mathcal{F}}^{G}(B)$. It is a
consequence of Proposition~\ref{Proposition:F} that there exist completely
positive contractive order zero maps $\eta _{0},\ldots ,\eta _{d}\colon
A\rightarrow C^{\prime }\cap F_{\mathcal{F}}^{G}(B)$ (with commuting ranges)
such that $\eta _{0}+\cdots +\eta _{d}$ is unital.
\end{remark}

\subsection{Relationship between order zero dimension and 
\texorpdfstring{$d$}{d}-containment}

The notion of\emph{\ Rokhlin dimension }(with commuting towers) for a $G$%
-C*-algebra---see \cite[Definition 1.1]{hirshberg_rokhlin_2015}, \cite[%
Definition 3.2]{gardella_rokhlin_2014}---can be naturally presented in terms
of $d$-containment. Precisely, a $G$-C*-algebra $A$ has Rokhlin dimension
(with commuting towers) at most $d$ if and only if the $G$-C*-algebra $C(G)$
endowed with the canonical left translation action $\mathtt{Lt}$ is $G$%
-equivariantly commutant $d$-contained (with commuting towers) in $A$. We
will denote by $\mathrm{dim}_{\mathrm{Rok}}(A)$ the Rokhlin dimension of a $%
G $-C*-algebra $A$, and by $\mathrm{dim}_{\mathrm{Rok}}^{\mathrm{c}}(A)$ the
Rokhlin dimension with commuting towers of $A$. We point out that Rokhlin
dimension has recently been generalized to $\mathbb{R}$-actions (flows) in 
\cite{hirshberg_rokhlin_2016}.

In Proposition~\ref{Proposition:dimension-implies-containment}, we will
observe that there exists a relationship between the notion of $G$%
-equivariant order zero dimension of a $G$-equivariant *-homomorphism
introduced in Subsection \ref{Subsection:oz}, and the notion of $G$%
-equivariant commutant $d$-containment introduced in Subsection \ref%
{Subsection:containment}. Precisely, if $\theta \colon A\rightarrow B$ has $%
G $-equivariant order zero dimension (with commuting towers) at most $d$,
then $B$ is commutant $d$-contained (with commuting towers) in $A$.

\begin{lemma}
\label{Lemma:oz-dimension-commutant-embedding}Let $C$ be a unital $G$%
-C*-algebra, let $A$ and $B$ be $G$-C*-algebras, let $\kappa $ be a cardinal
larger than the density character of $A$ and $C$, and let $\mathcal{F}$ be a
countably incomplete $\kappa $-good filter. Suppose that $\theta \colon
A\rightarrow B$ is a $G$-equivariant *-homomorphism, and let $1_{C}\otimes
\theta \colon A\rightarrow C\otimes _{\max }B$ be the map $a\mapsto
1_{C}\otimes \theta (a)$. If $\dim _{\mathrm{oz}}^{G}(1_C\otimes \theta
)\leq d<+\infty $, then there exist $G$-equivariant completely positive
contractive order zero maps $\eta _{0},\ldots ,\eta _{d}\colon C\rightarrow
F_{\mathcal{F}}^{G}(A)$ such that $\sum_{j=0}^{d}\eta _{j}$ is unital.
The converse holds if $A=B$ and $\theta \colon A\rightarrow A$ is the
identity.
\end{lemma}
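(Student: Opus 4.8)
The plan is to prove the two implications by matching the combinatorics of Definition~\ref{Definition:d-containment} with the $\kappa$-saturation of $F_{\mathcal{F}}^{G}(A)$ supplied by Proposition~\ref{Proposition:F}. For the forward implication I would first dispose of the separable case by a direct restriction argument, and then reduce the general (non-separable, non-unital) case to it by realizing an appropriate positive quantifier-free type in the saturated structure $F_{\mathcal{F}}^{G}(A)$. The converse is an explicit construction built from the Winter--Zacharias structure theorem.

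So assume first that $A$ and $C$ are separable and that $\dim_{\mathrm{oz}}^{G}(1_{C}\otimes\theta)\le d$, which by Definition~\ref{Definition:oz-dimension} means $1_{C}\otimes\theta$ is $G$-equivariantly $d$-contained in $\mathrm{id}_{A}$. Applying Definition~\ref{Definition:d-containment} with $A_{0}=A$ and $B_{0}$ the separable $G$-C*-subalgebra of $C\otimes_{\max}B$ generated by $C\otimes 1_{B}$ and $(1_{C}\otimes\theta)(A)$, I obtain $G$-equivariant cpc order zero $A$-bimodule maps $\psi_{0},\dots,\psi_{d}\colon B_{0}\to\prod_{\mathcal{F}}^{G}A$ with contractive sum $\psi$ satisfying $\psi\circ(1_{C}\otimes\theta)=\Delta_{A}$ on $A$. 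Setting $\eta_{j}(c):=\psi_{j}(c\otimes 1_{B})$ (extending $\psi_{j}$ to multipliers when $B$ is non-unital), each $\eta_{j}$ is cpc order zero and $G$-equivariant, being the composite of the unital $G$-equivariant $*$-homomorphism $c\mapsto c\otimes 1_{B}$ with $\psi_{j}$. The essential point is that the $\eta_{j}$ are asymptotically central: since $c\otimes 1_{B}$ commutes with $(1_{C}\otimes\theta)(a)$ and $A$ acts on $\prod_{\mathcal{F}}^{G}A$ through $\Delta_{A}$, the bimodule property gives
\[
\eta_{j}(c)\Delta_{A}(a)=\psi_{j}\bigl((c\otimes 1_{B})(1_{C}\otimes\theta(a))\bigr)=\psi_{j}\bigl((1_{C}\otimes\theta(a))(c\otimes 1_{B})\bigr)=\Delta_{A}(a)\eta_{j}(c)
\]
for all $a\in A$, so $\eta_{j}(c)\in A'\cap\prod_{\mathcal{F}}^{G}A$ and hence defines an element of $F_{\mathcal{F}}^{G}(A)$. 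For unitality of the sum, positivity of $\psi$ and $(1_{C}\otimes\theta)(e_{\lambda})\le 1_{C}\otimes 1_{B}$ for an approximate unit $(e_{\lambda})$ of $A$ yield $\psi(1_{C}\otimes 1_{B})\ge\Delta_{A}(e_{\lambda})$; since $\Delta_{A}(e_{\lambda})\to 1$ in $F_{\mathcal{F}}^{G}(A)$ and $\psi(1_{C}\otimes 1_{B})\le 1$, the image of $\sum_{j}\eta_{j}(1_{C})$ is the unit. For general $A,C$ I would instead view the desired family as realizing the positive quantifier-free type (in variables indexed by a dense subset of $C$) encoding ``the $\eta_{j}$ are $G$-equivariant cpc order zero with unital sum''; the separable computation applied to separable $A_{0}\subset A$, $C_{0}\subset C$ shows this type is approximately finitely satisfiable in $A$, so Proposition~\ref{Proposition:F}, together with $\kappa>\max\{$density characters of $A,C\}$, furnishes an exact realization.

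For the converse, take $A=B$ and $\theta=\mathrm{id}_{A}$, and suppose we are given $G$-equivariant cpc order zero maps $\eta_{0},\dots,\eta_{d}\colon C\to F_{\mathcal{F}}^{G}(A)$ with $\sum_{j}\eta_{j}(1_{C})=1$. By the structure theorem \cite[Corollary~4.1]{winter_completely_2009} write $\eta_{j}=h_{j}\pi_{j}$ with $h_{j}=\eta_{j}(1_{C})$ a positive contraction and $\pi_{j}\colon C\to F_{\mathcal{F}}^{G}(A)$ a $*$-homomorphism whose range commutes with $h_{j}$. Lifting to $A'\cap\prod_{\mathcal{F}}^{G}A$, both $h_{j}$ and $\pi_{j}(C)$ commute with $\Delta_{A}(A)$, so by the universal property of $\otimes_{\max}$ the assignment $c\otimes a\mapsto\pi_{j}(c)\Delta_{A}(a)$ defines a $*$-homomorphism $\Pi_{j}\colon C\otimes_{\max}A\to\prod_{\mathcal{F}}^{G}A$, and $\psi_{j}:=h_{j}\Pi_{j}$ is a $G$-equivariant cpc order zero $A$-bimodule map. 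Their sum $\psi$ is contractive (using $\sum_{j}h_{j}=1$ and $h_{j}\Pi_{j}(x)\le\|x\|h_{j}$) and satisfies $\psi(1_{C}\otimes a)=\bigl(\sum_{j}\eta_{j}(1_{C})\bigr)\Delta_{A}(a)=\Delta_{A}(a)$, the annihilator contribution vanishing against $\Delta_{A}(A)$; thus $\psi\circ(1_{C}\otimes\mathrm{id}_{A})=\Delta_{A}$. Restricting the $\psi_{j}$ to any separable $B_{0}\subset C\otimes_{\max}A$ witnesses that $1_{C}\otimes\mathrm{id}_{A}$ is $G$-equivariantly $d$-contained in $\mathrm{id}_{A}$, whence $\dim_{\mathrm{oz}}^{G}(1_{C}\otimes\mathrm{id}_{A})\le d$.

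The hard part is the forward direction in full generality: the bimodule maps produced by $d$-containment only commute with the chosen separable $A_{0}$ and only represent the unit asymptotically through the approximate unit, so they do not, on the nose, yield maps on all of $C$ valued in the corrected central sequence algebra $F_{\mathcal{F}}^{G}(A)$. This is precisely the gap that the $\kappa$-saturation of Proposition~\ref{Proposition:F} closes, turning the approximate finitary data of the separable restriction argument into a single global family $\eta_{0},\dots,\eta_{d}$; the non-unital bookkeeping (multiplier extensions and the annihilator ideal) is routine once this saturation step is in place.
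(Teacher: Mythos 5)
Your forward implication is essentially the paper's own argument: you encode the desired maps $\eta_{0},\dots,\eta_{d}$ as a positive quantifier-free type, verify approximate satisfiability from the bimodule maps $\psi_{j}$ (centrality from the bimodule property, unitality from $\psi\circ(1_{C}\otimes\theta)=\Delta_{A}$ against an approximate unit), and close with the saturation of $F_{\mathcal{F}}^{G}(A)$ from Proposition~\ref{Proposition:F}. The paper does exactly this, except that it never needs your preliminary ``restriction to $C\otimes 1_{B}$'' step: instead of extending $\psi_{j}$ to multipliers it evaluates the $\psi_{j}$ on the honest elements $c_{i}\otimes a_{\lambda}$, $(a_{\lambda})$ an approximate unit of $A$, which is also how your multiplier issue would disappear once you unwind your own type argument. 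This half is correct.

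The converse, however, contains a genuine gap: the step ``Lifting to $A'\cap\prod_{\mathcal{F}}^{G}A$, both $h_{j}$ and $\pi_{j}(C)$ commute with $\Delta_{A}(A)$.'' To invoke the universal property of $\otimes_{\max}$ you need $\pi_{j}$ lifted \emph{as a *-homomorphism} (elementwise lifts of the $\pi_{j}(c)$ will not multiply correctly), and *-homomorphisms, or equivalently cpc order zero maps, do not in general lift along quotient maps: this amounts to projectivity of the cone $C_{0}((0,1])\otimes C$, which fails already for $C=C([0,1]^{2})$. For the particular quotient $A'\cap\prod_{\mathcal{F}}^{G}A\to F_{\mathcal{F}}^{G}(A)$ such lifts do exist, but proving this requires Kirchberg-style $\sigma$-ideal/saturation arguments in the spirit of \cite{kirchberg_central_2006}, i.e.\ real work that your proof does not supply (there is also the smaller imprecision that the supporting *-homomorphism $\pi_{j}$ from \cite[Corollary~4.1]{winter_completely_2009} takes values in a multiplier algebra, not in $F_{\mathcal{F}}^{G}(A)$ itself). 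The paper avoids lifting altogether: it first forms $\eta_{j}\otimes\mathrm{id}_{A}\colon C\otimes_{\max}A\to F_{\mathcal{F}}^{G}(A)\otimes_{\max}A$ (tensoring a cpc order zero map with the identity is well defined on maximal tensor products), and then composes with the canonical *-homomorphism $\Psi\colon F_{\mathcal{F}}^{G}(A)\otimes_{\max}A\to\prod_{\mathcal{F}}^{G}A$ induced by $([a_{i}],b)\mapsto[a_{i}b]$, which exists because multiplication against $\Delta_{A}(A)$ kills the annihilator ideal. Setting $\psi_{j}=\Psi\circ(\eta_{j}\otimes\mathrm{id}_{A})$ yields the required bimodule maps with no lifting problem. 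Your argument becomes correct if you either rearrange it this way or explicitly establish the $\sigma$-ideal lifting result you are implicitly using.
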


\begin{proof}
Observe that $\theta $ is necessarily injective. We can therefore identify $%
A $ with its image under $\theta $ inside $B$. Let $\psi _{0},\ldots ,\psi
_{d}\colon C\otimes _{\max }B\rightarrow \prod_{\mathcal{F}}^{G}A$ be $G$%
-equivariant completely positive contractive order zero $A$-bimodule maps
witnessing the fact that $\dim _{\mathrm{oz}}^{G}(1_{C}\otimes \theta )\leq
d $. Fix $c_{0}=1,c_{1},\ldots ,c_{n}\in C$. Let $t(\overline{x})$ be a
positive quantifier-free $\mathcal{L}_{G}^{\mathrm{oz}}$-type that is
realized by $(c_{0},\ldots ,c_{n})$ in $C$. Consider the corresponding
multiplier $\mathcal{L}_{G}^{K}(A)$-type $t_{A}^{\mathrm{m}}$ defined as in
Subsection \ref{Subsection:Kirchberg}. Let $t_{A}^{\mathrm{c}}(\overline{x})$
be the commutant type associated with $A$, and consider the $\mathcal{L}%
_{G}^{K}(A)$-type $q_{A}(\overline{y}_{0},\ldots ,\overline{y}_{d})$
consisting of conditions $\varphi (\overline{y}_{j})\leq r$ for any
condition $\varphi (\overline{x})\leq r$ in $t_{A}^{\mathrm{m}}(\overline{x}%
)\cup t_{A}^{\mathrm{c}}(\overline{x})$ and $j=0,1,\ldots ,d$, and $%
\left\Vert a(y_{0,0}+\cdots +y_{0,d})-a\right\Vert =0$ for every $a\in A$.
Fix an approximate unit $(a_{\lambda })_{\lambda \in \Lambda }$ for $A$.
Considering the tuple $\overline{b}:=(c_{0}\otimes a_{\lambda },\ldots
,c_{n}\otimes a_{\lambda })$ in $C\otimes _{\max }B$, for large enough $%
\lambda $, we conclude that the type $t_{A}^{\mathrm{m}}(\overline{x})\cup
t_{A}^{\mathrm{c}}(\overline{x})$ is approximately realized in $C\otimes
_{\max }B$. Recall that, by definition of $G$-equivariant order zero
dimension, $\psi _{0},\ldots ,\psi _{d}$ are completely contractive order
zero $A$-bimodule maps with contractive sums such that $(\psi _{0}+\cdots
+\psi _{d})|_{A}$ is the canonical diagonal $G$-equivariant $\ast $%
-homomorphism $A\rightarrow \prod_{\mathcal{F}}^{G}A$. Therefore, by \L os'
theorem, considering the elements $\psi _{0}(\overline{b}),\ldots ,\psi _{d}(%
\overline{b})$ shows that the type $q_{A}(\overline{x})$ is approximately
realized in $A$. The conclusion follows from quantifier-free positive $%
\mathcal{L}_{G}^{K}(A)$-saturation of $F_{\mathcal{F}}^{G}(A)$; see
Proposition \ref{Proposition:F}.

Conversely, suppose that $A=B$, that $\theta \colon A\rightarrow A$ is the
identity map $\mathrm{id}_{A}$ of $A$, and that there exist $G$-equivariant
completely positive contractive order zero maps $\eta _{0},\ldots ,\eta
_{d}\colon C\rightarrow F_{\mathcal{F}}^{G}(A)$ such that $\eta _{0}+\cdots
+\eta _{d}$ is unital. Then the function $F_{\mathcal{F}}^{G}(A)\times
A\rightarrow \prod_{\mathcal{F}}^{G}A$, given by $([a_{i}]_{i\in
I},b)\mapsto \lbrack a_{i}b]_{i\in I}$, induces a $G$-equivariant
*-homomorphism $\Psi \colon F_{\mathcal{F}}^{G}(A)\otimes _{\max
}A\rightarrow \prod_{\mathcal{F}}^{G}A$, by the universal property of the
maximal tensor product. One can then define $\psi _{j}=\Psi \circ (\eta
_{j}\otimes \mathrm{id}_{A})\colon C\otimes _{\max }A\rightarrow \prod_{%
\mathcal{F}}^{G}A$, for $j=0,\ldots ,d$. These are well defined $G$%
-equivariant completely positive contractive order zero $A$-bimodule maps,
which witness that $\dim _{\mathrm{oz}}^{G}(\theta )\leq d$.
\end{proof}

Recall that, if $A$ is a C*-algebra and $C$ is a unital C*-algebra, then the
relative commutant of $1_{C}\otimes A$ inside $C\otimes _{\max }A$ is equal
to $C\otimes Z(A) $, where $Z(A) $ is the center of $A$; see \cite[Theorem 4]%
{archbold_centre_1975}. Using this fact, the same proof as Lemma \ref%
{Lemma:oz-dimension-commutant-embedding} shows the following.

\begin{lemma}
\label{Lemma:oz-dimension-commutant-embedding-commuting-towers}Let $C$ be a
unital $G$-C*-algebra, let $A$ and $B$ be $G$-C*-algebras, let $\kappa $ be
a cardinal larger than the density character of $A$ and $C$, and let $%
\mathcal{F}$ be a countably incomplete $\kappa $-good filter. Suppose that $%
\theta \colon A\rightarrow B$ is a $G$-equivariant *-homomorphism, and let $%
1_{C}\otimes \theta \colon A\rightarrow C\otimes _{\max }B$ be the map $%
a\mapsto 1_{C}\otimes \theta (a)$. If $\dim _{\mathrm{oz}}^{\text{c}%
,G}(1_C\otimes \theta )\leq d<+\infty $, then there exist $G$-equivariant
completely positive contractive order zero maps $\eta _{0},\ldots ,\eta
_{d}\colon C\rightarrow F_{\mathcal{F}}^{G}(A)$ with commuting ranges such
that $\eta _{0}+\cdots +\eta _{d}$ is unital. The converse holds if $A=B$
and $\theta \colon A\rightarrow A$ is the identity map.
\end{lemma}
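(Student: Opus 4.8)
The plan is to transcribe the proof of Lemma~\ref{Lemma:oz-dimension-commutant-embedding} almost verbatim, inserting the commuting-towers bookkeeping at the two places where it matters, and invoking Archbold's identification of the relative commutant of $1_{C}\otimes A$ inside $C\otimes_{\max}A$ with $C\otimes Z(A)$ to control commutation. For the forward direction I would run the type-realization argument exactly as before, identifying $A$ with $\theta(A)\subseteq B$ and fixing an approximate unit $(a_{\lambda})_{\lambda\in\Lambda}$ for $A$. The only new input is that the maps $\psi_{0},\ldots,\psi_{d}\colon C\otimes_{\max}B\rightarrow\prod_{\mathcal{F}}^{G}A$ now witness $d$-containment \emph{with commuting towers}, so that the images of $(1_{C}\otimes\theta(A))'\cap(C\otimes_{\max}B)$ under $\psi_{i}$ and $\psi_{j}$ commute for $i\neq j$. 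I would observe that the representatives $c\otimes a_{\lambda}$ used to realize a tuple from $C$ approximately commute with $1_{C}\otimes\theta(a)$, since $\lVert (c\otimes a_{\lambda})(1_{C}\otimes\theta(a))-(1_{C}\otimes\theta(a))(c\otimes a_{\lambda})\rVert=\lVert c\rVert\,\lVert a_{\lambda}\theta(a)-\theta(a)a_{\lambda}\rVert\to 0$. Thus these representatives lie approximately in the relevant relative commutant, and the commuting-towers hypothesis forces $[\psi_{i}(\overline b),\psi_{j}(\overline b)]\approx 0$. Enlarging the type $q_{A}$ by the conditions $[\overline{y}_{i},\overline{y}_{j}]=0$ for $i<j$ and appealing to quantifier-free positive $\mathcal{L}_{G}^{K}(A)$-saturation of $F_{\mathcal{F}}^{G}(A)$ from Proposition~\ref{Proposition:F} then produces order zero maps $\eta_{0},\ldots,\eta_{d}$ with commuting ranges and unital sum.

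For the converse I would reuse the construction $\psi_{j}=\Psi\circ(\eta_{j}\otimes\mathrm{id}_{A})$ of the previous lemma, where $\Psi\colon F_{\mathcal{F}}^{G}(A)\otimes_{\max}A\rightarrow\prod_{\mathcal{F}}^{G}A$ is induced by multiplication; these are already known to be $G$-equivariant completely positive contractive order zero $A$-bimodule maps, so only the commuting-towers condition remains. This is exactly where Archbold's theorem enters: by \cite[Theorem~4]{archbold_centre_1975} one has $(1_{C}\otimes A)'\cap(C\otimes_{\max}A)=C\otimes Z(A)$, so it suffices to check that $\psi_{i}(C\otimes Z(A))$ and $\psi_{j}(C\otimes Z(A))$ commute. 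For $c,c'\in C$ and $z,z'\in Z(A)$ one computes $\psi_{i}(c\otimes z)=\eta_{i}(c)\,\Delta_{A}(z)$, where $\Delta_{A}(z)$ is the diagonal image of $z$. Since $z\in Z(A)$, its diagonal image commutes with every element of $F_{\mathcal{F}}^{G}(A)=A'\cap\prod_{\mathcal{F}}^{G}A$; combining this with the hypothesis that $\eta_{i}$ and $\eta_{j}$ have commuting ranges, and with $\Delta_{A}(z)\Delta_{A}(z')=\Delta_{A}(z'z)=\Delta_{A}(z')\Delta_{A}(z)$, yields $\psi_{i}(c\otimes z)\,\psi_{j}(c'\otimes z')=\psi_{j}(c'\otimes z')\,\psi_{i}(c\otimes z)$, as required.

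The step I expect to be the main obstacle is precisely this converse verification: the argument succeeds only because the relative commutant is $C\otimes Z(A)$ rather than all of $C\otimes A$, since it is the centrality of the second tensor factor that guarantees $\Delta_{A}(z)$ commutes with the central-sequence algebra $F_{\mathcal{F}}^{G}(A)$ and hence with the ranges of the $\eta_{j}$. Everything else is a routine transcription of the proof of Lemma~\ref{Lemma:oz-dimension-commutant-embedding}, together with the approximate-unit observation in the forward direction that places the representatives of $C$ in the relevant relative commutant.
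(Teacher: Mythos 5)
Your converse direction is correct and is precisely what the paper's one-line proof intends: Archbold's theorem \cite[Theorem 4]{archbold_centre_1975} is invoked exactly so that the commuting-towers condition for the maps $\psi _{j}=\Psi \circ (\eta _{j}\otimes \mathrm{id}_{A})$ need only be verified on $C\otimes Z(A)$, where it follows from the commuting ranges of the $\eta _{j}$ together with the fact that $\Delta _{A}(z)$, for $z\in Z(A)$, commutes with every element of $\prod_{\mathcal{F}}^{G}A$ (coordinatewise). Checking on elementary tensors suffices by linearity and density, so this half of your argument is fine.

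The forward direction, however, contains a genuine gap. You pass from \textquotedblleft $c\otimes a_{\lambda }$ approximately commutes with each $1_{C}\otimes \theta (a)$\textquotedblright\ to \textquotedblleft $c\otimes a_{\lambda }$ lies approximately in the relative commutant $(1_{C}\otimes \theta (A))^{\prime }\cap (C\otimes _{\max }B)$\textquotedblright , and only the latter would let you apply the hypothesis, since Definition \ref{Definition:d-containment-commuting-towers} constrains $\psi _{i},\psi _{j}$ only on elements lying \emph{exactly} in that relative commutant. This inference is false for C*-algebras: small commutators with every element of a subalgebra do not bound the distance to the relative commutant. A counterexample inside this very setting: take $G$ trivial, $A=B=\mathcal{K}$, $\theta =\mathrm{id}$, and $C$ any unital C*-algebra. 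By Archbold's theorem the relative commutant is $C\otimes Z(\mathcal{K})=0$, yet for an approximate unit $(a_{\lambda })$ of finite-rank projections one has $\Vert \lbrack c\otimes a_{\lambda },1\otimes k]\Vert \rightarrow 0$ for every $k\in \mathcal{K}$ while the distance from $c\otimes a_{\lambda }$ to the relative commutant remains $\Vert c\Vert $. So nothing forces $[\psi _{i}(\overline{b}),\psi _{j}(\overline{b})]\approx 0$ for your representatives, and the enlarged type cannot be approximately realized by this argument.

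The repair, and the point of the reference to Archbold in this direction as well, is to use elements lying exactly in the relative commutant. When $A$ is unital, replace $c_{k}\otimes a_{\lambda }$ by $c_{k}\otimes \theta (1_{A})$: these lie in $(1_{C}\otimes \theta (A))^{\prime }\cap (C\otimes _{\max }B)$ because $\theta (1_{A})$ commutes with $\theta (A)$; the map $c\mapsto c\otimes \theta (1_{A})$ is a *-homomorphism (as $\theta (1_{A})$ is a projection), so the order zero type of $(c_{0},\ldots ,c_{n})$ is preserved; the bimodule identities give the multiplier, commutant, and unit conditions exactly, e.g.\ $\Delta _{A}(a)\psi (1_{C}\otimes \theta (1_{A}))=\psi (1_{C}\otimes \theta (a))=\Delta _{A}(a)$; and the commuting-towers hypothesis now applies verbatim to give $[\psi _{i}(c_{k}\otimes \theta (1_{A})),\psi _{j}(c_{l}\otimes \theta (1_{A}))]=0$, after which saturation (Proposition \ref{Proposition:F}) concludes as in Lemma \ref{Lemma:oz-dimension-commutant-embedding}. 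Note that for non-unital $A$ Archbold's theorem says the only elements available are those of $C\otimes Z(A)$, so the approximate-unit trick cannot be salvaged by central elements unless $A$ has a central approximate unit; this case requires a separate argument, which neither your proposal nor the paper's proof-by-reference supplies.
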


When $C=C(G)$, we deduce the following:

\begin{lemma}
\label{Lemma:rokhlin-and-oz} Let $A$ be a $G$-C*-algebra. Denote by $\theta
\colon A\rightarrow C(G)\otimes A$ the second factor embedding. Then $\dim _{%
\mathrm{Rok}}(A)=\dim _{\mathrm{oz}}^{G}(\theta )$ and $\dim _{\mathrm{Rok}%
}^{\mathrm{c}}(A)=\dim _{\mathrm{oz}}^{\mathrm{c},G}(\theta )$.
\end{lemma}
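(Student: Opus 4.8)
The plan is to read off both equalities directly from the two preceding lemmas, once the objects are matched up correctly. Throughout I take $C=C(G)$ equipped with the canonical left translation action $\mathtt{Lt}$, and I let the identity map $\mathrm{id}_A\colon A\rightarrow A$ play the role of the homomorphism $\theta$ in Lemma~\ref{Lemma:oz-dimension-commutant-embedding} (so that $A=B$ there). Since $G$ is second countable and compact, $C(G)$ is a separable, unital, commutative $G$-C*-algebra; in particular it is nuclear, so $C(G)\otimes_{\max}A=C(G)\otimes A$, and the map $1_{C(G)}\otimes \mathrm{id}_A$ appearing in Lemma~\ref{Lemma:oz-dimension-commutant-embedding} is exactly the second factor embedding $\theta$ of the present statement. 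I fix a cardinal $\kappa$ larger than the density characters of $A$ and of $C(G)$, together with a countably incomplete $\kappa$-good filter $\mathcal{F}$; by the remark following Definition~\ref{Definition:commutant-containment} none of the notions involved depends on this choice.

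For the first equality, fix $d\in\mathbb{N}$. By the description of Rokhlin dimension recalled before the statement, $\dim_{\mathrm{Rok}}(A)\leq d$ holds if and only if $(C(G),\mathtt{Lt})$ is $G$-equivariantly commutant $d$-contained in $A$. Because $C(G)$ is commutative, its diagonal embedding into $\prod_{\mathcal{F}}^{G}C(G)$ has range inside $C(G)'\cap\prod_{\mathcal{F}}^{G}C(G)$, so the hypothesis of Remark~\ref{Remark:containment-ssa} is met with $C(G)$ as the contained algebra and $A$ as the container. That remark then rephrases commutant $d$-containment as the existence of $G$-equivariant completely positive contractive order zero maps $\eta_0,\dots,\eta_d\colon C(G)\rightarrow F_{\mathcal{F}}^{G}(A)$ whose sum is unital. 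This is precisely the condition that, by the biconditional part of Lemma~\ref{Lemma:oz-dimension-commutant-embedding} (available since $A=B$ and $\theta=\mathrm{id}_A$), is equivalent to $\dim_{\mathrm{oz}}^{G}(1_{C(G)}\otimes\mathrm{id}_A)\leq d$, that is, to $\dim_{\mathrm{oz}}^{G}(\theta)\leq d$. Taking the infimum over $d$ yields $\dim_{\mathrm{Rok}}(A)=\dim_{\mathrm{oz}}^{G}(\theta)$.

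The second equality is obtained by running the identical argument in the commuting-towers setting: the definition of $\dim_{\mathrm{Rok}}^{\mathrm{c}}$ requires $(C(G),\mathtt{Lt})$ to be commutant $d$-contained in $A$ with commuting towers, Remark~\ref{Remark:containment-ssa} supplies the maps $\eta_0,\dots,\eta_d$ with the additional requirement of pairwise commuting ranges, and one concludes with the biconditional part of Lemma~\ref{Lemma:oz-dimension-commutant-embedding-commuting-towers} in place of Lemma~\ref{Lemma:oz-dimension-commutant-embedding}.

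Since the substantive content—the translation between order zero liftings into $F_{\mathcal{F}}^{G}(A)$ and the syntactic description of $G$-equivariant order zero dimension—has already been carried out in those two lemmas, the only genuine points to verify here are bookkeeping ones: that the left translation action is the one used on $C(G)$ both in the definition of Rokhlin dimension and in the application of the lemmas, that nuclearity of $C(G)$ reconciles $C(G)\otimes_{\max}A$ with $C(G)\otimes A$, and that commutativity of $C(G)$ is what licenses the use of Remark~\ref{Remark:containment-ssa}. I expect no essential obstacle beyond keeping these identifications straight.
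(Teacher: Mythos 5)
Your proof is correct and is precisely the argument the paper intends: the lemma appears there with no separate proof, introduced by ``When $C=C(G)$, we deduce the following,'' i.e.\ as an immediate consequence of Lemma~\ref{Lemma:oz-dimension-commutant-embedding} and Lemma~\ref{Lemma:oz-dimension-commutant-embedding-commuting-towers} applied with $C=C(G)$, $A=B$, $\theta=\mathrm{id}_A$, combined with the reformulation of Rokhlin dimension as commutant $d$-containment of $(C(G),\mathtt{Lt})$ in $A$. Your extra bookkeeping---nuclearity of $C(G)$ identifying $\otimes_{\max}$ with $\otimes$, and commutativity plus separability of $C(G)$ licensing Remark~\ref{Remark:containment-ssa}---simply makes explicit the identifications the paper leaves tacit.
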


\begin{proposition}
\label{Proposition:dimension-implies-containment}Suppose that $\theta \colon
A\rightarrow B$ is a $G$-equivariant *-homomorphism. If $\dim _{\mathrm{%
\mathrm{oz}}}^{G}(\theta )\leq d$, then $B\precsim _{d}A$. If $\dim _{%
\mathrm{oz}}^{\mathrm{c},G}( \theta ) \leq d$, then $B\precsim _{d}^{\mathrm{%
c}}A$.
\end{proposition}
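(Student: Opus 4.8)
The plan is to reduce both assertions to Lemmas~\ref{Lemma:oz-dimension-commutant-embedding} and~\ref{Lemma:oz-dimension-commutant-embedding-commuting-towers} by manufacturing, for each separable unital subalgebra of the central sequence algebra, a harmless auxiliary tensor factor. Fix a countably incomplete $\kappa$-good filter $\mathcal{F}$, with $\kappa$ larger than the density characters of $A$ and $B$, and fix a separable unital $G$-C*-subalgebra $C\subseteq F_{\mathcal{F}}^{G}(B)$. Unwinding Definition~\ref{Definition:commutant-containment}, what I must produce are $G$-equivariant completely positive contractive order zero maps $\eta_{0},\ldots,\eta_{d}\colon C\to F_{\mathcal{F}}^{G}(A)$ with unital sum (and, in the commuting-towers case, with pairwise commuting ranges). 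I will regard $C$ as a fixed unital $G$-C*-algebra and apply the two lemmas to this $C$ together with the given $\theta\colon A\rightarrow B$.

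The key observation I would record is that, since $C$ consists of (classes of) families in $B$ that asymptotically centralize $B$, the assignment $c\otimes b\mapsto c\cdot\Delta_{B}(b)$ defines a $G$-equivariant $\ast$-homomorphism $\mu\colon C\otimes_{\max}B\rightarrow\prod_{\mathcal{F}}^{G}B$, where $\Delta_{B}$ is the diagonal embedding and $\cdot$ is the canonical module action of $F_{\mathcal{F}}^{G}(B)$; for unital $B$ this is simply the map induced by the commuting inclusions $C\hookrightarrow\prod_{\mathcal{F}}^{G}B$ and $\Delta_{B}$ via the universal property of the maximal tensor product. With respect to the $B$-bimodule structure induced by $1_{C}\otimes\mathrm{id}_{B}$, the map $\mu$ is a $B$-bimodule map satisfying $\mu\circ(1_{C}\otimes\mathrm{id}_{B})=\Delta_{B}$, so it witnesses $\dim_{\mathrm{oz}}^{G}(1_{C}\otimes\mathrm{id}_{B})=0$. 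As $\mu$ is a single $\ast$-homomorphism, the commuting-towers requirement is vacuous, whence also $\dim_{\mathrm{oz}}^{\mathrm{c},G}(1_{C}\otimes\mathrm{id}_{B})=0$.

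Next I would write $1_{C}\otimes\theta=(1_{C}\otimes\mathrm{id}_{B})\circ\theta$ and apply the composition inequality of Proposition~\ref{prop:propertiesOzDim}(1), which gives
\[
\dim_{\mathrm{oz}}^{G}(1_{C}\otimes\theta)+1\leq(\dim_{\mathrm{oz}}^{G}(1_{C}\otimes\mathrm{id}_{B})+1)(\dim_{\mathrm{oz}}^{G}(\theta)+1)=d+1,
\]
so that $\dim_{\mathrm{oz}}^{G}(1_{C}\otimes\theta)\leq d$ whenever $\dim_{\mathrm{oz}}^{G}(\theta)\leq d$. Lemma~\ref{Lemma:oz-dimension-commutant-embedding} then yields the desired $\eta_{0},\ldots,\eta_{d}\colon C\rightarrow F_{\mathcal{F}}^{G}(A)$ with unital sum, which is precisely $B\precsim_{d}A$. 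For the commuting-towers statement I would run the identical argument with $\dim_{\mathrm{oz}}^{\mathrm{c},G}$ throughout and invoke Lemma~\ref{Lemma:oz-dimension-commutant-embedding-commuting-towers} at the end. This requires the commuting-towers analogue of the composition inequality; since the composing map $\mu$ is a single $\ast$-homomorphism, pre-composition by $\mu$ disturbs no commutation relations among the order zero witnesses of $\theta$, so the proof of Proposition~\ref{prop:propertiesOzDim}(1) adapts to give $\dim_{\mathrm{oz}}^{\mathrm{c},G}(1_{C}\otimes\theta)\leq\dim_{\mathrm{oz}}^{\mathrm{c},G}(\theta)\leq d$.

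I expect the main difficulty to lie in the non-unital verification of the key observation: when $B$ is non-unital, $F_{\mathcal{F}}^{G}(B)$ is not literally a subalgebra of $\prod_{\mathcal{F}}^{G}B$ but acts on it through the Kirchberg-invariant module structure, so one must check that $c\cdot\Delta_{B}(b)$ is independent of the representative of $c$, that $\mu$ is genuinely order zero and $B$-bimodular, and that the unit of $F_{\mathcal{F}}^{G}(B)$ acts as the identity, forcing $\mu\circ(1_{C}\otimes\mathrm{id}_{B})=\Delta_{B}$. A secondary point requiring care is the commuting-towers version of the composition inequality, which is not isolated as a separate statement but follows cleanly precisely because the second map in the composition is a $\ast$-homomorphism.
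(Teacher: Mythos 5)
Your argument is correct and coincides with the paper's own proof: the paper performs the same reduction, obtaining $\dim_{\mathrm{oz}}^{G}(1_{C}\otimes\mathrm{id}_{B})=0$ from the converse direction of Lemma~\ref{Lemma:oz-dimension-commutant-embedding} (whose proof is precisely your map $\mu$, so your explicit construction could be replaced by a citation), then applying the composition inequality, and concluding with the forward direction of Lemma~\ref{Lemma:oz-dimension-commutant-embedding}, respectively Lemma~\ref{Lemma:oz-dimension-commutant-embedding-commuting-towers} for the commuting-towers statement. Your extra care lands on exactly the two points where the paper is loose: the paper cites part~(2) of Proposition~\ref{prop:propertiesOzDim} where part~(1) is what is actually used, and it says nothing about the commuting-towers analogue of the composition inequality, which, as you observe, holds in the case at hand because the outer witness is a single $*$-homomorphism.
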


\begin{proof}
Fix a countably incomplete filter $\mathcal{F}$. Suppose that $\dim _{%
\mathrm{\mathrm{oz}}}^{G}(\theta )\leq d$. Fix a separable unital $G$%
-C*-subalgebra $C$ of $F_{\mathcal{F}}(B)$. Then by Lemma \ref%
{Lemma:oz-dimension-commutant-embedding} the second factor embedding $%
1_{C}\otimes \mathrm{id}_{B}\colon B\rightarrow C\otimes _{\max }B$ has
order zero dimension equal to zero. By Proposition \ref{prop:propertiesOzDim}%
(2), the $G$-equivariant *-homomorphism $(1_{C}\otimes \mathrm{id}_{B})\circ
\theta \colon A\rightarrow C\otimes _{\max }B$ has order zero dimension at
most $d$. Therefore by Lemma \ref{Lemma:oz-dimension-commutant-embedding}
again there exist $G$-equivariant completely positive contractive order zero
maps $\eta _{0},\ldots ,\eta _{d}\colon C\rightarrow F_{\mathcal{F}}^{G}(A)$
such that $\eta _{0}+\cdots +\eta _{d}$ is unital. By Lemma~\ref%
{Lemma:oz-dimension-commutant-embedding}, this concludes the proof.

The second assertion can be proved in the same way, by replacing Lemma \ref%
{Lemma:oz-dimension-commutant-embedding} with Lemma \ref%
{Lemma:oz-dimension-commutant-embedding-commuting-towers}.
\end{proof}

\subsection{Dimension functions\label{Subsection:dimension}}

By a \emph{dimension function} for (nuclear) $G$-C*-algebras we mean a
function from the class of (nuclear) C*-algebras to $\left\{ 0,1,2,\ldots
,\infty \right\} $.

\begin{definition}
\label{Definition:AE}A dimension function $\dim $ for $G$-C*-algebras is
said to be \emph{positively $\forall \exists $-axiomatizable} if there
exists a collection $\mathcal{F}$ of formulas $\xi ( \overline{x},\overline{z%
}_{0},\ldots ,\overline{z}_{d},\overline{y}_{0},\ldots ,\overline{y}_{d}) $
of the form%
\begin{equation*}
\max \left\{ \eta ( \overline{x},\overline{z}_{0},\ldots ,\overline{z}_{d})
,\varphi _{0}( \overline{z}_{0},\overline{y}_{0}) ,\ldots ,\varphi _{d}(%
\overline{z}_{d},\overline{y}_{d}),\psi ( \overline{x},\overline{z}%
_{0},\ldots ,\overline{z}_{d},\overline{y}_{0}+\cdots +\overline{y}_{d})
\right\},
\end{equation*}%
where

\begin{enumerate}
\item $\overline{z}_{0},\ldots ,\overline{z}_{d}$ have finite-dimensional
C*-algebras as sorts,

\item $\eta $ is a positive quantifier-free \ $\mathcal{L}_{G}^{\text{C*}}$%
-formula,

\item $\varphi $ is a positive quantifier-free \ $\mathcal{L}_{G}^{\mathrm{oz%
}}$-formula,

\item $\psi $ is a positive quantifier-free \ $\mathcal{L}_{G}^{\mathrm{osos}%
}$-formula,
\end{enumerate}
such that the following holds: for a $G$-C*-algebra $A$, $\dim (A)\leq d$ if
and only if%
\begin{equation*}
A\models \sup\limits_{\overline{x}}\inf\limits_{\overline{z}_{0}}\cdots
\inf\limits_{\overline{z}_{d}}\inf\limits_{\overline{y}_{0}}\cdots
\inf\limits_{\overline{y}_{d}}\xi (\overline{x},\overline{z}_{0},\ldots ,%
\overline{z}_{d},\overline{y}_{0},\ldots ,\overline{y}_{d})=0\text{.}
\end{equation*}
\end{definition}

\begin{definition}
A dimension function for \emph{nuclear }$G$-C*-algebras is said to be \emph{%
nuclearly }positively $\forall \exists $-axiomatizable if in Definition \ref%
{Definition:AE} we can simultaneously choose $\varphi $ and $\psi $ to be
positive quantifier-free formulas in $\mathcal{L}_{G}^{\mathrm{oz}\text{-}%
\mathrm{nuc}}$ and $\mathcal{L}_{G}^{\text{C*-}\mathrm{nuc}}$, respectively.
\end{definition}

\begin{example}
\label{eg:NDimDr} The following are positively $\forall \exists $%
-axiomatizable dimension functions for nuclear C*-algebras:

\begin{enumerate}
\item Nuclear dimension. Indeed, one can consider variables $( \overline{z}%
_{0},\ldots ,\overline{z}_{d}) $ with sorts finite-dimensional C*-algebras $%
F_{0},\ldots ,F_{d}$ and then the formulas

\begin{itemize}
\item $\eta (\overline{x},\overline{z}_{0},\ldots ,\overline{z}_{d})\equiv
\max\limits_{j=0,\ldots ,d}\inf\limits_{s\in \mathrm{CPC}(A,F_{j})}\max%
\limits_{k}\left\Vert s(x_{k})-z_{j,k}\right\Vert $;

\item $\varphi _{j}(\overline{y}_{j},\overline{z}_{j})\equiv
\inf\limits_{t\in \mathrm{CPC}(F_{j},A)}\max\limits_{k}\left\Vert
t(z_{j,k})-y_{j,k}\right\Vert $, for a fixed $j=0,\ldots ,d$;

\item $\psi (\overline{x},\overline{z}_{0},\ldots ,\overline{z}_{d},%
\overline{y})\equiv \max\nolimits_{k}\left\Vert x_{k}-y_{k}\right\Vert $.
\end{itemize}

\item Decomposition rank. In fact, one may just consider the same formulas $%
\eta $ and $\psi $ as in (1), together with 
\begin{equation*}
\varphi _{j}(\overline{z}_{j},\overline{y}_{j})\equiv \inf\limits_{t\in 
\mathrm{CPC}(F_{j},A)}\max\limits_{k}\left\Vert
t(z_{j,k})-y_{j,k}\right\Vert ,\ \mbox{ for }j=0,\ldots ,d.
\end{equation*}
\end{enumerate}
\end{example}

If $\overline{x},\overline{y}$ are $n$-tuples of variables, we write $\delta
(\overline{x},\overline{y})$ for the formula 
\begin{equation*}
\max\limits_{1\leq j,k\leq n}\left\Vert x_{j}y_{k}-y_{k}x_{j}\right\Vert 
\text{.}
\end{equation*}

\begin{definition}
\label{Definition:existentially-axiomatizable} A dimension function dim for
(separable) $G$-C*-algebras is said to be \emph{\ commutant positively
existentially axiomatizable} if there exists a collection $\mathcal{F}$ of
formulas $\xi (\overline{x},\overline{y}_{0},\ldots ,\overline{y}_{d})$ of
the form%
\begin{equation*}
\max_{\substack{ 0\leq j<k\leq d  \\ 1\leq \ell \leq n}}\{\delta (\overline{x%
},\overline{y}_{j}),\varphi (\overline{y}_{j}),\left\Vert x_{\ell
}(y_{0,j}+\cdots +y_{d,j})-x_{\ell }\right\Vert \},
\end{equation*}%
where $\varphi $ is a quantifier-free\ positive $\mathcal{L}_{G}^{\mathrm{oz}%
}$-formula with parameters from finite-dimensional C*-algebras, such that
the following hods: for a (separable) $G$-C*-algebra $A$, one has $\dim
(A)\leq d$ if and only if%
\begin{equation*}
A\models \sup\limits_{\overline{x}}\inf\limits_{\overline{y}_{0}}\cdots
\inf\limits_{\overline{y}_{d}}\xi (\overline{x},\overline{y}_{0},\ldots ,%
\overline{y}_{d})=0\text{.}
\end{equation*}
\end{definition}

\begin{definition}
\label{Definition:commutant-commuting}Suppose that $\dim $ is a dimension
function for (separable) $G$-C*-algebras. We say that $\dim $ is\emph{\
commutant positively existentially axiomatizable with commuting towers} if
there exists a collection $\mathcal{F}$ of formulas $\xi (\overline{x},%
\overline{y}_{0},\ldots ,\overline{y}_{d})$ of the form%
\begin{equation*}
\max_{\substack{ 0\leq j<k\leq d  \\ 1\leq \ell \leq n}}\{\delta (\overline{x%
},\overline{y}_{j}),\delta (\overline{y}_{j},\overline{y}_{k}),\varphi (%
\overline{z}_{j},\overline{y}_{j}),\left\Vert x_{\ell }(y_{0,j}+\cdots
+y_{d,j})-x_{\ell }\right\Vert \}
\end{equation*}%
where $\varphi $ is a positive quantifier-free \ $\mathcal{L}_{G}^{\mathrm{oz%
}}$-formula with parameters from finite-dimensional C*-algebras, such that
the following hods: for a (separable) $G$-C*-algebra $A$, one has $\dim
(A)\leq d$ if and only if%
\begin{equation*}
A\models \sup\limits_{\overline{x}}\inf\limits_{\overline{y}_{0}}\cdots
\inf\limits_{\overline{y}_{d}}\xi (\overline{x},\overline{z}_{0},\ldots ,%
\overline{z}_{d},\overline{y}_{0},\ldots ,\overline{y}_{d})=0\text{.}
\end{equation*}
\end{definition}

\begin{example}
\label{Example:dim-Rok}Suppose that $C$ is a fixed $G$-C*-algebra. Set $\dim
( A) \leq d$ if and only if $C$ is commutant $d$-contained in $C$ (with
commuting towers). Then $\dim $ is a dimension function for $G$-C*-algebras
that is commutant positively existentially axiomatizable (with commuting
towers).

In the particular case when $C$ is the $G$-C*-algebra $C(G)$ endowed with
the canonical shift action of $G$, this says that Rokhlin dimension (with
commuting towers) is a commutant positively existentially axiomatizable
(with commuting towers) dimension function.
\end{example}

The following is a consequence of Definition \ref%
{Definition:commutant-commuting} and the syntactic characterization of
commutant $d$-containment.

\begin{proposition}
\label{Proposition:commuting} Let $\dim $ be a dimension function for
separable $G$-C*-algebras that is positively existentially axiomatizable
(with commuting towers). Let $A$ and $B$ be separable $G$-C*-algebras such
that $A$ is commutant $d$-contained (with commuting towers) in $B$. Then 
\begin{equation*}
\dim (B)+1\leq (d+1)(\dim (A)+1)\text{.}
\end{equation*}
\end{proposition}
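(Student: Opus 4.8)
The plan is to deduce the inequality from the transitivity of commutant $d$-containment, using the axiomatizability of $\dim$ to convert both occurrences of $\dim$ into commutant-containment statements and then transporting witnesses through a single composition of order zero maps. If $\dim(A)=\infty$ there is nothing to prove, so set $e:=\dim(A)$ and assume $e<\infty$. Fix a countably incomplete $\kappa$-good filter $\mathcal{F}$, with $\kappa$ larger than the density characters of $A$ and $B$. I treat the commuting-towers case (Definition~\ref{Definition:commutant-commuting} and $\precsim^{\mathrm{c}}$); the case without commuting towers is identical after deleting every commutation requirement (Definition~\ref{Definition:existentially-axiomatizable}).

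First I would unwind the axiomatizability of $\dim$. Fix a formula $\xi\in\mathcal{F}$, let $\varphi$ be its order zero subformula, and let $F$ be the finite-dimensional $G$-C*-algebra over which the parameters $\overline{z}$ of $\varphi$ range. By Proposition~\ref{Proposition:F}, saturation of $F_{\mathcal{F}}^{G}(A)$, and the definability of order zero maps out of finite-dimensional algebras, the condition $A\models\sup_{\overline{x}}\inf_{\overline{y}_0}\cdots\inf_{\overline{y}_e}\xi=0$ is equivalent to the existence of $G$-equivariant completely positive contractive order zero maps $\rho_0,\dots,\rho_e\colon F\to F_{\mathcal{F}}^{G}(A)$ with pairwise commuting ranges and unital sum $\sum_{i=0}^{e}\rho_i(1_F)=1$; this is exactly the statement that $F$ is $G$-equivariantly commutant $e$-contained with commuting towers in $A$, in the spirit of Example~\ref{Example:dim-Rok} and Remark~\ref{Remark:containment-ssa}.

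Next I would combine these witnesses with the hypothesis $A\precsim_{d}^{\mathrm{c}}B$ by the same composition of order zero maps that underlies the transitivity recorded after Definition~\ref{Definition:commutant-containment}. Let $C_0\subseteq F_{\mathcal{F}}^{G}(A)$ be the separable unital $G$-C*-subalgebra generated by $\bigcup_i\rho_i(F)$, unital since $\sum_i\rho_i(1_F)=1$. By $A\precsim_{d}^{\mathrm{c}}B$ and Remark~\ref{Remark:bootstrap}, there are $G$-equivariant order zero maps $\eta_0,\dots,\eta_d\colon C_0\to F_{\mathcal{F}}^{G}(B)$ with pairwise commuting ranges and unital sum. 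Factoring each $\eta_j=h_j\sigma_j$ by the structure theorem for order zero maps \cite[Corollary~4.1]{winter_completely_2009}, with $\sigma_j$ a $G$-equivariant $*$-homomorphism and $h_j=\eta_j(1_{C_0})$ a positive contraction commuting with $\sigma_j(C_0)$, I form the $(e+1)(d+1)$ maps
\[\omega_{i,j}:=\eta_j\circ\rho_i=h_j\,(\sigma_j\circ\rho_i)\colon F\longrightarrow F_{\mathcal{F}}^{G}(B).\]
Each $\omega_{i,j}$ is again order zero: $\sigma_j\circ\rho_i$ is order zero as an order zero map followed by a $*$-homomorphism, and multiplication by the commuting positive contraction $h_j$ preserves the order zero condition; it is moreover completely positive and contractive as a composition of such maps.

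Finally I would check that the family $(\omega_{i,j})$ witnesses $\dim(B)\le(e+1)(d+1)-1$. Since $\sum_{i,j}\omega_{i,j}=\sum_j\eta_j\bigl(\sum_i\rho_i\bigr)$ with $\sum_i\rho_i(1_F)=1_{C_0}$ and $\sum_j\eta_j$ unital, the total sum is unital (and hence contractive). For the commuting-towers bookkeeping, the ranges $\omega_{i,j}(F)$ and $\omega_{i',j'}(F)$ commute: when $j=j'$ both lie in $h_j\sigma_j(C_0)$, where the homomorphism $\sigma_j$ carries the commuting ranges $\rho_i(F),\rho_{i'}(F)$ and $h_j$ is central; when $j\neq j'$ they lie in the commuting algebras $\eta_j(C_0),\eta_{j'}(C_0)$. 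Invoking the axiomatizability characterization for $B$ at level $(e+1)(d+1)-1$ (the reverse of the second paragraph, with the same subformula $\varphi$), this yields $B\models\sup_{\overline{x}}\inf\cdots\,\xi'=0$; as $\xi\in\mathcal{F}$ was arbitrary, $\dim(B)\le(e+1)(d+1)-1$, i.e. $\dim(B)+1\le(d+1)(\dim(A)+1)$. I expect the composition step to be the only real obstacle: order zero maps do not compose to order zero maps in general, so the argument hinges entirely on the factorization $\eta_j=h_j\sigma_j$, which turns $\eta_j\circ\rho_i$ into a central multiple of the genuinely order zero map $\sigma_j\circ\rho_i$; everything else is the bookkeeping of matching the finite-dimensional parameter data across $\mathcal{F}$ and verifying the commutation of the cross terms.
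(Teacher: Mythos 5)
Your strategy is exactly the one the paper intends: the paper offers no written proof beyond the sentence that the statement ``is a consequence of Definition~\ref{Definition:commutant-commuting} and the syntactic characterization of commutant $d$-containment,'' and your argument---use Proposition~\ref{Proposition:F} to turn $\dim(A)=e$ into exact realizations in $F_{\mathcal{F}}^{G}(A)$, push them into $F_{\mathcal{F}}^{G}(B)$ through the maps $\eta_{0},\ldots ,\eta_{d}$ furnished by $A\precsim_{d}^{\mathrm{c}}B$, verify that the $(e+1)(d+1)$ composites are order zero with the right commutation and unitality, and translate back via saturation---is the natural fleshing-out of that sentence. Two points should be corrected, though neither breaks the argument. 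First, your opening reduction over-specializes the definition: commutant positive existential axiomatizability only requires $\varphi$ to be \emph{some} positive quantifier-free $\mathcal{L}_{G}^{\mathrm{oz}}$-formula with finite-dimensional parameters, and realizing $\varphi=0$ in $F_{\mathcal{F}}^{G}(A)$ need not produce genuine order zero maps $\rho_{i}\colon F\rightarrow F_{\mathcal{F}}^{G}(A)$; that happens in the prototypical case of Example~\ref{Example:dim-Rok} (Rokhlin dimension, $\dim_{D}$), but not for an arbitrary $\varphi$. The general case is handled by the same composition, using instead that completely positive contractive order zero maps are $\mathcal{L}_{G}^{\mathrm{oz}}$-morphisms, so positive quantifier-free $\mathcal{L}_{G}^{\mathrm{oz}}$-formulas do not increase under $\eta_{j}$; the only care needed is that the oz-predicates are then evaluated in the domain $C_{0}$ of $\eta_{j}$, so $C_{0}$ must be enlarged to contain countably many nearly optimal order zero witnesses for the oz-atomic subformulas, which costs nothing since $C_{0}$ only needs to be separable.

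Second, the step you flag as ``the only real obstacle'' is not an obstacle at all: compositions of completely positive order zero maps are \emph{always} order zero, because orthogonal positive elements are sent to orthogonal positive elements at each stage. Your factorization $\eta_{j}=h_{j}\sigma_{j}$ via the structure theorem is therefore a valid but unnecessary detour, and the same remark streamlines your verification that $\omega_{i,j}(F)$ and $\omega_{i',j}(F)$ commute. A last cosmetic point: to obtain the maps $\eta_{j}$ on $C_{0}$ you only need Definition~\ref{Definition:commutant-containment} itself (which quantifies over all separable unital subalgebras of $F_{\mathcal{F}}^{G}(A)$); Remark~\ref{Remark:bootstrap}, which concerns subalgebras of $F_{\mathcal{F}}^{G}(B)$, is not the relevant citation.
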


Similarly, the following fact is a consequence of the syntactic
characterization of $G$-equivariant $d$-containment, Remark \ref%
{Remark:oz-nuclear}, and Proposition \ref%
{Proposition:dimension-implies-containment}.

\begin{proposition}
\label{Proposition:estimate-dimension} Let $A$ and $B$ be $G$-C*-algebras,
and let $\theta \colon A\rightarrow B$ be a $G$-equivariant *-homomorphism.
Suppose that $\dim $ is a dimension function for C*-algebras. If $\dim $ is
positively $\forall \exists $-axiomatizable dimension or commutant
positively existentially axiomatizable, then 
\begin{equation*}
\dim (A)+1\leq (\dim _{\mathrm{oz}}^{G}(\theta )+1)(\dim (B)+1).
\end{equation*}

Moreover, if $B$ is nuclear and $\dim $ is nuclearly $\forall \exists $%
-axiomatizable, then again 
\begin{equation*}
\dim (A)+1\leq (\dim _{\mathrm{oz}}^{G}(\theta )+1)(\dim (B)+1).
\end{equation*}
\end{proposition}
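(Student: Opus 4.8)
The plan is to dispose of the hypotheses on $\dim$ one at a time, in each case reducing to machinery already in place. Set $m:=\dim_{\mathrm{oz}}^{G}(\theta)$ and $n:=\dim(B)$; we may assume both are finite, as otherwise the inequality is vacuous. The commutant case is then immediate: since $\dim_{\mathrm{oz}}^{G}(\theta)\leq m$, Proposition~\ref{Proposition:dimension-implies-containment} gives $B\precsim_{m}A$, and Proposition~\ref{Proposition:commuting}, applied with the two algebras interchanged (its ``$A$'' being our $B$ and its ``$B$'' our $A$), yields $\dim(A)+1\leq(m+1)(\dim(B)+1)$. One reduces to separable $G$-invariant subalgebras, which changes neither side, to legitimize the appeal to Proposition~\ref{Proposition:commuting}.

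The content of the statement is therefore the $\forall\exists$-axiomatizable case, which I would establish directly from the syntactic characterization of $d$-containment recorded after Definition~\ref{Definition:d-containment}. Fix a tuple $\overline{a}$ in $A$ and $\varepsilon>0$; it suffices to verify the $\sup$-$\inf$ condition of Definition~\ref{Definition:AE} for $d=(m+1)(n+1)-1$ up to error $O(\varepsilon)$. Applying the axiomatization of $\dim(B)\leq n$ to the tuple $\theta(\overline{a})$ produces finite-dimensional C*-algebras $F_{0},\dots,F_{n}$, elements $w_{i,k}\in F_{i}$, completely positive contractive maps $s_{i}\colon B\to F_{i}$ with $s_{i}(\theta(a_{k}))\approx w_{i,k}$, and completely positive contractive order zero maps $t_{i}\colon F_{i}\to B$ with $\sum_{i}t_{i}(w_{i,k})\approx\theta(a_{k})$. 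Write $\overline{b}=(t_{i}(w_{i,k}))_{i,k}$, and let $\overline{w}=(w_{i,k})_{i,k}$ be the corresponding tuple in $F=\bigoplus_{i}F_{i}$.

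The crucial point is to move this decomposition from $B$ to $A$ \emph{without composing} order zero maps, which would spoil the order zero property. To this end I would apply the syntactic characterization of the $m$-containment of $\theta$ in $\mathrm{id}_{A}$ to the tuples $\overline{a}$, $\overline{b}$, $\overline{w}$, using the block-wise order zero formula $\varphi(\overline{z},\overline{y})=\max_{i}\inf_{t\in\mathrm{OZ}(F_{i},\,\cdot\,)}\max_{k}\Vert t(z_{i,k})-y_{i,k}\Vert$ and the sum formula $\psi(\overline{x},\overline{z},\overline{y})=\max_{k}\Vert x_{k}-\sum_{i}y_{i,k}\Vert$. Since each $t_{i}$ is order zero and $\sum_{i}t_{i}(w_{i,k})\approx\theta(a_{k})$, both $\varphi(\overline{w},\overline{b})$ and $\psi(\theta(\overline{a}),\overline{w},\overline{b})$ are $O(\varepsilon)$, so the characterization returns tuples $\overline{c}_{0},\dots,\overline{c}_{m}$ in $A$ with $\varphi(\overline{w},\overline{c}_{j})=O(\varepsilon)$ for each $j$ and $\psi(\overline{a},\overline{w},\overline{c}_{0}+\cdots+\overline{c}_{m})=O(\varepsilon)$. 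Reading off these two inequalities, for each pair $(i,j)$ one obtains an order zero map $r_{i,j}\colon F_{i}\to A$ with $r_{i,j}(w_{i,k})\approx c_{j,i,k}$, while $\sum_{i,j}c_{j,i,k}\approx a_{k}$. Taking the composites $s_{i}\circ\theta\colon A\to F_{i}$ as downward maps (they send $a_{k}$ to $\approx w_{i,k}$), these $(n+1)(m+1)$ blocks witness $\dim(A)\leq(m+1)(n+1)-1$ up to $O(\varepsilon)$; letting $\varepsilon\to0$ finishes this case.

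For the nuclear refinement the same argument applies verbatim, except that $\varphi$ and $\psi$ are now taken in the nuclear languages $\mathcal{L}_{G}^{\mathrm{oz}\text{-}\mathrm{nuc}}$ and $\mathcal{L}_{G}^{\mathrm{osos}\text{-}\mathrm{nuc}}$: because $B$ is nuclear, Remark~\ref{Remark:oz-nuclear} permits the $d$-containment characterization to be run with these nuclear formulas, and the maps $t_{i}\colon F_{i}\to B$ and $r_{i,j}\colon F_{i}\to A$ are automatically nuclear, their domains being finite-dimensional (Lemma~\ref{Lemma:completely positive contractive-morphism}), so they genuinely witness the nuclear predicates. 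I expect the main difficulty to lie precisely in this bookkeeping: one must design $\varphi$ so that it records a \emph{separate} order zero map on each block $F_{i}$, so that the $(m+1)$-fold splitting supplied by $d$-containment multiplies the order zero summands to exactly $(m+1)(n+1)$ while keeping every summand order zero, and one must check that the resulting downward and upward maps assemble into an admissible witness for the chosen axiomatization.
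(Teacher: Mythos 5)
Your proposal is correct and follows essentially the same route as the paper: the paper's entire proof is the sentence preceding the proposition, which cites exactly the three ingredients you use --- Proposition~\ref{Proposition:dimension-implies-containment} combined with Proposition~\ref{Proposition:commuting} for the commutant case, the syntactic characterization of $G$-equivariant $d$-containment following Definition~\ref{Definition:d-containment} for the $\forall\exists$ case, and Remark~\ref{Remark:oz-nuclear} for the nuclear refinement. Your block-wise choice of $\varphi$ and $\psi$, with downward maps $s_{i}\circ\theta$ and the $(m+1)(n+1)$ upward witnesses $r_{i,j}$, is the intended unpacking of the syntactic characterization.

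One mathematical misstatement in your commentary should be corrected, although it does not affect the validity of the argument: composing completely positive contractive order zero maps does \emph{not} spoil the order zero property. If $a,b\geq 0$ satisfy $ab=0$, then $t_{i}(a)t_{i}(b)=0$ and hence $\psi_{j}(t_{i}(a))\psi_{j}(t_{i}(b))=0$, so $\psi_{j}\circ t_{i}$ is again completely positive contractive order zero; this closure under composition is used implicitly throughout the paper, for instance in part (1) of Proposition~\ref{prop:propertiesOzDim}. The genuine obstruction to the naive composition argument is that $\psi_{j}\circ t_{i}$ takes values in the reduced power $\prod_{\mathcal{F}}^{G}A$ rather than in $A$, and one needs \L os' theorem and saturation --- which is precisely what the syntactic characterization packages --- to replace it by order zero maps $r_{i,j}\colon F_{i}\rightarrow A$ landing in $A$ itself. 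Your chosen route accomplishes exactly this, so only the stated reason, not the construction, is wrong.
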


In particular, Proposition \ref{Proposition:estimate-dimension} applies when 
$\dim $ is either nuclear dimension $\dim _{\mathrm{nuc}}$, decomposition
rank $\mathrm{dr}$, or Rokhlin dimension $\dim _{\mathrm{Rok}}$; see Example~%
\ref{eg:NDimDr} and Example \ref{Example:dim-Rok}. More generally, one can
define the nuclear dimension and decomposition rank of a *-homomorphism $%
f\colon A\rightarrow B$, and then show that if $\theta \colon A\rightarrow B$
is $d$-contained in $f$, then 
\begin{equation*}
\dim _{\mathrm{nuc}}(\theta )+1\leq (d+1)(\dim _{\mathrm{nuc}}(f)+1)\ 
\mbox{
and }\ \mathrm{dr}(\theta )+1\leq (d+1)(\mathrm{dr}(f)+1)\text{.}
\end{equation*}

The following result relates the order zero dimension of the canonical
inclusions $A^{G}\rightarrow A$ and $A\rtimes G\rightarrow A\otimes \mathcal{%
K}(L^{2}(G))$ to the Rokhlin dimension of a $G$-C*-algebra $A$.


\begin{proposition}
\label{Proposition:crossed-product} Let $A$ be a $G$-C*-algebra $A$, and
denote by $\iota \colon A^{G}\hookrightarrow A$ and $\sigma \colon A\rtimes
G\rightarrow A\otimes \mathcal{K}(L^{2}(G))$ the canonical inclusion maps.
Then $\dim _{\mathrm{oz}}(\iota )\leq \dim _{\mathrm{Rok}}(A)$ and $\dim _{%
\mathrm{oz}}(\sigma )\leq \dim _{\mathrm{Rok}}(A)$.
\end{proposition}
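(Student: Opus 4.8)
The plan is to realize both $\iota$ and $\sigma$ as the images, under standard C*-algebraic constructions, of a single $G$-equivariant map: the second factor embedding $\theta\colon A\to C(G)\otimes A$, $a\mapsto 1\otimes a$, where $C(G)\otimes A$ carries the diagonal action $\mathtt{Lt}\otimes\alpha$. This $\theta$ is a $G$-equivariant *-homomorphism, and by Lemma~\ref{Lemma:rokhlin-and-oz} we have $\dim_{\mathrm{oz}}^{G}(\theta)=\dim_{\mathrm{Rok}}(A)$. The entire argument then reduces to combining this identity with Lemma~\ref{Lemma:crossed}, which already bounds $\dim_{\mathrm{oz}}(\theta|_{A^{G}})$ and $\dim_{\mathrm{oz}}(\widehat{\theta})$ by $\dim_{\mathrm{oz}}^{G}(\theta)$; the remaining work is purely to identify $\theta|_{A^{G}}$ with $\iota$ and $\widehat{\theta}$ with $\sigma$.

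First I would record that $\dim_{\mathrm{oz}}$ is invariant under pre- and post-composition with *-isomorphisms. Indeed, an isomorphism $u$ has $\dim_{\mathrm{oz}}(u)=0$, since $\Delta\circ u^{-1}$ supplies the single order zero bimodule map required by $0$-containment in the identity; the multiplicativity estimate of Proposition~\ref{prop:propertiesOzDim}(1), applied to $u$ and to $u^{-1}$, then forces $\dim_{\mathrm{oz}}(v\circ f\circ u)=\dim_{\mathrm{oz}}(f)$ for any isomorphisms $u,v$. Thus it suffices to identify the two maps above with $\iota$ and $\sigma$ up to isomorphisms of domain and codomain.

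For the fixed-point inequality, I would use the ``untwisting'' *-isomorphism $\Phi\colon C(G)\otimes A\to C(G)\otimes A$, $(\Phi f)(h)=\alpha_{h^{-1}}(f(h))$, which intertwines $\mathtt{Lt}\otimes\alpha$ with $\mathtt{Lt}\otimes\mathrm{id}$ and hence restricts to an isomorphism $(C(G)\otimes A)^{G}\cong(C(G)\otimes A)^{\mathtt{Lt}\otimes\mathrm{id}}=\mathbb{C}\otimes A\cong A$. Tracing an element $a\in A^{G}$ through this identification, the constant function $\theta(a)$ is sent to $a$, so $\theta|_{A^{G}}$ becomes exactly the inclusion $\iota\colon A^{G}\hookrightarrow A$. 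Together with Lemma~\ref{Lemma:crossed} and the invariance above, this yields $\dim_{\mathrm{oz}}(\iota)\leq\dim_{\mathrm{oz}}^{G}(\theta)=\dim_{\mathrm{Rok}}(A)$.

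For the crossed-product inequality, the same $\Phi$ induces an isomorphism $(C(G)\otimes A)\rtimes G\cong(C(G)\otimes A)\rtimes_{\mathtt{Lt}\otimes\mathrm{id}}G\cong(C(G)\rtimes_{\mathtt{Lt}}G)\otimes A\cong A\otimes\mathcal{K}(L^{2}(G))$, using the standard isomorphism $C(G)\rtimes_{\mathtt{Lt}}G\cong\mathcal{K}(L^{2}(G))$ (equivalently, Green's imprimitivity for the stabilization of a $G$-C*-algebra). The hard part is to verify that $\widehat{\theta}$ corresponds under this composite isomorphism to the canonical inclusion $\sigma$. I would check this on the dense *-subalgebra $C(G,A)\subset A\rtimes G$: a direct computation shows that $\widehat{\theta}(f)$, after applying $\Phi$ fiberwise and passing to $A$-valued integral kernels on $L^{2}(G)$, has kernel $(h,k)\mapsto\alpha_{h^{-1}}(f(hk^{-1}))$, which is precisely the kernel of the integrated form of the regular covariant representation defining $\sigma$. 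Granting this compatibility, Lemma~\ref{Lemma:crossed} and the invariance of $\dim_{\mathrm{oz}}$ under isomorphisms give $\dim_{\mathrm{oz}}(\sigma)\leq\dim_{\mathrm{oz}}^{G}(\theta)=\dim_{\mathrm{Rok}}(A)$. The main obstacle is exactly this last kernel computation matching $\widehat{\theta}$ to $\sigma$; the fixed-point case is comparatively routine.
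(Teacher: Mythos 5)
Your proposal is correct and takes essentially the same approach as the paper: both arguments reduce everything to Lemma~\ref{Lemma:rokhlin-and-oz} and Lemma~\ref{Lemma:crossed} applied to the second factor embedding $\theta\colon A\to C(G)\otimes A$, and both identify $\theta|_{A^{G}}$ with $\iota$ and $\widehat{\theta}$ with $\sigma$ via the untwisting isomorphism $(C(G)\otimes A,\mathtt{Lt}\otimes\alpha)\cong(C(G)\otimes A,\mathtt{Lt}\otimes\iota_{A})$, which the paper simply cites as \cite[Proposition 2.3]{gardella_crossed_2014} and calls canonical. The only difference is one of detail: you write out the intertwining formula, the isomorphism-invariance of $\dim_{\mathrm{oz}}$, and the integral-kernel verification explicitly, all of which check out.
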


\begin{proof}
Denote by $\theta \colon A\rightarrow C(G)\otimes A$ the second factor
embedding. Let $\mathtt{Lt}$ denote the action of $G$ on $C(G)$ by left
translation, and denote by $\alpha $ the given action on $A$. Endow $%
C(G)\otimes A$ with the tensor product action $\gamma =\mathtt{Lt}\otimes
\alpha $. Then $\theta $ is $G$-equivariant, and hence it induces a
*-homomorphism $A\rtimes G\rightarrow (C(G)\otimes A)\rtimes G$. Observe
that $(C(G)\otimes A,\gamma )$ is canonically $G$-equivariantly isomorphic
to $(C(G)\otimes A,\mathtt{Lt}\otimes \iota _{A})$ by \cite[Proposition 2.3]%
{gardella_crossed_2014}. Then the crossed product $(C(G)\otimes A)\rtimes
_{\gamma }G$ is canonically isomorphic to $A\otimes \mathcal{K}(L^{2}(G))$,
and the fixed point algebra $(C(G)\otimes A)^{\gamma }$ is canonically
isomorphic to $A$. It follows that the map $\widehat{\theta }$---defined
right before Lemma \ref{Lemma:crossed}---is canonically conjugate to $\sigma 
$, and $\theta |_{A^{G}}$ is canonically conjugate to $\iota $.

Using Lemma~\ref{Lemma:rokhlin-and-oz} at the first step, Lemma \ref%
{Lemma:crossed} at the second, and the above observations at the third, we
get 
\begin{equation*}
\dim _{\mathrm{Rok}}(A)=\dim _{\mathrm{oz}}^{G}(\theta )\geq \dim _{\mathrm{%
oz}}(\widehat{\theta })=\dim _{\mathrm{oz}}(\widehat{\sigma }).
\end{equation*}%
Similarly, we have $\dim _{\mathrm{Rok}}(A)\geq \dim _{\mathrm{oz}}(\iota )$%
, as desired.
\end{proof}

\begin{corollary}
\label{Corollary:existential-crossed-product} Let $A$ be a $G$-C*-algebra $A$%
, and let $\dim $ be a positively $\forall \exists $-axiomatizable dimension
function for C*-algebras. Then 
\begin{equation*}
\dim (A^{G})+1\leq (\dim _{\mathrm{Rok}}(A)+1)(\dim (A)+1)\text{,}
\end{equation*}%
and 
\begin{equation*}
\dim (A\rtimes G)\leq (\dim _{\mathrm{Rok}}(A)+1)(\dim (A)+1)\text{.}
\end{equation*}
\end{corollary}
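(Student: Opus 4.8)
The plan is to derive both inequalities by feeding Proposition~\ref{Proposition:crossed-product}, which bounds the order zero dimension of the canonical maps $\iota\colon A^{G}\hookrightarrow A$ and $\sigma\colon A\rtimes G\rightarrow A\otimes\mathcal{K}(L^{2}(G))$ by $\dim_{\mathrm{Rok}}(A)$, into Proposition~\ref{Proposition:estimate-dimension}, which turns a bound on the order zero dimension of a $\ast$-homomorphism into a bound on the dimension of its domain. Since $\iota$ and $\sigma$ are ordinary (non-equivariant) $\ast$-homomorphisms and $\dim$ is a dimension function for plain C*-algebras, I would regard $A^{G}$, $A$, $A\rtimes G$, and $A\otimes\mathcal{K}(L^{2}(G))$ as plain C*-algebras and apply the non-equivariant instance of Proposition~\ref{Proposition:estimate-dimension} (the case of the trivial group), for which $\dim_{\mathrm{oz}}^{G}$ specializes to the $\dim_{\mathrm{oz}}$ appearing in Proposition~\ref{Proposition:crossed-product}.

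For the fixed point algebra this is immediate. Applying Proposition~\ref{Proposition:estimate-dimension} to $\iota\colon A^{G}\rightarrow A$ gives $\dim(A^{G})+1\leq(\dim_{\mathrm{oz}}(\iota)+1)(\dim(A)+1)$, and since Proposition~\ref{Proposition:crossed-product} yields $\dim_{\mathrm{oz}}(\iota)\leq\dim_{\mathrm{Rok}}(A)$, and the codomain of $\iota$ is $A$ itself, the first inequality follows at once.

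For the crossed product, applying Proposition~\ref{Proposition:estimate-dimension} to $\sigma\colon A\rtimes G\rightarrow A\otimes\mathcal{K}(L^{2}(G))$ together with $\dim_{\mathrm{oz}}(\sigma)\leq\dim_{\mathrm{Rok}}(A)$ gives
\[
\dim(A\rtimes G)+1\leq(\dim_{\mathrm{Rok}}(A)+1)\bigl(\dim(A\otimes\mathcal{K}(L^{2}(G)))+1\bigr).
\]
The remaining, and I expect the only genuinely non-formal, point is to replace $\dim(A\otimes\mathcal{K}(L^{2}(G)))$ by $\dim(A)$; this $\mathcal{K}$-stability is the main obstacle. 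When $G$ is finite one has $A\otimes\mathcal{K}(L^{2}(G))\cong M_{|G|}(A)$, so it reduces to matrix stability; in general one writes $A\otimes\mathcal{K}(L^{2}(G))=\varinjlim_{n}M_{n}(A)$ and argues in two steps. First, $\dim(M_{n}(A))\leq\dim(A)$, because amplifying the witnessing data in Definition~\ref{Definition:AE} by $M_{n}$ preserves all the relevant structure: a completely positive contractive map amplifies to one, an order zero map amplifies to one, and a finite-dimensional C*-algebra stays finite-dimensional. Second, a positively $\forall\exists$-axiomatizable dimension function satisfies $\dim(\varinjlim_{n}B_{n})\leq\limsup_{n}\dim(B_{n})$, since the witnesses realizing the defining $\sup\inf$-condition on each $B_{n}$ already lie inside the limit, and the $\mathcal{L}^{\mathrm{oz}}$- and $\mathcal{L}^{\mathrm{osos}}$-predicates occurring in $\xi$ only decrease when the ambient algebra is enlarged. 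Combining these two facts gives $\dim(A\otimes\mathcal{K}(L^{2}(G)))\leq\dim(A)$, whence
\[
\dim(A\rtimes G)+1\leq(\dim_{\mathrm{Rok}}(A)+1)(\dim(A)+1),
\]
which is in fact slightly stronger than the stated inequality.
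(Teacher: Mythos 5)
Your overall route is the paper's own: Corollary~\ref{Corollary:existential-crossed-product} is stated there without proof, as an immediate combination of Proposition~\ref{Proposition:crossed-product} and Proposition~\ref{Proposition:estimate-dimension} (in its non-equivariant instance), which is exactly what you do. Your treatment of the first inequality is correct and complete, since the codomain of $\iota$ is $A$ itself. You are also right that the crossed product case hinges on replacing $\dim(A\otimes \mathcal{K}(L^{2}(G)))$ by $\dim(A)$, and your direct-limit step is sound: witnesses taken in $M_{n}(A)$ remain witnesses in $A\otimes\mathcal{K}(L^{2}(G))$, because norms are preserved under the corner embeddings while the distance-to-cone, $\mathrm{CPC}$- and $\mathrm{OZ}$-predicates can only decrease in a larger algebra, and positive formulas are monotone in these predicates.

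The genuine gap is your claim that $\dim(M_{n}(A))\leq \dim(A)$ holds for \emph{every} positively $\forall\exists$-axiomatizable dimension function. Your amplification argument presupposes that the formulas in Definition~\ref{Definition:AE} tie the witnesses to $\overline{x}$ through c.p.c.\ or order zero maps that can be tensored with $M_{n}$; that structure is present in the particular formulas of Example~\ref{eg:NDimDr}, but Definition~\ref{Definition:AE} allows $\eta$, $\varphi_{j}$ and $\psi$ to be \emph{arbitrary} positive quantifier-free formulas, so there is no ``witnessing data'' to amplify. Concretely, the $\{0,\infty\}$-valued function with $\dim(A)=0$ if and only if $A$ is commutative fits Definition~\ref{Definition:AE}: take $\eta(\overline{x},\overline{z}_{0},\ldots,\overline{z}_{d})=\|x_{1}x_{2}-x_{2}x_{1}\|$ and take the $\varphi_{j}$ and $\psi$ to be identically zero formulas such as $\|0\cdot y_{j,1}\|$. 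This $\dim$ is not matrix stable ($\dim(\mathbb{C})=0$ but $\dim(M_{2}(\mathbb{C}))=\infty$), and it even refutes the second inequality of the corollary in this generality: for the translation action of $\mathbb{Z}/2\mathbb{Z}$ on $\mathbb{C}^{2}$ one has $\dim_{\mathrm{Rok}}=0$ and $\dim(\mathbb{C}^{2})=0$, yet $\mathbb{C}^{2}\rtimes \mathbb{Z}/2\mathbb{Z}\cong M_{2}(\mathbb{C})$ has $\dim=\infty$. So your gap cannot be closed by any argument; what is actually needed is the extra hypothesis that $\dim$ satisfies $\dim(A\otimes\mathcal{K}(L^{2}(G)))\leq\dim(A)$. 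The paper uses this silently: note that Corollary~\ref{Corollary:nuclear-esitmate-crossed} asserts the ``$+1$'' version of the crossed-product estimate for $\dim_{\mathrm{nuc}}$ and $\mathrm{dr}$, which only follows from the present corollary's derivation via the classical fact that these two dimension functions are stable under tensoring with the compacts. Your instinct that this is ``the only genuinely non-formal point'' was exactly right; the correct conclusion is that it is a point where an additional assumption on $\dim$ must be imposed, not one that follows from positive $\forall\exists$-axiomatizability.
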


For separable unital $A$, the following first appeared as \cite[Theorem 3.3]%
{gardella_regularity_?}. The particular case of commuting towers has also
been independently obtained in \cite{gardella_rokhlin_2016}, using
completely different methods.

\begin{corollary}
\label{Corollary:nuclear-esitmate-crossed} Let $A$ be a $G$-C*-algebra $A$.
Then%
\begin{equation*}
\dim _{\mathrm{nuc}}(A^{G})+1\leq \dim _{\mathrm{nuc}}(A\rtimes G)+1\leq
(\dim _{\mathrm{Rok}}(A)+1)(\dim _{\mathrm{nuc}}(A)+1)
\end{equation*}%
and%
\begin{equation*}
\mathrm{dr}(A^{G})+1\leq \mathrm{dr}(A\rtimes G)+1\leq (\dim _{\mathrm{Rok}%
}(A)+1)(\mathrm{dr}(A)+1).
\end{equation*}
\end{corollary}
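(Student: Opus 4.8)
The plan is to specialize the abstract estimate of Proposition~\ref{Proposition:estimate-dimension} to the dimension functions $\dim_{\mathrm{nuc}}$ and $\mathrm{dr}$, feeding in the order zero dimension bounds for the canonical maps recorded in Proposition~\ref{Proposition:crossed-product}. Since nuclear dimension and decomposition rank are only \emph{nuclearly} $\forall\exists$-axiomatizable, I first dispose of the degenerate case: if $\dim_{\mathrm{nuc}}(A)=\infty$ (respectively $\mathrm{dr}(A)=\infty$), then the right-hand side of the relevant chain is infinite and there is nothing to prove. Thus I may assume $A$ is nuclear, whence $A\otimes\mathcal{K}(L^2(G))$, the crossed product $A\rtimes G$, and the fixed point algebra $A^G$ are all nuclear as well, using that the compact group $G$ is amenable.

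For the right-hand inequalities I would apply Proposition~\ref{Proposition:estimate-dimension}, instantiated with the trivial group (so that $\dim_{\mathrm{oz}}^{G}$ is just the ordinary order zero dimension $\dim_{\mathrm{oz}}$), in its nuclear form to the $*$-homomorphism $\sigma\colon A\rtimes G\to A\otimes\mathcal{K}(L^2(G))$. By Example~\ref{eg:NDimDr} both $\dim_{\mathrm{nuc}}$ and $\mathrm{dr}$ are nuclearly $\forall\exists$-axiomatizable, and the codomain is nuclear, so for $\dim\in\{\dim_{\mathrm{nuc}},\mathrm{dr}\}$ we obtain
\[
\dim(A\rtimes G)+1\leq\bigl(\dim_{\mathrm{oz}}(\sigma)+1\bigr)\bigl(\dim(A\otimes\mathcal{K}(L^2(G)))+1\bigr).
\]
Proposition~\ref{Proposition:crossed-product} bounds $\dim_{\mathrm{oz}}(\sigma)\leq\dim_{\mathrm{Rok}}(A)$, and since both dimension functions are invariant under tensoring with the compacts on the separable space $L^2(G)$ we have $\dim(A\otimes\mathcal{K}(L^2(G)))=\dim(A)$. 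Substituting these in yields the right-hand inequalities in both displays.

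For the left-hand inequalities I would use the classical description of the fixed point algebra of a compact group action as a corner of the crossed product: the Haar projection $p=\int_G u_g\,dg\in A\rtimes G$ satisfies $p(A\rtimes G)p\cong A^G$, so $A^G$ is a hereditary C*-subalgebra of $A\rtimes G$ (equivalently, a full corner of the ideal it generates, hence Morita equivalent to that ideal). As nuclear dimension and decomposition rank do not increase under passage to ideals and are invariant under Morita equivalence, this gives $\dim_{\mathrm{nuc}}(A^G)\leq\dim_{\mathrm{nuc}}(A\rtimes G)$ and $\mathrm{dr}(A^G)\leq\mathrm{dr}(A\rtimes G)$, which are exactly the left-hand inequalities. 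Concatenating the two parts completes the proof.

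The point requiring most care is the bookkeeping around nuclearity: the nuclear form of Proposition~\ref{Proposition:estimate-dimension} demands a nuclear codomain, which is precisely why the reduction to nuclear $A$—and the ensuing nuclearity of $A\otimes\mathcal{K}(L^2(G))$—must be carried out at the outset. The remaining ingredients, namely stable (matricial) invariance of $\dim_{\mathrm{nuc}}$ and $\mathrm{dr}$, their monotonicity under ideals, and the corner realization $A^G\cong p(A\rtimes G)p$, are standard structural facts about these dimension functions and about compact group crossed products, and pose no serious difficulty.
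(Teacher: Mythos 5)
Your proof is correct and follows essentially the same route as the paper: the right-hand bounds come from Proposition~\ref{Proposition:estimate-dimension} combined with the estimate $\dim_{\mathrm{oz}}(\sigma)\leq\dim_{\mathrm{Rok}}(A)$ of Proposition~\ref{Proposition:crossed-product} (the paper packages exactly this as Corollary~\ref{Corollary:existential-crossed-product} plus Example~\ref{eg:NDimDr}), and the left-hand bounds come from Rosenberg's realization of $A^{G}$ as a corner of $A\rtimes G$ together with monotonicity of $\dim_{\mathrm{nuc}}$ and $\mathrm{dr}$ under hereditary subalgebras (your detour through ideals and Morita equivalence proves the same thing). Your additional bookkeeping---reducing to nuclear $A$ so as to invoke the nuclear form of Proposition~\ref{Proposition:estimate-dimension}, and using $\dim(A\otimes\mathcal{K}(L^{2}(G)))=\dim(A)$---is a slightly more careful rendering of steps the paper leaves implicit, not a different argument.
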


\begin{proof}
The first inequalities in Corollary \ref{Corollary:nuclear-esitmate-crossed}
are due to the fact that the fixed point algebra $A^{G}$ of a $G$-C*-algebra
is a corner of the crossed product $A\rtimes G$---see \cite[Theorem]%
{rosenberg_appendix_1979}---and the fact that decomposition rank and nuclear
dimension are nonincreasing when passing to hereditary subalgebras; see \cite%
[Proposition 3.8]{kirchberg_covering_2004} and \cite[Proposition 2.5]%
{winter_nuclear_2010}. The second inequalities are an immediate consequence
of Corollary~\ref{Corollary:existential-crossed-product} and Example~\ref%
{eg:NDimDr}.
\end{proof}

\subsection{Bundles}

In this subsection, we generalize the main result of \cite%
{dadarlat_trivialization_2008} to equivariant bundles; see Theorem~\ref%
{thm:trivialBundle}. This result will be crucial in our applications to
actions with finite Rokhlin dimension in Subsections~6.4 and~6.5.

We will need the following equivariant version of the Choi-Effros lifting
theorem for compact groups. 

\begin{proposition}
\label{prop:CEequiv} Let $(A,\alpha )$ and $(B,\beta )$ be $G$-C*-algebras,
and let $\varphi \colon A\rightarrow B$ be a surjective, $G$-equivariant,
nuclear *-homomorphism. Then there exists a $G$-equivariant completely
positive contractive lift $\sigma \colon B\rightarrow A$. If $\varphi $ is
unital, then we can also choose $\sigma $ to be unital.
\end{proposition}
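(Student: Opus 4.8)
Prove the equivariant Choi–Effros lifting theorem (Proposition~\ref{prop:CEequiv}): given a surjective equivariant nuclear *-homomorphism $\varphi\colon A\to B$ between $G$-C*-algebras, produce a $G$-equivariant completely positive contractive lift $\sigma\colon B\to A$, which can be taken unital when $\varphi$ is.

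The plan is to start from a nonequivariant lift and then average over the compact group $G$ using Haar measure to restore equivariance, exploiting the fact that a convex combination (integral) of completely positive contractive maps is again completely positive contractive. Let me think about what "nonequivariant lift" to start with.

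First I would apply the classical Choi–Effros lifting theorem (the version recalled in the paragraph preceding Lemma~\ref{Lemma:completely positive contractive-morphism}, or its unital form from \cite{choi_completely_1976}, reduced to the nonunital case via unitizations as in \cite{brown_c*-algebras_2008}): since $\varphi$ is surjective and nuclear, the identity map $\mathrm{id}_B\colon B\to B$ is a nuclear completely positive contractive map, and the Choi–Effros theorem provides a completely positive contractive lift $\sigma_0\colon B\to A$ with $\varphi\circ\sigma_0=\mathrm{id}_B$. (In the unital case one arranges $\sigma_0$ unital directly from the unital Choi–Effros theorem.) This $\sigma_0$ need not be $G$-equivariant.

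Next I would average. Define
\[
\sigma(b)=\int_G \alpha_g\bigl(\sigma_0(\beta_{g^{-1}}(b))\bigr)\,dg,
\]
where $dg$ is normalized Haar measure on the compact group $G$. The integrand is continuous in $g$ (using strong continuity of both actions and norm-continuity of $\sigma_0$), and the integral of $A$-valued continuous functions over a compact group converges in norm, so $\sigma$ is well defined with values in $A$. I would then verify three things: (i) $\sigma$ is completely positive and contractive, because for each fixed $g$ the map $b\mapsto\alpha_g(\sigma_0(\beta_{g^{-1}}(b)))$ is a composition of a *-automorphism, a completely positive contractive map, and a *-automorphism, hence completely positive contractive, and these properties are preserved under averaging against a probability measure (on each matrix amplification $M_n$, positivity and the contractive norm bound pass through the vector-valued integral); (ii) $\sigma$ is equivariant, by the standard change-of-variables $h\mapsto gh$ in the Haar integral together with equivariance of the integrand structure; and (iii) $\sigma$ is still a lift, since $\varphi$ is equivariant and a *-homomorphism, so
\[
\varphi(\sigma(b))=\int_G \beta_g\bigl(\varphi(\sigma_0(\beta_{g^{-1}}(b)))\bigr)\,dg=\int_G \beta_g\bigl(\beta_{g^{-1}}(b)\bigr)\,dg=\int_G b\,dg=b,
\]
using $\varphi\circ\alpha_g=\beta_g\circ\varphi$ and $\varphi\circ\sigma_0=\mathrm{id}_B$.

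The main obstacle I anticipate is not conceptual but technical: justifying that the vector-valued (Bochner-type) integral of $A$-valued functions behaves well with respect to the matricial positive cones and the operator norm, i.e.\ that complete positivity and complete contractivity genuinely survive the averaging. This is handled by noting that for each amplification $M_n(A)$ the map $\sigma\otimes\mathrm{id}_{M_n}$ is itself the Haar average of the completely positive contractive maps $(\alpha_g\circ\sigma_0\circ\beta_{g^{-1}})\otimes\mathrm{id}_{M_n}$, and that the closed convex set of completely positive contractive maps is preserved under integration against a probability measure; positivity of $\sigma(b^*b)$-type expressions follows by approximating the integral with Riemann sums of positive elements. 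In the unital case, $\sigma(1_B)=\int_G\alpha_g(\sigma_0(1_B))\,dg=\int_G\alpha_g(1_A)\,dg=1_A$, so unitality is automatic and the last assertion follows.
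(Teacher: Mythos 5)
Your proof is correct and follows essentially the same route as the paper's: obtain a (unital) completely positive contractive lift from the nonequivariant Choi--Effros theorem, then average it over $G$ against normalized Haar measure, checking that complete positivity, contractivity, the lifting identity, and unitality survive the integration while equivariance is gained via left invariance. One cosmetic remark: you do not need the (unjustified as stated) claim that $\mathrm{id}_B$ is nuclear, since the version of the Choi--Effros theorem recalled before Lemma~\ref{Lemma:completely positive contractive-morphism} applies directly to the nuclear map $\varphi$ itself and already produces the completely positive contractive section $\sigma_0$ of $\varphi$.
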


\begin{proof}
Use Choi-Effros to find a completely positive contractive lift $\rho\colon
B\to A$ (which may be chosen to be unital if $\varphi$ is). If $\mu$ denotes
the normalized Haar measure on $G$, then it is easy to check that the map $%
\sigma\colon B\to A$ given by $\sigma(b)=\int_G
\alpha_g(\rho(\beta_{g^{-1}}(b))) \ \mathrm{d}\mu$, for all $b\in B$, is as
in the statement.
\end{proof}

Suppose that $X$ is a compact metrizable space. The definition of $C( X) $%
-algebra can be found in \cite[Definition 2.1]{dadarlat_trivialization_2008}%
. We consider here the natural equivariant analog of a $C( X) $-algebra:

\begin{definition}
Let $A$ be a $C(X)$-algebra. For $x\in X$, denote by $U_{x}$ the open subset 
$X\setminus \{x\}$ of $X$, and denote by $A(U_{x})$ the corresponding ideal
of $A$.
We say that $A$ is a \emph{$G$-$C(X)$-algebra}, if $A$ is endowed with an
action $\alpha \colon G\rightarrow \mathrm{Aut}(A)$ satisfying $\alpha
_{g}(A(U_{x}))\subset A(U_{x})$ for all $x\in X$ and all $g\in G$.
\end{definition}

In the context of the above definition, given $x\in X$, denote by $A_x$ the
quotient $A/A(U_x)$ and by $\pi_x\colon A\to A_x$ the canonical quotient
map. Then $\alpha$ induces actions $\alpha^{(x)}\colon G\to\mathrm{Aut}(A_x)$%
, that make each $\pi_x$ equivariant.

The definition of \emph{unitarily regular action} is given in~\cite[%
Definition~1.18]{szabo_strongly_2016}. Observe that the trivial action on a
strongly self-adsorbing C*-algebra is unitarily regular. More generally,
this applies to any strongly self-absorbing $G$-C*-algebra that $G$%
-equivariantly absorbs the trivial action on the Jiang-Su algebra; see \cite[%
Proposition 1.20]{szabo_strongly_2016}. The main theorem of this subsection
is the following:

\begin{theorem}
\label{thm:trivialBundle} Let $X$ be a compact metrizable space of finite
covering dimension. Let $(D,\delta )$ be a strongly self-absorbing,
unitarily regular $G$-C*-algebra, and let $(A,\alpha )$ be a separable,
unital $G$-$C(X)$-algebra such that $A_{x}$ is $G$-equivariantly isomorphic
to $D$, for all $x\in X$. Then there is a $G$-equivariant $C(X)$-linear
isomorphism 
\[(A,\alpha )\cong (C(X)\otimes D,\iota _{C(X)}\otimes \delta ).\]
\end{theorem}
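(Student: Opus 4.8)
The plan is to follow the strategy of the non-equivariant trivialization theorem of Dadarlat--Winter \cite{dadarlat_trivialization_2008}, feeding in at each step the equivariant tools developed above: the equivariant Choi--Effros lifting theorem (Proposition~\ref{prop:CEequiv}) in place of the ordinary one, the uniqueness of $G$-equivariant unital $*$-homomorphisms into (relative commutants of) equivariant sequence algebras from Corollary~\ref{Corollary:commutant} and Corollary~\ref{Corollary:ssa} in place of the usual $K_1$-injective uniqueness, and the hypothesis that $(D,\delta)$ is unitarily regular as the equivariant replacement for $K_1$-injectivity of $D$ (cf.\ \cite[Definition~1.18]{szabo_strongly_2016}). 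The goal is to produce a $G$-equivariant, $C(X)$-linear, unital approximate intertwining in the sense of Elliott between $A$ and $C(X)\otimes D$, whose limit is the desired isomorphism.

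The argument will rest on two pillars, both now available equivariantly. The first is \emph{uniqueness}: for a separable $G$-$C(X)$-algebra whose fibres absorb $D$ and a countably incomplete filter $\mathcal{F}$, any two $G$-equivariant, $C(X)$-linear, unital $*$-homomorphisms $C(X)\otimes D\rightarrow\prod_{\mathcal{F}}^{G}A$ should be approximately $G$-unitarily equivalent by unitaries in the fixed-point algebra. Fibrewise this is exactly Corollary~\ref{Corollary:commutant}(1)/Corollary~\ref{Corollary:ssa}(1); the $C(X)$-linear version I would obtain by rerunning the same saturation argument inside the equivariant central-sequence algebra $F_{\mathcal{F}}^{G}(A)$ (Proposition~\ref{Proposition:F}), with unitary regularity of $(D,\delta)$ ensuring that the correcting unitaries can be connected to $1$ through a path of $G$-invariant unitaries, which is what keeps the corrections $C(X)$-linear over $X$. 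The second pillar is \emph{existence}: since each fibre $A_x$ is $G$-equivariantly isomorphic to $D$, for every $x$ one gets a $G$-equivariant unital $*$-homomorphism $D\rightarrow A_x$, which I would lift $G$-equivariantly and unitally to a completely positive contractive map $D\rightarrow A$ via Proposition~\ref{prop:CEequiv}, approximately multiplicative and approximately equivariant on prescribed finite sets over a small closed neighbourhood of $x$.

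The main obstacle, and the only place where finite covering dimension enters, will be the \emph{gluing} of the local data into a single global $G$-equivariant $C(X)$-linear almost-homomorphism $C(X)\otimes D\rightarrow A$ (and, symmetrically, almost-homomorphisms $A\rightarrow C(X)\otimes D$ built from the fibrewise inverses). I would choose a finite open cover of $X$ of multiplicity at most $\dim(X)+1$ with a subordinate partition of unity, trivialize over each member, and patch the local maps across overlaps; on each overlap two local trivializations differ by a map that, by the uniqueness pillar, is approximately $G$-unitarily equivalent to the identity, and unitary regularity is used to realize the correction by $G$-invariant unitaries depending continuously on $x$. Finite multiplicity of the cover bounds the number of successive corrections, so the accumulated error stays controlled; this dimension-theoretic patching is the technical heart of the proof and is the exact equivariant analogue of the argument in \cite{dadarlat_trivialization_2008}.

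With both pillars in place, the final step is to run a one-sided Elliott approximate intertwining $C(X)$-linearly and $G$-equivariantly: the existence pillar supplies approximately $C(X)$-linear $G$-equivariant unital maps in both directions, and the uniqueness pillar guarantees that their composites are approximately $G$-unitarily equivalent, through $C(X)$-linear $G$-invariant unitaries, to the respective identity maps. Passing to the limit then yields a $G$-equivariant $C(X)$-linear unital $*$-isomorphism $(A,\alpha)\cong(C(X)\otimes D,\iota_{C(X)}\otimes\delta)$, as claimed.
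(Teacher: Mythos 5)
Your overall architecture --- equivariant Choi--Effros lifting, approximate uniqueness of almost-multiplicative, almost-equivariant maps out of $D$ up to unitaries in $\mathcal{U}_0(A^G)$ (with unitary regularity standing in for $K_1$-injectivity), and an Elliott-style $C(X)$-linear equivariant intertwining --- is exactly the paper's: these ingredients are Proposition~\ref{prop:CEequiv}, Proposition~\ref{prop:3.5} together with Lemma~\ref{lemma:4.2}, and Proposition~\ref{prop:Elliott}, respectively. The genuine gap is in what you yourself call the technical heart, the gluing. Your scheme (finite open cover of multiplicity $\dim(X)+1$, local trivializations, corrections by invariant unitaries on overlaps, ``finite multiplicity bounds the number of successive corrections'') is \emph{not} the Dadarlat--Winter argument, and as described it fails: the difficulty is not accumulation of error but \emph{consistency} of the corrections. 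On a triple overlap $U_i\cap U_j\cap U_k$, after conjugating $\theta_j$ to match $\theta_i$ and then $\theta_k$ to match the corrected $\theta_j$, the remaining discrepancy with $\theta_i$ is the holonomy $w_{ij}w_{jk}w_{ik}^{\ast}$, a unitary which approximately commutes with the copy of $D$ but need not be close to $1$; already for $d=1$ the same monodromy problem appears around a cycle in the nerve of the cover. Trivializing these obstructions is essentially equivalent to the theorem being proved, so it cannot be dismissed by bounding how many corrections take place.

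What the paper (following \cite{dadarlat_trivialization_2008}) actually does is linearize the patching so that higher overlaps never arise. Finite covering dimension is used not to produce a cover of small multiplicity but to embed $X$ into a cube $[0,1]^{n}$, and the proof is an induction on $n$: writing $p\colon X\to[0,1]$ for the first coordinate projection and $Y=p(X)$, the fibres $A_s$, for $s\in Y$, are $G$-$C(X_s)$-algebras over subsets $X_s$ of $[0,1]^{n-1}$, which are trivialized by the inductive hypothesis; Lemma~\ref{lemma:4.5} then produces $G$-equivariant local approximate trivializations over closed neighbourhoods in $Y$, and these are patched by subdividing $Y$ into \emph{consecutive} intervals, so that only pairwise overlaps between neighbouring pieces occur, and the correcting $G$-invariant unitaries supplied by Lemma~\ref{lemma:4.2} come with \emph{paths}, parametrized by the interval coordinate, that taper each correction to the identity at one end --- this is what keeps the glued map $C(X)$-linear and removes any cocycle condition. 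If you insist on a cover-based formulation, you must supply precisely this kind of higher coherence; the coordinate-by-coordinate induction is the device that makes it unnecessary.
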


Our proof follows the lines of Dadarlat-Winter's proof of the nonequivariant
version of Theorem \ref{thm:trivialBundle} from \cite[ Section~4]%
{dadarlat_trivialization_2008}. In fact, for the sake of succinctness, we
only mention what changes are needed in said proof, and leave the smaller
details to the reader. Similar results for general locally compact groups
are explored in \cite{forough_preparation_2018}.

Throughout the rest of the subsection, we fix a compact metrizable space $X$%
, a strongly self-absorbing $G$-C*-algebra $(D,\delta)$, and a separable
unital $G$-$C(X)$-algebra $A$.

\begin{definition}
Let $(B,\beta)$ and $(C,\gamma)$ be $G$-C*-algebras, let $\varepsilon>0$ and
let $F\subset B$ be a compact set. We say that a linear map $\varphi\colon
B\to C$ is \emph{$\varepsilon$-multiplicative} (respectively, \emph{$%
\varepsilon$-equivariant}) on $F$, if $\|\varphi(b_1b_2)-\varphi(b_1)%
\varphi(b_2)\|<\varepsilon$ for all $b_1,b_2\in F$ (respectively, $%
\|\gamma_g(\varphi(b))-\varphi(\beta_g(b))\|<\varepsilon$ for all $g\in G$
and for all $b\in F$).
\end{definition}

The following is the analog of Proposition~4.1 of~\cite%
{dadarlat_trivialization_2008}.

\begin{proposition}
\label{prop:Elliott} Denote by $\mu\colon C(X)\to A$ the structure map.
Suppose that for any $\varepsilon>0$ and for any compact subsets $F\subset A$%
, $H_1\subset C(X)$ and $H\subset D$, there are completely positive
contractive maps $\psi\colon A\to C(X)\otimes D$ and $\varphi\colon
C(X)\otimes D\to A$ satisfying

\begin{enumerate}
\item $\|(\varphi\circ\psi)(a)-a\|<\varepsilon$ for all $a\in F$;

\item $\|\varphi(f\otimes 1_D)-\mu(f)\|<\varepsilon$ for all $f\in H_1$;

\item $\|(\psi\circ\mu)(f)-f\otimes 1_D\|<\varepsilon$ for all $f\in H_1$;

\item $\varphi$ is $\varepsilon$-multiplicative and $\varepsilon$%
-equivariant on $(1_{C(X)}\otimes \mathrm{id}_D)(H_2)$;

\item $\psi$ is $\varepsilon$-multiplicative and $\varepsilon$-equivariant
on $F$.
\end{enumerate}

Then there is a $G$-equivariant $C(X)$-linear isomorphism $(A,\alpha)\cong
(C(X)\otimes D,\iota_{C(X)}\otimes\delta)$.
\end{proposition}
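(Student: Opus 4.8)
The plan is to run Elliott's two-sided approximate intertwining, exactly as in the nonequivariant argument of \cite[Section~4]{dadarlat_trivialization_2008}, while carrying the $C(X)$-linear and $G$-equivariant data through the limit. First I would fix increasing sequences of compact sets $F_1\subseteq F_2\subseteq\cdots$ with dense union in $A$, $H_{1,1}\subseteq H_{1,2}\subseteq\cdots$ with dense union in $C(X)$, and $H_{2,1}\subseteq H_{2,2}\subseteq\cdots$ with dense union in $D$, together with a summable sequence $\varepsilon_n\searrow 0$. Applying the hypothesis recursively to the data $(\varepsilon_n,F_n,H_{1,n},H_{2,n})$ produces completely positive contractive maps $\psi_n\colon A\to C(X)\otimes D$ and $\varphi_n\colon C(X)\otimes D\to A$ satisfying (1)--(5); at the $n$-th stage I would enlarge $F_n$ to absorb the finitely many values of the previously constructed $\varphi_{n-1}$ that matter, and correspondingly enlarge the target finite sets, so that all estimates below refer to a single coherent tower of finite sets.

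Assembling these into the single inductive system
\[
A\xrightarrow{\ \psi_1\ }C(X)\otimes D\xrightarrow{\ \varphi_1\ }A\xrightarrow{\ \psi_2\ }C(X)\otimes D\xrightarrow{\ \varphi_2\ }\cdots,
\]
I would argue via two cofinal subsystems. The subsystem formed by the copies of $A$ has connecting maps $\varphi_n\circ\psi_n$, which by~(1) converge point-norm to $\mathrm{id}_A$, so its inductive limit is $A$; the subsystem formed by the copies of $C(X)\otimes D$ has connecting maps $\psi_{n+1}\circ\varphi_n$, and once these are shown to converge point-norm to $\mathrm{id}_{C(X)\otimes D}$ its limit is $C(X)\otimes D$. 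As the two subsystems are cofinal in the same inductive limit, this yields an isomorphism $A\cong C(X)\otimes D$. Approximate multiplicativity from~(4) and~(5), together with summability of the $\varepsilon_n$, ensures that the connecting maps are asymptotically multiplicative, so that the limit is a genuine $*$-isomorphism.

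Granting the isomorphism, I would then check that it respects the extra structure. Conditions~(2) and~(3), namely $\varphi_n(f\otimes 1_D)\approx\mu(f)$ and $\psi_n(\mu(f))\approx f\otimes 1_D$ on $H_{1,n}$, pass to the limit and show that the resulting isomorphism carries $f\otimes 1_D$ to $\mu(f)$; since $\mu$ is the structure map of $A$, this is exactly $C(X)$-linearity. For equivariance, the decisive point is that the $\varepsilon$-equivariance estimates in~(4) and~(5) hold \emph{uniformly} over $g\in G$: for each relevant $b$ one has $\sup_{g\in G}\|(\iota_{C(X)}\otimes\delta)_g(\psi_n(b))-\psi_n(\alpha_g(b))\|<\varepsilon_n$, and symmetrically for $\varphi_n$. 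Because $G$ is compact these uniform bounds survive the point-norm limit simultaneously for every $g$, so the limit isomorphism intertwines $\alpha$ and $\iota_{C(X)}\otimes\delta$, giving the desired $G$-equivariant $C(X)$-linear isomorphism $(A,\alpha)\cong(C(X)\otimes D,\iota_{C(X)}\otimes\delta)$.

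The main obstacle is the reverse coherence $\psi_{n+1}\circ\varphi_n\approx\mathrm{id}_{C(X)\otimes D}$, which is needed to identify the second subsystem's limit with $C(X)\otimes D$ but which the hypothesis does \emph{not} supply directly: it controls only $\varphi_n\circ\psi_n$. Since the fibrewise maps $D\to D$ can well be proper unital endomorphisms, $\psi_n$ need not be approximately surjective, so this reverse estimate is genuinely additional information. Following \cite[Section~4]{dadarlat_trivialization_2008}, I would extract it from an approximate uniqueness statement afforded by strong self-absorption of $D$: any two approximately $C(X)$-linear, approximately multiplicative approximate embeddings into the $D$-stable algebra $C(X)\otimes D$ are approximately unitarily equivalent, so at each stage one may choose $\psi_{n+1}$ and fold in a unitary correction making $\psi_{n+1}\circ\varphi_n$ close to the identity on the relevant finite set, while preserving to within $\varepsilon_{n+1}$ the $C(X)$-linearity and the uniform-in-$g$ equivariance. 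Carrying these unitary corrections through the recursion with summable errors is the technical heart of the argument, and compactness of $G$ is what keeps the equivariance defects uniformly small throughout.
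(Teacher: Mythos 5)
Your outline is, in substance, the argument the paper intends: the paper's own proof is a one-line deferral to \cite{dadarlat_trivialization_2008}, whose Proposition~4.1 is proved by exactly the two-sided Elliott intertwining you describe, with the reverse coherence $\psi_{n+1}\circ\varphi_n\approx \mathrm{id}_{C(X)\otimes D}$ manufactured from strong self-absorption of $D$ via approximate uniqueness and unitary corrections. You are also right that this reverse coherence is genuinely extra information: for $X$ a point, $A=\mathbb{C}$ and $D=\mathbb{C}\oplus\mathbb{C}$ (trivial actions), the diagonal embedding $\psi$ and the first-coordinate projection $\varphi$ satisfy (1)--(5) with $\varepsilon=0$, yet $A\not\cong C(X)\otimes D$; so self-absorption of $D$ must be used somewhere, and uniqueness-plus-unitary-corrections is where. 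One remark in your favor: the corrections preserve $C(X)$-linearity \emph{exactly}, not merely up to $\varepsilon_{n+1}$, since $C(X)\otimes 1_D$ is central in $C(X)\otimes D$ and hence fixed by every $\mathrm{Ad}(u)$; the only structure at risk is the $G$-action.

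That is where the genuine gap lies. You justify the claim that the unitary corrections preserve the uniform-in-$g$ equivariance by appealing to compactness of $G$, but compactness does nothing here: if $u$ is not (close to) a $G$-invariant unitary, then $\mathrm{Ad}(u)\circ\psi_{n+1}$ can fail to be approximately equivariant by an amount of order $\sup_{g\in G}\|u-g\cdot u\|$, however equivariant $\psi_{n+1}$ was, and one cannot repair $u$ by averaging over $G$ (the average of unitaries is not a unitary). What is needed is an \emph{equivariant} approximate uniqueness theorem producing correcting unitaries in the fixed-point algebra $\bigl(C(X)\otimes D\bigr)^{G}$; this is precisely the content of Proposition~\ref{prop:3.5} and Lemma~\ref{lemma:4.2}, whose proofs rest on Theorem~2.15 of \cite{szabo_strongly_2016} in place of Theorem~3.1 of \cite{dadarlat_trivialization_2008}, and this is exactly where unitary regularity of $(D,\delta)$ enters the paper's treatment. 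Concretely, one applies Proposition~\ref{prop:3.5} to the two approximately multiplicative, approximately equivariant unital maps $d\mapsto(\psi_{n+1}\circ\varphi_n)(1\otimes d)$ and $d\mapsto 1\otimes d$ from $D$ into the $G$-equivariantly $D$-absorbing algebra $C(X)\otimes D$, obtaining $u\in\mathcal{U}_0\bigl((C(X)\otimes D)^{G}\bigr)$, and then extends the estimate from $(C(X)\otimes 1_D)\cup(1\otimes H)$ to finite subsets of $C(X)\otimes D$ using centrality of $C(X)\otimes 1_D$ and approximate multiplicativity. Your proposal never invokes this (or any substitute), so as written the limit isomorphism need not intertwine $\alpha$ and $\iota_{C(X)}\otimes\delta$; once each correction is routed through Proposition~\ref{prop:3.5}, your argument closes and coincides with the proof the paper imports, which is also why Theorem~\ref{thm:trivialBundle} carries the unitary regularity hypothesis.
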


\begin{proof}
The only thing that needs to be checked is that the isomorphisms $\overline{%
\varphi }$ and $\overline{\psi }$ constructed in~\cite%
{dadarlat_trivialization_2008}, are equivariant, which is a routine
computation.
\end{proof}

We need an equivariant version of \cite[Proposition~3.5]%
{dadarlat_trivialization_2008}, in order to prove the analog of \cite[%
Lemma~4.5]{dadarlat_trivialization_2008}. We note here that when $%
\varepsilon >0$ is small enough, then any unitary in $A_{\varepsilon }^{G}$
can be perturbed to a nearby unitary in $A^{G}$. Moreover, if the original
unitary can be connected to the unit within $A_{\varepsilon }^{G}$, then its
perturbation can be connected to the unit by a path in $A^{G}$.

\begin{proposition}
\label{prop:3.5} Suppose that $D$ is unitarily regular. Then for any finite
set $F\subset D$ and every $\varepsilon >0$, there exist a finite set $%
H\subset D$ and $\delta >0$ with the following property: for any unital $D$%
-absorbing $G$-C*-algebra $A$, and any unital completely positive maps $%
\varphi ,\psi \colon D\rightarrow A$ that are $\delta $-multiplicative and $%
\delta $-equivariant on $H$, there is a unitary $u\in \mathcal{U}_{0}(A^{G})$
such that $\Vert \varphi (d)-u\psi (d)u^{\ast }\Vert <\varepsilon $ for all $%
d\in F$.
\end{proposition}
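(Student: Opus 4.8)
The plan is to follow Dadarlat--Winter's proof of the nonequivariant statement \cite[Proposition~3.5]{dadarlat_trivialization_2008}, substituting at each step the $G$-equivariant version of the tool it uses, and invoking unitary regularity precisely to control the connected component of the implementing unitary. The argument splits into an exact (*-homomorphism) version and a reindexing step that deduces the $\delta$-multiplicative version from it.

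\emph{The exact version.} First I would establish the following: if $(A,\alpha)$ is a separable unital $D$-absorbing $G$-C*-algebra, then any two unital $G$-equivariant *-homomorphisms $\Phi,\Psi\colon D\to A$ are approximately $G$-unitarily equivalent, with implementing unitaries in $\mathcal{U}_0(A^G)$. Composing $\Phi$ and $\Psi$ with the diagonal embedding $A\to\prod_{\mathcal{F}}^{G}A$ and applying Corollary~\ref{Corollary:ssa}(1) yields a unitary in the fixed-point algebra of $\prod_{\mathcal{F}}^{G}A$ conjugating one into the other; reflecting this back to $A$ by countable saturation gives the desired approximate $G$-unitary equivalence. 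The refinement that the unitary may be taken in $\mathcal{U}_0(A^G)$ is exactly where unitary regularity of $(D,\delta)$ in the sense of \cite[Definition~1.18]{szabo_strongly_2016} is used: it guarantees that the half-flip unitaries witnessing the equivalence lie in the connected component of the identity of the relevant fixed-point algebra, and the perturbation remark stated just before the proposition transports this homotopy down to $A^G$ itself.

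\emph{Reindexing.} To pass from genuine *-homomorphisms to $\delta$-multiplicative, $\delta$-equivariant unital completely positive maps, I would argue by contradiction. If the conclusion failed for some $F$ and $\varepsilon$, there would be separable unital $D$-absorbing $G$-C*-algebras $A_n$ and unital completely positive maps $\varphi_n,\psi_n\colon D\to A_n$ that are $1/n$-multiplicative and $1/n$-equivariant on a set $H_n$ exhausting a dense subset of $D$, yet for which no unitary $u\in\mathcal{U}_0(A_n^G)$ satisfies $\Vert\varphi_n(d)-u\psi_n(d)u^{*}\Vert<\varepsilon$ for every $d\in F$. Forming the continuous part $\prod_{\mathcal{U}}^{G}A_n$ of a reduced product over a free ultrafilter $\mathcal{U}$ on $\mathbb{N}$, the sequences $(\varphi_n)$ and $(\psi_n)$ assemble into honest unital $G$-equivariant *-homomorphisms $D\to\prod_{\mathcal{U}}^{G}A_n$. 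This reduced product is countably saturated and $D$ is commutant weakly contained in it (each $A_n$ being $D$-absorbing), so Theorem~\ref{Theorem:commutant}(1) applies and the two *-homomorphisms are $G$-unitarily equivalent. Reflecting this equivalence to $\mathcal{U}$-many coordinates by \L os' theorem produces, for some $n$, a unitary implementing the equivalence within $\varepsilon$ on $F$; the perturbation remark again ensures it can be chosen in $\mathcal{U}_0(A_n^G)$, contradicting the choice of $A_n$.

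\emph{Main obstacle.} The only genuinely new difficulty relative to the nonequivariant proof is obtaining $u\in\mathcal{U}_0(A^G)$ rather than merely $u\in\mathcal{U}(A^G)$. This is precisely what unitary regularity is designed to supply, with the descent of the homotopy from the sequence algebra to the fixed-point algebra $A^G$ effected by the perturbation remark preceding the proposition. All remaining estimates are routine adaptations of the arguments in \cite[Section~3]{dadarlat_trivialization_2008}.
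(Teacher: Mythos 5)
Your skeleton---an exact uniqueness statement plus a reindexing-by-contradiction argument through $\prod_{\mathcal{U}}^{G}A_n$---is the same as the paper's, which adapts Dadarlat--Winter's proof of \cite[Proposition~3.5]{dadarlat_trivialization_2008} with equivariant substitutes. The genuine gap is in how you produce the unitary in $\mathcal{U}_{0}(A^{G})$ rather than merely in $\mathcal{U}(A^{G})$. Theorem~\ref{Theorem:commutant}(1) (and likewise Corollary~\ref{Corollary:ssa}(1)) yields a conjugating unitary $w$ in the fixed-point algebra of the reduced product with no information whatsoever about its homotopy class. Lifting $w$ by \L os' theorem then gives unitaries $u_n\in\mathcal{U}(A_n^{G})$, and nothing forces these to lie in $\mathcal{U}_0(A_n^{G})$; since your counterexamples were chosen precisely so that no unitary of $\mathcal{U}_0(A_n^{G})$ implements the equivalence on $F$, you obtain no contradiction. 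To descend into $\mathcal{U}_0(A_n^{G})$ one needs $w$ itself to lie in $\mathcal{U}_0$ of the fixed-point algebra of the reduced product, so that it is a finite product of exponentials whose self-adjoint generators can be lifted coordinatewise; an arbitrary unitary of $\prod_{\mathcal{U}}(A_n^{G})$ need not be of this form, as $K_1$-type obstructions survive reduced products. The perturbation remark preceding the proposition cannot repair this: it converts almost-invariant unitaries, and homotopies through almost-invariant unitaries, into nearby invariant ones, but it says nothing about the homotopy class of a unitary that is already invariant, and it cannot manufacture a path where none has been produced.

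The same problem is concealed in your ``exact version'': unitary regularity does not ``guarantee that the half-flip unitaries witnessing the equivalence lie in the connected component of the identity''---by definition it only controls commutators of almost-invariant unitaries. The actual mechanism by which unitary regularity yields the $\mathcal{U}_0$-refinement is \cite[Theorem~2.15]{szabo_strongly_2016}, the equivariant analogue of \cite[Theorem~3.1]{dadarlat_trivialization_2008}: for a unitarily regular strongly self-absorbing $(D,\delta)$, approximate $G$-unitary equivalence of equivariant unital *-homomorphisms can be upgraded to equivalence along continuous paths of (almost) invariant unitaries starting at the unit. This is a substantive intertwining theorem, not a formal consequence of saturation together with the definition of unitary regularity, and it is exactly the ingredient the paper invokes (``this is where unitary regularity of the action on $D$ is used''), alongside \cite[Proposition~3.4(iii)]{szabo_strongly_2015} in place of \cite[Corollary~1.12]{toms_strongly_2007}. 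If you insert this theorem at the point where you pass from ``$G$-unitarily equivalent'' to ``equivalent via a unitary in $\mathcal{U}_0$ of the fixed-point algebra,'' and then discretize and lift the resulting path---with the perturbation remark now legitimately converting the almost-invariant path into one in $A_n^{G}$---your argument closes and essentially recovers the paper's proof in model-theoretic dress.
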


\begin{proof}
The proof in~\cite{dadarlat_trivialization_2008} applies almost verbatim,
with the following changes: the maps $\Phi $ and $\Psi $ are also
equivariant. Instead of \cite[Corollary~1.12]{toms_strongly_2007}, use \cite[%
Proposition~3.4(iii)]{szabo_strongly_2015}; the obtained unitary $V$ can be
chosen to belongs to $(B\otimes D)^{G}$, and similarly with $V_{n}$. The
equivariant analog of \cite[Proposition~1.9]{toms_strongly_2007} is
straightforward to show for compact groups (choosing unitaries in the fixed
point algebra). The unital homomorphisms $\theta _{n}$ can then be chosen to
be equivariant, and the maps $\gamma _{n}$ are also equivariant. 
Finally, one must use \cite[Theorem~2.15]%
{szabo_strongly_2016} instead of \cite[Theorem~3.1]%
{dadarlat_trivialization_2008} (this is where unitary regularity of the
action on $D$ is used).
\end{proof}

Lemma~4.2 in~\cite{dadarlat_trivialization_2008} goes through with only
minor changes:

\begin{lemma}
\label{lemma:4.2} Adopt the notation from \cite[Lemma~4.2]%
{dadarlat_trivialization_2008}. Assume furthermore that $D$ is unitarily
regular and that the maps $\sigma _{1}$ and $\sigma _{2}$ are $\delta
(F,\gamma )$-equivariant on $\mathcal{E}(F,\gamma )$. Then there is a
continuous path $(u_{t})_{t\in \lbrack 0,1]}$ of unitaries in $(C(K)\otimes
D)^{G}$ satisfying $u_{0}=1_{C(K)}\otimes 1_{D}$ and $\Vert u_{1}\sigma
_{1}(d)u_{1}^{\ast }-\sigma _{2}(d)\Vert <\gamma $ for all $d\in F\cdot F$.
\end{lemma}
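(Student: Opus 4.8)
The plan is to reproduce the argument of \cite[Lemma~4.2]{dadarlat_trivialization_2008} essentially verbatim, replacing its single invocation of the nonequivariant approximate-uniqueness statement \cite[Proposition~3.5]{dadarlat_trivialization_2008} with its equivariant counterpart, Proposition~\ref{prop:3.5}. Concretely, the finite set $\mathcal{E}(F,\gamma)\subset D$ and the tolerance $\delta(F,\gamma)>0$ in the hypothesis are now taken to be the finite set $H$ and the number $\delta$ produced by Proposition~\ref{prop:3.5} when it is fed the finite set $F\cdot F$ and the tolerance $\gamma$. The added assumption that $\sigma_1$ and $\sigma_2$ be $\delta(F,\gamma)$-equivariant on $\mathcal{E}(F,\gamma)$ is precisely the extra input that Proposition~\ref{prop:3.5} demands over its nonequivariant predecessor, while the $\delta$-multiplicativity of the unital completely positive maps $\sigma_1,\sigma_2$ is already part of the adopted notation.

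First I would check that the target algebra is admissible for Proposition~\ref{prop:3.5}. The algebra $C(K)\otimes D$, equipped with the action $\iota_{C(K)}\otimes\delta$, is a unital $D$-absorbing $G$-C*-algebra: since $D$ is strongly self-absorbing one has a $G$-equivariant isomorphism $(C(K)\otimes D)\otimes D\cong C(K)\otimes(D\otimes D)\cong C(K)\otimes D$. Together with the standing hypothesis that $D$ is unitarily regular, this is exactly what is needed to apply Proposition~\ref{prop:3.5} to the pair $\sigma_1,\sigma_2\colon D\to C(K)\otimes D$.

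Applying Proposition~\ref{prop:3.5} (with $\psi=\sigma_1$, $\varphi=\sigma_2$) then yields a unitary $u_1\in\mathcal{U}_0\bigl((C(K)\otimes D)^G\bigr)$ satisfying $\|u_1\sigma_1(d)u_1^*-\sigma_2(d)\|<\gamma$ for all $d\in F\cdot F$. The decisive improvement over the nonequivariant statement is that $u_1$ lies in the \emph{fixed point algebra} $(C(K)\otimes D)^G$ and, moreover, in the connected component of the identity of the unitary group of that fixed point algebra. Consequently there is a continuous path $(u_t)_{t\in[0,1]}$ of unitaries in $(C(K)\otimes D)^G$ with $u_0=1_{C(K)}\otimes 1_D$ and endpoint $u_1$, which is the asserted path.

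The only point requiring care—and the main (mild) obstacle—is the bookkeeping: one must verify that every auxiliary unitary and homotopy appearing in the Dadarlat-Winter argument can be chosen equivariantly, so that the resulting path never leaves $(C(K)\otimes D)^G$. This is guaranteed step by step by the fact that Proposition~\ref{prop:3.5} already delivers its implementing unitary inside $\mathcal{U}_0$ of the fixed point algebra, exactly as in the analogous substitutions carried out in the proof of Proposition~\ref{prop:3.5} itself.
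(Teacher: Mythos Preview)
Your proposal is correct and takes essentially the same approach as the paper: both argue that the proof of \cite[Lemma~4.2]{dadarlat_trivialization_2008} goes through once each application of \cite[Proposition~3.5]{dadarlat_trivialization_2008} is replaced by its equivariant counterpart, Proposition~\ref{prop:3.5}. You have supplied more detail than the paper (which gives only a one-line instruction to make this replacement), including the verification that $C(K)\otimes D$ is $G$-equivariantly $D$-absorbing; the one cosmetic discrepancy is that you speak of a ``single invocation'' whereas the paper says ``every application,'' but your final paragraph already accounts for the possibility of several such substitutions.
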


\begin{proof}
Replace every application of \cite[Proposition~3.5]%
{dadarlat_trivialization_2008} with an application of Proposition~\ref%
{prop:3.5}.
\end{proof}

We need an equivariant analog of a local approximate trivialization; see 
\cite[Definition~4.3]{dadarlat_trivialization_2008}. Since our notation
differs slightly from the one used in said paper, we reproduce the
definition entirely.

\begin{definition}
\label{df:trivialization} For $n\in\mathbb{N}$, we write $p\colon [0,1]^n\to
[0,1]$ for the first coordinate projection. Given a compact subset $X\subset
[0,1]^n$, set $Y=p(X)$. If $C\subset Y$ is a closed subset, we write $%
X_C=p^{-1}(C)$. Let $A$ be a unital $G$-$C(X)$-algebra $A$. We abbreviate $%
A_{X_C}$ to $A_C$, and $A_{X_{\{s\}}}$ to $A_s$, for $s\in Y$, while the
fiber maps are denoted $\pi_C$ and $\pi_s$, respectively. (We will not
distinguish, as far as the notation is concerned, between fiber maps of
different $C(X)$-algebras associated to the same closed subset of $X$.)

Suppose that $D$ is a strongly self-absorbing $G$-C*-algebra, that each
fiber of $A$ is $G$-equivariantly isomorphic to $D$, and that for each $s\in
Y$, there is a $G$-$C(X_s)$-algebra isomorphism $A_s\cong C(X_s)\otimes D$.
Let $\eta>0$, let $t\in Y$, and let $\theta\colon A_t\to C(X_t)\otimes D$ be
a $G$-$C(X_t)$-algebra isomorphism. Fix compact subsets $F\subset A$
containing $1_A$, $H\subset C(X)\otimes D$, and $\widehat{H}\subset
C(X_t)\otimes D$.

Let $Y^{(t)}$ be a closed neighborhood of $t$ in $Y$. A \emph{$G$%
-equivariant $(\theta ,F,H,\widehat{H},\eta )$-trivialization of $A$ over $%
Y^{(t)}$} is a family of diagrams, indexed over $s\in Y^{(t)}$, as follows: 
\begin{equation*}
\xymatrix{ & & A\ar[d]^-{\pi_{Y^{(t)}}} & \\ & & A_{Y^{(t)}}\ar[r]^-{\pi_s}
\ar[d]^-{\theta^{(t)}} & A_s\ar[d]^-{\theta_s^{(t)}}\\ C(X)\otimes
D\ar[r]^-{\pi_{Y^{(t)}}} & C(Y^{(t)})\otimes
D\ar[r]^-{\iota^{(t)}}\ar[ur]^-{\sigma^{(t)}} & \prod\limits_{r\in Y^{(t)}}
C(X_r)\otimes D\ar[r]^-{\pi_s}\ar[d]^-{\pi_t} & C(X_s)\otimes D)\\ & &
C(X_t)\otimes D\ar[ul]^-{\zeta^{(t)}}, & }
\end{equation*}%
where all C*-algebras are $G$-$C(X)$-algebras in the obvious way; each map
is $G$-equivariant, unital and completely positive; and conditions (i)
through (xii) in \cite[Definition~4.3]{dadarlat_trivialization_2008} are
satisfied. 
\end{definition}

Existence of equivariant local approximate trivializations, in the sense of
the definition above, is established similarly as in the nonequivariant case:

\begin{lemma}
\label{lemma:4.5} Adopt the notation and assumptions of the first two
paragraphs of Definition~\ref{df:trivialization}, and assume moreover that $%
D $ is unitarily regular. Then there exist a closed neighborhood $%
Y^{(t)}\subset Y$ of $t$ and a $G$-equivariant $(\theta, F, H, \widehat{H}%
,\eta)$-trivialization of $A$ over $Y^{(t)}$.
\end{lemma}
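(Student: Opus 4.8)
The plan is to follow the proof of \cite[Lemma~4.5]{dadarlat_trivialization_2008} essentially verbatim, substituting at each step the equivariant analogs of the three ingredients established earlier in this subsection. The topological skeleton of the argument---the choice of the closed neighborhood $Y^{(t)}$, the auxiliary tolerances, and the finite sets controlling the various approximations---carries over unchanged, since it concerns only the base space $Y$ and the bookkeeping of the $\varepsilon$'s.

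First I would reproduce the construction of the maps $\theta^{(t)}$, $\sigma^{(t)}$, $\zeta^{(t)}$, and $\theta_s^{(t)}$ appearing in the diagram of Definition~\ref{df:trivialization}. Every point at which Dadarlat and Winter invoke the Choi-Effros lifting theorem to produce a completely positive contractive lift of a quotient map, or of the fiber isomorphism $\theta\colon A_t\to C(X_t)\otimes D$, I would instead invoke the equivariant Choi-Effros lifting of Proposition~\ref{prop:CEequiv}. This yields $G$-equivariant unital completely positive contractive lifts, so that all the maps in the diagram can be taken $G$-equivariant, unital, and completely positive, as required.

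Second, wherever the nonequivariant proof corrects the approximate compatibility of two such lifts by a unitary---an application of \cite[Proposition~3.5]{dadarlat_trivialization_2008}---I would apply Proposition~\ref{prop:3.5} instead. Since $D$ is assumed unitarily regular, this produces a correcting unitary lying in $\mathcal{U}_0(A^G)$, hence in the fixed point algebra and connected to the unit; its hypotheses are met because the lifts produced in the previous step are genuinely $G$-equivariant on the relevant finite sets, so they satisfy any prescribed approximate-equivariance requirement. Finally, the passage from the pointwise datum at $t$ to a trivialization over a neighborhood $Y^{(t)}$ requires connecting the relevant unitaries to the identity through continuous paths inside $(C(K)\otimes D)^G$; this is exactly the conclusion of Lemma~\ref{lemma:4.2}, which I would use in place of \cite[Lemma~4.2]{dadarlat_trivialization_2008}.

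The main obstacle is of a bookkeeping nature: one must check that the equivariance constraints are compatible with the nested choices of finite sets and tolerances, and that the continuity in the base variable $s\in Y^{(t)}$ is preserved by the equivariant corrections. However, because each of the three ingredients---Proposition~\ref{prop:CEequiv}, Proposition~\ref{prop:3.5}, and Lemma~\ref{lemma:4.2}---has been arranged to output precisely the $G$-equivariant, fixed-point, and path data demanded by Definition~\ref{df:trivialization}, no estimate beyond those already present in \cite{dadarlat_trivialization_2008} is needed, and the verification of conditions (i)--(xii) proceeds exactly as in the nonequivariant case.
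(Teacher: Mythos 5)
Your proposal is correct and follows essentially the same route as the paper's own proof: run the Dadarlat--Winter argument for \cite[Lemma~4.5]{dadarlat_trivialization_2008} verbatim, replacing each Choi--Effros lift by the equivariant lift of Proposition~\ref{prop:CEequiv}, each application of \cite[Lemma~4.2]{dadarlat_trivialization_2008} (and, internally, of \cite[Proposition~3.5]{dadarlat_trivialization_2008}) by Lemma~\ref{lemma:4.2} (respectively Proposition~\ref{prop:3.5}) so that the correcting unitaries lie in the fixed point algebra, and then conclude equivariance of the maps $\theta_s^{(t)}$ from equivariance of their constituents. The paper additionally names the specific lifts ($\overline{\zeta}^{(t)}$, $\overline{\sigma}^{(t)}$) and unitaries ($\widetilde{u}^{(s)}$) involved, but this is only a difference in level of detail, not in method.
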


\begin{proof}
Again, the proof given in \cite{dadarlat_trivialization_2008} requires only
minor changes: the isomorphisms $\widetilde{\theta }_{s}^{(t)}\colon
A_{s}\rightarrow C(X_{s})\otimes D$ are chosen to be $G$-equivariant. Also,
the $G$-equivariant, unital completely positive lifts $\overline{\zeta }%
^{(t)}\colon C(X_{t})\rightarrow C(X_{\widetilde{Y}^{(t)}})$ and $\overline{%
\sigma }^{(t)}\colon D\rightarrow A_{\widetilde{Y}^{(t)}}$ are obtained
using Proposition~\ref{prop:CEequiv}. The applications of \cite[Lemma~4.2]%
{dadarlat_trivialization_2008} are replaced by applications of Lemma~\ref%
{lemma:4.2}. In particular, $\widetilde{u}^{(s)}$ can be chosen to be $G$%
-invariant. It follows that $\theta _{s}^{(t)}$ is equivariant, since so are 
$\widetilde{\pi }^{(s)}$ and $\widetilde{\theta }_{s}^{(t)}$. The
verification of (xi) and (xii) in Definition~\ref{df:trivialization} is
routine, and we omit it.
\end{proof}

Finally, we come to the proof of the main result of this section:

\begin{proof}[Proof of Theorem~\protect\ref{thm:trivialBundle}]
Use Proposition~\ref{prop:Elliott} instead of \cite[Proposition~4.1]%
{dadarlat_trivialization_2008}. The basis of induction must also assume that 
$\theta _{t}\colon A_{t}\rightarrow C(X_{t})\otimes D$ is $G$-equivariant.
Apply Lemma~\ref{lemma:4.5} in place of \cite[Lemma~4.5]%
{dadarlat_trivialization_2008}. The unital completely positive maps $\lambda
^{(i)},\varrho ^{(i)}\colon C(X_{y_{i}})\otimes D\rightarrow
C(X_{t_{i}})\otimes D$ are $G$-equivariant because so are $\zeta ^{(y_{i})}$%
, $\sigma ^{(y_{i})}$, $\pi _{t_{i}}$, $\theta _{t_{i}}^{(y_{i})}$, and $%
\theta _{t_{i}}^{(y_{i+1})}$. The unitaries $u_{t}^{(i)}$, for $t\in \lbrack
0,1]$ and $i\in I$, can be chosen to be $G$-invariant by Lemma~\ref%
{lemma:4.2}; in other words, the path $t\mapsto u_{t}^{(i)}$ determines a $G$%
-invariant unitary in $C([0,1])\otimes C(X_{t_{1}})\otimes D$, where $%
C([0,1])$ carries the trivial $G$-action. The unitaries defined in (31) and
(32) are automatically $G$-invariant. Finally, the maps $\psi \colon
A\rightarrow C(X)\otimes D$ and $\varphi \colon C(X)\otimes D\rightarrow A$
are readily checked to be equivariant (observe that the structure map of a $%
G $-$C(X)$-algebra is equivariant when $C(X)$ is endowed with the trivial $G$%
-action). This finishes the proof.
\end{proof}

\subsection{\texorpdfstring{$G$}{G}-equivariant \texorpdfstring{$D$}{D}%
-absorption}

We start by providing a new characterization of $G$-equivariant $D$%
-absorption. The nonequivariant case (when the group is trivial), has
recently been observed in \cite{hirshberg_rokhlin_2016}.

\begin{theorem}
\label{Theorem:D-absorption} Let $A$ be a separable $G$-C*-algebra, let $%
\mathcal{F}$ be a countably incomplete filter, and let $D$ be a strongly
self-absorbing, unitarily regular $G$-C*-algebra. Fix $d\in \mathbb{N}$.
Then $A$ is $G$-equivariantly $D$-absorbing if and only if there exist $G$%
-equivariant completely positive contractive order zero maps $\psi
_{0},\ldots ,\psi _{d}\colon D\rightarrow F_{\mathcal{F}}^{G}(A)$ with
commuting ranges such that $\psi _{0}+\cdots +\psi _{d}$ is unital.
\end{theorem}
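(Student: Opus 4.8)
The plan is to reduce both implications to the characterization that $A$ is $G$-equivariantly $D$-absorbing if and only if there exists a unital $G$-equivariant *-homomorphism $D\to F_{\mathcal{F}}^{G}(A)$ (this is among the equivalences recorded above for strongly self-absorbing $D$, and ultimately rests on \cite[Theorem~4.6 and Theorem~4.7]{szabo_strongly_2015}). The forward implication is then immediate: given such a unital *-homomorphism $\rho\colon D\to F_{\mathcal{F}}^{G}(A)$, which is in particular a completely positive contractive order zero map, I would set $\psi_{0}=\rho$ and $\psi_{1}=\cdots=\psi_{d}=0$. These are $G$-equivariant completely positive contractive order zero maps with trivially commuting ranges whose sum $\psi_{0}+\cdots+\psi_{d}=\rho$ is unital.

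For the converse, which is the substantive direction, I would convert the commuting tower into a continuous field over a simplex. Set $h_{j}:=\psi_{j}(1_{D})$; since $1_{D}$ is $G$-invariant and each $\psi_{j}$ is $G$-equivariant, the $h_{j}$ are $G$-invariant positive contractions, and since $\sum_{j}\psi_{j}$ is unital they satisfy $\sum_{j=0}^{d}h_{j}=1$. Because the ranges of the $\psi_{j}$ pairwise commute, each $h_{j}$ commutes with every $\psi_{k}(D)$, so it lies in the centre of the separable unital $G$-C*-subalgebra $E:=C^{\ast}\big(\bigcup_{j=0}^{d}\psi_{j}(D)\big)\subseteq F_{\mathcal{F}}^{G}(A)$, whose unit is $\sum_{j}h_{j}=1$. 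The commuting $G$-invariant positive contractions $h_{0},\dots,h_{d}$ with unit sum generate a central copy of $C(Y)$, where $Y\subseteq\Delta_{d}$ is their joint spectrum inside the standard $d$-simplex; this endows $E$ with the structure of a separable unital $G$-$C(Y)$-algebra on which $G$ acts trivially on the base.

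Next I would compute the fibres. Fix $x=(t_{0},\dots,t_{d})\in Y$ and let $\pi_{x}\colon E\to E_{x}$ be the quotient map, so that $h_{j}\mapsto t_{j}$. For each $j$ with $t_{j}>0$, the order zero structure theorem \cite[Corollary~4.1]{winter_completely_2009} shows that $d\mapsto t_{j}^{-1}\pi_{x}(\psi_{j}(d))$ is a unital $G$-equivariant *-homomorphism $\bar{\rho}_{j}\colon D\to E_{x}$, while $\pi_{x}\circ\psi_{j}=0$ when $t_{j}=0$; the $\bar{\rho}_{j}$ have commuting ranges since the $\psi_{j}$ do. As at least one $t_{j}$ is positive, the maps $\{\bar{\rho}_{j}:t_{j}>0\}$ assemble into a surjective unital $G$-equivariant *-homomorphism from $\bigotimes_{\{j:t_{j}>0\}}D$ onto $E_{x}$. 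Strong self-absorption of $(D,\delta)$ identifies this finite tensor power $G$-equivariantly with $D$, and $G$-simplicity of $D$ forces the surjection to be injective, so $E_{x}\cong_{G}D$ for every $x\in Y$. Now $Y$ is compact metrizable of covering dimension at most $d$, and $(D,\delta)$ is strongly self-absorbing and unitarily regular, so Theorem~\ref{thm:trivialBundle} applies and yields a $C(Y)$-linear $G$-equivariant isomorphism $E\cong_{G}C(Y)\otimes D$. Composing $d\mapsto 1_{C(Y)}\otimes d$ with this isomorphism and the inclusion $E\hookrightarrow F_{\mathcal{F}}^{G}(A)$ produces a unital $G$-equivariant *-homomorphism $D\to F_{\mathcal{F}}^{G}(A)$, whence $A$ is $G$-equivariantly $D$-absorbing.

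The main obstacle is the fibrewise analysis together with the bookkeeping of $G$-equivariance: one must verify that each $h_{j}$ is genuinely central in $E$ (so that $E$ really is a $C(Y)$-algebra and not merely carries a commutative subalgebra), that dividing an order zero map by the scalar value of its unit on a fibre yields an honest *-homomorphism with commuting ranges, and that the fibre isomorphisms $E_{x}\cong D$ can be taken $G$-equivariantly — all of which rely on the continuity of the $G$-action on $F_{\mathcal{F}}^{G}(A)$ and on $1_{D}$ being $G$-fixed. Once $E$ is identified as a separable unital $G$-$C(Y)$-algebra with all fibres $\cong_{G}D$, the remaining work is entirely absorbed into Theorem~\ref{thm:trivialBundle}, whose hypothesis of unitary regularity is precisely what makes the equivariant trivialization available.
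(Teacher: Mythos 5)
Your proof is correct and is essentially the paper's own argument: both directions reduce to Szab\'{o}'s characterization of $G$-equivariant $D$-absorption by a single unital $G$-equivariant *-homomorphism $D\to F_{\mathcal{F}}^{G}(A)$, and the converse is settled by exhibiting a separable unital $G$-$C(X)$-algebra, over a compact metrizable finite-dimensional base, all of whose fibers are $G$-isomorphic to $D$, and then trivializing it with Theorem~\ref{thm:trivialBundle}. The only difference is one of implementation: the paper passes through the universal algebra of a commuting tower (the kernel of the canonical character on $(C(D)^{\dagger})^{\otimes (d+1)}$, fibered over the spectrum of its center inside $[0,1]^{d+1}$, so that the fibers are tensor powers of $D$ by construction), whereas you fiber the concrete subalgebra $C^{*}\bigl(\psi_{0}(D),\ldots,\psi_{d}(D)\bigr)\subseteq F_{\mathcal{F}}^{G}(A)$ over the joint spectrum of the $\psi_{j}(1_{D})$ and identify its fibers by hand via the order zero structure theorem, strong self-absorption of $D$, and simplicity of $D^{\otimes k}$ (together with the standard fact that fibers over points of the joint spectrum are nonzero).
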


\begin{proof}
By \cite[Theorem~3.7]{szabo_strongly_2015}, being $D$-absorbing is
equivalent to the condition in Theorem \ref{Theorem:D-absorption} with $d=0$%
. We now prove the converse implication. We let $C(D)=C_{0}((0,1])\otimes D$
denote the cone of $D$, and $C(D)^{\dagger }$ denote its minimal
unitization, endowed with the canonical $G$-action. The tensor product $%
C(D)^{\dagger }\otimes \cdots \otimes C(D)^{\dagger }$ of $d+1$ copies of $%
C(D)^{\dagger }$ has a canonical $G$-equivariant character. We let $E$ be
its the kernel, which is a $G$-invariant ideal. Observe that if $B$ is a
unital C*-algebra, then $(d+1)$-tuples of $G$-equivariant completely
positive contractive order zero maps $D\rightarrow B$ with commuting ranges
and unital sum, are into one-to-one correspondence with unital $G$%
-equivariant *-homomorphisms $E\rightarrow B$. This follows form the
structure theorem for completely positive contractive order zero maps \cite[%
Corollary 4.1]{winter_completely_2009}---or, more precisely, its equivariant
counterpart \cite[Corollary 2.10]{gardella_rokhlin_2014}---and the universal
properties of unitization and tensor products. Therefore, in order to
conclude the proof, it is enough to show that $E$ is $G$-equivariantly $D$%
-absorbing.

Denote by $X$ the spectrum of the center of $E$, which is a subspace of the $%
(d+1)$-dimensional cube $[0,1]^{d+1}$. Thus, $X$ is a compact metrizable
space. Moreover, the $G$-C*-algebra\ $E$ is easily seen to be a $G$-$C(X)$%
-algebra with fibers isomorphic to $D$. By Theorem~\ref{thm:trivialBundle},
we conclude that $E$ is $G$-equivariantly isomorphic to $C(X)\otimes D$, and
in particular is $G$-equivariantly $D$-absorbing. This finishes the proof.
\end{proof}

In view of Remark \ref{Remark:containment-ssa}, one can reformulate Theorem %
\ref{Theorem:D-absorption} by asserting that $A$ is $G$-equivariantly $D$%
-absorbing if and only if it is commutant $d$-contained in $D$ with
commuting towers for some $d\in \mathbb{N}$.

\begin{corollary}
\label{Corollary:D-absorption} Let $A$ be a separable $G$-C*-algebra, let $%
\mathcal{F}$ be a countably incomplete filter, and let $D$ be a strongly
self-absorbing, unitarily regular $G$-C*-algebra. Then $A$ is $D$-absorbing
if and only if there exist $d\in \mathbb{N}$ and completely positive
contractive order zero maps $\psi _{0},\ldots ,\psi _{d}\colon D\rightarrow
F_{\mathcal{F}}^{G}(A)$ with commuting ranges such that $\psi _{0}+\cdots
+\psi _{d}$ is unital.
\end{corollary}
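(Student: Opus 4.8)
The plan is to reduce the statement to Theorem~\ref{Theorem:D-absorption}, which is precisely the same assertion but in the $G$-equivariant category, and to bridge the passage between the full central sequence algebra and its continuous part via the canonical inclusion $F_{\mathcal{F}}^{G}(A)\hookrightarrow F_{\mathcal{F}}(A)$; here $F_{\mathcal{F}}(A)=F_{\mathcal{F}}^{\{e\}}(A)$ is the Kirchberg invariant for the trivial group. Throughout I will use that a strongly self-absorbing $G$-C*-algebra is strongly self-absorbing as a C*-algebra, and that the trivial action on a strongly self-absorbing C*-algebra is unitarily regular, so that Theorem~\ref{Theorem:D-absorption} is applicable with $G$ replaced by the trivial group.

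The ``if'' direction is the easy one. Given completely positive contractive order zero maps $\psi_0,\dots,\psi_d\colon D\to F_{\mathcal{F}}^{G}(A)$ with commuting ranges and unital sum, I would compose each $\psi_j$ with the inclusion $F_{\mathcal{F}}^{G}(A)\hookrightarrow F_{\mathcal{F}}(A)$. This produces completely positive contractive order zero maps $D\to F_{\mathcal{F}}(A)$ with the same commuting-ranges and unital-sum properties, so Theorem~\ref{Theorem:D-absorption} applied to the trivial group forces $A$ to be $D$-absorbing.

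For the ``only if'' direction, plain $D$-absorption yields, via Theorem~\ref{Theorem:D-absorption} for the trivial group (equivalently, straight from a plain isomorphism $A\cong A\otimes D$), such maps into $F_{\mathcal{F}}(A)$; the entire difficulty is to arrange that they land in the continuous part $F_{\mathcal{F}}^{G}(A)$. My plan is to first average approximate lifts $\phi_n\colon D\to A$ over $G$ against the normalized Haar measure, exactly as in Proposition~\ref{prop:CEequiv}, forming $\widetilde{\phi}_n(d)=\int_G\alpha_g(\phi_n(\delta_{g^{-1}}(d)))\,\mathrm{d}\mu$. The resulting map is equivariant, and the bound $\|g\cdot[\widetilde{\phi}_n(d)]-[\widetilde{\phi}_n(d)]\|\leq\|\delta_g(d)-d\|$, uniform in $n$, guarantees that the induced central sequences genuinely lie in the continuous part $F_{\mathcal{F}}^{G}(A)$. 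Averaging destroys the order zero property, so I would then recover the order zero, commuting-tower decomposition by passing to the cone ideal $E\subset(C(D)^{\dagger})^{\otimes(d+1)}$ used in the proof of Theorem~\ref{Theorem:D-absorption} and re-running the equivariant bundle trivialization of Theorem~\ref{thm:trivialBundle}, which applies because $E$ is a $G$-$C(X)$-algebra with fibers $D$.

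The main obstacle is precisely this continuization with the order zero structure intact. Example~\ref{eg:NotCts} shows that $F_{\mathcal{F}}^{G}(A)$ is in general a proper subalgebra of $F_{\mathcal{F}}(A)$, so producing order zero maps into the continuous part is strictly stronger than producing them into the full central sequence algebra, and there is no order-zero-preserving shortcut: the conditional expectation $\mathrm{id}_A\otimes\tau_D\colon A\otimes D\to A$ is equivariant but not multiplicative, and the simple algebra $D$ admits no character, so one cannot transport the equivariant maps obtained for $A\otimes D$ back to $A$ while preserving order zero. This is exactly where unitary regularity of $\delta$ and the equivariant bundle theorem are indispensable, mirroring their role in Theorem~\ref{Theorem:D-absorption}.
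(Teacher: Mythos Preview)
You have misread the statement. In this paper $D$ is a strongly self-absorbing $G$-C*-algebra, and ``$A$ is $D$-absorbing'' in Corollary~\ref{Corollary:D-absorption} is shorthand for ``$A$ is $G$-equivariantly $D$-absorbing''; likewise the order zero maps into $F_{\mathcal{F}}^{G}(A)$ are to be read as $G$-equivariant (this is the only reason the codomain is the continuous part rather than $F_{\mathcal{F}}(A)$). Under that reading the Corollary is an immediate repackaging of Theorem~\ref{Theorem:D-absorption}: the Theorem gives the equivalence for every fixed $d\in\mathbb{N}$, so the forward implication holds with $d=0$, and the backward implication is the Theorem applied with whichever $d$ the hypothesis hands you. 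That is the paper's entire proof; no averaging and no bundle argument are involved.

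Under your interpretation---non-equivariant absorption of the underlying C*-algebras and non-equivariant maps---the ``only if'' direction becomes a different and harder claim, and your sketch does not close it. After averaging the lifts $\phi_n$ you correctly observe that order zero is destroyed; you then propose to ``re-run the equivariant bundle trivialization'' on the cone ideal $E$. But Theorem~\ref{thm:trivialBundle} only tells you that $E$ itself is $G$-equivariantly $D$-absorbing, a statement about $E$ and not about $A$. To exploit it you would need a unital $G$-equivariant $*$-homomorphism from $E$ (or from $D$) into $F_{\mathcal{F}}^{G}(A)$, which is exactly what you are trying to construct; the averaged, non-order-zero maps $\widetilde{\phi}_n$ do not provide one, and the original non-equivariant order zero maps land only in $F_{\mathcal{F}}(A)$. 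So the final step is circular. More broadly, non-equivariant $D$-absorption of $A$ imposes no constraint whatsoever on the $G$-action on $A$, so there is no mechanism in your outline that forces the continuous part $F_{\mathcal{F}}^{G}(A)$ to be large enough to receive the required decomposition.
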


Suppose that $D$ is a strongly self-absorbing $G$-C*-algebra. Consider the $%
\left\{ 0,\infty \right\} $-valued dimension function for separable $G$%
-C*-algebras obtained by setting $\dim _{D}(A)=0$ if and only if $A$ is $G$%
-equivariantly $D$-absorbing. The following proposition is an immediate
consequence of Theorem \ref{Theorem:D-absorption}; see also Example \ref%
{Example:dim-Rok}.

\begin{proposition}
\label{Proposition:dimDCommPosExisAxiom} Let $D$ be a strongly
self-absorbing, unitarily regular $G$-C*-algebra. Then $\dim _{D}$, as
defined above, is commutant positively existentially axiomatizable with
commuting towers.
\end{proposition}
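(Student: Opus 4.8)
The plan is to recognize $\dim_D$ as a special case of the commutant-containment dimension function of Example \ref{Example:dim-Rok}. Applying that example to the fixed $G$-C*-algebra $C = D$, one obtains a dimension function, which I will denote by $\dim_D'$, characterized by
\[
\dim_D'(A) \leq d \iff D \text{ is } G\text{-equivariantly commutant } d\text{-contained in } A \text{ with commuting towers,}
\]
and Example \ref{Example:dim-Rok} already asserts that $\dim_D'$ is commutant positively existentially axiomatizable with commuting towers, via formulas of the shape prescribed in Definition \ref{Definition:commutant-commuting}. It therefore suffices to show that $\dim_D' = \dim_D$ as $\{0, \infty\}$-valued functions on separable $G$-C*-algebras.

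To establish this identity, I would first note that $\dim_D$ takes only the values $0$ and $\infty$: by definition $\dim_D(A) = 0$ when $A$ is $G$-equivariantly $D$-absorbing and $\dim_D(A) = \infty$ otherwise, so $\dim_D(A) \leq d$ holds for some (equivalently, every) $d \geq 0$ precisely when $A$ is $D$-absorbing. Fixing $d \in \mathbb{N}$ and a countably incomplete filter $\mathcal{F}$, I would then use that $D$ is strongly self-absorbing, so that Remark \ref{Remark:containment-ssa} applies and $D$ is commutant $d$-contained in $A$ with commuting towers if and only if there exist $G$-equivariant completely positive contractive order zero maps $\psi_0, \ldots, \psi_d \colon D \to F_{\mathcal{F}}^G(A)$ with commuting ranges and unital sum. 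Since $D$ is moreover unitarily regular, this is exactly the condition of Theorem \ref{Theorem:D-absorption}, stated there for an arbitrary fixed $d$, and is therefore equivalent to $A$ being $G$-equivariantly $D$-absorbing. Chaining these equivalences yields, for every fixed $d \geq 0$,
\[
\dim_D'(A) \leq d \iff A \text{ is } D\text{-absorbing} \iff \dim_D(A) \leq d,
\]
so $\dim_D' = \dim_D$ and the proposition follows.

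The only substantial input is Theorem \ref{Theorem:D-absorption}, and the crux is that it supplies the equivalence \emph{uniformly in $d$}: for $d = 0$ this is merely Szab\'{o}'s characterization of $D$-absorption, whereas what is needed here is that the $(d+1)$-tower condition with commuting ranges remains equivalent to plain $D$-absorption for \emph{every} $d$. It is precisely this uniformity that collapses the associated dimension function to $\{0, \infty\}$ and lets the single family of formulas furnished by Example \ref{Example:dim-Rok} detect $D$-absorption simultaneously at every level $d$. Granting that theorem, I expect no estimates to remain: the axiomatizing formulas come verbatim from Example \ref{Example:dim-Rok}, and the identification $\dim_D = \dim_D'$ is the bookkeeping carried out above, so the main (and only) obstacle is really invoking the already-proved Theorem \ref{Theorem:D-absorption} with the correct direction of containment.
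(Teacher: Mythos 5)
Your proposal is correct and follows essentially the same route as the paper: the paper derives the proposition as an immediate consequence of Theorem \ref{Theorem:D-absorption} (whose uniformity in $d$, via Remark \ref{Remark:containment-ssa}, identifies $\dim_D$ with the commutant-containment dimension function for $C=D$) together with Example \ref{Example:dim-Rok}, which supplies the axiomatizing formulas. Your write-up simply makes this chain of equivalences explicit.
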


The following is the main result of this subsection. The conclusion is new
even in the nonequivariant setting. Recall that $\mathcal{Z}$-absorbing
strongly self-absorbing actions are automatically unitarily regular.

\begin{corollary}
\label{Corollary:absorbption-containment} Let $A$ and $B$ be separable $G$%
-C*-algebras, and let $D$ be a unitarily regular, strongly self-absorbing $G$%
-C*-algebra. If $A$ is $G$-equivariantly $D$-absorbing and $A\precsim _{d}^{%
\mathrm{c}}B$ for some $d\in \mathbb{N}$, then $B$ is $G$-equivariantly $D$%
-absorbing.
\end{corollary}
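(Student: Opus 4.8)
The plan is to deduce this statement as an immediate instance of the dimension-function formalism developed above, rather than manipulating the order zero maps by hand. Recall the $\{0,\infty\}$-valued dimension function $\dim_D$ for separable $G$-C*-algebras, defined by setting $\dim_D(A)=0$ precisely when $A$ is $G$-equivariantly $D$-absorbing, and $\dim_D(A)=\infty$ otherwise. Since $D$ is unitarily regular and strongly self-absorbing, Proposition~\ref{Proposition:dimDCommPosExisAxiom} guarantees that $\dim_D$ is commutant positively existentially axiomatizable with commuting towers; this is exactly the hypothesis needed in order to feed $\dim_D$ into Proposition~\ref{Proposition:commuting}.

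First I would record that, by hypothesis, $A$ is $G$-equivariantly $D$-absorbing, so that $\dim_D(A)=0$. Next, since $A\precsim_d^{\mathrm{c}}B$ and both $A$ and $B$ are separable, Proposition~\ref{Proposition:commuting} applied to the dimension function $\dim_D$ yields
\[
\dim_D(B)+1\le (d+1)(\dim_D(A)+1)=(d+1)(0+1)=d+1,
\]
whence $\dim_D(B)\le d<\infty$. Finally, because $\dim_D$ takes only the values $0$ and $\infty$, the bound $\dim_D(B)\le d<\infty$ forces $\dim_D(B)=0$, which by definition means that $B$ is $G$-equivariantly $D$-absorbing, as desired.

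I expect no serious obstacle here: all of the genuine content has already been packaged into Theorem~\ref{Theorem:D-absorption} (which underlies both the well-definedness and the axiomatizability of $\dim_D$) and into the abstract estimate of Proposition~\ref{Proposition:commuting}. The only points requiring care are checking that the hypotheses of Proposition~\ref{Proposition:commuting} are literally met---namely that $\dim_D$ has the required syntactic form, supplied by Proposition~\ref{Proposition:dimDCommPosExisAxiom}, and that $A$ and $B$ are separable with $A\precsim_d^{\mathrm{c}}B$---and then reading off that a finite value of the $\{0,\infty\}$-valued function $\dim_D$ must in fact be $0$.
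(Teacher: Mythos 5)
Your proof is correct, but it is not the route the paper takes. The paper argues directly with commutant containments: since $A$ is $G$-equivariantly $D$-absorbing, one has $D\precsim_{0}^{\mathrm{c}}A$ (Remark~\ref{Remark:containment-ssa} together with \cite[Theorem 4.7]{szabo_strongly_2015}); composing with $A\precsim_{d}^{\mathrm{c}}B$ gives $D\precsim_{d}^{\mathrm{c}}B$, and then Theorem~\ref{Theorem:D-absorption} with Remark~\ref{Remark:containment-ssa} yields $D$-absorption of $B$. You instead feed the $\{0,\infty\}$-valued function $\dim_{D}$ into the abstract transfer estimate of Proposition~\ref{Proposition:commuting}, with Proposition~\ref{Proposition:dimDCommPosExisAxiom} supplying the axiomatizability hypothesis, and let the finiteness bound $\dim_{D}(B)\le d$ collapse to $\dim_{D}(B)=0$. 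The two arguments share the same mathematical core: Proposition~\ref{Proposition:dimDCommPosExisAxiom} is Theorem~\ref{Theorem:D-absorption} plus Remark~\ref{Remark:containment-ssa} in disguise, and Proposition~\ref{Proposition:commuting} is the syntactic form of the composition rule for commutant containments, so the difference is packaging rather than substance. Your packaging buys uniformity---it exhibits the corollary as an instance of exactly the mechanism the paper deploys later for Theorem~\ref{Theorem:absorption} (via Corollary~\ref{Corollary:fixed}), and it never needs the transitivity of $\precsim^{\mathrm{c}}$ explicitly---while the paper's chaining is shorter and makes visible that unitary regularity enters only through Theorem~\ref{Theorem:D-absorption}. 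One small correction: the well-definedness of $\dim_{D}$ requires nothing (it is a definition); only its axiomatizability with commuting towers rests on Theorem~\ref{Theorem:D-absorption}.
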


\begin{proof}
If $A$ is $G$-equivariantly $D$-absorbing, then $D\precsim _{0}^{\mathrm{c}%
}A $. If furthermore $A\precsim _{d}^{\mathrm{c}}B$, then we have $D\precsim
_{d}^{\mathrm{c}}B$. Therefore $B$ is $G$-equivariantly $D$-absorbing by
Theorem \ref{Theorem:D-absorption} and Remark \ref{Remark:containment-ssa}.
\end{proof}

\subsection{Examples and applications to dimensional inequalities}

In this section, we exhibit some examples of embeddings with finite order
zero dimension, and use them to deduce some dimensional inequalities,
particularly for nuclear dimension and decomposition rank. We need to
extract a technical fact from Section~5 of \cite{matui_decomposition_2014}.
If $a,b$ are elements of a C*-algebra $A$, we write $a\thickapprox
_{\varepsilon }b$ to denote that $\left\Vert a-b\right\Vert <\varepsilon $.

\begin{lemma}
\label{lemma:matui-sato} Let $n\in\mathbb{N}$, and let $\varepsilon>0$. Then
there exist completely positive contractive maps $\lambda_0,\lambda_1\colon
M_n\to \mathcal{Z}$ such that $\lambda_0(1_{M_n})+\lambda_1(1_{M_n})
\approx_{\varepsilon} 1_{\mathcal{Z}}$.
\end{lemma}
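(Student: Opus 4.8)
The plan is to produce \emph{order zero} completely positive contractive maps $\lambda_0,\lambda_1\colon M_n\to\mathcal Z$ (which are in particular completely positive contractive, as required) with unit images summing to $1_{\mathcal Z}$ up to $\varepsilon$. We may assume $n\ge 2$, the case $n=1$ being handled by $\lambda_0=\mathrm{id}_{\mathcal Z}$, $\lambda_1=0$. Because the cone over $M_n$ is projective, order zero maps from $M_n$ into a reduced power lift to sequences of order zero maps, so it suffices to construct such a pair with values in the central sequence algebra $F_{\mathcal U}(\mathcal Z)=\mathcal Z'\cap\prod_{\mathcal U}\mathcal Z$, for a free ultrafilter $\mathcal U$ on $\mathbb N$, whose unit images sum to the unit exactly; evaluating a coordinate representative then yields maps into $\mathcal Z$ satisfying the norm estimate.

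First I would build, inside a prime dimension drop algebra, two order zero maps whose unit images sum to $1$ off a set of arbitrarily small trace. Since $n$ and $n+1$ are coprime, the dimension drop algebra
\[ Z_{n,n+1}=\{f\in C([0,1],M_n\otimes M_{n+1}): f(0)\in M_n\otimes\mathbb C1,\ f(1)\in\mathbb C1\otimes M_{n+1}\} \]
embeds unitally in $\mathcal Z$. Define $\lambda_0(x)(t)=f_0(t)\,(x\otimes 1_{n+1})$ with $f_0\in C([0,1])$ satisfying $0\le f_0\le 1$, $f_0(0)=1$ and $f_0\equiv 0$ on $[1-\delta,1]$; this is order zero and respects both boundary conditions. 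For $\lambda_1$ I would represent $x$ diagonally with multiplicity $n+1$ near $t=0$, let one of the $n+1$ summands fade out on $[1-\delta,1]$ while rotating the remaining $n$ summands into a corner copy of $M_n$ inside $\mathbb C1\otimes M_{n+1}$, weighting by a function $f_1$ chosen so that $f_0+f_1\le 1$. The only obstruction to $\lambda_0(1)+\lambda_1(1)=1$ is then a rank-one projection in the $M_{n+1}$-fibre, confined to $[1-\delta,1]$, so the positive defect $d:=1-\lambda_0(1)-\lambda_1(1)$ satisfies $\tau(d)\le\delta$ for the unique trace $\tau$, and this can be made as small as desired.

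The main obstacle is that $d$, while of small trace, has norm close to $1$, so trace-smallness alone does not give the required \emph{norm} estimate; this is exactly where the $\mathcal Z$-specific input enters. Viewing $\lambda_0,\lambda_1$ as order zero maps into $F_{\mathcal U}(\mathcal Z)$, I would invoke property (SI) for $F_{\mathcal U}(\mathcal Z)$ together with uniqueness of the trace on $\mathcal Z$ \cite{matui_decomposition_2014}: since $\lambda_0(1)+\lambda_1(1)$ is full while $d$ has arbitrarily small trace, $d$ is Cuntz-absorbed by $\lambda_0(1)+\lambda_1(1)$, which lets one rotate the defect away and modify $\lambda_1$ (keeping it order zero via the structure theorem \cite{winter_completely_2009}) so that $\lambda_0(1)+\lambda_1(1)=1$ in $F_{\mathcal U}(\mathcal Z)$. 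Evaluating the resulting central sequences at a suitable index produces order zero maps $\lambda_0,\lambda_1\colon M_n\to\mathcal Z$ with $\lambda_0(1_{M_n})+\lambda_1(1_{M_n})\approx_{\varepsilon}1_{\mathcal Z}$, as desired.
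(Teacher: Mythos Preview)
The paper does not give a self-contained argument here; it simply cites the first part of the proof of Theorem~1.1 in Section~5 of \cite{matui_decomposition_2014}. Your overall strategy---build approximate order zero maps into a dimension drop subalgebra of $\mathcal Z$ whose unit images sum to $1$ up to a defect of small trace, then pass to the central sequence algebra and use property~(SI) to eliminate the defect---is indeed the Matui--Sato approach, so you are reconstructing the cited argument rather than proposing a genuine alternative.

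That said, two steps in your sketch are too loose to stand as a proof. First, the construction of $\lambda_1$ inside $Z_{n,n+1}$ is not actually specified: ``let one of the $n+1$ summands fade out while rotating the remaining $n$ summands into a corner copy of $M_n$ inside $\mathbb C1\otimes M_{n+1}$'' does not obviously yield a globally well-defined \emph{order zero} map satisfying both boundary conditions; the multiplicities at the endpoints must be reconciled and the interpolation made explicit, and Matui--Sato carry this out with concrete formulas rather than a verbal description. Second, and more seriously, the (SI) step is where the real content lies: property~(SI) produces an element $s$ with $s^*s=d$ and $(\lambda_0(1)+\lambda_1(1))s=s$, but converting this into a modification of $\lambda_1$ that remains order zero and makes $\lambda_0(1)+\lambda_1(1)=1$ exactly is a nontrivial maneuver---it is precisely the delicate point in \cite{matui_decomposition_2014}, and invoking ``the structure theorem'' for order zero maps does not by itself explain how the defect is absorbed. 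Your outline points in the right direction, but both of these steps need to be written out before it constitutes a proof.
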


\begin{proof}
See the first part of proof of Theorem~1.1 in Section~5 of \cite%
{matui_decomposition_2014}.
\end{proof}

\begin{theorem}
\label{thm:InclZUHF} Let $U$ be a UHF-algebra, and let $\theta\colon 
\mathcal{Z}\to U$ be any unital embedding. Then $\dim _{\mathrm{oz}}(
\theta)= 1$.
\end{theorem}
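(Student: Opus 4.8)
The plan is to reduce the computation to a statement about order zero maps into the central sequence algebra $F_{\mathcal{F}}(\mathcal{Z}) = \mathcal{Z}'\cap\prod_{\mathcal{F}}\mathcal{Z}$, and then establish the two inequalities $\dim_{\mathrm{oz}}(\theta)\geq 1$ and $\dim_{\mathrm{oz}}(\theta)\leq 1$ separately. For the reduction, I would first observe that any two unital embeddings of the strongly self-absorbing algebra $\mathcal{Z}$ into the $\mathcal{Z}$-stable algebra $U$ are approximately unitarily equivalent, and that $\dim_{\mathrm{oz}}$ is readily checked to be invariant both under approximate unitary equivalence of its argument and under postcomposition with an isomorphism of the target. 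This lets me replace the given $\theta$ by the embedding $1_{U}\otimes\mathrm{id}_{\mathcal{Z}}\colon\mathcal{Z}\to U\otimes\mathcal{Z}$ (using $U\otimes\mathcal{Z}\cong U$). For this particular map, Lemma~\ref{Lemma:oz-dimension-commutant-embedding}, applied with $A=B=\mathcal{Z}$, $C=U$, and $\theta=\mathrm{id}_{\mathcal{Z}}$, yields for any countably incomplete filter $\mathcal{F}$ the equivalence: $\dim_{\mathrm{oz}}(\theta)\leq d$ if and only if there exist completely positive contractive order zero maps $\eta_{0},\ldots,\eta_{d}\colon U\to F_{\mathcal{F}}(\mathcal{Z})$ whose sum is unital. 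Thus the theorem becomes the assertion that the least such $d$ is exactly $1$.

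For the lower bound, I would argue that $d=0$ is impossible. If it were admissible, there would be a single unital completely positive contractive order zero map $\eta_{0}\colon U\to F_{\mathcal{U}}(\mathcal{Z})$ for an ultrafilter $\mathcal{U}$ over $\mathbb{N}$ (the filter may be chosen freely by the filter-independence of $d$-containment). A unital order zero map is a unital $\ast$-homomorphism, so $U$ would embed unitally into $F_{\mathcal{U}}(\mathcal{Z})\subseteq\prod_{\mathcal{U}}\mathcal{Z}$. I would then invoke the limit trace $\tau_{\mathcal{U}}$: since $\mathcal{Z}$ has no nontrivial projections, every projection in $\prod_{\mathcal{U}}\mathcal{Z}$ has trace in $\{0,1\}$, because lifting to a representing sequence and applying continuous functional calculus produces genuine projections in $\mathcal{Z}$, each of which is $0$ or $1$. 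As $U$ is an infinite-dimensional UHF-algebra it contains a projection of trace $1/m$ with $m\geq 2$, whose image would be a projection of trace $1/m\notin\{0,1\}$, a contradiction. Hence $\dim_{\mathrm{oz}}(\theta)\geq 1$.

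For the upper bound it remains to produce two completely positive contractive order zero maps $\eta_{0},\eta_{1}\colon U\to F_{\mathcal{F}}(\mathcal{Z})$ with $\eta_{0}(1)+\eta_{1}(1)=1$. This is where Lemma~\ref{lemma:matui-sato} enters: for each $n$ and each $\varepsilon>0$ it supplies completely positive contractive maps $\lambda_{0},\lambda_{1}\colon M_{n}\to\mathcal{Z}$ with $\lambda_{0}(1_{M_n})+\lambda_{1}(1_{M_n})\approx_{\varepsilon}1_{\mathcal{Z}}$, and these may be taken order zero, as in the Matui--Sato construction. Writing $U=\overline{\bigcup_{k}M_{n_{k}}}$ and using $\mathcal{Z}\cong\bigotimes_{j}\mathcal{Z}$, I would place these maps in ever-further tensor factors so that they become asymptotically central in $\mathcal{Z}$; letting $\varepsilon\to 0$ along $\mathcal{F}$ simultaneously converts the approximate identities into exact ones and the approximate centrality into genuine membership in $F_{\mathcal{F}}(\mathcal{Z})$. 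A reindexing argument, using the quantifier-free saturation of $F_{\mathcal{F}}(\mathcal{Z})$ recorded in Proposition~\ref{Proposition:F}, then assembles the maps on the building blocks $M_{n_{k}}$ into coherent order zero maps $\eta_{0},\eta_{1}\colon U\to F_{\mathcal{F}}(\mathcal{Z})$ with unital sum.

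The hard part will be this upper-bound construction. The delicate points are: that the Matui--Sato maps can genuinely be arranged to be order zero and to land, coherently in $k$, in the central sequence algebra; that the exact unital-sum condition $\eta_{0}(1)+\eta_{1}(1)=1$ survives the passage from the finite-stage approximate data; and that the assembled sum $\eta_{0}+\eta_{1}$ is contractive. All three are handled by the saturation of $F_{\mathcal{F}}(\mathcal{Z})$, which promotes the family of approximate, finite-stage solutions into a single exact one. Combining this with the lower bound gives $\dim_{\mathrm{oz}}(\theta)=1$.
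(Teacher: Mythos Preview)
Your proposal is essentially correct and follows the same overall strategy as the paper for the upper bound: reduce to the second-factor embedding via approximate unitary equivalence, invoke Lemma~\ref{Lemma:oz-dimension-commutant-embedding} to translate the problem into finding two completely positive contractive order zero maps $U\to F_{\mathcal{F}}(\mathcal{Z})$ with unital sum, and supply those maps via Lemma~\ref{lemma:matui-sato} together with a reindexing/saturation argument. The paper's implementation differs in presentation---it composes the Matui--Sato maps with conditional expectations $E_n\colon U\to M_{k_n}$ to get maps $U\to\prod_{\mathcal{F}}(\prod_{\mathcal{F}}\mathcal{Z})$ and then factors them through $\prod_{\mathcal{F}}\mathcal{Z}$ by a diagonal argument---whereas you propose to secure centrality directly by embedding the Matui--Sato maps into distant tensor factors of $\mathcal{Z}\cong\bigotimes_j\mathcal{Z}$. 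Your route makes the centrality requirement (which is needed for the converse direction of Lemma~\ref{Lemma:oz-dimension-commutant-embedding}) more visibly satisfied.

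Your lower-bound argument is genuinely different from the paper's. You argue directly that a unital $\ast$-homomorphism $U\to\prod_{\mathcal{U}}\mathcal{Z}$ cannot exist because $\mathcal{Z}$ is projectionless, so every projection in $\prod_{\mathcal{U}}\mathcal{Z}$ has limit trace in $\{0,1\}$, while $U$ has projections of intermediate trace. The paper instead invokes Proposition~\ref{Proposition:estimate-dimension} with $\dim=\dim_{\mathrm{nuc}}$, using $\dim_{\mathrm{nuc}}(\mathcal{Z})=1$ and $\dim_{\mathrm{nuc}}(U)=0$. Your argument is more elementary and self-contained; the paper's argument illustrates that the general dimensional inequality machinery already forces the lower bound, which is thematically useful in context but uses heavier input.
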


\begin{proof}
Since any two unital embeddings of $\mathcal{Z}$ into $U$ are approximately
unitarily equivalent, and $\mathcal{Z}\otimes U$ is isomorphic to $U$, we
may assume, without loss of generality, that $\theta $ is the first tensor
factor embedding $\mathcal{Z}\rightarrow \mathcal{Z}\otimes U$. Let $%
\mathcal{F}$ be the filter of cofinite subsets of $\mathbb{N}$. Write $U$ as
an increasing union $U=\overline{\bigcup_{n\in \mathbb{N}}M_{k_{n}}}$ of
matrix algebras $M_{k_{n}}$. Using injectivity of $M_{k_{n}}$, choose a
conditional expectation $E_{n}\colon U\rightarrow M_{k_{n}}$. For $n,m\in 
\mathbb{N}$, let $\lambda _{0}^{(n,m)},\lambda _{1}^{(n,m)}\colon
M_{k_{n}}\rightarrow \mathcal{Z}$ denote the order zero maps obtained from
Lemma~\ref{lemma:matui-sato} for $\varepsilon =1/m$. For $j=0,1$, set 
\begin{equation*}
\lambda _{j}^{(n)}=(\lambda _{j}^{(n,m)})_{m\in \mathbb{N}}\colon
M_{k_{n}}\rightarrow \prod\nolimits_{\mathcal{F}}\mathcal{Z},
\end{equation*}%
which is an order zero map. Note that $\lambda
_{0}^{(n)}(1_{M_{k_{n}}})+\lambda _{1}^{(n)}(1_{M_{k_{n}}})$ is equal to the
identity of $\prod_{\mathcal{F}}\mathcal{Z}$. For $j=0,1$, let $\psi
_{j}\colon U\rightarrow \prod\nolimits_{\mathcal{F}}(\prod\nolimits_{%
\mathcal{F}}\mathcal{Z})$ be given by $\psi _{j}(x)=(\lambda
_{j}(E_{n}(x)))_{n\in \mathbb{N}}$ for all $x\in U$. Then $\psi _{j}$ is
order zero, and $\psi _{0}(1_{U})+\psi _{1}(1_{U})$ is equal to the identity
of $\prod\nolimits_{\mathcal{F}}(\prod\nolimits_{\mathcal{F}}\mathcal{Z})$.
We obtain a commutative diagram 
\begin{equation*}
\xymatrix{\mathbb{C} \ar[r]\ar[d] & U\ar[d]^-{\psi_j}\\
\prod\nolimits_{\mathcal{F}}\mathcal{Z} \ar[r] &
\prod\nolimits_{\mathcal{F}}(\prod\nolimits_{\mathcal{F}}\mathcal{Z}),}
\end{equation*}%
where the maps from $\mathbb{C}$ are the canonical unital homomorphisms, and
the lower horizontal map is the canonical diagonal *-homomorphism $\Delta
_{\prod_{\mathcal{F}}\mathcal{Z}}:\prod_{\mathcal{F}}\mathcal{Z}\rightarrow
\prod_{\mathcal{F}}(\prod_{\mathcal{F}}\mathcal{Z)}$. We claim that there
are completely positive contractive order zero maps $\varphi _{0},\varphi
_{1}\colon U\rightarrow \prod_{\mathcal{F}}\mathcal{Z}$ such that $\psi
_{j}=\Delta _{\prod_{\mathcal{F}}\mathcal{Z}}\circ \varphi _{j}$ for $j=0,1$%
. This (and in fact, a more general statement) can be proved along the lines
of \cite[Lemma~4.18]{gardella_crossed_2014}, replacing condition (2) in its
proof with the following: 
\begin{equation*}
\left\Vert (\psi _{j})_{m}^{(n_{r})}(b^{\ast })-(\psi
_{j})_{m}^{(n_{r})}(b)^{\ast }\right\Vert <\frac{1}{r}\ \ \mbox{ and }\ \
\left\Vert (\psi _{j})_{m}^{(n_{r})}(cc^{\prime })\right\Vert <\frac{1}{r}
\end{equation*}%
whenever $b,c,c^{\prime }\in G_{r}$ satisfy $cc^{\prime \ast }c^{\prime
}=c^{\prime \ast }=c^{\prime }c=0$. We omit the details.

The fact that $\dim _{\mathrm{oz}}(\theta )\leq 1$ now follows from Lemma~%
\ref{Lemma:oz-dimension-commutant-embedding}. It remains to show that $\dim
_{\mathrm{oz}}(\theta )>0$. Since $\dim _{\mathrm{nuc}}(\mathcal{Z})=1$ and $%
\dim _{\mathrm{nuc}}(U)=0$, the claim follows from Proposition~\ref%
{Proposition:estimate-dimension} for $\dim =\dim _{\mathrm{nuc}}$.
\end{proof}

In the proof of the next theorem, given C*-algebras $A$ and $B$, given $%
\varepsilon>0$ and given a finite subset $F\subset A$, we say that a linear
map $\varphi\colon A\to B$ is \emph{$\varepsilon$-order zero} on $F$, if $%
\|\varphi(ab)\|<\varepsilon$ for all $a,b\in F$ satisfying $%
ab=a^*b=ab^*=a^*b^*=0$.

\begin{theorem}
\label{thm:InclKirO2} Let $A$ be a unital Kirchberg algebra, and let $%
\theta\colon A\to \mathcal{O}_2$ be any unital embedding. Then $\dim _{%
\mathrm{oz}}( \theta)\leq 1$. Moreover, $\dim _{\mathrm{oz}}( \theta)=1$
unless $A=\mathcal{O}_2$.
\end{theorem}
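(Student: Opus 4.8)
The plan is to prove the two inequalities $\dim_{\mathrm{oz}}(\theta)\le 1$ and (for $A\not\cong\mathcal{O}_2$) $\dim_{\mathrm{oz}}(\theta)\ge 1$ by entirely separate arguments, following closely the template of the proof of Theorem~\ref{thm:InclZUHF}. The upper bound is reduced, via the machinery of Lemma~\ref{Lemma:oz-dimension-commutant-embedding}, to a two-colored order zero decomposition of the unit inside the central sequence algebra, while the lower bound is obtained by showing that dimension zero forces $\mathcal{O}_2$-absorption.

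For the upper bound I would first normalize $\theta$. Since $A$ is a unital Kirchberg algebra, Kirchberg's absorption theorem gives $A\otimes\mathcal{O}_2\cong\mathcal{O}_2$, and because $\mathcal{O}_2$ is $KK$-trivial all unital embeddings $A\to\mathcal{O}_2$ are approximately unitarily equivalent; as $\dim_{\mathrm{oz}}$ is unchanged by approximate unitary equivalence, I may assume $\theta$ is the first-factor embedding $A\to A\otimes\mathcal{O}_2$, $a\mapsto a\otimes 1$. Flipping the tensor factors identifies this with $1_{\mathcal{O}_2}\otimes\mathrm{id}_A\colon A\to\mathcal{O}_2\otimes A$, so by the converse direction of Lemma~\ref{Lemma:oz-dimension-commutant-embedding} (with $C=\mathcal{O}_2$, $B=A$, and $\theta=\mathrm{id}_A$) it suffices to produce completely positive contractive order zero maps $\eta_0,\eta_1\colon\mathcal{O}_2\to F_{\mathcal{F}}(A)$ with $\eta_0(1)+\eta_1(1)=1$. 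Since $A$ is Kirchberg we have $A\cong A\otimes\mathcal{O}_\infty$, so $F_{\mathcal{F}}(A)$ is purely infinite simple and contains a unital central copy of $\mathcal{O}_\infty$; it is therefore enough to build the two maps into $\mathcal{O}_\infty$ and compose. Here I would extract the purely infinite analog of Lemma~\ref{lemma:matui-sato} implicit in the nuclear-dimension bound for Kirchberg algebras in Section~5 of \cite{matui_decomposition_2014}: for each finite $F\subset\mathcal{O}_2$ and $\varepsilon>0$ there are c.p.c.\ maps $\varphi_0,\varphi_1\colon\mathcal{O}_2\to\mathcal{O}_\infty$ that are $\varepsilon$-multiplicative and $\varepsilon$-order zero on $F$ with $\|\varphi_0(1)+\varphi_1(1)-1\|<\varepsilon$. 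Assembling these approximate maps along $\mathcal{F}$ and correcting them to exact order zero maps, exactly as in the lifting step of Theorem~\ref{thm:InclZUHF} using \cite[Lemma~4.18]{gardella_crossed_2014}, yields $\eta_0,\eta_1$ and hence $\dim_{\mathrm{oz}}(\theta)\le 1$.

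For the lower bound I would argue by contradiction. If $\dim_{\mathrm{oz}}(\theta)=0$, then $\theta$ is $0$-contained in $\mathrm{id}_A$; since $\theta$ is unital, the single contractive order zero bimodule map witnessing this is forced to be unital, hence a $\ast$-homomorphism, so $\theta$ is a positively $\mathcal{L}^{\text{C*}}$-existential (sequentially split) embedding. As $\mathcal{O}_2$ is $\mathcal{O}_2$-absorbing and $\mathcal{O}_2$-absorption is definable by existential positive formulas among separable C*-algebras (Remark~\ref{Remark:existentially-definable-separable}), the preservation result Proposition~\ref{Proposition:definable-class} forces $A$ to be $\mathcal{O}_2$-absorbing, whence $A\cong A\otimes\mathcal{O}_2\cong\mathcal{O}_2$. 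Thus $\dim_{\mathrm{oz}}(\theta)\ge 1$ whenever $A\not\cong\mathcal{O}_2$, and combined with the upper bound this gives equality. When $A=\mathcal{O}_2$, strong self-absorption makes every unital embedding $\mathcal{O}_2\to\mathcal{O}_2$ sequentially split, so $\dim_{\mathrm{oz}}(\theta)=0$.

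The main obstacle is the construction of the two approximate maps in the purely infinite setting. Unlike the UHF case, $\mathcal{O}_2$ is not an inductive limit of matrix algebras, so there is no conditional-expectation reduction; moreover the $K$-theoretic constraint $[1]\neq 0$ in $K_0\!\left(F_{\mathcal{F}}(A)\right)$ rules out any single unital $\ast$-homomorphism $\mathcal{O}_2\to F_{\mathcal{F}}(A)$, which is precisely why two genuinely order zero colors with non-projection values $\eta_j(1)$ are unavoidable and where pure infiniteness of the central sequence algebra must be used.
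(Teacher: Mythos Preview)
Your overall strategy and the lower-bound argument match the paper's. The paper also reduces to a two-colored decomposition coming from maps out of $\mathcal{O}_2$, and for the lower bound it uses exactly your observation that a positively existential embedding into $\mathcal{O}_2$ forces $\mathcal{O}_2$-absorption. The route through Lemma~\ref{Lemma:oz-dimension-commutant-embedding} is a clean repackaging of what the paper does by hand: the paper works directly with the finitary definition and builds maps $\gamma_0,\gamma_1\colon \mathcal{O}_\infty\otimes\mathcal{O}_2\to\mathcal{O}_\infty$, treating the case $A=\mathcal{O}_\infty$ first and then tensoring up to a general Kirchberg algebra via Proposition~\ref{prop:propertiesOzDim}(2), whereas you go straight to general $A$ and pull the problem back to $\mathcal{O}_\infty$ inside $F_{\mathcal{F}}(A)$.

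There is, however, a genuine inconsistency in your description of the key technical input. You ask for c.p.c.\ maps $\varphi_0,\varphi_1\colon\mathcal{O}_2\to\mathcal{O}_\infty$ that are both $\varepsilon$-multiplicative and $\varepsilon$-order zero with $\varphi_0(1)+\varphi_1(1)\approx 1$. But $\varepsilon$-multiplicativity forces $\varphi_j(1)$ to be close to a projection, and an approximate unital $*$-homomorphism $\mathcal{O}_2\to p_j\mathcal{O}_\infty p_j$ forces $[p_j]=0$ in $K_0(\mathcal{O}_\infty)\cong\mathbb{Z}$; then $[p_0]+[p_1]=0\neq 1=[1_{\mathcal{O}_\infty}]$, contradicting $p_0+p_1\approx 1$. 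This is precisely the obstruction you flag in your last paragraph, so the maps you ask for cannot exist as stated. The fix is to drop ``$\varepsilon$-multiplicative'' entirely: you only need $\varepsilon$-order zero maps with approximately unital sum, so that $\eta_j(1)$ is a genuine non-projection positive contraction.

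The paper obtains exactly this, but not from \cite{matui_decomposition_2014} (whose Section~5 is about the UHF case, not Kirchberg algebras). The correct input, cited from \cite[Lemma~4.17]{gardella_rokhlin_2014} and \cite[Theorem~3.3]{barlak_rokhlin_2015}, is a pair of honest $*$-homomorphisms $\varphi_0,\varphi_1\colon\mathcal{O}_2\to\mathcal{O}_\infty$ together with positive contractions $k_0,k_1\in\mathcal{O}_2$ satisfying $\varphi_0(k_0)+\varphi_1(k_1)\approx 1_{\mathcal{O}_\infty}$. The order zero colors are then built as $\gamma_j(x\otimes y)=\varphi_j(k_j)^{1/2}\varphi_j(y)^{1/2}\,x\,\varphi_j(y)^{1/2}\varphi_j(k_j)^{1/2}$, using the infinite tensor product structure of $\mathcal{O}_\infty$ and of $\mathcal{O}_2$ to arrange the needed approximate commutation with $F$ and $H$. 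In your formulation via Lemma~\ref{Lemma:oz-dimension-commutant-embedding}, the same data yield order zero maps $\mathcal{O}_2\to \prod_{\mathcal{F}}\mathcal{O}_\infty\hookrightarrow F_{\mathcal{F}}(A)$ by sending $y\mapsto \varphi_j(k_j^{1/2}yk_j^{1/2})$ after moving $\varphi_j$ into successive tensor factors; the point is that $\eta_j(1)=\varphi_j(k_j)$ is not a projection, so the $K$-theory obstruction does not arise.
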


\begin{proof}
Assume first that $A=\mathcal{O}_\infty$. As in the proof of Theorem~\ref%
{thm:InclZUHF}, we may assume, without loss of generality, that $\theta$ is
the first tensor factor embedding $\mathcal{O}_\infty\to\mathcal{O}%
_\infty\otimes \mathcal{O}_2$. We will verify the finitary version of order
zero dimension. To that effect, let $\varepsilon>0$, and let $F\subset 
\mathcal{O}_\infty$ and $H\subset \mathcal{O}_2$ be finite subsets
consisting of positive contractions.

Use \cite[Lemma~4.17]{gardella_rokhlin_2014}---see also the first part of
the proof of Theorem~3.3 in~\cite{barlak_rokhlin_2015}---to find
*-homomorphisms $\varphi _{0},\varphi _{1}\colon \mathcal{O}_{2}\rightarrow 
\mathcal{O}_{\infty }$ and positive contractions $k_{0},k_{1}\in \mathcal{O}%
_{2}$ such that $\Vert \varphi _{0}(k_{0})+\varphi _{1}(k_{1})-1_{\mathcal{O}%
_{\infty }}\Vert <\varepsilon /5$. Since $\mathcal{O}_{\infty }$ is
isomorphic to its infinite tensor product, we may choose $\varphi _{0}$ and $%
\varphi _{1}$ to satisfy $\Vert \varphi _{j}(y)a-a\varphi _{j}(y)\Vert
<\Vert y\Vert \varepsilon /5$ for $j=0,1$, for all $y\in \mathcal{O}_{2}$
and for all $a\in F$. (For instance, find $m\in \mathbb{N}$ and a finite
subset $F^{\prime }\subset \otimes _{j=1}^{m}\mathcal{O}_{\infty }\subset
\otimes _{j=1}^{\infty }\mathcal{O}_{\infty }$, such that for every $a\in F$
there exists $a^{\prime }\in F^{\prime }$ with $\Vert a-a^{\prime }\Vert
<\varepsilon /5$. With $\iota _{m+1}\colon \mathcal{O}_{\infty }\rightarrow
\otimes _{j=1}^{\infty }\mathcal{O}_{\infty }$ denoting the $(m+1)$-st
tensor factor embedding, the maps $\varphi _{j}\circ \iota _{m+1}$, for $%
j=0,1$, will satisfy the condition above.) Likewise, since $\mathcal{O}_{2}$
is isomorphic to its infinite tensor product, we may also assume that $\Vert
k_{j}b-bk_{j}\Vert <\varepsilon /5$ for $j=0,1$ and for all $b\in H$. Define
completely positive contractive maps $\gamma _{0},\gamma _{1}\colon \mathcal{%
O}_{\infty }\otimes \mathcal{O}_{2}\rightarrow \mathcal{O}_{\infty }$ on
simple tensors as follows: for $x\in \mathcal{O}_{\infty }$ and for positive 
$y\in \mathcal{O}_{2}$, set 
\begin{equation*}
\gamma _{j}(x\otimes y)=\varphi _{j}(k_{j})^{1/2}\varphi
_{j}(y)^{1/2}x\varphi _{j}(y)^{1/2}\varphi _{j}(k_{j})^{1/2},\ \mbox{ for }%
j=0,1.
\end{equation*}

We claim that $\gamma _{0}$ and $\gamma _{1}$ are $\varepsilon $-order zero
on $F\otimes H$, that $((\gamma _{0}+\gamma _{1})\circ \theta )(a)\approx
_{\varepsilon }a$, and that $\gamma _{j}(a)\approx _{\varepsilon }\gamma
_{j}(1)a$ for all $j=0,1$ and all $a\in F$. To show the first part of the
claim, it is enough to observe that when $x\in F$ and $y\in H$, we have $%
\gamma _{j}(x\otimes y)\approx _{4\varepsilon /5}\varphi _{j}(k_{j}y)x$ for $%
j=0,1$. For the second one, a similar reasoning applies, since for $a\in F$
we have $\gamma _{j}(a\otimes 1_{\mathcal{O}_{2}})\approx _{2\varepsilon
/5}\varphi _{j}(k_{j})a$, and hence 
\begin{equation*}
(\gamma _{0}+\gamma _{1})(a\otimes 1_{\mathcal{O}_{2}})\approx
_{4\varepsilon /5}(\varphi _{0}(k_{0})+\varphi _{1}(k_{1}))a\approx
_{\varepsilon /5}a,
\end{equation*}%
as desired. The third part of the claim also follows, since we have $\gamma
_{j}(a)\approx _{2\varepsilon /5}\varphi _{j}(k_{j})a=\gamma _{j}(1)a$ for $%
j=0,1$ and for $a\in F$. This proves the result for $A=\mathcal{O}_{\infty }$%
.

When $A$ is an arbitrary Kirchberg algebra, the claim follows from the first
part of the proof and part~(2) of Proposition~\ref{prop:propertiesOzDim},
together with Kirchberg's absorption theorems $A\otimes \mathcal{O}_{\infty
}\cong A$ and $A\otimes \mathcal{O}_{\infty }\cong \mathcal{O}_{2}$.

When $A=\mathcal{O}_{2}$, then any inclusion into $\mathcal{O}_{2}$ is
approximately unitarily equivalent to the identity, which clearly has order
zero dimension zero. Since having a positively existential embedding into $%
\mathcal{O}_{2}$ implies absorbing $\mathcal{O}_{2}$, it follows that $\dim
_{\mathrm{oz}}(\theta )=1$ whenever $A$ is not $\mathcal{O}_{2}$.
\end{proof}

In particular, we recover from Theorem \ref{thm:InclKirO2} the following
dimensional estimate from \cite[Theorem~7.1]{matui_decomposition_2014}. The
actual nuclear dimension of Kirchberg algebras has recently been computed in 
\cite[Theorem~G]{bosa_covering_?}: it is always 1. We nevertheless present
this consequence to illustrate the applicability of our techniques.

\begin{corollary}
Let $A$ be a Kirchberg algebra. Then $\mathrm{dim}_\mathrm{nuc}(A)\leq 3$.
\end{corollary}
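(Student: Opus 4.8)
The plan is to take the group $G$ to be trivial throughout (so that $\dim_{\mathrm{oz}}^{G}$ is simply $\dim_{\mathrm{oz}}$), reduce to the unital case, and then feed Theorem~\ref{thm:InclKirO2} into the dimensional estimate of Proposition~\ref{Proposition:estimate-dimension}. The whole argument is a short application of the machinery already developed, so the work lies in arranging the inputs rather than in any new construction.

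First I would dispose of non-unitality. Nuclear dimension is invariant under stabilization, i.e. $\dim_{\mathrm{nuc}}(A)=\dim_{\mathrm{nuc}}(A\otimes\mathcal{K})$, by \cite{winter_nuclear_2010}. Since a Kirchberg algebra is simple and purely infinite, $A$ contains a nonzero projection $p$, which is full by simplicity, and the corner $pAp$ is a \emph{unital} Kirchberg algebra. Brown's stabilization theorem for full hereditary subalgebras of a $\sigma$-unital C*-algebra gives $pAp\otimes\mathcal{K}\cong A\otimes\mathcal{K}$, and combining this with stabilization invariance yields $\dim_{\mathrm{nuc}}(A)=\dim_{\mathrm{nuc}}(pAp)$. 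Thus we may assume without loss of generality that $A$ is unital.

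Next I would invoke Kirchberg's embedding theorem \cite[Theorem~6.3.12]{rordam_classification_2002} to obtain a unital embedding $\theta\colon A\to\mathcal{O}_{2}$, which is available since $A$ is unital, separable and nuclear (hence exact). Theorem~\ref{thm:InclKirO2} then gives $\dim_{\mathrm{oz}}(\theta)\le 1$. Because $\mathcal{O}_{2}$ is nuclear and $\dim_{\mathrm{nuc}}$ is a nuclearly positively $\forall\exists$-axiomatizable dimension function (Example~\ref{eg:NDimDr}), the nuclear clause of Proposition~\ref{Proposition:estimate-dimension} applies to $\theta$ and produces
\[
\dim_{\mathrm{nuc}}(A)+1\le(\dim_{\mathrm{oz}}(\theta)+1)(\dim_{\mathrm{nuc}}(\mathcal{O}_{2})+1).
\]
Feeding in $\dim_{\mathrm{oz}}(\theta)\le 1$ together with $\dim_{\mathrm{nuc}}(\mathcal{O}_{2})\le 1$ \cite{winter_nuclear_2010}, the right-hand side is at most $2\cdot 2=4$, so $\dim_{\mathrm{nuc}}(A)\le 3$, as desired.

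There is no serious obstacle here, as the conclusion follows by a single inequality once the ingredients are in place; the only points that require genuine care are the reduction to the unital case (checking that stabilization invariance and Brown's theorem both apply, which they do since $A$ is separable and simple) and the external input $\dim_{\mathrm{nuc}}(\mathcal{O}_{2})\le 1$. The latter is precisely what makes the final constant come out to $3$ rather than something larger, and it is the one place where we rely on a prior computation rather than on the results established in this paper.
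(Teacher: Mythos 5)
Your proof is correct and takes essentially the same route as the paper: feed Theorem~\ref{thm:InclKirO2} into Proposition~\ref{Proposition:estimate-dimension} (with trivial $G$) and use $\dim_{\mathrm{nuc}}(\mathcal{O}_{2})=1$ to get $\dim_{\mathrm{nuc}}(A)+1\leq 2\cdot 2=4$. If anything, you are more careful than the paper, whose one-line proof leaves implicit the reduction from a general (possibly non-unital) Kirchberg algebra to the unital case, which you handle correctly via stabilization invariance of nuclear dimension and Brown's theorem.
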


\begin{proof}
This follows immediately from Theorem~\ref{thm:InclKirO2}, Proposition~\ref%
{Proposition:estimate-dimension}, and the fact that $\dim _{\mathrm{nuc}}(%
\mathcal{O}_{2})=1$.
\end{proof}

In the next result, we endow $\mathcal{Z}, \mathcal{O}_2, \mathcal{O}_\infty$
and the UHF-algebra with the trivial $G$-action, and we endow all tensor
products with the diagonal action.

\begin{theorem}
\label{thm:O2OIUHFDimIneq} Let $A$ be a $G$-C*-algebra, and let $\dim$ be a
positively $\forall \exists $-axiomatizable dimension function for $G$%
-C*-algebra. Let $U$ be a UHF-algebra of infinite type. Then 
\begin{equation*}
\dim(A\otimes \mathcal{Z})\leq 2 \dim(A\otimes U)+1 \ \ \mbox{ and } \ \
\dim(A\otimes \mathcal{O}_\infty)\leq 2 \dim(A\otimes \mathcal{O}_2)+1.
\end{equation*}
\end{theorem}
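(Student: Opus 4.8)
The plan is to handle both inequalities by a single template: realize each comparison as an instance of the order-zero-dimension estimate in Proposition~\ref{Proposition:estimate-dimension}, applied to a tensor-product embedding whose $G$-equivariant order zero dimension is at most $1$. The bounds on order zero dimension are furnished by Theorems~\ref{thm:InclZUHF} and~\ref{thm:InclKirO2}, and tensor stability (Proposition~\ref{prop:propertiesOzDim}(2)) lets me promote these embeddings to maps out of $A\otimes(-)$.

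For the first inequality I would fix a unital embedding $\theta\colon\mathcal{Z}\to U$, which by Theorem~\ref{thm:InclZUHF} satisfies $\dim_{\mathrm{oz}}(\theta)=1$. Regarding $\mathcal{Z}$ and $U$ as $G$-C*-algebras with the trivial $G$-action, the continuous part of any reduced product of a trivially-acted algebra is the entire reduced product, so $\prod_{\mathcal{F}}^{G}C=\prod_{\mathcal{F}}C$ and the $G$-equivariant order zero dimension of $\theta$ agrees with the ordinary one; in particular $\dim_{\mathrm{oz}}^{G}(\theta)=1$. Applying Proposition~\ref{prop:propertiesOzDim}(2) with $C=A$ gives $\dim_{\mathrm{oz}}^{G}(\theta\otimes\mathrm{id}_{A})\leq\dim_{\mathrm{oz}}^{G}(\theta)=1$, where $\theta\otimes\mathrm{id}_{A}\colon\mathcal{Z}\otimes A\to U\otimes A$ is $G$-equivariant for the diagonal action. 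Feeding this map into Proposition~\ref{Proposition:estimate-dimension} yields
\[
\dim(\mathcal{Z}\otimes A)+1\leq\bigl(\dim_{\mathrm{oz}}^{G}(\theta\otimes\mathrm{id}_{A})+1\bigr)\bigl(\dim(U\otimes A)+1\bigr)\leq 2\bigl(\dim(U\otimes A)+1\bigr).
\]
Since $\dim$ is $G$-isomorphism invariant (being $\forall\exists$-axiomatizable, its value depends only on the satisfaction of a sentence), the flip isomorphisms $\mathcal{Z}\otimes A\cong A\otimes\mathcal{Z}$ and $U\otimes A\cong A\otimes U$ let me rewrite the above as $\dim(A\otimes\mathcal{Z})\leq 2\dim(A\otimes U)+1$, as desired.

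The second inequality is proved identically, replacing the embedding $\mathcal{Z}\to U$ by a unital embedding $\theta\colon\mathcal{O}_{\infty}\to\mathcal{O}_{2}$. Since $\mathcal{O}_{\infty}$ is a unital Kirchberg algebra distinct from $\mathcal{O}_{2}$, Theorem~\ref{thm:InclKirO2} gives $\dim_{\mathrm{oz}}(\theta)\leq 1$ (indeed $=1$). Endowing $\mathcal{O}_{\infty}$ and $\mathcal{O}_{2}$ with the trivial $G$-action, the same reduction yields $\dim_{\mathrm{oz}}^{G}(\theta\otimes\mathrm{id}_{A})\leq 1$, and Proposition~\ref{Proposition:estimate-dimension} then gives $\dim(A\otimes\mathcal{O}_{\infty})\leq 2\dim(A\otimes\mathcal{O}_{2})+1$.

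The computation is essentially formal once the inputs are assembled; the only point requiring care—and the main (if minor) obstacle—is the passage from the nonequivariant order zero dimension of the embeddings in Theorems~\ref{thm:InclZUHF} and~\ref{thm:InclKirO2} to their $G$-equivariant order zero dimension for the trivial action. I expect to dispatch this by the observation above: a $G$-equivariant order zero $A$-bimodule map into $\prod_{\mathcal{F}}^{G}(\cdot)$ is just an ordinary order zero map when the codomain algebra carries the trivial $G$-action, precisely because $\prod_{\mathcal{F}}^{G}C=\prod_{\mathcal{F}}C$ in that case. With this identification in hand, no further equivariant input is needed, and the two estimates follow.
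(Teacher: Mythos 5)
Your proof is correct and takes exactly the same route as the paper, whose proof simply cites Theorem~\ref{thm:InclZUHF}, Theorem~\ref{thm:InclKirO2}, part~(2) of Proposition~\ref{prop:propertiesOzDim}, and Proposition~\ref{Proposition:estimate-dimension} --- precisely the ingredients you assemble. Your explicit check that the $G$-equivariant and nonequivariant order zero dimensions coincide for trivially-acted algebras (via $\prod_{\mathcal{F}}^{G}C=\prod_{\mathcal{F}}C$ and automatic equivariance) is a detail the paper leaves implicit, and you handle it correctly.
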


\begin{proof}
This is a consequence of Theorem~\ref{thm:InclZUHF}, Theorem~\ref%
{thm:InclKirO2}, part~(2) of Proposition~\ref{prop:propertiesOzDim}, and
Proposition~\ref{Proposition:estimate-dimension}.
\end{proof}

We want to highlight two important consequences of Theorem~\ref%
{thm:O2OIUHFDimIneq}. One of them is obtained by letting $\dim $ be the
Rokhlin dimension. In this case, and again endowing $\mathcal{Z},\mathcal{O}%
_{2},\mathcal{O}_{\infty }$ and the UHF-algebra with the trivial $G$-action,
and all tensor products with the diagonal action, we deduce the following
dimensional inequalities (compare with Section~4 of~\cite%
{gardella_rokhlin_2014}).

\begin{corollary}
\label{cor:RokDimIneq} Let $A$ be a $G$-C*-algebra, and let $U$ be a
UHF-algebra of infinite type. Then 
\begin{equation*}
\dim _{\mathrm{Rok}}(A\otimes \mathcal{Z})\leq 2\dim _{\mathrm{Rok}%
}(A\otimes U)+1, \ \ \mbox{ and } \ \ 
\dim _{\mathrm{Rok}}(A\otimes \mathcal{O}_{\infty })\leq 2\dim _{\mathrm{Rok}%
}(A\otimes \mathcal{O}_{2})+1.
\end{equation*}
\end{corollary}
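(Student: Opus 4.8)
The plan is to obtain both inequalities as the special case $\dim=\dim_{\mathrm{Rok}}$ of Theorem~\ref{thm:O2OIUHFDimIneq}. The one subtlety to address is that Theorem~\ref{thm:O2OIUHFDimIneq} is phrased for positively $\forall\exists$-axiomatizable dimension functions, whereas Rokhlin dimension is instead \emph{commutant positively existentially axiomatizable}, by Example~\ref{Example:dim-Rok}. This turns out to be harmless: the proof of Theorem~\ref{thm:O2OIUHFDimIneq} uses nothing about $\dim$ beyond the inequality supplied by Proposition~\ref{Proposition:estimate-dimension}, and that proposition holds verbatim for commutant positively existentially axiomatizable dimension functions as well. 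Hence the argument I would give is simply the proof of Theorem~\ref{thm:O2OIUHFDimIneq} run with $\dim=\dim_{\mathrm{Rok}}$.

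For the first inequality I would fix a unital embedding $\theta\colon\mathcal{Z}\to U$, which exists since $U$ is $\mathcal{Z}$-absorbing, and invoke Theorem~\ref{thm:InclZUHF} to get $\dim_{\mathrm{oz}}(\theta)=1$. Because $\mathcal{Z}$ and $U$ carry the trivial $G$-action, the continuous reduced powers coincide with the ordinary ones on these algebras, so $\dim_{\mathrm{oz}}^G(\theta)=\dim_{\mathrm{oz}}(\theta)=1$. Tensoring $\theta$ with $\mathrm{id}_A$ and applying part~(2) of Proposition~\ref{prop:propertiesOzDim} to the induced map $A\otimes\mathcal{Z}\to A\otimes U$ gives $\dim_{\mathrm{oz}}^G(\mathrm{id}_A\otimes\theta)\leq 1$. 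Feeding this into Proposition~\ref{Proposition:estimate-dimension} with $\dim=\dim_{\mathrm{Rok}}$ yields
\[
\dim_{\mathrm{Rok}}(A\otimes\mathcal{Z})+1\leq\bigl(\dim_{\mathrm{oz}}^G(\mathrm{id}_A\otimes\theta)+1\bigr)\bigl(\dim_{\mathrm{Rok}}(A\otimes U)+1\bigr)\leq 2\bigl(\dim_{\mathrm{Rok}}(A\otimes U)+1\bigr),
\]
and subtracting $1$ gives exactly $\dim_{\mathrm{Rok}}(A\otimes\mathcal{Z})\leq 2\dim_{\mathrm{Rok}}(A\otimes U)+1$.

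The second inequality follows by the identical chain, replacing the embedding $\mathcal{Z}\to U$ with a unital embedding $\theta\colon\mathcal{O}_\infty\to\mathcal{O}_2$ and Theorem~\ref{thm:InclZUHF} with Theorem~\ref{thm:InclKirO2}, which gives $\dim_{\mathrm{oz}}(\theta)=1$ since $\mathcal{O}_\infty\neq\mathcal{O}_2$. I do not anticipate a genuine obstacle: all of the real work is already packaged in Theorems~\ref{thm:InclZUHF} and~\ref{thm:InclKirO2} and in Proposition~\ref{Proposition:estimate-dimension}. The only steps needing a word of justification are the axiomatizability mismatch noted above and the trivial-action identification $\dim_{\mathrm{oz}}^G(\theta)=\dim_{\mathrm{oz}}(\theta)$, and both are immediate.
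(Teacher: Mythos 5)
Your proof is correct and takes essentially the same route as the paper: the paper obtains the corollary precisely by specializing Theorem~\ref{thm:O2OIUHFDimIneq} to $\dim=\dim_{\mathrm{Rok}}$, and that theorem is proved by exactly the chain you spell out (Theorems~\ref{thm:InclZUHF} and~\ref{thm:InclKirO2}, part~(2) of Proposition~\ref{prop:propertiesOzDim}, and Proposition~\ref{Proposition:estimate-dimension}). Your added observation---that Rokhlin dimension is only shown in Example~\ref{Example:dim-Rok} to be commutant positively existentially axiomatizable rather than positively $\forall\exists$-axiomatizable, and that this mismatch with the stated hypothesis of Theorem~\ref{thm:O2OIUHFDimIneq} is harmless because Proposition~\ref{Proposition:estimate-dimension} explicitly covers both kinds of axiomatizability---is a point the paper glosses over, and your handling of it is correct.
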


The other consequence is obtained be letting $\dim $ be either the nuclear
dimension or the decomposition rank. The estimates involving nuclear
dimension have previously been observed in \cite[Section~3]%
{barlak_rokhlin_2015}, while the estimates for the decomposition rank are
new.

\begin{corollary}
\label{cor:DimNucDRIneq} Let $A$ be a C*-algebra, and let $U$ be any
UHF-algebra of infinite type. Then 
\begin{equation*}
\dim _{\mathrm{nuc}}(A\otimes \mathcal{Z})\leq 2\dim _{\mathrm{nuc}%
}(A\otimes U)+1, \ \ \mbox{ and } \ \ 
\dim _{\mathrm{nuc}}(A\otimes \mathcal{O}_{\infty })\leq 2\dim _{\mathrm{nuc}%
}(A\otimes \mathcal{O}_{2})+1\text{.}
\end{equation*}%
Furthermore,%
\begin{equation*}
\mathrm{dr}(A\otimes \mathcal{Z})\leq 2\mathrm{dr}(A\otimes U)+1\text{.}
\end{equation*}
\end{corollary}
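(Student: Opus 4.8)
The plan is to obtain all three inequalities as immediate specializations of Theorem~\ref{thm:O2OIUHFDimIneq}, taking the group $G$ to be trivial so that $(G\text{-})$C*-algebras are just C*-algebras and $\dim_{\mathrm{nuc}}$, $\mathrm{dr}$ are the ordinary (non-equivariant) invariants. The single ingredient I need is that both the nuclear dimension and the decomposition rank are positively $\forall\exists$-axiomatizable dimension functions; this is exactly the content of Example~\ref{eg:NDimDr}, where explicit axiomatizing formulas $\eta$, $\varphi_j$, $\psi$ are exhibited using the $\mathrm{CPC}$-approximation predicates of the nuclear languages (with $\eta$ and $\psi$ common to both, and only the family $\{\varphi_j\}$ differing).

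With this in hand, I would first apply Theorem~\ref{thm:O2OIUHFDimIneq} with $\dim=\dim_{\mathrm{nuc}}$: its two conclusions read $\dim_{\mathrm{nuc}}(A\otimes\mathcal{Z})\leq 2\dim_{\mathrm{nuc}}(A\otimes U)+1$ and $\dim_{\mathrm{nuc}}(A\otimes\mathcal{O}_\infty)\leq 2\dim_{\mathrm{nuc}}(A\otimes\mathcal{O}_2)+1$, which are precisely the first two displayed inequalities. I would then apply the same theorem with $\dim=\mathrm{dr}$; its first conclusion gives $\mathrm{dr}(A\otimes\mathcal{Z})\leq 2\mathrm{dr}(A\otimes U)+1$, the last displayed inequality.

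The reason the statement does not record a decomposition-rank analogue of the $\mathcal{O}_\infty$/$\mathcal{O}_2$ inequality is that it would be vacuous: finite decomposition rank forces quasidiagonality and stable finiteness, whereas $\mathcal{O}_2$ and $\mathcal{O}_\infty$ are purely infinite, so $\mathrm{dr}(A\otimes\mathcal{O}_2)$ and $\mathrm{dr}(A\otimes\mathcal{O}_\infty)$ are infinite for every $A$. Only the $\mathcal{Z}$/$U$ version carries information, which is why just the one decomposition-rank estimate is stated.

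There is no genuine obstacle at this stage, since all the analytic content has already been isolated in Theorem~\ref{thm:O2OIUHFDimIneq}, which in turn rests on the order-zero dimension computations $\dim_{\mathrm{oz}}(\mathcal{Z}\to U)=1$ and $\dim_{\mathrm{oz}}(\mathcal{O}_\infty\to\mathcal{O}_2)\leq 1$ of Theorems~\ref{thm:InclZUHF} and~\ref{thm:InclKirO2}, combined with Proposition~\ref{Proposition:estimate-dimension} and part~(2) of Proposition~\ref{prop:propertiesOzDim}. The only point deserving a moment's care is that Example~\ref{eg:NDimDr} phrases axiomatizability ``for nuclear C*-algebras''; this suffices here because if $A$ fails to be nuclear then none of the tensor products involved is nuclear, so both sides of each inequality equal $\infty$ and the bound holds trivially.
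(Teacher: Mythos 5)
Your proposal is correct and follows exactly the paper's route: Corollary~\ref{cor:DimNucDRIneq} is obtained by specializing Theorem~\ref{thm:O2OIUHFDimIneq} (with trivial group) to $\dim=\dim_{\mathrm{nuc}}$ and $\dim=\mathrm{dr}$, whose axiomatizability is supplied by Example~\ref{eg:NDimDr}. Your extra remarks---reducing to the nuclear case since all algebras involved are non-nuclear (hence of infinite dimension) when $A$ is not nuclear, and explaining why no decomposition-rank analogue is stated for $\mathcal{O}_{2}$ and $\mathcal{O}_{\infty}$---are sound refinements of points the paper leaves implicit.
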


\subsection{Rokhlin dimension and strongly self-absorbing 
\texorpdfstring{$G$}{G}-C*-algebras}

In this subsection, and since we consider different actions on the same C*-algebra, 
we denote by $\dim _{\mathrm{Rok}}^{\mathrm{c}}(A,\alpha )$ the Rokhlin dimension with
commuting towers of the $G$-C*-algebra $(A,\alpha )$. The following is one
of our main technical results.

\begin{theorem}
\label{Theorem:fixed} Let $\alpha $ be a continuous action of $G$ on a
C*-algebra $A$. If $\mathrm{\dim }_{\mathrm{Rok}}(A,\alpha )\leq d$, then $%
(A,\iota _{A})\precsim _{d}(A,\alpha )$. If $\mathrm{\dim }_{\mathrm{Rok}}^{%
\mathrm{c}}(A,\alpha )\leq d$, then $(A,\iota _{A})\precsim _{d}^{\mathrm{c}%
}(A,\alpha )$.
\end{theorem}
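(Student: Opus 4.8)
The plan is to recast both hypotheses as statements about the order zero dimension of the canonical factor embedding and then feed them into Proposition~\ref{Proposition:dimension-implies-containment}. Write $\theta\colon (A,\alpha)\to (C(G)\otimes A,\mathtt{Lt}\otimes\alpha)$ for the second factor embedding $a\mapsto 1_{C(G)}\otimes a$, which is $G$-equivariant. By Lemma~\ref{Lemma:rokhlin-and-oz} the hypothesis $\dim_{\mathrm{Rok}}(A,\alpha)\le d$ is exactly $\dim_{\mathrm{oz}}^{G}(\theta)\le d$, and $\dim_{\mathrm{Rok}}^{\mathrm{c}}(A,\alpha)\le d$ is exactly $\dim_{\mathrm{oz}}^{\mathrm{c},G}(\theta)\le d$. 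Proposition~\ref{Proposition:dimension-implies-containment} then yields
\[(C(G)\otimes A,\mathtt{Lt}\otimes\alpha)\precsim_d (A,\alpha)\qquad\text{(resp. }\precsim_d^{\mathrm c}\text{)}.\]
The remaining task is to see that the trivial action $(A,\iota_A)$ can be slipped in front of $(C(G)\otimes A,\mathtt{Lt}\otimes\alpha)$ without increasing the containment dimension; concretely, I would prove $(A,\iota_A)\precsim_0 (C(G)\otimes A,\mathtt{Lt}\otimes\alpha)$ and compose the two containments.

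For this I would first construct a unital $G$-equivariant $\ast$-homomorphism $\Phi\colon F_{\mathcal{F}}^{G}(A,\iota_A)\to F_{\mathcal{F}}^{G}(C(G)\otimes A,\mathtt{Lt}\otimes\alpha)$. The key observation is the untwisting isomorphism $(C(G)\otimes A,\mathtt{Lt}\otimes\alpha)\cong (C(G)\otimes A,\mathtt{Lt}\otimes\iota_A)$ of \cite[Proposition~2.3]{gardella_crossed_2014} (already used in Proposition~\ref{Proposition:crossed-product}), which induces an isomorphism $\Psi_\ast$ of the corresponding equivariant Kirchberg invariants. On the $\mathtt{Lt}\otimes\iota_A$ side the factor embedding $a\mapsto 1_{C(G)}\otimes a$ is $G$-equivariant for the trivial action on $A$, and sends a central sequence $x$ of $A$ to $1_{C(G)}\otimes x$, which commutes with all of $C(G)\otimes A$ and is fixed by $\mathtt{Lt}\otimes\iota_A$ (hence continuous). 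It therefore induces a unital $G$-equivariant $\ast$-homomorphism $F_{\mathcal{F}}^{G}(A,\iota_A)\to F_{\mathcal{F}}^{G}(C(G)\otimes A,\mathtt{Lt}\otimes\iota_A)$, and I set $\Phi:=\Psi_\ast^{-1}\circ(\text{this map})$. Unitality of $\Phi$ follows by evaluating on an approximate unit of $A$, using that $C(G)$ is unital; the possibly nonunital case of $A$ is handled throughout by working with the invariants $F_{\mathcal{F}}^{G}(-)$ and their multiplier types as in Proposition~\ref{Proposition:F}, rather than with naive relative commutants. The restriction of $\Phi$ to any separable unital $G$-C*-subalgebra $C\subseteq F_{\mathcal{F}}^{G}(A,\iota_A)$ is then a unital $G$-equivariant $\ast$-homomorphism into $F_{\mathcal{F}}^{G}(C(G)\otimes A,\mathtt{Lt}\otimes\alpha)$, which is precisely the content of $(A,\iota_A)\precsim_0(C(G)\otimes A,\mathtt{Lt}\otimes\alpha)$.

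To conclude, given a separable unital $G$-C*-subalgebra $C\subseteq F_{\mathcal{F}}^{G}(A,\iota_A)$, I would put $C':=\Phi(C)$, a separable unital (in fact $G$-fixed) $G$-C*-subalgebra of $F_{\mathcal{F}}^{G}(C(G)\otimes A,\mathtt{Lt}\otimes\alpha)$, and apply the containment $(C(G)\otimes A,\mathtt{Lt}\otimes\alpha)\precsim_d(A,\alpha)$ to $C'$ to obtain $G$-equivariant completely positive contractive order zero maps $\eta_0,\dots,\eta_d\colon C'\to F_{\mathcal{F}}^{G}(A,\alpha)$ with unital sum. Then $\zeta_j:=\eta_j\circ(\Phi|_C)$ are $G$-equivariant completely positive contractive order zero maps $C\to F_{\mathcal{F}}^{G}(A,\alpha)$ whose sum $(\sum_j\eta_j)\circ\Phi|_C$ is unital, witnessing $(A,\iota_A)\precsim_d(A,\alpha)$. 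In the commuting-towers case the $\eta_j$ have pairwise commuting ranges, a property preserved under precomposition with the $\ast$-homomorphism $\Phi|_C$, so the same argument gives $(A,\iota_A)\precsim_d^{\mathrm c}(A,\alpha)$. (These last steps are also an instance of the transitivity $A\precsim_0 B$, $B\precsim_d C\Rightarrow A\precsim_d C$ recorded after Definition~\ref{Definition:commutant-containment}.)

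I expect the main obstacle to be the construction and verification of $\Phi$ in the second paragraph, in particular checking that the factor embedding genuinely lands in the \emph{continuous}, $G$-fixed part of the equivariant Kirchberg invariant and is unital in the nonunital setting, whereas the reduction to order zero dimension and the final composition are formal.
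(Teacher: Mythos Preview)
Your argument is correct and genuinely different from the paper's. The paper proves the theorem by an explicit averaging construction: given a separable $C\subset F_{\mathcal U}(A)$, it picks Rokhlin towers $\psi_0,\dots,\psi_d\colon C(G)\to C'\cap F_{\mathcal U}^G(A,\alpha)$ and defines, in the finite group case, $\eta_j(x)=\sum_{g\in G}\psi_j(\delta_g)\alpha_g(x)$; for general compact $G$ it approximates this with a partition of unity $(f_g)_{g\in K}$, checks approximate order-zero and approximate $G$-invariance by hand, and invokes saturation (Proposition~\ref{Proposition:F}) to upgrade to exact maps. Your route instead packages the same averaging into the untwisting isomorphism $(C(G)\otimes A,\mathtt{Lt}\otimes\alpha)\cong(C(G)\otimes A,\mathtt{Lt}\otimes\iota_A)$, reducing the whole theorem to Lemma~\ref{Lemma:rokhlin-and-oz}, Proposition~\ref{Proposition:dimension-implies-containment}, and the elementary observation that the second-factor embedding induces a unital $G$-map $F_{\mathcal F}^G(A,\iota_A)\to F_{\mathcal F}^G(C(G)\otimes A,\mathtt{Lt}\otimes\iota_A)$. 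This is cleaner and entirely avoids the partition-of-unity approximation for non-finite $G$; it is essentially the same strategy the paper itself later uses in the proof of Theorem~\ref{Theorem:absorption+}(1), just applied one theorem earlier. The trade-off is that the paper's direct proof makes the mechanism visible at the level of elements, which is informative in its own right.
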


\begin{proof}
We prove the first assertion. The proof of the second assertion is
analogous. Fix a nonprincipal ultrafilter $\mathcal{U}$ over $\mathbb{N}$.
We denote by $F_{\mathcal{U}}(A)$ the Kirchberg invariant of $(A,\iota _{A})$
(endowed with the trivial action), and by $F_{\mathcal{U}}^{G}(A)$ the
Kirchberg invariant of $(A,\alpha )$ (endowed with the canonical $G$-action
obtained from $\alpha $). Since $\mathrm{dim}_{\mathrm{Rok}}^{\mathrm{c}%
}(\alpha )\leq d$, it follows from the reformulation of Rokhlin dimension in
terms of commutant $d$-containment and Proposition \ref{Proposition:F} that
for any separable C*-subalgebra $C$ of $F_{\mathcal{U}}(A) $ there exist $G$%
-equivariant completely positive contractive order zero maps $\psi
_{0},\ldots ,\psi _{d}\colon C(G)\rightarrow C^{\prime }\cap F_{\mathcal{U}%
}^{G}(A)$ such that $\psi _{0}+\cdots +\psi _{d}$ is unital.

Fix a separable C*-subalgebra $C$ of $F_{\mathcal{U}}^{G}(A)$ containing $A$%
. When $G$ is finite, the maps witnessing that $(A,\alpha )\precsim
_{d}(A,\iota _{A})$ can be constructed explicitly, so we outline this first.
For $g\in G$, let $\delta _{g}\in C(G)$ be the characteristic function of $%
\{g\}$. Define maps $\eta _{j}\colon C\rightarrow F_{\mathcal{U}}^{G}(A)$,
for $j=0,\ldots ,d$, by $\eta _{j}(x)=\sum\nolimits_{g\in G}\psi _{j}(\delta
_{g})\alpha _{g}(x)$. Then these maps witness the fact that $(A,\iota _{A})$
is $G$-equivariantly commutant $d$-contained in $(A,\alpha )$.

Suppose now that $G$ is an arbitrary compact second countable group. Below,
if $a$ and $b$ are elements of a C*-algebra and $\varepsilon >0$, we write $%
a\thickapprox _{\varepsilon }b$ to mean that $\left\Vert a-b\right\Vert
<\varepsilon $. Let $\rho$ be a left invariant metric on $G$. Fix a finite
subset $F $ of positive elements of $C$ and $\varepsilon >0$. The argument
in \cite[Proposition 2.11]{gardella_crossed_2014} shows that there exist $%
\delta >0$, a finite subset $K$ of $G$, and a partition of unity $( f_{g})
_{g\in K}$ of $G$ satisfying:

\begin{enumerate}
\item $f_{g}\in C( G) $ is a positive contraction for all $g\in G$;

\item $f_{g}$ and $f_{h}$ are orthogonal whenever $g,h\in G$ satisfy $\rho(
g,h) >\delta $;

\item for every $a\in F$, we have $\alpha_g(a)\thickapprox _{\varepsilon
}\alpha_h(a)$ whenever $g,h\in G$ satisfy $\rho( g,h) <\delta $; and

\item $\alpha _{h}( \sum\nolimits_{g\in K}\psi _{j}(f_{g})a) \thickapprox
_{\varepsilon }\sum_{g\in K}\psi _{j}(f_{g})a$ for all $h\in G$ and all $%
a\in F$.
\end{enumerate}

Define now $\eta _{j}\colon C\rightarrow F_{\mathcal{U}}^{G}(A)$ by $\eta
_{j}(x)=\sum_{g\in K}\psi _{j}(f_{g})\alpha _{g}(x)$ for $j=0,\ldots ,d$.
Observe that $\eta _{0},\ldots ,\eta _{d}$ are completely positive
contractive maps. Furthermore, for every $0\leq j\leq d$, if $a,b\in F$
satisfy $ab\thickapprox _{\varepsilon }0$, then (1) and (2) imply that%
\begin{align*}
\eta _{j}(a)\eta _{j}(b) =&\sum\limits_{g,h\in K}\psi _{j}(f_{g})\psi
_{j}(f_{h})\alpha _{g}(a)\alpha _{h}(b) \thickapprox_{\varepsilon }\sum\limits_{g,h\in K}\psi _{j}(f_{g})\psi
_{j}(f_{h})\alpha _{g}(ab)\thickapprox _{\varepsilon }0\text{.}
\end{align*}

By (3), we have $\alpha _{g}(\eta _{j}(a))\thickapprox _{\varepsilon }\eta
_{j}(a)$ for every $g\in G$, every $j=0,\ldots ,d$, and every $a\in F$.
Since $\varepsilon >0$ and $F\subset C_{+}$ are arbitrary, it follows from
Proposition \ref{Proposition:F} that there exist $G$-equivariant completely
positive contractive order zero maps $\eta _{0},\ldots ,\eta _{d}\colon
C\rightarrow F_{\mathcal{F}}^{G}(A)$ with unital sum. Since this is true for
every separable C*-subalgebra $C$ of $F_{\mathcal{F}}(A)$, we conclude that $%
(A,\iota _{A})\precsim _{d}(A,\alpha )$, as desired.
\end{proof}

The following corollary is then a consequence of Theorem \ref{Theorem:fixed}
and Proposition \ref{Proposition:commuting}.

\begin{corollary}
\label{Corollary:fixed} Let $\dim $ be a dimension function for $G$%
-C*-algebras, $A$ is a C*-algebra, $\alpha $ is a continuous action of a $G$
on $A$, and $\iota _{A}$ is the trivial $G$-action on $A$. If $\dim $ is
positively existentially axiomatizable, then 
\begin{equation*}
\dim (A,\alpha )+1\leq (\dim _{\mathrm{Rok}}(A,\alpha )+1)(\dim (A,\iota
_{A})+1)\text{.}
\end{equation*}%
If $\dim $ is commutant positively existentially axiomatizable, then 
\begin{equation*}
\dim (A,\alpha )+1\leq (\dim _{\mathrm{Rok}}^{\mathrm{c}}(A,\alpha )+1)(\dim
(A,\iota _{A})+1)\text{.}
\end{equation*}
\end{corollary}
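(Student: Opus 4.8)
The plan is to obtain both inequalities as a direct formal composition of Theorem~\ref{Theorem:fixed} and Proposition~\ref{Proposition:commuting}, treating the two assertions in parallel. First I would dispose of the degenerate case: if $\dim_{\mathrm{Rok}}(A,\alpha)=\infty$ (respectively $\dim_{\mathrm{Rok}}^{\mathrm{c}}(A,\alpha)=\infty$), then the right-hand side of the relevant inequality is $\infty$ and there is nothing to prove. So I may assume the relevant Rokhlin dimension is a finite integer and set $d=\dim_{\mathrm{Rok}}(A,\alpha)$ in the first case and $d=\dim_{\mathrm{Rok}}^{\mathrm{c}}(A,\alpha)$ in the second.

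Next I would invoke Theorem~\ref{Theorem:fixed}. For the first inequality, the hypothesis $\dim_{\mathrm{Rok}}(A,\alpha)\leq d$ yields $(A,\iota_A)\precsim_d(A,\alpha)$; for the second, the hypothesis $\dim_{\mathrm{Rok}}^{\mathrm{c}}(A,\alpha)\leq d$ yields $(A,\iota_A)\precsim_d^{\mathrm{c}}(A,\alpha)$. That is, the trivial $G$-action on $A$ is $G$-equivariantly commutant $d$-contained in $(A,\alpha)$ (with commuting towers, in the second case).

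Finally I would feed this containment into Proposition~\ref{Proposition:commuting}, applied with the structure playing the role of ``$A$'' there being $(A,\iota_A)$ and the structure playing the role of ``$B$'' there being $(A,\alpha)$. When $\dim$ is positively existentially axiomatizable, the proposition applied to $(A,\iota_A)\precsim_d(A,\alpha)$ gives $\dim(A,\alpha)+1\leq(d+1)(\dim(A,\iota_A)+1)$; when $\dim$ is commutant positively existentially axiomatizable with commuting towers, its commuting-towers variant applied to $(A,\iota_A)\precsim_d^{\mathrm{c}}(A,\alpha)$ gives the same bound. Substituting back $d=\dim_{\mathrm{Rok}}(A,\alpha)$ (respectively $d=\dim_{\mathrm{Rok}}^{\mathrm{c}}(A,\alpha)$) produces exactly the two displayed inequalities.

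Since the argument is just the concatenation of two results already in hand, I do not expect a genuine obstacle; the only thing demanding care is bookkeeping. One must orient the containment correctly, so that Proposition~\ref{Proposition:commuting} bounds $\dim$ of the \emph{larger} structure $(A,\alpha)$ in terms of $\dim$ of the trivial-action structure $(A,\iota_A)$, and one must pair the commuting-towers version of Theorem~\ref{Theorem:fixed} with the commuting-towers version of Proposition~\ref{Proposition:commuting} (and likewise in the non-commuting case). A secondary technical point is that Proposition~\ref{Proposition:commuting} is phrased for separable $G$-C*-algebras, so if $A$ is not assumed separable I would first reduce to a separable $G$-invariant subalgebra, using that the dimension functions under consideration are determined by their separable $G$-invariant subalgebras.
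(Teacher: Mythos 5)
Your proof is correct and is essentially identical to the paper's argument: the paper obtains Corollary~\ref{Corollary:fixed} precisely as the concatenation of Theorem~\ref{Theorem:fixed} (giving $(A,\iota_A)\precsim_d(A,\alpha)$, respectively $(A,\iota_A)\precsim_d^{\mathrm{c}}(A,\alpha)$) with Proposition~\ref{Proposition:commuting}, oriented exactly as you orient it. Your extra remarks on the infinite-dimension case and on reducing to separable subalgebras are sensible bookkeeping that the paper leaves implicit.
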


We now arrive at one of the main results of this section. It asserts that,
for a strongly self-absorbing C*-algebra, $G$-actions with finite Rokhlin
dimension with commuting towers, on $D$-absorbing C*-algebras, automatically
absorb the trivial action on $D$.

\begin{theorem}
\label{Theorem:absorption} Let $D$ be a strongly self-absorbing C*-algebra,
let $A$ be a separable $D$-absorbing C*-algebra, and let $\alpha \colon
G\rightarrow \mathrm{Aut}(A)$ be an action of $G$ with $\dim _{\mathrm{Rok}%
}^{\mathrm{c}}(A,\alpha )<\infty $. Then $(A,\alpha )$ is $G$-equivariantly $%
(D,\iota _{D})$-absorbing.
\end{theorem}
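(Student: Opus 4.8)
The plan is to deduce the theorem formally from Theorem~\ref{Theorem:fixed} and Corollary~\ref{Corollary:absorbption-containment}, using the trivial-action algebra $(D,\iota_D)$ as the absorbing object. First I would record that $(D,\iota_D)$ qualifies as a strongly self-absorbing, unitarily regular $G$-C*-algebra: since $D$ is strongly self-absorbing as a C*-algebra and $\iota_D$ is trivial, the approximate unitary equivalences and the isomorphism witnessing the half-flip condition are automatically $G$-equivariant, so $(D,\iota_D)$ is a strongly self-absorbing $G$-C*-algebra; and it is unitarily regular, as noted in the remark following the definition of unitary regularity. Thus $(D,\iota_D)$ meets every standing hypothesis needed to apply the absorption machinery.

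The next step is to verify the two inputs of Corollary~\ref{Corollary:absorbption-containment}, applied to the separable $G$-C*-algebras $(A,\iota_A)$ and $(A,\alpha)$ with absorbing algebra $(D,\iota_D)$. For the absorption input, I would argue that $(A,\iota_A)$ is $G$-equivariantly $(D,\iota_D)$-absorbing: by hypothesis $A$ is $D$-absorbing as a C*-algebra, so there is a C*-isomorphism $A\otimes D\cong A$; since the diagonal action $\iota_A\otimes\iota_D$ is again the trivial action, this isomorphism is automatically $G$-equivariant, and by Remark~\ref{rem:CocConjSSA} this yields $G$-equivariant $(D,\iota_D)$-absorption. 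For the containment input, I would invoke Theorem~\ref{Theorem:fixed}: choosing $d\in\mathbb{N}$ with $\dim_{\mathrm{Rok}}^{\mathrm{c}}(A,\alpha)\le d<\infty$, that theorem gives $(A,\iota_A)\precsim_d^{\mathrm{c}}(A,\alpha)$.

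Finally, feeding these two facts into Corollary~\ref{Corollary:absorbption-containment}---with its ``$A$'' taken to be $(A,\iota_A)$, its ``$B$'' taken to be $(A,\alpha)$, and its ``$D$'' taken to be $(D,\iota_D)$---immediately yields that $(A,\alpha)$ is $G$-equivariantly $(D,\iota_D)$-absorbing, which is exactly the assertion. The substantive work is packaged in the two results being quoted: Theorem~\ref{Theorem:fixed} converts finite Rokhlin dimension with commuting towers into a commutant $d$-containment with commuting towers, and Theorem~\ref{Theorem:D-absorption} (used inside the corollary) upgrades such a $d$-dimensional commutant containment back to genuine $0$-dimensional absorption, via the trivialization of the associated bundle in Theorem~\ref{thm:trivialBundle}. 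Granting these, the present statement is essentially a bookkeeping deduction, and the only points demanding genuine care here are the equivariance of $A\otimes D\cong A$ under the trivial actions and the confirmation that $(D,\iota_D)$ is strongly self-absorbing and unitarily regular. Accordingly, the real obstacle lies not in this proof but was already overcome in establishing Theorem~\ref{Theorem:fixed} and the $D$-absorption characterization of Theorem~\ref{Theorem:D-absorption}.
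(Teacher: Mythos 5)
Your proof is correct and coincides in substance with the paper's own argument: the paper likewise reduces the statement to Theorem~\ref{Theorem:fixed} together with the $D$-absorption machinery of Theorem~\ref{Theorem:D-absorption}, after observing that $(D,\iota_D)$ is unitarily regular. The only difference is packaging --- the paper invokes the dimension function $\dim_D$ via Proposition~\ref{Proposition:dimDCommPosExisAxiom} and Corollary~\ref{Corollary:fixed}, whereas you invoke Corollary~\ref{Corollary:absorbption-containment}; these encode the identical content, and your explicit verification that the nonequivariant isomorphism $A\otimes D\cong A$ is automatically equivariant for trivial actions is a step the paper leaves implicit.
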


\begin{proof}
Let $\dim _{D}$ be the $\left\{ 0,\infty \right\} $-valued dimension
function for $G$-C*-algebras which is finite if and only if the given $G$%
-C*-algebra is $(D,\iota _{D})$-absorbing. The action $(D,\iota _{D})$ is
unitarily regular since it absorbs $(\mathcal{Z},\iota _{\mathcal{Z}})$
tensorially; see \cite[Proposition 1.20]{szabo_strongly_2016}. It follows
from this and Proposition~\ref{Proposition:dimDCommPosExisAxiom} that $\dim
_{D}$ is commutant positively existentially axiomatizable with commuting
towers. The result now follows from Corollary \ref{Corollary:fixed}.
\end{proof}

\begin{corollary}
\label{Corollary:crossed-commuting} Let $D$ be a strongly self-absorbing
C*-algebra, and let $(A,\alpha )$ be a separable $G$-C*-algebra with $\dim _{%
\mathrm{Rok}}^{\mathrm{c}}(A,\alpha )<\infty $. If $A$ is (nonequivariantly) 
$D$-absorbing, then so are $A^{G}$ and $A\rtimes_{\alpha} G$.
\end{corollary}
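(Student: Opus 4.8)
The plan is to obtain both assertions as formal consequences of Theorem~\ref{Theorem:absorption} together with two standard structural identities governing how fixed point algebras and crossed products interact with tensoring by a trivially-acted C*-algebra. Under the hypotheses ($D$ strongly self-absorbing, $A$ separable and $D$-absorbing, and $\dim_{\mathrm{Rok}}^{\mathrm{c}}(A,\alpha)<\infty$), Theorem~\ref{Theorem:absorption} gives that $(A,\alpha)$ is $G$-equivariantly $(D,\iota_D)$-absorbing. Since $(D,\iota_D)$ is a strongly self-absorbing $G$-C*-algebra (the trivial action on a strongly self-absorbing C*-algebra is such), Remark~\ref{rem:CocConjSSA} upgrades this to an honest $G$-equivariant conjugacy: there is a $G$-equivariant isomorphism $\Phi\colon A\to A\otimes D$ intertwining $\alpha$ with $\alpha\otimes\iota_D$. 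Everything that follows is then bookkeeping built on this single substantive input.

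First I would treat the fixed point algebra. Restricting the $G$-equivariant isomorphism $\Phi$ to invariant elements yields $A^{G}\cong (A\otimes D)^{\alpha\otimes\iota_D}$. The key identification is $(A\otimes D)^{\alpha\otimes\iota_D}=A^{G}\otimes D$: averaging against the normalized Haar measure on the compact group $G$ gives a faithful conditional expectation $E\colon A\to A^{G}$, and on the minimal tensor product (legitimate since $D$, being strongly self-absorbing, is nuclear) the map $E\otimes\mathrm{id}_D$ agrees with the averaging projection $\int_G(\alpha_g\otimes\mathrm{id}_D)\,\mathrm{d}\mu(g)$. The range of this projection is exactly the fixed point algebra of $\alpha\otimes\iota_D$, and it equals $A^{G}\otimes D$. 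Combining, $A^{G}\cong A^{G}\otimes D$, so $A^{G}$ is $D$-absorbing.

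Next I would treat the crossed product. Applying functoriality of the crossed product to $\Phi$ gives $A\rtimes_\alpha G\cong (A\otimes D)\rtimes_{\alpha\otimes\iota_D}G$. Because the action on $D$ is trivial, $D$ is nuclear, and $G$ is compact (hence amenable, so full and reduced crossed products coincide), there is a canonical isomorphism $(A\otimes D)\rtimes_{\alpha\otimes\iota_D}G\cong (A\rtimes_\alpha G)\otimes D$, arising from the universal property of the crossed product applied to the covariant system in which $D$ commutes with the implementing unitaries of the $G$-part. It follows that $A\rtimes_\alpha G\cong (A\rtimes_\alpha G)\otimes D$, so $A\rtimes_\alpha G$ is $D$-absorbing as well. (Separability is preserved throughout, as $A$ separable and $G$ second countable compact forces $A^{G}$ and $A\rtimes_\alpha G$ to be separable.)

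The genuine mathematical weight sits entirely in Theorem~\ref{Theorem:absorption}; the remainder is an application of two well-known identities. I expect the only point requiring care to be the crossed-product identification $(A\otimes D)\rtimes_{\alpha\otimes\iota_D}G\cong (A\rtimes_\alpha G)\otimes D$: although standard, it relies on nuclearity of $D$ and amenability of $G$ to ensure that tensoring by the trivially-acted factor commutes with the crossed product construction, and this is the step I would verify most explicitly.
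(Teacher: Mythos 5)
Your proposal is correct and follows exactly the paper's own argument: Theorem~\ref{Theorem:absorption} (with Remark~\ref{rem:CocConjSSA}) gives the $G$-equivariant isomorphism $(A,\alpha)\cong(A\otimes D,\alpha\otimes\iota_D)$, after which one passes to crossed products and fixed point algebras using the identities $(A\otimes D)\rtimes_{\alpha\otimes\iota_D}G\cong(A\rtimes_\alpha G)\otimes D$ and $(A\otimes D)^{\alpha\otimes\iota_D}=A^{G}\otimes D$. The only difference is that you spell out the justification of these two identifications (Haar averaging, nuclearity of $D$, amenability of $G$), which the paper leaves implicit.
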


\begin{proof}
By Theorem~\ref{Theorem:absorption}, there is a $G$-equivariant isomorphism
between $(A,\alpha)$ and $(A\otimes D,\alpha\otimes \iota_D)$. Upon taking
crossed products, we deduce that 
\begin{equation*}
A\rtimes_{\alpha} G\cong (A\otimes D)\rtimes_{\alpha\otimes \iota_D}G\cong
(A\rtimes_{\alpha} G)\otimes D,
\end{equation*}
so $A\rtimes_{\alpha} G$ is $D$-absorbing. The same applies to the fixed point
algebra, since we have $A^{\alpha}\cong (A\otimes
D)^{\alpha\otimes\iota_D}=A^{\alpha}\otimes D$.
\end{proof}

Corollary \ref{Corollary:crossed-commuting} is a significant generalization
of previously known results concerning Jiang-Su absorption: for finite
groups this was shown by Hirshberg-Winter-Zacharias in \cite[Theorem 5.9]%
{hirshberg_rokhlin_2015}, and in \cite[Theorem 5.4.4]{gardella_compact_2015}
by the first-named author for compact groups. Similar results have been
independently obtained with different methods in \cite{gardella_rokhlin_2016}%
.

Observe that in the next result we do not require the strongly
self-absorbing action to be unitarily regular, unlike in Theorem~\ref%
{thm:trivialBundle} or \ref{Corollary:absorbption-containment}.

\begin{theorem}
\label{Theorem:absorption+} Let $A$ be a separable C*-algebra, and let $%
\alpha \colon G\rightarrow \mathrm{Aut}(A)$ be an action. Suppose that $A$
absorbs a strongly self-absorbing C*-algebra $D$, and let $\delta \colon
G\rightarrow \mathrm{Aut}(D)$ be any strongly self-absorbing action.

\begin{enumerate}
\item If $\dim _{\mathrm{Rok}}(A,\alpha )=d<\infty $, then $(D,\delta
)\precsim _{d}(A,\alpha )$.

\item If $\dim _{\mathrm{Rok}}^{\mathrm{c}}(A,\alpha )=d<\infty $, then $%
(D,\delta )\precsim _{d}^{\mathrm{c}}(A,\alpha )$. Moreover, $(A,\alpha )$
is $G$-equivariantly $(D,\delta )$-absorbing.
\end{enumerate}
\end{theorem}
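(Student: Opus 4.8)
The plan is to prove parts (1) and (2) by adapting the proof of Theorem~\ref{Theorem:fixed}, feeding in the nonequivariant $D$-absorption, and then to deduce the absorption statement in~(2) from the containment together with the semantic characterization of equivariant $D$-absorption. First I would fix a countably incomplete ultrafilter $\mathcal{F}$ over $\mathbb{N}$ and record the two structures at our disposal. Since $A$ is (nonequivariantly) $D$-absorbing and $D$ is strongly self-absorbing, there is a unital $*$-homomorphism $\mu\colon D\to F_{\mathcal{F}}(A)$ into the nonequivariant central sequence algebra; by a bootstrap as in Remark~\ref{Remark:bootstrap} it may be arranged to land in the relative commutant of any prescribed separable subalgebra. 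Since $\dim_{\mathrm{Rok}}^{\mathrm{c}}(A,\alpha)\le d$ (resp. $\dim_{\mathrm{Rok}}(A,\alpha)\le d$), the reformulation of Rokhlin dimension via commutant $d$-containment and Proposition~\ref{Proposition:F} supply $G$-equivariant completely positive contractive order zero maps $\psi_0,\dots,\psi_d\colon C(G)\to F_{\mathcal{F}}^{G}(A)$, with pairwise commuting ranges (resp. without this condition) and unital sum, which can again be placed in the commutant of $\mu(D)$.

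The core of the argument is then to twist $\mu$ by $\delta$ and the Rokhlin towers. In the model case of a finite group, I would set
\[
\eta_j(x)=\sum_{g\in G}\psi_j(\delta_g)\,\alpha_g\big(\mu(\delta_{g^{-1}}(x))\big),\qquad x\in D,\ j=0,\dots,d,
\]
where $\delta_g\in C(G)$ is the characteristic function of $\{g\}$. A direct computation using $\mathtt{Lt}_h\delta_g=\delta_{hg}$ shows that each $\eta_j$ intertwines $\delta$ and $\alpha$; the orthogonality $\psi_j(\delta_g)\psi_j(\delta_h)=0$ for $g\ne h$, together with the fact that $\mu(D)$ commutes with the towers, shows that $\eta_j$ is order zero, that $\sum_j\eta_j$ is unital, and, using the commuting towers, that the ranges of $\eta_i$ and $\eta_j$ commute for $i\ne j$. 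Thus $\eta_0,\dots,\eta_d$ witness $(D,\delta)\precsim_d^{\mathrm{c}}(A,\alpha)$ (resp. $\precsim_d$). For a general compact group I would replace the $\delta_g$ by an approximate partition of unity $(f_g)_{g\in K}$ subordinate to a left invariant metric, exactly as in the proof of Theorem~\ref{Theorem:fixed}, so that the four estimates listed there yield the same conclusion in the limit along $\mathcal{F}$. This proves (1) and the containment assertion of (2).

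For the \emph{moreover} in~(2) I would use that, since $(D,\delta)$ is strongly self-absorbing, $(A,\alpha)$ is $G$-equivariantly $(D,\delta)$-absorbing if and only if there is a unital $G$-equivariant $*$-homomorphism $(D,\delta)\to F_{\mathcal{F}}^{G}(A)$; this is \cite[Theorem~4.7]{szabo_strongly_2015} and, crucially, needs no unitary regularity. By Remark~\ref{Remark:containment-ssa} the maps $\eta_0,\dots,\eta_d$ just constructed assemble into a unital $G$-equivariant $*$-homomorphism $\Phi\colon E\to F_{\mathcal{F}}^{G}(A)$, where $E$ is the kernel of the canonical character on the $(d+1)$-fold tensor product of cones over $D$, as in the proof of Theorem~\ref{Theorem:D-absorption}. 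Here $E$ is a $G$-$C(X)$-algebra with $X\subset[0,1]^{d+1}$ finite dimensional, with all fibers equivariantly isomorphic to $(D,\delta)$, and with $G$ acting \emph{trivially} on the base $X$ (the center of $E$ comes only from the cone parameters, since $D$ is simple), so that every fiber map is automatically $G$-equivariant.

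The step in the proof of Theorem~\ref{Theorem:D-absorption} that invokes the equivariant trivialization of Theorem~\ref{thm:trivialBundle}, and hence unitary regularity, is precisely what I expect to be the main obstacle, and the plan is to circumvent it. Nonequivariantly, $E$ is a $C(X)$-bundle with fibers $D$, hence nonequivariantly isomorphic to $C(X)\otimes D$ by the nonequivariant Dadarlat--Winter theorem \cite{dadarlat_trivialization_2008}, which requires no unitary regularity; and evaluation at a point $x_0\in X$ gives a $G$-equivariant surjection $\pi_{x_0}\colon E\to(D,\delta)$. Combining $\Phi$ with this fiber structure and with the nonequivariant embedding $\mu$, I would run a one-sided reindexing/intertwining argument inside the saturated algebra $F_{\mathcal{F}}^{G}(A)$ (Proposition~\ref{Proposition:F}) to push the ideal $\ker(\pi_{x_0})$ off to zero, thereby realizing the fiber $(D,\delta)$ as a genuine unital $G$-equivariant subalgebra of $F_{\mathcal{F}}^{G}(A)$. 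By the characterization above, this yields $G$-equivariant $(D,\delta)$-absorption, completing the proof without ever assuming $\delta$ unitarily regular.
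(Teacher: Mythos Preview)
Your averaging construction works for part~(1): once $\mu(D)$ is placed in the commutant of the towers, the maps $\eta_j(x)=\sum_g\psi_j(\delta_g)\alpha_g(\mu(\delta_{g^{-1}}(x)))$ are $G$-equivariant, order zero, and have unital sum, so Remark~\ref{Remark:containment-ssa} gives $(D,\delta)\precsim_d(A,\alpha)$. The paper instead argues via order zero dimension of $*$-homomorphisms: the second-factor embedding $(A,\alpha)\to(C(G)\otimes A,\mathtt{Lt}\otimes\alpha)$ has $\dim_{\mathrm{oz}}^G\le d$, and since $(C(G)\otimes A,\mathtt{Lt}\otimes\alpha)\cong(C(G)\otimes A\otimes D,\mathtt{Lt}\otimes\alpha\otimes\delta)$ (Prop.~2.3 of \cite{gardella_crossed_2014}), composing with the first-factor embedding into the $D$-stabilization costs nothing. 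Your route is more hands-on; theirs packages the averaging into the single isomorphism $(C(G)\otimes -,\mathtt{Lt}\otimes\gamma)\cong(C(G)\otimes -,\mathtt{Lt}\otimes\iota)$.

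For the commuting-towers containment in part~(2), however, your construction breaks. The ranges of $\eta_i$ and $\eta_j$ involve terms $\alpha_g(\mu(D))$ and $\alpha_h(\mu(D))$ for all pairs $g,h\in G$, and there is no reason these commute: commuting towers only gives $[\psi_i(C(G)),\psi_j(C(G))]=0$, not control over $[\alpha_g(\mu(D)),\alpha_h(\mu(D))]$. The paper sidesteps this by first invoking Theorem~\ref{Theorem:absorption} to get that $(A,\alpha)$ absorbs $(D,\iota_D)$, which produces a copy of $D$ in $F_{\mathcal{F}}^G(A)$ that is \emph{pointwise $\alpha$-fixed}. With such a $\theta$, the terms $\alpha_g(\theta(d))=\theta(d)$ all coincide, and the commutation of ranges follows immediately from the commuting towers. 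They then convert from $\iota_D$ to $\delta$ via the isomorphism $(C(G)\otimes D,\mathtt{Lt}\otimes\iota_D)\cong(C(G)\otimes D,\mathtt{Lt}\otimes\delta)$.

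The gap in your ``moreover'' is more serious. Your proposal to ``push $\ker(\pi_{x_0})$ off to zero'' by reindexing is not a proof: knowing $E\cong C(X)\otimes D$ nonequivariantly and having a $G$-equivariant $\Phi\colon E\to F_{\mathcal{F}}^G(A)$ does not produce a $G$-equivariant embedding of the fiber $(D,\delta)$ without an equivariant section of $\pi_{x_0}$, and obtaining such a section is precisely the content of the equivariant bundle trivialization. The paper does not try to avoid unitary regularity here; instead it applies Corollary~\ref{Corollary:absorbption-containment} to the unitarily regular action $\delta^{\mathcal{Z}}=\delta\otimes\mathrm{id}_{\mathcal{Z}}$ to get $\alpha\cong\alpha\otimes\delta^{\mathcal{Z}}$, separately gets $\alpha\cong\alpha\otimes\mathrm{id}_{\mathcal{Z}}$ from Theorem~\ref{Theorem:absorption}, and concludes $\alpha\cong\alpha\otimes\mathrm{id}_{\mathcal{Z}}\otimes\delta\cong\alpha\otimes\delta$. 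This $\mathcal{Z}$-stabilization trick is the missing idea.
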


\begin{proof}
(1). Suppose that $\dim _{\mathrm{Rok}}( A,\alpha ) =d<+\infty $. By Lemma %
\ref{Lemma:rokhlin-and-oz}, the second-factor embedding $\theta\colon (
A,\alpha ) \rightarrow ( C(G) \otimes A,\mathtt{Lt}\otimes \alpha ) $ has $G$%
-equivariant order zero dimension at most $d$. Since $A\otimes D\cong D$, it
follows from Proposition~2.3 in~\cite{gardella_crossed_2014} that $( C(G)
\otimes A,\mathtt{Lt}\otimes \alpha ) $ is conjugate to $( C(G) \otimes
A\otimes D,\mathtt{Lt}\otimes \alpha \otimes \delta )$. In other words, $(
C(G) \otimes A,\mathtt{Lt}\otimes \alpha ) $ is $G$-equivariantly $(
D,\delta ) $-absorbing. By the implication (ii)$\Rightarrow $(iv) in \cite[%
Theorem 4.29]{barlak_sequentially_2016} the first-factor embedding $\eta
\colon ( C( G) \otimes A,\mathtt{Lt}\otimes \alpha ) \rightarrow ( C(G)
\otimes A\otimes D,\mathtt{Lt}\otimes \alpha \otimes \delta ) $ has $G$%
-equivariant order zero dimension zero. By item (1) in Proposition \ref%
{prop:propertiesOzDim}, the composition 
\begin{equation*}
\eta \circ \theta \colon( A,\alpha ) \rightarrow ( C(G) \otimes A\otimes D,%
\mathtt{Lt}\otimes \alpha \otimes \delta )
\end{equation*}
has dimension at most $d$. Observe that $( \eta \circ \theta ) (A) =1_{C(G)
}\otimes a\otimes 1_{D}$ for all $a\in A$. It follows that the first-factor
embedding $( A,\alpha ) \rightarrow ( A\otimes D,\alpha \otimes \delta ) $
has $G$-equivariant dimension at most $d$ (this embedding is really just $%
\eta\circ\theta$, once its codomain is truncated). We conclude from
Proposition~\ref{Proposition:dimension-implies-containment} that $( A\otimes
D,\alpha \otimes \delta ) \precsim _{d}(A,\alpha )$, and in particular $(
D,\delta ) \precsim _{d}(A,\alpha )$.

(2). Fix a nonprincipal ultrafilter $\mathcal{U}$ over $\mathbb{N}$. Suppose
that $\dim _{\mathrm{Rok}}^{\mathrm{c}}(A,\alpha )\leq d$. We want to show
that $(D,\delta )\precsim _{d}^{\mathrm{c}}(A,\alpha )$. In view of Remark %
\ref{Remark:containment-ssa}, it is enough to show that there exist $G$%
-equivariant unital completely positive contractive maps $\eta _{0},\ldots
,\eta _{d}\colon (D,\delta )\rightarrow (F_{\mathcal{U}}^{G}(A),\alpha )$
with commuting ranges such that $\sum_{j=0}^d\eta _{j}$ is unital. In
order to illustrate the ideas of the proof, we begin by considering the
unital case, since the argument is easier to follow in this case.

Assume that $A$ is unital, so that $F_{\mathcal{U}}^{G}(A)$ is equal to $%
A^{\prime }\cap \prod_{\mathcal{U}}^{G}A$. By assumption, there exist $G$%
-equivariant completely positive contractive order zero $A$-bimodule maps $%
\psi _{0},\ldots ,\psi _{d}\colon (C(G)\otimes A,\mathtt{Lt}\otimes \alpha
)\rightarrow (\prod_{\mathcal{U}}^{G}A,\alpha )$, such that $(\psi
_{0}+\cdots +\psi _{d})\circ (1\otimes \mathrm{id}_{A})$ is the diagonal
inclusion of $A$ into $\prod_{\mathcal{U}}^{G}A$, and $\psi _{j}(C(G)\otimes
1)$ and $\psi _{k}(C(G)\otimes 1)$ commute for $0\leq j<k\leq d$. Define $C$
to be the separable C*-subalgebra of $\prod_{\mathcal{U}}^{G}A$ generated by 
$A$ together with the ranges of $\psi _{0},\ldots ,\psi _{d}$. By Theorem~%
\ref{Theorem:absorption}, $(A,\alpha )$ is $(D,\iota _{D})$-absorbing. Use
Theorem \ref{Theorem:D-absorption} to find a $G$-equivariant *-homomorphism $%
\theta :D\rightarrow C^{\prime }\cap \prod_{\mathcal{U}}^{G}A$. For $%
j=0,\ldots ,d$, define a $G$-equivariant completely positive contractive
order zero $A$-bimodules map 
\begin{equation*}
\phi _{j}\colon (C(G)\otimes A\otimes D,\mathtt{Lt}\otimes \alpha \otimes
\iota _{D})\rightarrow (\prod\nolimits_{\mathcal{U}}^{G}A,\alpha )
\end{equation*}%
by setting $\phi _{j}(f\otimes a\otimes d)=\psi _{j}(f\otimes a)\theta (d)$
for all $f\in C(G)$, all $a\in A$, and all $d\in D$. Observe that $\phi
_{j}(C(G)\otimes 1_{A}\otimes D)$ commutes with $\phi _{k}(C(G)\otimes
1_{A}\otimes D)$ whenever $0\leq j<k\leq d$.

Fix a $G$-equivariant isomorphism $( C( G) \otimes D,\mathtt{Lt}\otimes
\iota _{D}) \to ( C( G) \otimes D,\mathtt{Lt}\otimes \delta ) $, and tensor
it with the identity on $A$ to obtain a $G$-equivariant *-isomorphism 
\begin{equation*}
\pi \colon ( C( G) \otimes A\otimes D,\mathtt{Lt}\otimes \alpha \otimes
\delta ) \rightarrow ( C( G) \otimes A\otimes D,\mathtt{Lt}\otimes \alpha
\otimes \iota _{D})
\end{equation*}%
satisfying $\pi ( 1_{C(G)}\otimes a\otimes 1_D) =1_{C(G)}\otimes a\otimes
1_D $ for every $a\in A$. For $j=0,\ldots,d$, let 
\begin{equation*}
\eta _{j}\colon ( D,\delta ) \rightarrow (A^{\prime }\cap \prod\nolimits_{%
\mathcal{U}}^{G}A,\alpha )
\end{equation*}%
be the $G$-equivariant completely positive order zero map given by $\eta
_{j}( d) =( \phi _{j}\circ \pi ) ( 1_{C(G)}\otimes 1_A\otimes d) $ for all $%
d\in D$. Then $\eta_j(D)$ commutes with $\eta_k(D)$ whenever $0\leq j<k\leq
d $, and $\sum_{j=0}^d\eta_j(1)=1$. We conclude that $( D,\delta ) \precsim
_{d}^{\mathrm{c}}( A,\alpha ) $.

We consider now the general case when $A$ is not necessarily unital. Fix
maps $\psi _{0},\ldots ,\psi _{d}\colon ( C( G) \otimes A,\mathtt{Lt}\otimes
\alpha ) \rightarrow (\prod_{\mathcal{U}}^{G}A,\alpha )$ as before. Let $C$
the separable C*-subalgebra of $\prod_{\mathcal{U}}^{G}A$ generated by the
ranges of $\psi _{0},\ldots ,\psi _{d}$. As above, find a $G$-equivariant
*-homomorphism 
\begin{equation*}
\theta \colon ( D,\iota _{D}) \rightarrow \frac{C^{\prime }\cap \prod_{%
\mathcal{U}}^{G}A}{\mathrm{Ann}( A,C^{\prime }\cap \prod_{\mathcal{U}}^{G}A) 
}\text{.}
\end{equation*}%
Let $\pi $ be as before, and consider the canonical $G$-equivariant
*-homomorphism 
\begin{equation*}
\Psi \colon C\otimes _{\max }\frac{C^{\prime }\cap \prod_{\mathcal{U}}^{G}A}{%
\mathrm{Ann}\left( A,C^{\prime }\cap \prod_{\mathcal{U}}^{G}A\right) }%
\rightarrow \prod\nolimits_{\mathcal{U}}^{G}A
\end{equation*}%
defined as in the proof of Lemma \ref{Lemma:oz-dimension-commutant-embedding}%
. For $j=0,\ldots, d$, set 
\begin{equation*}
\phi_j=\Psi\circ(\psi_j\otimes\theta)\colon C(G)\otimes A\otimes D\to
\prod\nolimits_{\mathcal{U}}^{G}A.
\end{equation*}
Then $\phi_j$ is a $G$-equivariant order zero $A$-bimodule map.

Let $(u_{\lambda })_{\lambda \in \Lambda }$ be an increasing approximate
identity for $A$. For every $\lambda \in \Lambda $ and $j=0,\ldots ,d$,
define 
\begin{equation*}
\eta _{j,\lambda }\colon (D,\delta )\rightarrow (\prod\nolimits_{\mathcal{U}%
}^{G}A,\alpha )
\end{equation*}%
by $\eta _{j,\lambda }(d)=(\phi _{j}\circ \pi )(1_{C(G)}\otimes u_{\lambda
}\otimes d)$. These maps satisfy are completely positive contractive order
zero, have commuting ranges, and 
\begin{equation*}
\left\Vert \lbrack \eta _{j,\lambda }(d),a]\right\Vert \rightarrow 0\ \ %
\mbox{ and }\ \ \left\Vert a{}(\eta _{0,\lambda }+\cdots +\eta _{0,\lambda
})(1)-a\right\Vert \rightarrow 0\text{,}
\end{equation*}%
for every $d\in D$ and $a\in A$. By countable saturation of $\left( \prod_{%
\mathcal{U}}^{G}A,\alpha \right) $, there exist $G$-equivariant completely
positive order zero maps 
\begin{equation*}
\tilde{\eta}_{j}\colon (D,\delta )\rightarrow (A^{\prime }\cap
\prod\nolimits_{\mathcal{U}}^{G}A,\alpha )
\end{equation*}%
satisfying $a\left[ (\tilde{\eta}_{0,\lambda }+\cdots +\tilde{\eta}%
_{0,\lambda })(1)\right] =a$ for every $a\in A$. Composing such maps with
the canonical quotient mapping 
\begin{equation*}
A^{\prime }\cap \prod\nolimits_{\mathcal{U}}^{G}A\rightarrow \frac{A^{\prime
}\cap \prod\nolimits_{\mathcal{U}}^{G}A}{\mathrm{Ann}\left( A,A^{\prime
}\cap \prod\nolimits_{\mathcal{U}}^{G}A\right) }=F_{\mathcal{U}}^{G}(GA)
\end{equation*}
gives $G$-equivariant completely positive order zero maps with commuting
ranges $\eta _{0},\ldots ,\eta _{d}\colon (D,\delta )\rightarrow (F_{%
\mathcal{U}}^{G}(A),\alpha )$, which witness the fact that $(D,\delta
)\precsim _{d}^{\mathrm{c}}(A,\alpha )$.

We now justify the second claim. When $\delta $ is unitarily regular, the
fact that $(D,\delta )\precsim _{d}^{\mathrm{c}}(A,\alpha )$ implies that $%
(A,\alpha )$ is $G$-equivariantly $(D,\delta )$-absorbing is a consequence
of Corollary \ref{Corollary:absorbption-containment}. Now suppose that $%
\delta $ is an arbitrary strongly self-absorbing action, and consider $%
\delta ^{\mathcal{Z}}=\delta \otimes \mathrm{id}_{\mathcal{Z}}$, regarded as
an action of $G$ on $D\otimes \mathcal{Z}\cong D$. Then $\delta ^{\mathcal{Z}%
}$ is unitarily regular, and hence $\alpha $ absorbs $\delta ^{\mathcal{Z}}$
by the above paragraph. Since $\mathrm{id}_{\mathcal{Z}}$ is unitarily
regular and $A$ is $\mathcal{Z}$-absorbing (because it is $D$-absorbing), we
also deduce that $\alpha $ absorbs $\mathrm{id}_{\mathcal{Z}}$. Putting
these things together, we deduce that 
\begin{equation*}
\alpha \cong \alpha \otimes \delta ^{\mathcal{Z}}=\alpha \otimes \mathrm{id}%
_{\mathcal{Z}}\otimes \delta \cong \alpha \otimes \delta .
\end{equation*}%
In other words, $\alpha $ absorbs $\delta $, and the proof is finished.
\end{proof}

Theorem \ref{Theorem:absorption+} has a number of new and strong
consequences, as already the case of the trivial action on $D$ is new. For
example, we derive now some dimension reduction type results. Roughly
speaking, these statements say that, in some contexts, the Rokhlin property
is \emph{equivalent} to finite Rokhlin dimension with commuting towers. (By
definition, a compact group action has the Rokhlin property if it has
Rokhlin dimension zero.)

These are useful results, since proving directly that an action has the
Rokhlin property is often challenging, and there are not many tools
available. On the other hand, Rokhlin dimensional estimates are much easier
to come by, particularly for finite groups. It follows from our results
that, in some cases, knowing that the Rokhlin dimension is finite is enough
to deduce the Rokhlin property. Having access to the Rokhlin property is
extremely valuable, since it entails classifiability (of the action), and
the structure of the crossed product is extremely well-understood; see \cite%
{gardella_crossed_2014}.

\begin{corollary}
\label{cor:DimReductionO2} Let $A$ be an $\mathcal{O}_{2}$-absorbing
C*-algebra, and let $\alpha \colon G\rightarrow \mathrm{Aut}(A)$ be an
action with $\mathrm{dim}_{\mathrm{Rok}}^{\mathrm{c}}(\alpha )<\infty $.
Then $\alpha $ has the Rokhlin property.
\end{corollary}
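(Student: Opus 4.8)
The plan is to deduce the corollary from Theorem~\ref{Theorem:absorption+} by pairing $\alpha$ with a well-chosen strongly self-absorbing action on $\mathcal{O}_2$. The whole statement reduces to the following input, which I isolate as the key ingredient: there exists a strongly self-absorbing $G$-action $\delta$ on $\mathcal{O}_2$ having the Rokhlin property, i.e.\ with $\dim_{\mathrm{Rok}}(\mathcal{O}_2,\delta)=0$. Granting this, since $A$ is separable, $\mathcal{O}_2$-absorbing, and $\dim_{\mathrm{Rok}}^{\mathrm{c}}(A,\alpha)<\infty$, I would apply part~(2) of Theorem~\ref{Theorem:absorption+} with $D=\mathcal{O}_2$ and this particular $\delta$ to conclude that $(A,\alpha)$ is $G$-equivariantly $(\mathcal{O}_2,\delta)$-absorbing. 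By Remark~\ref{rem:CocConjSSA} this means $(A,\alpha)$ is conjugate to $(A\otimes\mathcal{O}_2,\alpha\otimes\delta)$, so it suffices to show that $\alpha\otimes\delta$ has the Rokhlin property.

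For the existence of $\delta$: when $G$ is finite one takes the infinite tensor product $\bigotimes_{n\in\mathbb{N}}\mathrm{Ad}(\lambda_G)$ of the left regular representation on $M_{|G|}$, which is a Rokhlin action on $M_{|G|^{\infty}}$, and tensors it with the trivial action on $\mathcal{O}_2$, using $M_{|G|^{\infty}}\otimes\mathcal{O}_2\cong\mathcal{O}_2$; the resulting action is strongly self-absorbing and retains the Rokhlin property because the diagonal subalgebras $\ell^\infty(G)\cong C(G)$ of the tensor factors form an asymptotically central, equivariant copy of $(C(G),\mathtt{Lt})$. For general second countable compact $G$ one uses the corresponding model Rokhlin action built from the regular representation on $L^2(G)$; I would either cite the construction from the literature or verify directly that $(C(G),\mathtt{Lt})$ embeds unitally and equivariantly into the central sequence algebra. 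This existence statement is the main obstacle, since producing a \emph{simple} strongly self-absorbing model carrying a genuine Rokhlin action for an arbitrary compact group is the only step not already packaged by the results of this section.

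Finally, to transfer the Rokhlin property across the tensor factor: let $\theta_{\mathcal{O}_2}\colon\mathcal{O}_2\to C(G)\otimes\mathcal{O}_2$ and $\theta_{A\otimes\mathcal{O}_2}\colon A\otimes\mathcal{O}_2\to C(G)\otimes A\otimes\mathcal{O}_2$ denote the respective second-factor embeddings. Rearranging tensor legs by a $G$-equivariant isomorphism identifies $\theta_{A\otimes\mathcal{O}_2}$ with $\theta_{\mathcal{O}_2}\otimes\mathrm{id}_A$, and since order zero dimension is invariant under composition with equivariant isomorphisms on either side (an immediate consequence of part~(1) of Proposition~\ref{prop:propertiesOzDim}, as isomorphisms have order zero dimension $0$), part~(2) of the same proposition gives
\[
\dim_{\mathrm{oz}}^{G}(\theta_{A\otimes\mathcal{O}_2})=\dim_{\mathrm{oz}}^{G}(\theta_{\mathcal{O}_2}\otimes\mathrm{id}_A)\leq\dim_{\mathrm{oz}}^{G}(\theta_{\mathcal{O}_2}).
\]
By Lemma~\ref{Lemma:rokhlin-and-oz} the outer terms equal $\dim_{\mathrm{Rok}}(A\otimes\mathcal{O}_2,\alpha\otimes\delta)$ and $\dim_{\mathrm{Rok}}(\mathcal{O}_2,\delta)=0$, so $\alpha\otimes\delta$ has the Rokhlin property. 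Combined with the conjugacy $(A,\alpha)\cong(A\otimes\mathcal{O}_2,\alpha\otimes\delta)$ and the conjugacy-invariance of Rokhlin dimension, this yields $\dim_{\mathrm{Rok}}(A,\alpha)=0$, i.e.\ $\alpha$ has the Rokhlin property.
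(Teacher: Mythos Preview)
Your approach is essentially the same as the paper's: both deduce the result from Theorem~\ref{Theorem:absorption+} by absorbing a strongly self-absorbing Rokhlin action $\delta$ on $\mathcal{O}_2$. The differences are in how the two steps are handled.

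For the existence of $\delta$, the paper proceeds in two strokes: it cites \cite{gardella_compact_2015} for the existence of \emph{some} Rokhlin $G$-action on $\mathcal{O}_2$, and then invokes the classification theorem of \cite{gardella_classification_2016} to conclude that any such action is automatically strongly self-absorbing. This sidesteps exactly the obstacle you flag for general compact $G$; your direct construction for finite $G$ is fine, but for arbitrary compact $G$ the clean route is existence plus classification rather than an explicit model.

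For the last step, the paper simply asserts that absorbing $\delta$ yields the Rokhlin property, whereas you spell this out via Lemma~\ref{Lemma:rokhlin-and-oz} and Proposition~\ref{prop:propertiesOzDim}(2). Your argument is correct and makes explicit what the paper leaves implicit: the Rokhlin towers for $\delta$ sit in the $\mathcal{O}_2$ tensor factor and remain central in $A\otimes\mathcal{O}_2$, so $\dim_{\mathrm{Rok}}(\alpha\otimes\delta)\le\dim_{\mathrm{Rok}}(\delta)=0$.
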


\begin{proof}
Let $\delta\colon G\to\mathrm{Aut}(\mathcal{O}_2)$ be any action with the
Rokhlin property; one such action is constructed in \cite%
{gardella_compact_2015}. By the classification theorem in \cite%
{gardella_classification_2016}, the action $\delta$ is strongly
self-absorbing. It follows from Theorem~\ref{Theorem:absorption+} that $%
\alpha$ absorbs $\delta$, so $\alpha$ has the Rokhlin property.
\end{proof}



\begin{corollary}
\label{cor:DimReductionUHF} Let $G$ be a finite group, let $A$ be an $%
M_{|G|^\infty}$-absorbing C*-algebra, and let $\alpha\colon G\to\mathrm{Aut}%
(A)$ be an action with $\mathrm{dim}_{\mathrm{Rok}}^{\mathrm{c}%
}(\alpha)<\infty$. Then $\alpha$ has the Rokhlin property. This in
particular applies to Cuntz algebras of the form $\mathcal{O}_{n|G|}$.
\end{corollary}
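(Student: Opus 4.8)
The plan is to run the argument of Corollary~\ref{cor:DimReductionO2} with $\mathcal{O}_2$ replaced by the UHF-algebra $D=M_{|G|^\infty}$. Concretely, I would first produce a strongly self-absorbing $G$-action $\delta\colon G\to\mathrm{Aut}(D)$ with the Rokhlin property, and then feed it into Theorem~\ref{Theorem:absorption+}. Since $A$ is $M_{|G|^\infty}$-absorbing (that is, $D$-absorbing) and $\dim_{\mathrm{Rok}}^{\mathrm{c}}(\alpha)<\infty$, part~(2) of Theorem~\ref{Theorem:absorption+} yields a $G$-equivariant isomorphism $(A,\alpha)\cong(A\otimes D,\alpha\otimes\delta)$. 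Because Rokhlin dimension is a conjugacy invariant, it then suffices to check that $\alpha\otimes\delta$ has Rokhlin dimension zero; this follows from $\dim_{\mathrm{Rok}}(D,\delta)=0$, since the Rokhlin structure of $\delta$, witnessed by a unital $G$-equivariant $\ast$-homomorphism $C(G)\to F_{\mathcal{U}}^{G}(D)$, maps $G$-equivariantly and unitally into $F_{\mathcal{U}}^{G}(A\otimes D)$ through the (central) second tensor factor.

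To build $\delta$, let $\lambda\colon G\to U(\ell^2(G))$ be the left regular representation, identify $B(\ell^2(G))\cong M_{|G|}$, and set $\delta=\bigotimes_{n\in\mathbb{N}}\mathrm{Ad}(\lambda)$ on $D=\bigotimes_{n\in\mathbb{N}}M_{|G|}\cong M_{|G|^\infty}$. Denoting by $e_h$ the rank-one projection onto $\mathbb{C}\xi_h$ in a single factor, where $\{\xi_h\}_{h\in G}$ is the standard basis of $\ell^2(G)$, one has $\sum_{h\in G}e_h=1$ and $\mathrm{Ad}(\lambda_g)(e_h)=e_{gh}$. Transporting these projections into the $n$-th tensor factor and letting $n\to\infty$ produces Rokhlin projections in $F_{\mathcal{U}}^{G}(D)$, equivalently a unital $G$-equivariant $\ast$-homomorphism $C(G)\to F_{\mathcal{U}}^{G}(D)$ (see Remark~\ref{Remark:containment-ssa}); thus $\dim_{\mathrm{Rok}}(D,\delta)=0$.

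The crux is to verify that $\delta$ is a \emph{strongly self-absorbing} $G$-action. Reindexing $\mathbb{N}\cong\mathbb{N}\sqcup\mathbb{N}$ gives a $G$-equivariant isomorphism $(D,\delta)\cong(D,\delta)\otimes(D,\delta)$, so $(D,\delta)$ is $G$-equivariantly self-absorbing. It remains to show that $\delta$ has approximately $G$-inner half-flip. For this I would use, at the $i$-th pair of tensor factors, the coordinate-flip unitary $W^{(i)}\in M_{|G|}\otimes M_{|G|}$; since the flip commutes with $\lambda_g\otimes\lambda_g$ for every $g$, each $W^{(i)}$ lies in $(M_{|G|}\otimes M_{|G|})^{G}$, so the finite products $W_k=\bigotimes_{i=1}^{k}W^{(i)}$ are $G$-invariant unitaries in $D\otimes D$ with $\mathrm{Ad}(W_k)\circ(\mathrm{id}_D\otimes 1_D)\to 1_D\otimes\mathrm{id}_D$ pointwise. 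Having approximately $G$-inner half-flip together with $G$-equivariant self-absorption, $(D,\delta)$ is strongly self-absorbing by the equivariant characterization \cite[Theorem~4.6]{szabo_strongly_2015}. This verification---in particular, the observation that the swap unitaries can be chosen $G$-invariant, which is exactly what upgrades the half-flip from inner to \emph{$G$}-inner---is the main point of the argument; everything else is an application of Theorem~\ref{Theorem:absorption+}.

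Finally, the assertion about $\mathcal{O}_{n|G|}$ reduces to checking that $\mathcal{O}_{n|G|}$ is $M_{|G|^\infty}$-absorbing. The algebra $\mathcal{O}_{n|G|}$ is a unital Kirchberg algebra in the UCT class with $K_1=0$ and $K_0\cong\mathbb{Z}/(n|G|-1)\mathbb{Z}$. As $\gcd(|G|,\,n|G|-1)=1$, the element $|G|$ is already invertible in $\mathbb{Z}/(n|G|-1)\mathbb{Z}$, so tensoring with $M_{|G|^\infty}$ (which inverts $|G|$ in $K_0$) leaves the $K$-theory and the class of the unit unchanged. Kirchberg--Phillips classification then gives $\mathcal{O}_{n|G|}\otimes M_{|G|^\infty}\cong\mathcal{O}_{n|G|}$, so the first part of the statement applies to any action of $G$ on $\mathcal{O}_{n|G|}$ with finite Rokhlin dimension with commuting towers.
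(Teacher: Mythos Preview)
Your proof is correct and follows essentially the same strategy as the paper: construct $\delta=\bigotimes_{n}\mathrm{Ad}(\lambda)$ on $M_{|G|^\infty}$, verify that it is a strongly self-absorbing $G$-action with the Rokhlin property, and then apply Theorem~\ref{Theorem:absorption+} to conclude that $\alpha$ absorbs $\delta$ and hence inherits the Rokhlin property. You supply more detail than the paper (which simply declares the relevant properties of $\delta$ to be ``well-known'' and ``elementary''), and you also spell out the $K$-theoretic reason for $M_{|G|^\infty}$-absorption of $\mathcal{O}_{n|G|}$, which the paper leaves implicit.
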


\begin{proof}
Let $\delta\colon G\to\mathrm{Aut}(M_{|G|^\infty})$ be the infinite tensor
product of conjugation by the left regular representation of $G$ on $%
\ell^2(G)$. This action is well-known to have the Rokhlin property, since $%
C(G)$ embeds equivariantly into $B(\ell ^{2}(G))\cong M_{|G|}$ as
multiplication operators. It is elementary to show that such an action is
strongly self-absorbing. It follows from Theorem~\ref{Theorem:absorption+}
that $\alpha$ absorbs $\delta$, so $\alpha$ has the Rokhlin property.
\end{proof}

Finally, we obtain some new Rokhlin-dimensional estimates. The following one
represents a satisfactory parallel with the $\{0,1,\infty\}$-type behaviour
that nuclear dimension and decomposition rank tend to have in the
noncommutative setting. It is also particularly satisfactory, since proving
finiteness of the Rokhlin dimension is a far easier task than proving that
it is (at most) 1.

\begin{corollary}
\label{cor:estimate} Let $G$ be a finite group, let $A$ be a C*-algebra, and
let $\alpha \colon G\rightarrow \mathrm{Aut}(A)$ satisfy $\mathrm{dim}_{%
\mathrm{Rok}}^{\mathrm{c}}(\alpha )<\infty $. Then 
\begin{equation*}
\mathrm{dim}_{\mathrm{Rok}}(\alpha \otimes \mathrm{id}_{\mathcal{Z}})\leq 1.
\end{equation*}%
If $A$ is $\mathcal{Z}$-absorbing, then%
\begin{equation*}
\dim _{\mathrm{Rok}}\left( \alpha \right) \leq 1\text{.}
\end{equation*}
\end{corollary}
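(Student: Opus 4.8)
The plan is to reduce both assertions to the single claim that, for every $G$-C*-algebra $(A,\alpha)$ with $\dim_{\mathrm{Rok}}^{\mathrm{c}}(\alpha)<\infty$, one has $\dim_{\mathrm{Rok}}(\alpha\otimes\iota_{\mathcal Z})\le 1$, where $\iota_{\mathcal Z}=\mathrm{id}_{\mathcal Z}$ is the trivial action. The first assertion \emph{is} precisely this claim, while the second will follow by combining it with equivariant $\mathcal Z$-absorption. Throughout I may assume $A$ separable, since both Rokhlin dimensions are determined by separable $G$-invariant subalgebras. The decisive choice will be to run the argument through the UHF-algebra $U=M_{|G|^{\infty}}$ of infinite type, endowed with its trivial action $\iota_U$.

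First I would record that tensoring with a trivial action does not increase the Rokhlin dimension with commuting towers. Indeed, the map $a\mapsto a\otimes 1_U$ induces a canonical unital $G$-equivariant $*$-homomorphism $F_{\mathcal F}^{G}(A)\to F_{\mathcal F}^{G}(A\otimes U)$ (its image commutes with $A\otimes 1$ and, lying in $\prod^{G}_{\mathcal F}(A)\otimes 1$, also with $1\otimes U$; see Proposition~\ref{Proposition:F}), and composing a witnessing family of commuting order zero Rokhlin towers with it preserves the order zero property, the commutativity of the ranges, and the unitality of the sum. Hence $\dim_{\mathrm{Rok}}^{\mathrm{c}}(\alpha\otimes\iota_U)\le\dim_{\mathrm{Rok}}^{\mathrm{c}}(\alpha)<\infty$. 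Since $M_{|G|^{\infty}}$ is strongly self-absorbing, $A\otimes U$ is $M_{|G|^{\infty}}$-absorbing, so Corollary~\ref{cor:DimReductionUHF} (this is where finiteness of $G$ is essential) applies and shows that $\alpha\otimes\iota_U$ has the Rokhlin property, that is, $\dim_{\mathrm{Rok}}(\alpha\otimes\iota_U)=0$. Feeding this into Corollary~\ref{cor:RokDimIneq} with the same $U$ yields
\[
\dim_{\mathrm{Rok}}(\alpha\otimes\iota_{\mathcal Z})\le 2\,\dim_{\mathrm{Rok}}(\alpha\otimes\iota_U)+1=1,
\]
which is exactly the first assertion.

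For the second assertion, assume in addition that $A$ is $\mathcal Z$-absorbing. As $\dim_{\mathrm{Rok}}^{\mathrm{c}}(\alpha)<\infty$ and $\mathcal Z$ is strongly self-absorbing, Theorem~\ref{Theorem:absorption} (with $D=\mathcal Z$) shows that $(A,\alpha)$ is $G$-equivariantly $(\mathcal Z,\iota_{\mathcal Z})$-absorbing; since the trivial action on $\mathcal Z$ is itself strongly self-absorbing, Remark~\ref{rem:CocConjSSA} upgrades this to a $G$-equivariant isomorphism $(A,\alpha)\cong(A\otimes\mathcal Z,\alpha\otimes\iota_{\mathcal Z})$. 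Rokhlin dimension being a conjugacy invariant, we conclude $\dim_{\mathrm{Rok}}(\alpha)=\dim_{\mathrm{Rok}}(\alpha\otimes\iota_{\mathcal Z})\le 1$ by the first assertion.

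The main obstacle, and the point that dictates every choice above, is securing the base case $\dim_{\mathrm{Rok}}(\alpha\otimes\iota_U)=0$: this is precisely why $U$ must be taken to be $M_{|G|^{\infty}}$ (so that $A\otimes U$ becomes $M_{|G|^{\infty}}$-absorbing and Corollary~\ref{cor:DimReductionUHF} is available), and why the hypothesis that $G$ is finite cannot be dropped. Once this Rokhlin-property input is in hand, the passage from $0$ to $1$ is automatic from the $\mathcal Z$-versus-$U$ dimension inequality of Corollary~\ref{cor:RokDimIneq}, and the second assertion becomes a formal consequence of equivariant $\mathcal Z$-absorption via Theorem~\ref{Theorem:absorption}.
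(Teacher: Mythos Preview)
Your proof is correct and follows essentially the same route as the paper: apply Corollary~\ref{cor:RokDimIneq} with $U=M_{|G|^{\infty}}$, after using Corollary~\ref{cor:DimReductionUHF} to obtain $\dim_{\mathrm{Rok}}(\alpha\otimes\iota_U)=0$, and then deduce the second assertion from equivariant $\mathcal Z$-absorption. The paper cites Theorem~\ref{Theorem:absorption+} rather than Theorem~\ref{Theorem:absorption} for the second step, but these coincide here; you also make explicit the (easy but implicit in the paper) fact that $\dim_{\mathrm{Rok}}^{\mathrm{c}}(\alpha\otimes\iota_U)\le\dim_{\mathrm{Rok}}^{\mathrm{c}}(\alpha)$, which is needed to invoke Corollary~\ref{cor:DimReductionUHF}.
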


\begin{proof}
We apply Corollary~\ref{cor:RokDimIneq} with $U=M_{|G|^{\infty }}$, so that $%
\mathrm{dim}_{\mathrm{Rok}}(\alpha \otimes \mathrm{id}_{U})=0$ by Corollary~%
\ref{cor:DimReductionUHF}. The second assertion follows from the first one
and Theorem \ref{Theorem:absorption+}.
\end{proof}

The next result is a dynamical version of the main result of~\cite%
{tikuisis_decomposition_2014}, which states that dr$(C(X)\otimes\mathcal{Z}%
)\leq 2$.

\begin{corollary}
\label{cor:estimateCommutative} Let $G$ be a finite group, let $X$ be a
compact Hausdorff space, and let $\alpha\colon G\to\mathrm{Aut}(C(X))$ be
induced by a free action of $G$ on $X$. Then 
\begin{equation*}
\mathrm{dim}_{\mathrm{Rok}}(\alpha\otimes\mathrm{id}_{\mathcal{Z}})\leq 1.
\end{equation*}
\end{corollary}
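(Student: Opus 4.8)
The plan is to deduce this from Corollary~\ref{cor:estimate}. Since $C(X)$ is commutative, any two elements commute, so a system of Rokhlin towers for $(C(X),\alpha)$ automatically consists of commuting towers; consequently $\dim_{\mathrm{Rok}}(\alpha)=\dim_{\mathrm{Rok}}^{\mathrm{c}}(\alpha)$. Thus, by the first assertion of Corollary~\ref{cor:estimate} applied with $A=C(X)$, it suffices to prove that $\dim_{\mathrm{Rok}}^{\mathrm{c}}(\alpha)<\infty$; that is, that a free action of a finite group on a compact Hausdorff space induces an action on $C(X)$ of finite Rokhlin dimension.

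First I would record the topological reformulation of Rokhlin dimension in the commutative setting. Write $G^{\ast(d+1)}$ for the $(d+1)$-fold join of the finite set $G$, equipped with the diagonal left-translation action; this is the $d$-th stage $E_dG$ of Milnor's model for $EG$, a $d$-dimensional simplicial complex on which $G$ acts freely. Unwinding the definition of commutant $d$-containment of $(C(G),\mathtt{Lt})$ in $(C(X),\alpha)$ via Lemma~\ref{Lemma:rokhlin-and-oz} and Example~\ref{Example:dim-Rok}, one sees that $\dim_{\mathrm{Rok}}(\alpha)\le d$ as soon as there is a $G$-equivariant continuous map $F\colon X\to G^{\ast(d+1)}$. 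Indeed, writing $F(x)=[(t_0(x),s_0(x)),\dots,(t_d(x),s_d(x))]$ in join coordinates and setting $f^{(l)}_g(x)=t_l(x)$ when $s_l(x)=g$ and $f^{(l)}_g(x)=0$ otherwise, one obtains positive contractions in $C(X)$ with $\sum_{l,g}f^{(l)}_g=1$, with $f^{(l)}_gf^{(l)}_h=0$ for $g\neq h$, and with $\alpha_h(f^{(l)}_g)=f^{(l)}_{hg}$. These are exactly Rokhlin towers, and they automatically commute; feeding them into $F_{\mathcal{F}}^{G}(C(X))$ through Proposition~\ref{Proposition:F} produces the order zero maps $\eta_0,\dots,\eta_d\colon C(G)\to F_{\mathcal{F}}^{G}(C(X))$ with unital sum demanded by Example~\ref{Example:dim-Rok}.

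It then remains to produce such an equivariant map for some finite $d$. Since $G$ is finite and acts freely on the Hausdorff space $X$, each $x$ can be separated from its finitely many translates $gx$ ($g\neq e$), yielding an open $U_x\ni x$ with $gU_x\cap U_x=\emptyset$ for $g\neq e$; hence $X\to X/G$ is a covering map and, as $X/G$ is compact Hausdorff and therefore paracompact, a numerable principal $G$-bundle. Let $c\colon X/G\to BG$ be a classifying map, so that $X$ is $G$-equivariantly isomorphic to the pullback $c^{\ast}EG$. Here $BG=EG/G$ is a CW complex exhausted by the finite-dimensional subcomplexes $B_dG=E_dG/G$. As $X/G$ is compact, its image $c(X/G)$ is a compact subset of the CW complex $BG$, hence is contained in a finite subcomplex, hence in some $B_dG$; thus $c$ factors as $X/G\to B_dG\hookrightarrow BG$. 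Pulling back $E_dG\to B_dG$ along this factorization recovers $X\to X/G$ together with a $G$-equivariant map $X\to E_dG=G^{\ast(d+1)}$. By the previous paragraph $\dim_{\mathrm{Rok}}^{\mathrm{c}}(\alpha)=\dim_{\mathrm{Rok}}(\alpha)\le d<\infty$, and Corollary~\ref{cor:estimate} then yields $\dim_{\mathrm{Rok}}(\alpha\otimes\mathrm{id}_{\mathcal{Z}})\le 1$.

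The main obstacle is precisely the finiteness of the Rokhlin dimension when $X$ is allowed to have infinite covering dimension (matching Tikuisis' theorem, where $X$ is arbitrary): a direct partition-of-unity construction would only bound $\dim_{\mathrm{Rok}}(\alpha)$ by the covering dimension of $X/G$, which need not be finite. The classifying-space argument circumvents this, since the only finiteness used is that the \emph{image} of the compact space $X/G$ in the CW complex $BG$ meets finitely many cells. The point requiring care is to verify that, for the finite discrete group $G$, Milnor's $BG$ genuinely is a CW complex with the property that its compact subsets lie in finite subcomplexes, and that the classifying map into it is continuous for numerable bundles over the paracompact base $X/G$; alternatively, one may simply invoke the known fact that free actions of finite groups on compact Hausdorff spaces have finite Rokhlin dimension with commuting towers.
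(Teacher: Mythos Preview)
Your proof is correct and follows the same route as the paper: reduce to Corollary~\ref{cor:estimate} by verifying that $\dim_{\mathrm{Rok}}^{\mathrm{c}}(\alpha)<\infty$. The only difference is that the paper simply cites \cite[Theorem~4.2]{gardella_rokhlin_2014} for this finiteness, whereas you unpack the classifying-space argument (equivariant map $X\to G^{\ast(d+1)}$ via compactness of $c(X/G)$ in the CW complex $BG$) that underlies such results; your version is thus more self-contained but otherwise identical in structure.
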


\begin{proof}
This is an immediate consequence of Corollary~\ref{cor:estimate}, since $%
\alpha $ has finite Rokhlin dimension (with commuting towers) by Theorem~4.2
in~\cite{gardella_rokhlin_2014}.
\end{proof}

\appendix

\section*{Appendix}

\renewcommand{\thesection}{A} \setcounter{theorem}{0}

Here we recall some basic notions from first order logic for metric
structures. We will consider here and in the following a \emph{multi-sorted
language $\mathcal{L}$}. This is endowed with a collection of \emph{sorts}.
A collection of \emph{domains}, as well as a collection of \emph{%
pseudometric symbols}, are associated with each given sort. We refer the
reader to \cite[Appendix]{gardella_model_2017} for details concerning the
semantic and syntax in this setting, which is slightly more general than the
usual setting of logic for metric structures as considered in \cite%
{ben_yaacov_model_2008} or \cite{farah_model_2016, farah_model_2014}. In
particular, the notions of (positive/positive primitive) $\mathcal{L}$%
-formula and $\mathcal{L}$-type, (positive quantifier-free) $\kappa $%
-saturated structure, $\mathcal{L}$-definable set, reduced product, and
ultraproduct, can be found in \cite[Appendix]{gardella_model_2017}.

An $\mathcal{L}$-\emph{embedding} $\theta :M\rightarrow N$ between $\mathcal{%
L}$-structures is a collection of functions $\theta _{\mathcal{S}}\colon 
\mathcal{S}^{M}\rightarrow \mathcal{S}^{N}$ such that $\theta _{\mathcal{S}%
}(D^{M})\subseteq D^{N}$ for every domain $D$ associated with $\mathcal{S}$,
and $\varphi (\theta (\overline{a}))=\varphi (\overline{a})$ for any
quantifier-free\ $\mathcal{L}$-formula $\theta (\overline{x})$ and tuple $%
\overline{a}$ in $M$. Similarly, an $\mathcal{L}$-\emph{morphism }$\theta $
between $\mathcal{L}$-structures is a a collection of functions $\theta _{%
\mathcal{S}}\colon \mathcal{S}^{M}\rightarrow \mathcal{S}^{N}$ such that $%
\theta _{\mathcal{S}}(D^{M})\subseteq D^{N}$ for every domain associated
with $\mathcal{S}$, such that $\varphi (\theta (\overline{a}))\leq \varphi (%
\overline{a})$ for any atomic\ $\mathcal{L}$-formula $\theta (\overline{x})$
and any tuple $\overline{a}$ in $M$. If $\mathcal{F}$ is a filter over a set 
$I$ and $\left( M_{i}\right) _{i\in I}$ is an $I$-sequence of $\mathcal{L}$%
-structures, we let $\prod_{\mathcal{F}}M_{i}$ be the corresponding reduced
product. In the case when $\left( M_{i}\right) _{i\in I}$ is constantly
equal to a fixed $\mathcal{L}$-structure $M$, one obtaines the reduced $%
\mathcal{L}$-power $\prod_{\mathcal{F}}M$ of $M$ with respect to $\mathcal{F}
$.

\subsection{Existential theories\label{Subsection:existential}}

Suppose that $M$ is an $\mathcal{L}$-structure. The \emph{existential $%
\mathcal{L}$-theory }$\mathrm{Th}_{\exists }^{\mathcal{L}}(M)$ of $M$ is the
function $\varphi \mapsto \varphi ^{M}$ that assigns to an existential $%
\mathcal{L}$-sentence $\varphi $ its value $\varphi ^{M}$ in $M$. We say
that $M$ is \emph{weakly $\mathcal{L}$-contained }in $N$ if $\mathrm{Th}%
_{\exists }^{\mathcal{L}}(M)\leq \mathrm{Th}_{\exists }^{\mathcal{L}}(N)$,
and \emph{weakly $\mathcal{L}$-equivalent }to $N$ if $M$ and $N$ have the
same existential $\mathcal{L}$-theory. We will identify the existential $%
\mathcal{L}$-theory of an $\mathcal{L}$-structure with its weak $\mathcal{L}$%
-equivalence class. It follows from saturation of ultrapowers and \L os'
theorem that $M$ is weakly $\mathcal{L}$-contained in $N$ if and only if for
some (equivalently, any) countably incomplete ultrafilter $\mathcal{U}$,
every separable substructure of $M$ admits an $\mathcal{L}$-embedding into $%
\prod_{\mathcal{U}}N$. This is equivalent to the assertion that if a
quantifier-free\ $\mathcal{L}$-type is approximately realized in $M$, then
it is approximately realized in $N$.

A class $\mathfrak{C}$ of structures is said to be \emph{existentially $%
\mathcal{L}$-axiomatizable} if there is a collection $\left( \varphi
_{i}\right) $ of existential $\mathcal{L}$-sentences such that, for every $%
\mathcal{L}$-structure $M$, $M\in \mathfrak{C}$ if and only if $\varphi
_{i}^{M}\leq 0$ for every $i\in I$. More generally, we consider the
following notion, which has been introduced in \cite[Definition 5.7.1]%
{farah_model_2016}.

\begin{definition}
\label{Definition:uniform-family}A class $\mathfrak{C}$ of (separable)
structures is said to be \emph{definable by a uniform family of existential $%
\mathcal{L}$-formulas} if, for every $k\in \mathbb{N}$, there exist $%
n_{k}\in \mathbb{N}$ and an uniformly equicontinuous collections $\mathcal{F}%
_{k}(x_{1},\ldots ,x_{n_{k}})$ of existential $\mathcal{L}$-formulas, such
that a (separable) $\mathcal{L}$-structure $M$ belongs to $\mathfrak{C}$ if
and only if for every $k\in \mathbb{N}$ and every $\overline{a}\in M^{n_{k}}$
there exists $\varphi \in \mathcal{F}_{k}$, such that $M\models \varphi (%
\overline{a})\leq 1/k$.
\end{definition}

Observe that if $\mathfrak{C}$ is a class of (separable) structures
definable by a uniform family of existential $\mathcal{L}$-formulas, then $%
\mathfrak{C}$ is closed under (countable) direct limits. We say that a
property is definable by a\emph{\ }uniform family of existential $\mathcal{L}
$-formulas if the class of $\mathcal{L}$-structures satisfying that property
is.

The notions of existential positive $\mathcal{L}$-theory, positive weak $%
\mathcal{L}$-containment, positive weak $\mathcal{L}$-equivalence,
positively existentially $\mathcal{L}$-axiomatizable class, and class
definable by a uniform family of existential positive primitive $\mathcal{L}$%
-formulas are defined as above, by only considering existential positive $%
\mathcal{L}$-formulas. It follows from \L os' Theorem 
and Proposition \ref{Proposition:good} below that 
$M$ is positively weakly $\mathcal{L}$-contained in $N$ if and only if for
some (equivalently, any) countably incomplete filter $\mathcal{F}$, every
separable substructure of $M $ admits an $\mathcal{L}$-morphism to $\prod_{%
\mathcal{F}}N$.

\subsection{Existential theories of embeddings\label%
{Subsection:existential-embedding}}

Let $A,M$ be $\mathcal{L}$-structures and $\theta \colon
A\rightarrow M$ an $\mathcal{L}$-embedding. We can regard $(M,\theta
_{M}) $ as a structure in the language $\mathcal{L}(A)$ obtained by adding a
constant symbol $c_{a}$ for any element $a\in A$. The interpretation of $%
c_{a}$ in $(M,\theta )$ is the image $\theta (a)$ of $a$ under $\theta $.
One can then define the notions of quantifier-free\ $\mathcal{L}(A)$-formula
and quantifier-free\ $\mathcal{L}(A)$-type. The same definition as in
Subsubsection \ref{Subsection:existential} gives the notion of weak $%
\mathcal{L}$-containment, weak $\mathcal{L}$-equivalence, and existential $%
\mathcal{L}$-theory for embeddings $\theta _{M}\colon A\rightarrow M$ and $%
\theta _{N}\colon A\rightarrow N$. As in the case of $\mathcal{L}$%
-structures, one can say that $\theta _{M}$ is weakly $\mathcal{L}$%
-contained in $\theta _{N}$ if and only if for any separable substructures $%
A_{0}\subset A$ and $M_{0}\subset M$ such that $\theta _{M}(A_{0})\subset
M_{0}$, and for some (equivalently, any) countably incomplete ultrafilter $%
\mathcal{U}$, there exists an $\mathcal{L}$-embedding $\eta \colon
M_{0}\rightarrow \prod_{\mathcal{U}}N$ such that $\Delta _{N}\circ \theta
_{N}|_{A_{0}}=\eta \circ \theta _{M}|_{A_{0}}$.

\begin{definition}
\label{Definition:existential-embedding}An $\mathcal{L}$-embedding $\theta
_{M}\colon A\rightarrow M$ is said to be \emph{$\mathcal{L}$-existential }if
for any quantifier-free\ $\mathcal{L}$-formula $\varphi (\overline{x},%
\overline{y})$ and any tuple $\overline{a}\in A$, the value of $%
\inf\nolimits_{\overline{y}}\varphi (\overline{a},\overline{y})$ in $A$ is
the same as the value of $\inf\nolimits_{\overline{y}}\varphi (\theta _{M}(%
\overline{a}),\overline{y})$ in $M$.
\end{definition}

It is easy to see that $\theta _{M}\colon A\rightarrow M$ is $\mathcal{L}$%
-existential if and only if $\theta _{M}$ is weakly $\mathcal{L}$-contained
in the identity embedding $\mathrm{id}_{A}\colon A\rightarrow A$.

Similarly, one can define the notion of positively $\mathcal{L}$-existential 
$\mathcal{L}$-embedding $\theta _{M}\colon A\rightarrow M$, by only
considering existential positive $\mathcal{L}$-formulas. The following fact
follows easily from the definitions.

\begin{proposition}
\label{Proposition:definable-class}Suppose that $\mathfrak{C}$ is a class of
structures that is definable by a uniform family of existential positive $%
\mathcal{L}$-formulas.\ If $\theta _{M}\colon A\rightarrow M$ is a
positively $\mathcal{L}$-existential $\mathcal{L}$-embedding and $M\in 
\mathfrak{C}$, then $A\in \mathfrak{C}$.
\end{proposition}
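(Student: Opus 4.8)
The plan is to argue directly from the two defining conditions, without invoking the semantic characterization via reduced products. First I would unwind the hypotheses into their concrete criteria. By the positive version of Definition~\ref{Definition:uniform-family}, for each $k\in\mathbb{N}$ there are an integer $n_k$ and a uniformly equicontinuous family $\mathcal{F}_k(x_1,\ldots,x_{n_k})$ of existential positive $\mathcal{L}$-formulas such that a structure $N$ lies in $\mathfrak{C}$ if and only if for every $\overline{c}\in N^{n_k}$ there is some $\varphi\in\mathcal{F}_k$ with $\varphi^N(\overline{c})\leq 1/k$. On the other side, being positively $\mathcal{L}$-existential (the positive form of Definition~\ref{Definition:existential-embedding}) means that $\theta_M$ preserves the values of existential positive formulas, that is, $\varphi^A(\overline{a})=\varphi^M(\theta_M(\overline{a}))$ for every existential positive $\mathcal{L}$-formula $\varphi$ and every tuple $\overline{a}$ in $A$. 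The only direction I will actually use is $\varphi^A(\overline{a})\leq\varphi^M(\theta_M(\overline{a}))$, which is precisely the nontrivial content of positive existentiality, the reverse inequality being automatic for any $\mathcal{L}$-embedding since witnesses in $A$ map to witnesses in $M$.

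The verification then runs as follows. Fix $k\in\mathbb{N}$ and an arbitrary tuple $\overline{a}\in A^{n_k}$; the goal is to produce $\varphi\in\mathcal{F}_k$ with $\varphi^A(\overline{a})\leq 1/k$. Since $\theta_M$ is an $\mathcal{L}$-embedding, $\theta_M(\overline{a})$ is a tuple in $M^{n_k}$, and because $M\in\mathfrak{C}$ the membership criterion supplies some $\varphi\in\mathcal{F}_k$ with $\varphi^M(\theta_M(\overline{a}))\leq 1/k$. Applying positive existentiality to this particular $\varphi$ gives $\varphi^A(\overline{a})\leq\varphi^M(\theta_M(\overline{a}))\leq 1/k$. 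As $k$ and $\overline{a}$ were arbitrary, the membership criterion for $\mathfrak{C}$ is met by $A$, whence $A\in\mathfrak{C}$.

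The one point requiring a moment's care, and the closest thing here to an obstacle, is matching the shape of the formulas in $\mathcal{F}_k$ with the formulas literally covered by the definition of positively $\mathcal{L}$-existential: Definition~\ref{Definition:existential-embedding} is phrased for formulas $\inf_{\overline{y}}\psi(\overline{x},\overline{y})$ with $\psi$ quantifier-free. I would therefore either read ``existential positive $\mathcal{L}$-formula'' as already meaning such an infimum of a positive quantifier-free formula, or else note that the preservation inequality $\varphi^A(\overline{a})\leq\varphi^M(\theta_M(\overline{a}))$ extends to arbitrary existential positive formulas by a routine induction on formula complexity, using that nested $\inf$ quantifiers may be amalgamated and that the admissible positive connectives are monotone and continuous. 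With this understood no genuine difficulty remains, consistent with the remark preceding the statement that it \emph{follows easily from the definitions}.
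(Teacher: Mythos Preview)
Your proof is correct and is precisely the direct-from-definitions argument the paper has in mind; the paper itself does not write out a proof but simply remarks that the proposition ``follows easily from the definitions.'' Your final paragraph about the shape of existential positive formulas is a reasonable caution, but in this setting existential positive formulas are by definition of the form $\inf_{\overline{y}}\psi$ with $\psi$ positive quantifier-free, so no induction is needed.
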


\begin{proposition}
\label{Proposition:limit} Let $\Lambda $ be a directed set. The following
properties follow easily from the definition.

\begin{enumerate}
\item The composition of positive $\mathcal{L}$-existential $\mathcal{L}$%
-embeddings is a positively $\mathcal{L}$-existential $\mathcal{L}$%
-embedding.

\item Let $(\{M_{\lambda }\}_{\lambda \in \Lambda },\{\theta _{\lambda ,\mu
}\}_{\lambda ,\mu \in \Lambda ,\lambda <\mu })$ be a direct system of $%
\mathcal{L}$-structures with positively $\mathcal{L}$-existential $\mathcal{L%
}$-embeddings $\theta _{\lambda \mu }\colon M_{\lambda }\rightarrow M_{\mu }$
for $\lambda <\mu $. If $M$ is the corresponding direct limit, then the
canonical $\mathcal{L}$-embedding of $M_{\lambda }$ into $M$, for $\lambda
\in \Lambda $, is positively $\mathcal{L}$-existential.

\item For $j=0,1$, let $(\{M_{\lambda }^{(j)}\}_{\lambda \in \Lambda
},\{\theta _{\lambda \mu }^{(j)}\}_{\lambda ,\mu \in \Lambda ,\lambda <\mu
}) $ be a direct system of $\mathcal{L}$-structures. Let $\{\eta _{\lambda
}\colon M_{\lambda }^{(0)}\rightarrow M_{\lambda }^{(1)}\}_{\lambda \in
\Lambda }$ be a family of intertwining positively $\mathcal{L}$-existential $%
\mathcal{L}$-embeddings. Then 
\begin{equation*}
\varinjlim \eta _{\lambda }\colon \varinjlim_{\lambda }M_{\lambda
}^{(0)}\rightarrow \varinjlim_{\lambda }M_{\lambda }^{(1)}
\end{equation*}%
is a positively $\mathcal{L}$-existential $\mathcal{L}$-embedding.
\end{enumerate}
\end{proposition}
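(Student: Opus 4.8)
The plan is to reduce all three items to one elementary observation plus a density argument. First I would record the preliminary fact that for \emph{any} $\mathcal{L}$-embedding $\theta\colon A\to M$, any positive quantifier-free formula $\varphi(\overline{x},\overline{y})$, and any tuple $\overline{a}$ in $A$, one automatically has
\[
\inf\nolimits_{\overline{y}}\varphi^{M}(\theta(\overline{a}),\overline{y})\leq \inf\nolimits_{\overline{y}}\varphi^{A}(\overline{a},\overline{y}),
\]
since $\theta$ preserves quantifier-free formulas while the infimum on the left ranges over the larger structure. Thus $\theta$ is positively $\mathcal{L}$-existential exactly when the reverse inequality holds for all such $\varphi$ and $\overline{a}$. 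I would also use that $\inf_{\overline{y}}\varphi(\cdot,\overline{y})$ depends uniformly continuously on the displayed free tuple (formulas are uniformly continuous on domains of quantification and $\theta$ is contractive), so the defining equality need only be checked on a dense set of tuples. Item (1) is then immediate: for positively $\mathcal{L}$-existential $\theta\colon A\to B$ and $\psi\colon B\to C$, applying the defining equality for $\theta$ at $\overline{a}$ and for $\psi$ at the tuple $\theta(\overline{a})$ gives
\[
\inf\nolimits_{\overline{y}}\varphi^{A}(\overline{a},\overline{y})=\inf\nolimits_{\overline{y}}\varphi^{B}(\theta(\overline{a}),\overline{y})=\inf\nolimits_{\overline{y}}\varphi^{C}(\psi(\theta(\overline{a})),\overline{y}),
\]
which is precisely positive $\mathcal{L}$-existentiality of $\psi\circ\theta$.

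For item (2), write $\theta_{\lambda,\infty}\colon M_{\lambda}\to M$ for the canonical map and recall that $\bigcup_{\mu}\theta_{\mu,\infty}(M_{\mu})$ is dense in $M$. Fixing $\varphi$ and $\overline{a}\in M_{\lambda}$, I must only establish the reverse inequality. Given $\varepsilon>0$, choose a witness $\overline{c}\in M$ with $\varphi^{M}(\theta_{\lambda,\infty}(\overline{a}),\overline{c})$ within $\varepsilon$ of the infimum; by density and directedness of $\Lambda$ there exist $\mu\geq\lambda$ and $\overline{c}'\in M_{\mu}$ with $\theta_{\mu,\infty}(\overline{c}')$ close to $\overline{c}$. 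Using $\theta_{\lambda,\infty}=\theta_{\mu,\infty}\circ\theta_{\lambda,\mu}$ together with the fact that $\theta_{\mu,\infty}$ preserves quantifier-free formulas, the value $\varphi^{M}(\theta_{\lambda,\infty}(\overline{a}),\theta_{\mu,\infty}(\overline{c}'))$ equals $\varphi^{M_{\mu}}(\theta_{\lambda,\mu}(\overline{a}),\overline{c}')$, which is at least $\inf_{\overline{y}}\varphi^{M_{\mu}}(\theta_{\lambda,\mu}(\overline{a}),\overline{y})=\inf_{\overline{y}}\varphi^{M_{\lambda}}(\overline{a},\overline{y})$, the last equality holding because $\theta_{\lambda,\mu}$ is positively $\mathcal{L}$-existential. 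Tightening the approximation (uniform continuity of $\varphi$) and letting $\varepsilon\to0$ yields $\inf_{\overline{y}}\varphi^{M_{\lambda}}(\overline{a},\overline{y})\leq\inf_{\overline{y}}\varphi^{M}(\theta_{\lambda,\infty}(\overline{a}),\overline{y})$, hence equality.

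Item (3) combines the first two. Writing $\iota^{(j)}_{\lambda}\colon M^{(j)}_{\lambda}\to M^{(j)}$ for the canonical maps and $\eta=\varinjlim\eta_{\lambda}$, the intertwining hypothesis gives $\eta\circ\iota^{(0)}_{\lambda}=\iota^{(1)}_{\lambda}\circ\eta_{\lambda}$. By item (2) each $\iota^{(1)}_{\lambda}$ is positively $\mathcal{L}$-existential, so by item (1) the composite $\eta\circ\iota^{(0)}_{\lambda}$ is positively $\mathcal{L}$-existential for every $\lambda$. Since $\iota^{(0)}_{\lambda}$ is itself positively $\mathcal{L}$-existential and $\bigcup_{\lambda}\iota^{(0)}_{\lambda}(M^{(0)}_{\lambda})$ is dense in $M^{(0)}$, I would verify the defining equality for $\eta$ on this dense set: for $\overline{a}'\in M^{(0)}_{\lambda}$,
\[
\inf\nolimits_{\overline{y}}\varphi^{M^{(1)}}(\eta(\iota^{(0)}_{\lambda}(\overline{a}')),\overline{y})=\inf\nolimits_{\overline{y}}\varphi^{M^{(0)}_{\lambda}}(\overline{a}',\overline{y})=\inf\nolimits_{\overline{y}}\varphi^{M^{(0)}}(\iota^{(0)}_{\lambda}(\overline{a}'),\overline{y}),
\]
the two equalities coming from positive $\mathcal{L}$-existentiality of $\eta\circ\iota^{(0)}_{\lambda}$ and of $\iota^{(0)}_{\lambda}$, and the preliminary uniform-continuity remark extends the equality to all of $M^{(0)}$. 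The only genuine work is the density-plus-continuity bookkeeping in item (2): one must ensure the infimum witness in the direct limit is approximable from a single finite stage $M_{\mu}$, which is exactly where directedness of $\Lambda$ and uniform continuity of $\varphi$ on the relevant domains of quantification are used; everything else is a formal manipulation of the defining equality.
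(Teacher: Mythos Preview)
Your arguments for items (1) and (2) are correct and are exactly the kind of direct verification from Definition~\ref{Definition:existential-embedding} that the paper has in mind when it says these ``follow easily from the definition.''

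There is, however, a genuine gap in your proof of item (3). You invoke item (2) to conclude that each $\iota^{(1)}_{\lambda}$ (and later each $\iota^{(0)}_{\lambda}$) is positively $\mathcal{L}$-existential, but item (2) requires the \emph{connecting maps} $\theta^{(j)}_{\lambda,\mu}$ themselves to be positively $\mathcal{L}$-existential, and item (3) makes no such assumption---only the intertwiners $\eta_{\lambda}$ are assumed positively $\mathcal{L}$-existential. So the reduction of (3) to (1)+(2) does not go through as written.

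The fix is to run your density argument from (2) directly in the setting of (3), rather than factoring through (2). Concretely: given $\overline{a}=\iota^{(0)}_{\lambda}(\overline{a}')$ and a near-optimal witness $\overline{c}\in M^{(1)}$, approximate $\overline{c}$ by $\iota^{(1)}_{\mu}(\overline{c}')$ with $\mu\geq\lambda$, and use the intertwining relation $\theta^{(1)}_{\lambda,\mu}\circ\eta_{\lambda}=\eta_{\mu}\circ\theta^{(0)}_{\lambda,\mu}$ together with the fact that $\iota^{(1)}_{\mu}$ preserves quantifier-free formulas to obtain
\[
\varphi^{M^{(1)}}\bigl(\eta(\overline{a}),\iota^{(1)}_{\mu}(\overline{c}')\bigr)
=\varphi^{M^{(1)}_{\mu}}\bigl(\eta_{\mu}(\theta^{(0)}_{\lambda,\mu}(\overline{a}')),\overline{c}'\bigr)
\geq \inf\nolimits_{\overline{y}}\varphi^{M^{(0)}_{\mu}}\bigl(\theta^{(0)}_{\lambda,\mu}(\overline{a}'),\overline{y}\bigr),
\]
the last step using only that $\eta_{\mu}$ is positively $\mathcal{L}$-existential. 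Then the automatic inequality for the embedding $\iota^{(0)}_{\mu}$ (not positive $\mathcal{L}$-existentiality) gives the lower bound $\inf_{\overline{y}}\varphi^{M^{(0)}}(\overline{a},\overline{y})$. All the ingredients you already identified---density, directedness, uniform continuity---are exactly what is needed; only the packaging via item (2) is illegitimate.
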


The analogue of Remark \ref{Proposition:limit} holds for $\mathcal{L}$%
-existential $\mathcal{L}$-embeddings as well.

\subsection{Saturation of ultrapowers}

The notion of $\kappa $-good ultrafilter has been introduced in the case of
model theory for discrete structures in the classical monograph \cite[%
Section 6.1]{chang_model_1977}. The notion of $\kappa $-good filter can be
defined exactly as for ultrafilters. Theorem 6.1.4 of \cite{chang_model_1977}
shows that countably incomplete $\kappa $-good ultrafilters exist for any
cardinal $\kappa $. Every countably incomplete ultrafilter is $\aleph _{1}$%
-good; see \cite[Exercise 6.1.2]{chang_model_1977}. In particular, every
nonprincipal ultrafilter over a countable set is $\aleph _{1}$-good. The
same proof as \cite[Theorem 6.1.8]{chang_model_1977} shows the following.

\begin{proposition}
\label{Proposition:good}Suppose that $\kappa $ is a cardinal larger than the
density character of $\mathcal{L}$. Suppose that $M$ is an $\mathcal{L}$%
-structure and $\mathcal{U}$ is a countably incomplete $\kappa $-good
ultrafilter. Then $\prod_{\mathcal{U}}M$ is $\mathcal{L}$-$\kappa $%
-saturated.

If $\mathcal{F}$ is a countably incomplete $\kappa $-good filter, then $%
\prod_{\mathcal{F}}M$ is positively quantifier-free $\mathcal{L}$-$\kappa $%
-saturated.
\end{proposition}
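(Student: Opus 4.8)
The plan is to transcribe the classical Chang--Keisler argument (Theorem~6.1.8 of \cite{chang_model_1977}) into the continuous setting, the one genuinely new point being the handling of metric approximations. First I would fix the data coming from countable incompleteness: a decreasing sequence $I=I_{0}\supseteq I_{1}\supseteq\cdots$ of sets in $\mathcal U$ with $\bigcap_{n}I_{n}=\varnothing$, together with the level function $n(i)=\max\{n:i\in I_{n}\}$, which is finite for every $i$ and satisfies $\mathcal U\text{-}\lim_{i}n(i)=\infty$. Let $p(\overline x)=\{\varphi_{\alpha}(\overline x,\overline b_{\alpha}):\alpha<\lambda\}$ be a consistent $\mathcal L$-type over parameters in $\prod_{\mathcal U}M$ with $\lambda<\kappa$; after using the continuous connectives we may assume every condition has the form $\varphi_{\alpha}=0$.

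The decisive preliminary step is a discretization converting approximate satisfiability into \emph{exact} satisfiability of looser conditions. Replace $p$ by $q=\{\max(\varphi_{\alpha}-1/k,\,0)=0:\alpha<\lambda,\ k\geq 1\}$, indexed by $\lambda\times\mathbb N$ (still of cardinality $<\kappa$); a realization of $q$ is precisely a realization of $p$. The gain is that, by consistency of $p$ with a factor-of-two margin in the precision, every finite subset of $q$ is satisfied with value $0$ at $\mathcal U$-almost every index: for finite $s\subseteq\lambda\times\mathbb N$ with first coordinates $s_{0}$ and maximal second coordinate $k^{\ast}$, the set of $i$ at which some $x$ achieves $\max_{\alpha\in s_{0}}\varphi_{\alpha}(x,\overline b_{\alpha}(i))<1/(2k^{\ast})$ lies in $\mathcal U$, and there $\max_{(\alpha,k)\in s}\max(\varphi_{\alpha}-1/k,0)$ vanishes.

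With this in hand I would run the good-ultrafilter machinery verbatim. Define $f$ on the finite subsets of $\lambda\times\mathbb N$ by letting $f(s)$ be the intersection of $I_{|s|}$ with the (now $\mathcal U$-large) set of indices $i$ admitting a witness $x$ in the relevant domain with $\max_{(\alpha,k)\in s}\max(\varphi_{\alpha}(x,\overline b(i))-1/k,0)=0$; this $f$ is monotone and $\mathcal U$-valued. By $\kappa$-goodness there is a multiplicative refinement $g\leq f$. Setting $S_{i}=\{(\alpha,k):i\in g(\{(\alpha,k)\})\}$, every finite $s\subseteq S_{i}$ gives $i\in g(s)\subseteq I_{|s|}$, whence $|s|\leq n(i)$; so $|S_{i}|\leq n(i)<\infty$ and each $S_{i}$ is finite. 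Multiplicativity then yields $i\in g(S_{i})\subseteq f(S_{i})$, so I may choose $a(i)$ in the domain with $\varphi_{\alpha}(a(i),\overline b(i))\leq 1/k$ for all $(\alpha,k)\in S_{i}$ (arbitrary $a(i)$ if $S_{i}=\varnothing$), and put $a=[a(i)]_{\mathcal U}$.

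Finally, to verify that $a$ realizes $p$, fix $\alpha_{0}$ and $k_{0}$: on $g(\{(\alpha_{0},k_{0})\})\in\mathcal U$ one has $(\alpha_{0},k_{0})\in S_{i}$, hence $\varphi_{\alpha_{0}}(a(i),\overline b(i))\leq 1/k_{0}$, so \L os' theorem gives $\varphi_{\alpha_{0}}(a,\overline b_{\alpha_{0}})\leq 1/k_{0}$; letting $k_{0}\to\infty$ yields $\varphi_{\alpha_{0}}(a,\overline b_{\alpha_{0}})=0$. For the second assertion, concerning a countably incomplete $\kappa$-good \emph{filter} $\mathcal F$ and positive quantifier-free types, the same construction applies word for word, with two adjustments: all formulas involved are positive and quantifier-free, and the single appeal to \L os' theorem is replaced by its one-directional analogue valid for reduced products over filters, namely that $\{i:\varphi^{M}(a(i),\cdot)\leq r\}\in\mathcal F$ forces $\varphi^{\prod_{\mathcal F}M}([a],\cdot)\leq r$ for quantifier-free $\varphi$; since every condition we impose is an upper bound, this one-sided estimate suffices. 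The main obstacle, and the only genuine departure from the discrete template, is exactly the passage through the discretized type $q$: without it the natural witnesses are accurate only to within $1/|S_{i}|$, a quantity not controlled on any $\mathcal U$-large set, whereas the discretization reduces everything to exact satisfaction and makes the classical combinatorics go through unchanged.
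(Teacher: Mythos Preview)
Your argument is correct and is precisely the approach the paper intends: the paper does not give a proof but simply states that ``the same proof as \cite[Theorem 6.1.8]{chang_model_1977}'' applies, and you have faithfully transcribed that argument with the appropriate continuous-logic adaptation (the discretization to the type $q$). The one point worth noting is that the paper offers no details at all, so your write-up is strictly more informative; in particular your explicit identification of the discretization step as the only real modification, and your remark that for filters only the one-sided \L os inequality for quantifier-free formulas is needed, match exactly what the paper is tacitly invoking.
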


Using Proposition \ref{Proposition:good} one can easily deduce the following
characterization of $\mathcal{L}$-existential $\mathcal{L}$-embeddings.

\begin{theorem}
\label{Theorem:existential-embedding} Let $A$ and $M$ be $\mathcal{L}$%
-structures, and let $\theta \colon A\rightarrow M$ be an $\mathcal{L}$%
-embedding. Let $\kappa $ be a cardinal greater than the density character
of $M$ and the density character of $\mathcal{L}$. The following assertions
are equivalent:

\begin{enumerate}
\item $\theta $ is an $\mathcal{L}$-existential $\mathcal{L}$-embedding;

\item there exist an $\mathcal{L}$-structure $N$ and an $\mathcal{L}$%
-embedding $\eta \colon M\rightarrow N$ such that $\eta \circ \theta \colon
A\rightarrow N$ is an $\mathcal{L}$-existential $\mathcal{L}$-embedding;

\item if $N$ is a quantifier-free\ $\mathcal{L}$-$\kappa $-saturated $%
\mathcal{L}$-structure, and $\theta _{N}\colon A\rightarrow N$ is an $%
\mathcal{L}$-embedding, then there exists an $\mathcal{L}$-embedding $\eta
\colon M\rightarrow N$ such that $\eta \circ \theta =\theta _{N}$;

\item for some (equivalently, any) countably incomplete ultrafilter $%
\mathcal{U}$, and for every separable $A_{0}\subset A$ and $M_{0}\subset M$
such that $\theta _{M}(A_{0})\subset M_{0}$, there exists an $\mathcal{L}$%
-embedding $\eta \colon M_{0}\rightarrow \prod_{\mathcal{U}}A$ such that $%
\eta \circ \theta _{M}|_{A_{0}}=\Delta _{A}|_{A_{0}}$.
\end{enumerate}
\end{theorem}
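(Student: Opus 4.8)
The plan is to prove the cycle $(2)\Rightarrow(1)\Rightarrow(3)\Rightarrow(2)$ and to dispose of $(4)$ separately, since $(4)$ is essentially a restatement of $(1)$. Indeed, taking $\theta_N=\mathrm{id}_A\colon A\to A$ in the characterization of weak $\mathcal{L}$-containment for embeddings recorded in Subsection~\ref{Subsection:existential-embedding}, condition $(4)$ says precisely that $\theta$ is weakly $\mathcal{L}$-contained in $\mathrm{id}_A$, which by the remark following Definition~\ref{Definition:existential-embedding} is equivalent to $\theta$ being $\mathcal{L}$-existential. Thus $(1)\Leftrightarrow(4)$ requires no further work.

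For $(2)\Rightarrow(1)$ I would argue by a sandwich estimate. Fix a quantifier-free formula $\varphi(\overline{x},\overline{y})$ and a tuple $\overline{a}$ in $A$. Since $\theta$ is an embedding and existential formulas can only decrease along embeddings, $(\inf_{\overline{y}}\varphi)^M(\theta(\overline{a}))\le(\inf_{\overline{y}}\varphi)^A(\overline{a})$; likewise $(\inf_{\overline{y}}\varphi)^N(\eta(\theta(\overline{a})))\le(\inf_{\overline{y}}\varphi)^M(\theta(\overline{a}))$. On the other hand, existentiality of $\eta\circ\theta$ gives $(\inf_{\overline{y}}\varphi)^N(\eta(\theta(\overline{a})))=(\inf_{\overline{y}}\varphi)^A(\overline{a})$. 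Chaining these forces equality throughout, so $(\inf_{\overline{y}}\varphi)^M(\theta(\overline{a}))=(\inf_{\overline{y}}\varphi)^A(\overline{a})$, which is exactly $(1)$.

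The implication $(3)\Rightarrow(2)$ would use an ultrapower as a concrete saturated witness. Choose a countably incomplete $\kappa$-good ultrafilter $\mathcal{U}$ (which exists by the results cited before Proposition~\ref{Proposition:good}); since $\kappa$ exceeds the density character of $\mathcal{L}$, Proposition~\ref{Proposition:good} guarantees that $N:=\prod_{\mathcal{U}}A$ is $\mathcal{L}$-$\kappa$-saturated, hence in particular quantifier-free $\mathcal{L}$-$\kappa$-saturated. The diagonal embedding $\Delta_A\colon A\to\prod_{\mathcal{U}}A$ is elementary by \L os' theorem, and in particular $\mathcal{L}$-existential. Applying $(3)$ with $\theta_N=\Delta_A$ produces an $\mathcal{L}$-embedding $\eta\colon M\to N$ with $\eta\circ\theta=\Delta_A$, and since $\Delta_A$ is $\mathcal{L}$-existential this is precisely $(2)$.

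The heart of the matter, and the step I expect to require the most care, is $(1)\Rightarrow(3)$. Given a quantifier-free $\mathcal{L}$-$\kappa$-saturated $N$ and an embedding $\theta_N\colon A\to N$, the goal is to realize in $N$ the quantifier-free type of $M$ over $\theta(A)$, with parameters interpreted via $\theta_N$; the realizing assignment is then the desired $\eta$. The key point is approximate finite satisfiability: a finite piece of this type has the form $\inf_{\overline{x}}\max_k|\psi_k(\overline{x},\theta_N(\overline{a}))-r_k|$, where the values $r_k$ are attained in $M$ by an appropriate tuple, so the corresponding existential quantity vanishes in $M$ at $\theta(\overline{a})$. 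By existentiality $(1)$ this quantity also vanishes in $A$ at $\overline{a}$, and since existential formulas only decrease along the embedding $\theta_N$ while the expression is nonnegative, it vanishes in $N$ at $\theta_N(\overline{a})$ as well. As $\theta(A)$ has fewer than $\kappa$ elements and the density character of $M$ is also less than $\kappa$, quantifier-free $\kappa$-saturation lets us realize the full type, assembling $\eta$ by transfinite recursion along a dense subset of $M$, realizing one coordinate at a time over the fewer-than-$\kappa$ previously chosen parameters. The bookkeeping ensuring that at every stage the relevant partial type remains approximately finitely satisfiable, together with the correct use of the cardinal bound $\kappa$, is the main obstacle.
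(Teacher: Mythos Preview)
Your proposal is correct and follows the standard model-theoretic route; the paper itself gives no proof beyond the remark that the theorem ``can easily be deduced'' from Proposition~\ref{Proposition:good}, so your sketch is already more detailed than what the paper provides. Your cycle $(2)\Rightarrow(1)\Rightarrow(3)\Rightarrow(2)$ together with the observation that $(4)$ is literally the semantic reformulation of weak $\mathcal{L}$-containment in $\mathrm{id}_A$ (hence equivalent to $(1)$ by the remark after Definition~\ref{Definition:existential-embedding}) is exactly the intended argument.

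One comment on the step you flag as the ``main obstacle'': the transfinite recursion for $(1)\Rightarrow(3)$ does require an inductive hypothesis slightly stronger than ``$(n_\beta)_{\beta<\alpha}$ has the same quantifier-free type as $(m_\beta)_{\beta<\alpha}$ over $A$''. The difficulty is that at stage $\alpha$ you need the one-variable type of $m_\alpha$ over $(n_\beta)_{\beta<\alpha}\cup\theta_N(A)$ to be approximately finitely satisfiable, and knowing only that $(n_\beta)$ and $(m_\beta)$ agree on quantifier-free formulas does not obviously transfer the relevant \emph{existential} statement $\inf_x\max_j|\psi_j(x,(m_\beta)_F,\theta(\bar a))-r_j|=0$ from $M$ to $N$. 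The clean fix is to carry along the stronger hypothesis that the \emph{residual} type---the full type $p$ with the already-chosen variables instantiated to $(n_\beta)_{\beta<\alpha}$---remains approximately finitely satisfiable; this holds at stage $0$ by your argument, and a finite fragment at stage $\alpha+1$ is witnessed by applying the stage-$\alpha$ hypothesis to a finite set of variables including $x_\alpha$ together with the finitely many future variables appearing in that fragment. Since the conditions needed on $n_\alpha$ to maintain this are (after unwinding) quantifier-free in $n_\alpha$ over $<\kappa$ parameters and are jointly approximately finitely satisfiable by the inductive hypothesis, quantifier-free $\kappa$-saturation suffices. This is the ``bookkeeping'' you allude to, and once stated this way it is routine.
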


A similar characterization can be given for positively $\mathcal{L}$%
-existential $\mathcal{L}$-embeddings.

\begin{theorem}
\label{Theorem:positive-existential-embedding} Let $A$ and $M$ be $\mathcal{L%
}$-structures, and let $\theta \colon A\rightarrow M$ be an $\mathcal{L}$%
-embedding. Let $\kappa $ be a cardinal larger than the density character of 
$M$ and the density character of $\mathcal{L}$. The following assertions are
equivalent:

\begin{enumerate}
\item $\theta $ is a positively $\mathcal{L}$-existential $\mathcal{L}$%
-embedding;

\item there exist an $\mathcal{L}$-structure $N$ and an $\mathcal{L}$%
-morphism $\eta \colon M\rightarrow N$ such that $\eta \circ \theta \colon
A\rightarrow N$ is an $\mathcal{L}$-existential $\mathcal{L}$-embedding;

\item if $N$ is a quantifier-free\ positively quantifier-free $\mathcal{L}$-$%
\kappa $-saturated $\mathcal{L}$-structure, and $\theta _{N}\colon
A\rightarrow N$ is an $\mathcal{L}$-embedding, then there exists an $%
\mathcal{L}$-morphism $\eta \colon M\rightarrow N$ such that $\eta \circ
\theta =\theta _{N}$;

\item for some (equivalently, any) countably incomplete ultrafilter $%
\mathcal{F}$, and for every separable $A_{0}\subset A$ and $M_{0}\subset M$
such that $\theta _{M}(A_{0})\subset M_{0}$, there exists an $\mathcal{L}$%
-morphism $\eta \colon M_{0}\rightarrow \prod_{\mathcal{F}}A$ such that $%
\eta \circ \theta _{M}|_{A_{0}}=\Delta _{A}|_{A_{0}}$.
\end{enumerate}
\end{theorem}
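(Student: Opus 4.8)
The plan is to prove the cycle $(1)\Rightarrow(3)\Rightarrow(2)\Rightarrow(1)$ and, separately, $(1)\Rightarrow(4)\Rightarrow(1)$, following the proof of Theorem~\ref{Theorem:existential-embedding} almost verbatim but systematically replacing $\mathcal{L}$-embeddings by $\mathcal{L}$-morphisms, arbitrary formulas by positive ones, and full $\kappa$-saturation of ultrapowers by the positive quantifier-free $\kappa$-saturation of reduced products supplied by Proposition~\ref{Proposition:good}. The single device that makes the positive version go through is the observation that, for positive quantifier-free formulas $\varphi_{1},\dots,\varphi_{k}$ and reals $r_{1},\dots,r_{k}$, the formula $\inf_{\overline{z}}\max_{j}\bigl(\varphi_{j}(\overline{x},\overline{z})\dot{-}\,r_{j}\bigr)$ is again positive existential, since the truncation $t\mapsto\max\{t-r,0\}$ is a monotone connective. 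Thus the assertion ``there are witnesses making every $\varphi_{j}$ at most $r_{j}+\varepsilon$'' is encoded by a single positive existential condition, to which the defining equality of positive existentiality applies.

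For $(1)\Rightarrow(3)$, let $N$ be positively quantifier-free $\mathcal{L}$-$\kappa$-saturated and $\theta_{N}\colon A\to N$ an $\mathcal{L}$-embedding. I would enumerate $M$ in $\mathrm{dens.char}(M)<\kappa$ many variables $\overline{z}$ and consider the positive quantifier-free type over $\theta_{N}(A)$ whose conditions are $\varphi(\theta_{N}(\overline{a}),\overline{z})\le\varphi^{M}(\theta(\overline{a}),\overline{m})$ for positive atomic $\varphi$ and tuples $\overline{a}$ from $A$, with $\overline{m}$ the corresponding elements of $M$. A finite fragment reduces, via the device above, to showing that $\inf_{\overline{z}}\max_{j}(\varphi_{j}\dot{-}r_{j})$ vanishes in $N$ at $\theta_{N}(\overline{a})$: this value is $0$ in $M$ at $\theta(\overline{a})$ by the choice of the $r_{j}$, hence $0$ in $A$ by positive existentiality~(1), and then $\le 0$ in $N$ because $\theta_{N}$ is an embedding (an embedding only lowers the value of an existential formula). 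Positive quantifier-free $\kappa$-saturation realizes the type, and the realizing tuple defines a morphism $\eta\colon M\to N$ with $\eta\circ\theta=\theta_{N}$, since morphisms are exactly the maps decreasing positive atomic formulas. For $(3)\Rightarrow(2)$ I would take a countably incomplete $\kappa$-good ultrafilter $\mathcal{U}$, set $N=\prod_{\mathcal{U}}A$ (which is $\mathcal{L}$-$\kappa$-saturated by Proposition~\ref{Proposition:good}, hence positively quantifier-free $\kappa$-saturated), and let $\theta_{N}=\Delta_{A}$; the diagonal embedding is elementary by \L os' theorem, hence $\mathcal{L}$-existential. Applying (3) yields a morphism $\eta\colon M\to N$ with $\eta\circ\theta=\Delta_{A}$, which is an $\mathcal{L}$-existential embedding, exactly~(2).

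For $(2)\Rightarrow(1)$, given a morphism $\eta\colon M\to N$ with $\eta\circ\theta$ an $\mathcal{L}$-existential embedding, I would fix a positive existential formula $\inf_{\overline{y}}\varphi$ and a tuple $\overline{a}$ in $A$. The inequality $\inf^{M}\le\inf^{A}$ is automatic since $\theta$ is an embedding. For the reverse, the morphism inequality $\varphi^{N}(\eta(\,\cdot\,))\le\varphi^{M}(\,\cdot\,)$ gives $\inf^{N}_{\overline{y}}\varphi(\eta\theta(\overline{a}),\overline{y})\le\inf^{M}_{\overline{y}}\varphi(\theta(\overline{a}),\overline{y})$, while existentiality of $\eta\circ\theta$ gives $\inf^{N}_{\overline{y}}\varphi(\eta\theta(\overline{a}),\overline{y})=\inf^{A}_{\overline{y}}\varphi(\overline{a},\overline{y})$; together these force equality. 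Finally, for $(1)\Rightarrow(4)$ I use that a countably incomplete ultrafilter is $\aleph_{1}$-good, so $\prod_{\mathcal{F}}A$ is positively quantifier-free $\aleph_{1}$-saturated, and run the same type-realization argument as in $(1)\Rightarrow(3)$ on each separable pair $(A_{0},M_{0})$, now incorporating the pinning conditions $d(\,\cdot\,,\Delta_{A}(a))\le 0$ to force $\eta\circ\theta|_{A_{0}}=\Delta_{A}|_{A_{0}}$. For $(4)\Rightarrow(1)$ I would choose $M_{0}$ separable containing near-optimal witnesses for $\inf^{M}_{\overline{y}}\varphi(\theta(\overline{a}),\overline{y})$, apply (4) to transport them into $\prod_{\mathcal{U}}A$, and read off $\inf^{A}\le\inf^{M}$ by \L os' theorem; this also yields the ``some/any'' interchange in~(4).

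The main obstacle I anticipate is the finite-satisfiability step in $(1)\Rightarrow(3)$ and $(1)\Rightarrow(4)$: one must package the ``morphism over $A$'' requirements as positive quantifier-free conditions and verify that they are approximately finitely satisfiable purely from positive existentiality, which is precisely where the monotone-truncation device and the distinction between positive quantifier-free saturation (available for filters) and full saturation (needing ultrafilters) are essential. Everything else---uniform continuity of the constructed $\eta$ from the equicontinuity moduli, the bookkeeping of domains of quantification, and the pinning of $\eta\circ\theta$---is routine and parallels \cite{farah_model_2016} together with the proof of Theorem~\ref{Theorem:existential-embedding}.
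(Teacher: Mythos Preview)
Your proposal is correct and matches the paper's intended approach: the paper gives no explicit proof of this theorem, merely remarking that it is the positive analogue of Theorem~\ref{Theorem:existential-embedding} and is deduced from Proposition~\ref{Proposition:good}. Your detailed sketch is precisely the standard elaboration one expects---the saturation/type-realization argument with positive quantifier-free types in place of arbitrary ones, morphisms in place of embeddings, and the monotone-truncation device to package finitely many conditions into a single positive existential formula. One small remark: the pinning conditions $d(z_{\theta(a)},\theta_N(a))\le 0$ that you correctly include in $(1)\Rightarrow(4)$ are equally needed in $(1)\Rightarrow(3)$ to force $\eta\circ\theta=\theta_N$ exactly (not just as a morphism inequality); you should state them there too, but their finite satisfiability follows by the same mechanism you already describe.
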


We isolate the following fact, which is an immediate consequence of the
semantic characterization of positive $\mathcal{L}$-existential $\mathcal{L}$%
-embedding. If $F$ is a functor between two categories, we denote by $%
F(\theta )$ the image of a morphism $\theta $ under $F$. We regard the class
of $\mathcal{L}$-structures as a category with $\mathcal{L}$-morphisms as
morphisms.

\begin{proposition}
\label{Proposition:functor}Suppose that $\mathcal{L}^{(0)}$ and $\mathcal{L}%
^{(1)}$ are languages.\ Let $F$ be a functor from the category of $\mathcal{L%
}^{(0)}$-structures to the category of $\mathcal{L}^{(1)}$-structures.
Assume that $F$ preserves direct limits and that for any separable $\mathcal{%
L}^{(0)}$-structure $M$ and nonprincipal ultrafilter $\mathcal{U}$ over $%
\mathbb{N}$, there exists an $\mathcal{L}^{(1)}$-morphism $\rho _{M}\colon
F(\prod_{\mathcal{U}}M)\rightarrow \prod_{\mathcal{U}}F(M)$ such that $\rho
_{M}\circ F(\Delta _{M})=\Delta _{F(M)}$. If $A$ and $M$ are $\mathcal{L}%
^{(0)}$-structures in $\mathfrak{C}$ and $\theta _{M}\colon A\rightarrow M$
is a positive $\mathcal{L}^{(0)}$-existential $\mathcal{L}^{(0)}$-embedding,
then $F(\theta _{M})$ is a positive $\mathcal{L}^{(1)}$-existential $%
\mathcal{L}^{(1)}$--embedding.
\end{proposition}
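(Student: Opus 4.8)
The plan is to verify, for the $\mathcal{L}^{(1)}$-morphism $F(\theta_M)\colon F(A)\to F(M)$, the semantic (ultrapower) characterization of positively $\mathcal{L}^{(1)}$-existential embeddings supplied by Theorem~\ref{Theorem:positive-existential-embedding}, feeding the positive existentiality of $\theta_M$ through the two structural hypotheses on $F$: functoriality together with the comparison morphisms $\rho_M$, and preservation of direct limits.

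First I would treat the case in which $A$ and $M$ are separable, where the argument is genuinely immediate and isolates all of the ideas. Fix a nonprincipal ultrafilter $\mathcal{U}$ over $\mathbb{N}$. Since $\theta_M$ is positively $\mathcal{L}^{(0)}$-existential, condition~(4) of Theorem~\ref{Theorem:positive-existential-embedding}, applied with $A_0=A$ and $M_0=M$, produces an $\mathcal{L}^{(0)}$-morphism $\eta\colon M\to\prod_{\mathcal{U}}A$ with $\eta\circ\theta_M=\Delta_A$. Applying $F$ and composing with the comparison morphism $\rho_A\colon F(\prod_{\mathcal{U}}A)\to\prod_{\mathcal{U}}F(A)$, which is available precisely because $A$ is separable, I set $\zeta:=\rho_A\circ F(\eta)\colon F(M)\to\prod_{\mathcal{U}}F(A)$. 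Functoriality together with the hypothesized identity $\rho_A\circ F(\Delta_A)=\Delta_{F(A)}$ then gives $\zeta\circ F(\theta_M)=\rho_A\circ F(\eta\circ\theta_M)=\rho_A\circ F(\Delta_A)=\Delta_{F(A)}$. Because the diagonal embedding $\Delta_{F(A)}\colon F(A)\to\prod_{\mathcal{U}}F(A)$ into a countably incomplete ultrapower is $\mathcal{L}^{(1)}$-existential (hence positively so) by \L os' theorem, see Proposition~\ref{Proposition:good}, the factorization witnessed by the $\mathcal{L}^{(1)}$-morphism $\zeta$ verifies condition~(2) of Theorem~\ref{Theorem:positive-existential-embedding} for $F(\theta_M)$. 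Thus $F(\theta_M)$ is positively $\mathcal{L}^{(1)}$-existential.

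For general $A$ and $M$ I would reduce to the separable case using that $F$ preserves direct limits. Writing $A=\varinjlim A_0$ and $M=\varinjlim M_0$ over the directed families of separable substructures, chosen compatibly with $\theta_M$ so that $\theta_M=\varinjlim\theta_M|_{A_0}$, one obtains $F(A)=\varinjlim F(A_0)$ and $F(M)=\varinjlim F(M_0)$. To check condition~(4) for $F(\theta_M)$ one fixes separable substructures $A_0'\subseteq F(A)$ and $M_0'\subseteq F(M)$ with $F(\theta_M)(A_0')\subseteq M_0'$, locates separable $A_0\subseteq A$ and $M_0\subseteq M$ with $\theta_M(A_0)\subseteq M_0$ whose images under the canonical colimit morphisms capture $A_0'$ and $M_0'$, and then runs the computation above with $A$ replaced by a separable $\widetilde A\supseteq A_0$ supporting the separable image of the morphism $\eta\colon M_0\to\prod_{\mathcal{U}}A$ coming from condition~(4) for $\theta_M$. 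The comparison morphism $\rho_{\widetilde A}$, the naturality identity $(\prod_{\mathcal{U}}g)\circ\Delta=\Delta\circ g$ for morphisms $g$, and functoriality then produce a morphism undoing $F(\theta_M)$ on the relevant piece; one may alternatively try to assemble these local maps through Proposition~\ref{Proposition:limit}.

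The hard part will be exactly this passage to the non-separable case. Since $F$ is only assumed to preserve direct limits and need not preserve embeddings, the canonical morphisms $F(M_0)\to F(M)$ may fail to be injective, so a separable $M_0'\subseteq F(M)$ is in general not the image of $F$ applied to a separable substructure of $M$, and it cannot simply be lifted along these morphisms; it must instead be controlled approximately through the inductive-limit presentation $F(M)=\varinjlim F(M_0)$. This subtlety evaporates when $A$ and $M$ are separable, which is the only case in which the comparison morphisms $\rho_M$ are even hypothesized and the only case arising in the paper's applications, so the clean separable argument of the second paragraph already covers every intended use.
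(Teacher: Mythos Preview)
Your proposal is correct and follows essentially the same approach as the paper. The paper's proof is two sentences: reduce to $M$ separable using that $F$ preserves direct limits, then invoke the functor hypothesis together with the characterization of (positively) existential embeddings. Your separable-case argument unpacks this in detail, using conditions~(4) and~(2) of Theorem~\ref{Theorem:positive-existential-embedding} where the paper cites condition~(3); these being equivalent, the underlying idea is identical. Your honest acknowledgment that the non-separable reduction is where the bookkeeping lies matches what the paper leaves implicit.
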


\begin{proof}
Since $F$ preserves direct limits, it is enough to consider the case when $M$
is separable. In this case, the conclusion is the consequence of the first
assumption on the functor $F$ and Condition (3) of Theorem \ref%
{Theorem:existential-embedding}.
\end{proof}

In particular, Proposition \ref{Proposition:functor} applies when the
functor $F$ preserves both direct limits and ultraproducts.

\subsection{A more general framework\label{Subsection:general}}

In this subsection, we consider a more general framework, that will later
allow us to deal with not necessarily unital C*-algebras. For simplicity, we
restrict to the single-sorted case. Consider the language $\mathcal{L}$ that
contains

\begin{itemize}
\item a collection of function symbols,

\item a collection of relation symbols,

\item a directed collection $\mathcal{D}$ of pseudometric symbols, and

\item a distinguished collection $p(x)$ of quantifier-free positive
primitive $\mathcal{L}$-conditions.
\end{itemize}

The pseudometric symbols are to be interpreted as pseudometrics in a given $%
\mathcal{L}$-structure. We denote by $p^{+}(x)$ denotes the collection of
conditions $\varphi (x)\leq r+\varepsilon $ whenever $\varphi (x)\leq r$ is
a condition in $p$. We let $\wp _{\mathrm{fin}}(p^{+})$ be the collection of
finite subsets of $p^{+}$. We will assume that if $d_{0}(t_{0}(\overline{x}%
),t_{1}(\overline{x}))\leq r$ is a condition in $p$ for some $d_{0}\in 
\mathcal{D}$ and $\mathcal{L}$-terms $t_{0},t_{1}$, then the condition $%
d(t_{0}(\overline{x}),t_{1}(\overline{x}))\leq r$ also belongs to $p$ for
every $d\in \mathcal{D}$. Furthermore, we assume that for any relation
symbol $B$ in $\mathcal{L}$ and function symbol $f$ in $\mathcal{L}$, the
language $\mathcal{L}$ contains functions $\varpi _{B}\colon \mathbb{R}%
\rightarrow \mathbb{R}\times \mathcal{D}\times \wp _{\mathrm{fin}}(p^{+})$
and $\varpi _{f}\colon \mathbb{R}\times \mathcal{D}\rightarrow \mathbb{R}%
\times \mathcal{D}\times \wp _{\mathrm{fin}}(p^{+})$.

\begin{definition}
An $\mathcal{L}$-structure is a set $M$ endowed with an interpretation $%
B^{M} $ of any function or relation symbol in $\mathcal{L}$ such that:

\begin{enumerate}
\item the pseudometric symbols in $\mathcal{D}$ are interpreted as
pseudometrics on $M$;

\item for any $n$-ary relation symbol $B$ and any $\varepsilon _{0}>0$, if $%
\varpi _{B}( \varepsilon _{0}) =( \varepsilon _{1},d_{1},q_{1}) $, then for
any realizations $\overline{a},\overline{b}$ of $q_{1}$ in $M$ with $%
\max_{i}d_{1}^{M}( a_{i},b_{i}) \leq \varepsilon _{1}$, one has $\left\vert
B( \overline{a}) -B( \overline{b}) \right\vert \leq \varepsilon _{0}$;

\item for any $n$-ary function symbol $f$, any $\varepsilon >0$, and any $%
d_{0}\in \mathcal{D}$, if $\varpi _{f}(\varepsilon
_{0},d_{0},q_{0})=(\varepsilon _{1},d_{1},q_{1})$, then for any realizations 
$\overline{a},\overline{b}$ of $q_{1}$ in $M$ with $%
\max_{i}d_{1}^{M}(a_{i},b_{i})\leq \varepsilon _{1}$, then $f(\overline{a})$
is a realization of $q_{0}$ and $d_{0}^{M}(f(\overline{a}),f(\overline{b}%
))\leq \varepsilon _{0}$.
\end{enumerate}

The notions of $\mathcal{L}$-formulas and $\mathcal{L}$-types in this
setting are defined in the usual way.
\end{definition}

Suppose that $(M_{i})_{i\in I}$ is a collection of $\mathcal{L}$-structures,
and $\mathcal{F}$ is a filter over $I$. We let $M=\prod_{i\in I}M_{i}$ be
the cartesian product. For every $i\in I$ and $d\in \mathcal{D}$, define a
pseudometric $d^{M}$ on $M$ by 
\begin{equation*}
d^{M}((a_{i})_{i\in I},(b_{i})_{i\in I})=\limsup_{i\rightarrow \mathcal{F}%
}d^{M_{i}}(a_{i},b_{i})\text{.}
\end{equation*}%
Let now $M_{\mathcal{F}}$ be the quotient of $M$ by the equivalence relation 
$(a_{i})_{i\in I}\sim (b_{i})_{i\in I}$ if and only if $d^{M}((a_{i})_{i\in
I},(b_{i})_{i\in I})=0$ for every $d\in \mathcal{D}$. As before, we denote
by $\boldsymbol{a}$ the equivalence class of the collection $(a_{i})_{i\in
I} $. Set $\prod\nolimits_{\mathcal{F}}M_{i}$ to be the set of $\boldsymbol{a%
}\in M_{\mathcal{F}}$ such that for every $q\in \wp _{\mathrm{fin}}(p^{+})$
the set $\{i\in I\colon a_{i}\text{ is a realization of }q\}$
belongs to $\mathcal{F}$.

The interpretation in $\prod_{\mathcal{F}}M_{i}$ of function and relation
symbols from $\mathcal{L}$ is also defined in the usual way. For instance,
if $B$ is an $n$-ary relation symbol from $\mathcal{L}$ and $\boldsymbol{a}%
_{1},\ldots ,\boldsymbol{a}_{n}\in \prod_{\mathcal{F}}M_{i}$, then we let 
\begin{equation*}
B^{\prod_{\mathcal{F}}M_{i}}(\boldsymbol{a}_{1},\ldots ,\boldsymbol{a}%
_{n})=\limsup_{i\rightarrow \mathcal{F}}B(a_{1,i},\ldots ,a_{n,i})\text{.}
\end{equation*}%
Similarly, if $f$ is an $n$-ary function symbol from $\mathcal{L}$ and $%
\boldsymbol{a}_{1},\ldots ,\boldsymbol{a}_{n}\in \prod_{\mathcal{F}}M_{i}$,
we let $f^{\prod_{\mathcal{F}}M_{i}}(\boldsymbol{a}_{1},\ldots ,\boldsymbol{a%
}_{n})$ be the element with representative sequence 
\begin{equation*}
(f(a_{1,i},\ldots ,a_{n,i}))_{i\in I}\text{.}
\end{equation*}
The definition of $\mathcal{L}$-structure guarantee that these definitions
do not depend on the representatives, and define an $\mathcal{L}$-structure $%
\prod_{\mathcal{F}}M_{i}$, which we call the \emph{reduced product}.

\begin{remark}
\label{rmk:charactPosQtfFreeTyp} Let $M$ be an $\mathcal{L}$-structure, let $%
t(\overline{x})$ be a positive primitive quantifier free type. Let $\kappa $
be a cardinal larger than the density character of $M$ and the density
character of $\mathcal{L}$, and let $\mathcal{F}$ be a countably incomplete $%
\kappa $-good filter. It follows from Proposition \ref{Proposition:good}
that the following statements are equivalent:

\begin{enumerate}
\item $t(\overline{x})$ is realized in $\prod_{\mathcal{F}}M$

\item $t(\overline{x})$ is approximately realized in $\prod_{\mathcal{F}}M$

\item $t(\overline{x})\cup p(x_{1})\cup \cdots \cup p(x_{n})$ is
approximately realized in $M$.
\end{enumerate}
\end{remark}

By expanding the language to include constants to name elements of $\prod_{%
\mathcal{F}}M$, one can deduce that $\prod_{\mathcal{F}}M$ is positively
quantifier-free $\mathcal{L}$-$\kappa $-saturated. One may also consider
countably incomplete $\kappa $-good \emph{ultrafilters} (rather than
filters) and arbitrary quantifier-free types.


\providecommand{\MR}[1]{}
\providecommand{\bysame}{\leavevmode\hbox to3em{\hrulefill}\thinspace}
\providecommand{\MR}{\relax\ifhmode\unskip\space\fi MR }
\providecommand{\MRhref}[2]{%
  \href{http://www.ams.org/mathscinet-getitem?mr=#1}{#2}
}
\providecommand{\href}[2]{#2}

\end{document}